\numberwithin{equation}{section}
\newtheorem{Theorem}{Theorem}[section]
\newtheorem{Corollary}[Theorem]{Corollary}
\newtheorem{Lemma}[Theorem]{Lemma}
\newtheorem{Proposition}[Theorem]{Proposition}
{ \theoremstyle{definition}
\newtheorem{Definition}[Theorem]{Definition}

\newtheorem{Remark}[Theorem]{Remark}

\newtheorem{fact}[Theorem]{Fact}
\newtheorem{notation}[Theorem]{Notation}}
\newcommand{\vl}{\boldsymbol{\lambda}}
\newcommand{\vm}{\boldsymbol{\mu}}
\newcommand{\vx}{\boldsymbol{x}}
\newcommand{\vs}{\boldsymbol{s}}
\newcommand{\lo}{\lambda^{(1)}}
\newcommand{\lt}{\lambda^{(2)}}
\newcommand{\lN}{\lambda^{(N)}}
\newcommand{\mo}{\mu^{(1)}}
\newcommand{\mt}{\mu^{(2)}}
\newcommand{\mN}{\mu^{(N)}}
\newcommand{\vu}{\boldsymbol{u}}
\newcommand{\vv}{\boldsymbol{v}}
\newcommand{\Xo}{X^{(1)}}
\newcommand{\Xt}{X^{(2)}}
\newcommand{\ketzero}{\ket{\boldsymbol{0}}}
\newcommand{\brazero}{\bra{\boldsymbol{0}}}
\newcommand{\cF}{\mathcal{F}}
\newcommand{\cU}{\mathcal{U}}
\newcommand{\cV}{\mathcal{V}}
\newcommand{\cA}{\mathcal{A}}
\newcommand{\cAs}{\mathcal{A}^*}
\newcommand{\wPhi}{\widehat{\Phi}}
\newcommand{\wS}{\widetilde{S}}
\newcommand{\wcT}{\widetilde{\mathcal{T}}}
\newcommand{\sfN}{\mathsf{N}}
\newcommand{\hg}{c}
\newcommand{\cTH}{\mathcal{T}^H}
\newcommand{\cTV}{\mathcal{T}^V}
\newcommand{\tr}{\mathrm{tr}}
\newcommand{\elG}[1]{\Gamma(#1 ; q,p)}
\newcommand{\Phicross}{\Phi^{\mathrm{cr}}}
\newcommand{\tilTV}{\widetilde{\mathcal{T}}^{V}}
\newcommand{\tilTH}{\widetilde{\mathcal{T}}^{H}}
\newcommand{\mkdloop}{\widetilde{\mathcal{T}}^{\mathrm{loop}}}
\newcommand{\wg}[2]{\Theta(#1;q,p)_{#2}}
\newcommand{\phicont}{\phi^{\mathrm{cont}}}
\newcommand{\Iellip}{I^{\mathrm{ellip}}}
\newcommand{\parset}{\mathsf{P}}
\newcommand{\parap}{\mathtt{P}}
\newcommand{\cellip}{c^{\mathrm{ellip}}}
\newcommand{\ordmac}{f^{\mathfrak{gl}_N}}
\newcommand{\nonstrui}{f^{\widehat{\mathfrak{gl}}_N}}
\newcommand{\fellip}{f^{\mathrm{ellip}}}
\newcommand{\cVweight}[2]{
	\cV\Big(
	\arraycolsep=1pt
	\renewcommand{\arraystretch}{0.8}
	\begin{array}{c}#1\end{array} ; #2
	\renewcommand{\arraystretch}{1.0}
	\Big)
}
\begin{document}
\allowdisplaybreaks

\newcommand{\arXivNumber}{2002.00243}

\renewcommand{\thefootnote}{}

\renewcommand{\PaperNumber}{116}

\FirstPageHeading

\ShortArticleName{Non-Stationary Ruijsenaars Functions for $\kappa=t^{-1/N}$}

\ArticleName{Non-Stationary Ruijsenaars Functions for $\boldsymbol{\kappa=t^{-1/N}}$\\ and Intertwining Operators of Ding--Iohara--Miki\\ Algebra\footnote{This paper is a~contribution to the Special Issue on Elliptic Integrable Systems, Special Functions and Quantum Field Theory. The full collection is available at \href{https://www.emis.de/journals/SIGMA/elliptic-integrable-systems.html}{https://www.emis.de/journals/SIGMA/elliptic-integrable-systems.html}}}

\Author{Masayuki FUKUDA~$^\dag$, Yusuke OHKUBO~$^\ddag$ and Jun'ichi SHIRAISHI~$^\ddag$}

\AuthorNameForHeading{M.~Fukuda, Y.~Ohkubo and J.~Shiraishi}

\Address{$^\dag$~Department of Physics, Faculty of Science, The University of Tokyo,\\
\hphantom{$^\dag$}~Hongo 7-3-1, Bunkyo-ku, Tokyo 113-0033 Japan}
\EmailD{\href{phy.m.fukuda@gmail.com}{phy.m.fukuda@gmail.com}}

\Address{$^\ddag$~Graduate School of Mathematical Sciences, The University of Tokyo,\\
\hphantom{$^\ddag$}~Komaba 3-8-1, Meguro-ku, Tokyo 153-8914 Japan}
\EmailD{\href{yusuke.ohkubo.math@gmail.com}{yusuke.ohkubo.math@gmail.com}, \href{shiraish@ms.u-tokyo.ac.jp}{shiraish@ms.u-tokyo.ac.jp}}

\ArticleDates{Received April 23, 2020, in final form November 01, 2020; Published online November 18, 2020}

\Abstract{We construct the non-stationary Ruijsenaars functions (affine analogue of the Macdonald functions) in the special case $\kappa=t^{-1/N}$, using the intertwining operators of the Ding--Iohara--Miki algebra (DIM algebra) associated with $N$-fold Fock tensor spaces. By the $S$-duality of the intertwiners, another expression is obtained for the non-stationary Ruijsenaars functions with $\kappa=t^{-1/N}$, which can be regarded as a natural elliptic lift of the asymptotic Macdonald functions to the multivariate elliptic hypergeometric series. We~also investigate some properties of the vertex operator of the DIM algebra appearing in the present algebraic framework; an integral operator which commutes with the elliptic Ruijsenaars operator, and the degeneration of the vertex operators to the Virasoro primary fields in the conformal limit $q \rightarrow 1$.}

\Keywords{Macdonald function; Rujisenaars function; Ding--Iohara--Miki algebra}

\Classification{33D52; 81R10}

\renewcommand{\thefootnote}{\arabic{footnote}}
\setcounter{footnote}{0}

\section{Introduction}

The non-stationary Ruijsenaars function $\nonstrui(\vx,p|\vs,\kappa|q,t)$
introduced by one of the authors in~\cite{Shiraishi2019affine}
is by definition given in the form of the Nekrasov partition function,
\begin{gather}
f^{\widehat{\mathfrak{gl}}_N}(\vx,p|\vs,\kappa|q,t)\nonumber
\\ \qquad
{}=
\sum_{\lambda^{(1)},\ldots,\lambda^{(N)}\in {\mathsf P}}
\prod_{i,j=1}^N
{\sfN^{(j-i|N)}_{\lambda^{(i)},\lambda^{(j)}} (ts_j/s_i|q,\kappa) \over \sfN^{(j-i|N)}_{\lambda^{(i)},\lambda^{(j)}} (s_j/s_i|q,\kappa)}
 \prod_{\beta=1}^N\prod_{\alpha\geq 1} ( p x_{\alpha+\beta}/tx_{\alpha+\beta-1})^{\lambda^{(\beta)}_\alpha}.\label{eq: nonstrui intro}
\end{gather}
Here $\vx=(x_1,\ldots, x_N)$, $\vs=(s_1,\ldots, s_N)$, and $N\in \mathbb{Z}_{\geq 1}$.
As for the detail, see Definition~\ref{def: non-st. Ruij}.
Our goal in the present paper is to establish the transformation formula stated in
Theorem~\ref{thm: fgln=fEG} below
for the non-stationary Ruijsenaars function $\nonstrui$ in the special case $\kappa=t^{-1/N}$,
by using the $S$-duality of the intertwining operators of the Ding--Iohara--Miki (DIM) algebra.

Define the elliptic shifted product $\wg{a}{n}$ as the ratio of the Ruijsenaars elliptic
gamma function $\Gamma(a;q,p)$ by
\begin{gather}\label{eq: def wg}
\wg{a}{n} :=\frac{\Gamma(q^n a;q,p)}{\Gamma(a;q,p)}, \qquad
\Gamma(a; q, p) := \frac{(qp/a;q,p)_\infty}{(a;q,p)_\infty}.
\end{gather}

\begin{definition*}[Definition~\ref{def: ellip Mac}]
Define $\fellip_N(\vx;\vs|q,t,p) \in \mathbb{Q}(q,t,\vs)[[p,x_2/x_1,\ldots,x_N/x_{N-1}]]$ by
\begin{gather*}
\fellip_N(\vx;\vs|q,t,p)
=\sum_{\theta \in \mathsf{M}_N} \cellip_N(\theta;\boldsymbol{s}|q,q/t,p)
\prod_{1\leq i<j\leq N} (x_j/x_i)^{\theta_{ij}},
\end{gather*}
where $\mathsf{M}_N=\{ (\theta_{ij})_{1\leq i, j\leq N}\,|\,
\theta_{ij} \in \mathbb{Z}_{\geq 0}, \, \theta_{kl}=0 \mbox{ if } k \geq l\}$ is the set of
$N \times N$ strictly upper triangular matrices
with nonnegative integer entries,
and
\begin{gather*}
\cellip_N(\theta;\boldsymbol{s}|q,t,p)=
\prod_{k=2}^{N}\prod_{1\le i<j\le k}
\dfrac{\wg{q^{\sum_{a>k}(\theta_{ia}-\theta_{ja})}ts_j/s_i}{\theta_{ik}}}
{\wg{q^{\sum_{a>k}(\theta_{ia}-\theta_{ja})}qs_j/s_i}{\theta_{ik}}}
\\ \hphantom{\cellip_N(\theta;\boldsymbol{s}|q,t,p)=}
{}\times\prod_{k=2}^N \prod_{1\le i\le j<k}
\dfrac{\wg{q^{-\theta_{jk}+\sum_{a>k}(\theta_{ia}-\theta_{ja})}qs_j/ts_i}{\theta_{ik}}}
{\wg{q^{-\theta_{jk}+\sum_{a>k}(\theta_{ia}-\theta_{ja})}s_j/s_i}{\theta_{ik}}}.
\end{gather*}
\end{definition*}

\begin{theorem*}[Theorem~\ref{thm: fgln=fEG}]
As a formal series in $p$, $s_{i+1}/s_i$, $x_{i+1}/x_i$ $(i=1,\ldots, N-1)$
and $px_1/x_N$, we have
\begin{gather}\label{eq: fgln=fEG intro}
f^{\widehat{\mathfrak{gl}}_N}\big(\vx {}',p^{{1}/{N}}|\vs',t^{-{1}/{N}}|q,t\big)
=\mathfrak{C}\times \fellip_N(\boldsymbol{s};\boldsymbol{x}|q,t,p),
\end{gather}
where
\begin{gather*}
\mathfrak{C}:=
\left(\frac{(pq/t;q,p)_{\infty}}{(p;p)_{\infty}(pt;q,p)_{\infty}} \right)^{N}
\prod_{1\leq i<j\leq N} \frac{\Gamma(tx_j/x_i;q,p)}{\Gamma(qx_j/x_i;q,p)}
\prod_{1\leq i<j\leq N} \frac{(ts_j/s_i;q)_{\infty}}{(qs_j/s_i;q)_{\infty}},\\
\vs'=(s'_1,\ldots ,s'_N),\qquad s'_k =t^{{k}/{N}}s_k,\qquad
\vx'=(x'_1,\ldots ,x'_N),\qquad x'_k= p^{-{k}/{N}}x_k.
\end{gather*}
\end{theorem*}

The shifts of $\vs'$ and $\vx'$ correspond to the ones used in the limit $p\rightarrow 0$
 (Fact~\ref{fact: p->0 lim}).
These shifts appear naturally
in the construction by $p$-trace of vertex operators that we will explain later.\footnote{As is in this theorem,
we use $p$ instead of $p^N$
in the main text as in Definition~\ref{def: ellip Mac}
(since it simplifies our description).
Similarly,
though it is meant that we study the non-stationary Ruijsenaars functions for
$\kappa=t^{-1/N}$ (as in the title of this paper) of~(\ref{eq: nonstrui intro}),
the parameter $\kappa$ is occasionally used instead of~$\kappa^N$.}

Contrary to the case $N\geq 2$,
the case $N=1$ seems somewhat special from the point of view of our vertex operator approach,
and the parameter $\kappa$ can be treated as an arbitrary constant.
As~a~result, we have the following summation formula, which has been already proved by \cite{CNO2013Five, RW2018nekrasov}.
\begin{theorem*}[Theorem~\ref{thm: N=1}]
We have
\begin{gather}
 \exp\left(\sum \frac{1}{n}\frac{(1-q^n \kappa^n)(1-\kappa^n/t^n)\kappa^{-n}p^n}{(1-q^n)(1-t^{-n})(1-p^n)} \right) \nonumber\\
\qquad {}= \sum_{\lambda \in \parset} (p/\kappa)^{|\lambda|} \frac{\prod_{1 \leq i\leq j }
(\kappa q^{-\lambda_i+\lambda_{j+1}} t^{i-j};q)_{\lambda_j-\lambda_{j+1}}
(\kappa q^{\lambda_i-\lambda_j} t^{-i+j+1};q)_{\lambda_{j}-\lambda_{j+1}}}{\prod_{1 \leq i\leq j }
(q^{-\lambda_i+\lambda_{j+1}} t^{i-j};q)_{\lambda_j-\lambda_{j+1}}
(q^{\lambda_i-\lambda_j} t^{-i+j+1};q)_{\lambda_{j}-\lambda_{j+1}}} . \label{eq: N=1}
\end{gather}
\end{theorem*}

Remark that setting $\kappa = t^{-1}$ in (\ref{eq: N=1}),
we recover~(\ref{eq: fgln=fEG intro}) for $N=1$.
To prove Theorem~\ref{thm: fgln=fEG} and Theorem~\ref{thm: N=1},
we use the technique of the topological vertex operator.
This consists of the DIM algebra, the trivalent vertex operators,
and the web diagrams encoding the structure of the Fock tensor spaces
which the DIM algebra is acting on.

To fix a good starting point,
we need to recall some facts about the asymptotically free eigenfunctions $\ordmac(\vx,\vs|q,t)$
for the Macdonald $q$-difference operator. See \cite{BFS2014Macdonald, NS2012direct, Shiraishi2005conjecture} and Appendix~\ref{sec: asymp mac} as to the basic facts.

\begin{definition*}[Definition~\ref{def: ordinary Mac}]
Define the formal series $\ordmac(\vx;\vs|q,t) \in
\mathbb{Q}(q,t,\vs)[[x_2/x_1,\ldots, \allowbreak x_N/x_{N-1}]]$ by\footnote{$\ordmac(\vx;\vs|q,t)$
coincides with $p_N(\vx;\vs|q,t)$ in \cite{FOS2019Generalized}.}
\begin{gather*}
\ordmac(\boldsymbol{x};\boldsymbol{s}|q,t) =
\sum_{\theta \in \mathsf{M}_N} c_N(\theta;\boldsymbol{s}|q,t)
\prod_{1\leq i<j\leq N}(x_j/x_i)^{\theta_{ij}},
\end{gather*}
where the coefficient $c_N(\theta;\boldsymbol{s}|q,t)$ is defined by
\begin{gather*}
c_N(\theta;\boldsymbol{s}|q,t)=
\prod_{k=2}^{N}
\prod_{1\le i<j\le k}
\dfrac{(q^{\sum_{a>k}(\theta_{ia}-\theta_{ja})}ts_j/s_i;q)_{\theta_{ik}}}
{(q^{\sum_{a>k}(\theta_{ia}-\theta_{ja})}qs_j/s_i;q)_{\theta_{ik}}}
\\ \hphantom{c_N(\theta;\boldsymbol{s}|q,t)=}
{}\times\prod_{k=2}^N\prod_{1\le i\le j<k}
\dfrac{(q^{-\theta_{jk}+\sum_{a>k}(\theta_{ia}-\theta_{ja})}qs_j/ts_i;q)_{\theta_{ik}}}
{(q^{-\theta_{jk}+\sum_{a>k}(\theta_{ia}-\theta_{ja})}s_j/s_i;q)_{\theta_{ik}}}.
\end{gather*}
\end{definition*}

The Macdonald $q$-difference operator is
derived from the trigonometric case of the Ruijsenaars models~\cite{R1987complete}
which is given as a relativistic generalization of the
Calogero--Morser--Sutherland systems~\cite{OP1983quantum}.
Explicit eigenfunctions of the Macdonald $q$-difference operator
(trigonometric Ruijsenaars operator)
have been studied, and they are called Macdonald symmetric polyno\-mials~\cite{Macdonald2015Symmetric}.
Whereas the Macdonald polynomials are parametrized by partitions,
the function~$\ordmac$ in Defi\-ni\-tion~\ref{def: ordinary Mac} is series with parameters~$s_i$.
Specialization of~$s_i$'s gives us ordinary Macdonald polynomials associated with partitions.
Note that the~$\ordmac$ also enjoys the eigenvalue equation (Fact~\ref{fact: eigen fn of D}), the analytic property (Fact~\ref{fact: analyticity}),
the bispectral duality
(Fact~\ref{fact: bispec dual}), and the Poincar\'e duality
(Fact~\ref{fact: Poincare dual}).
Ruijsenaars introduced not only the trigonometric integral model but also the general elliptic version \cite{R1987complete}.
In the elliptic case,
however,
it seems we still lack a fundamental understanding of the properties
of eigenfunctions.
See \cite{FV2004hypergeometric,Ruijsenaars2009hilbert,Ruijsenaars2009hilbert2}, for example.
One of our motivations for studying the non-stationary Ruijsenaars functions
(\ref{eq: nonstrui intro})
comes from the strong hope that the non-stationary function might have much simpler combinatorial or analytic properties
than the original elliptic stationary Ruijsenaars functions.
Some of the observations given in \cite{Shiraishi2019affine} read:
(1)~it is conjectured that the non-stationary Ruijsenaars functions $\nonstrui$
with a suitable normalization procedure
give explicit solutions to the elliptic Ruijsenaars models
in the very (essentially) singular limit $\kappa \rightarrow 1$.
(2)~$\nonstrui$ reduce to the asymptotically free Macdonald eigenfunctions $\ordmac$
in the limit $p\rightarrow 0$.
(3)~We have the bispectral duality and the Poincar\'e duality for~$\nonstrui$.

We have a ``web diagrammatic description'' of the combinatorial structure
of $\ordmac$
(Definition~\ref{def: ordinary Mac}),
based on the DIM algebra.
In other words, the $\ordmac$ has an interpretation as a certain Nekrasov partition function associated with the web diagram.
See \cite{FOS2019Generalized,NPZ2017TUS,Zenkevich2018Higgs}.

The DIM algebra has two kinds of
intertwining operators, $\Phi\colon \cF^{(0,1)}\otimes \cF^{(1,0)} \rightarrow \cF^{(1,1)}$ and
$\Phi^*\colon \cF^{(1,1)} \rightarrow \cF^{(1,0)}\otimes \cF^{(0,1)}$
among the triple of Fock spaces, introduced in \cite{AFS2012quantum}.
As for the definition of the modules $\cF^{(1,M)}$, $\cF^{(0,1)}$ and the intertwiners,
see Facts~\ref{fact: lv 0,1 mod},~\ref{fact: lv 1,M mod}, and \ref{fact: intertwiner}.
By~using physics terminology,
we refer to the modules~$\cF^{(0,1)}$ as the preferred directions.
The matrix elements of these intertwiners are identical to the refined topological vertex.
We express the composition $\Phi^* \circ \Phi$
by the cross diagram in Fig.~\ref{fig: cross intro}, left.
\begin{figure}[t]\centering
 \includegraphics[width=7.5cm]{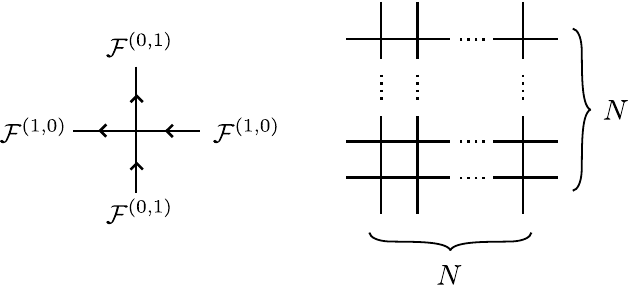}
	\caption{$\Phi^* \circ \Phi$ and $\ordmac$. }
	\label{fig: cross intro}
\end{figure}
Compose these operators reticulately
as in Fig.~\ref{fig: cross intro}, right.
Specialize spectral parameters in a certain manner.
Attach empty diagrams to all the external edges.
Then we have the Macdonald function~$\ordmac$
as thus constructed matrix element
(see Fact~\ref{fact: macdonald from mukade}).

\begin{figure}[t]\centering
 \includegraphics[width=3.2cm]{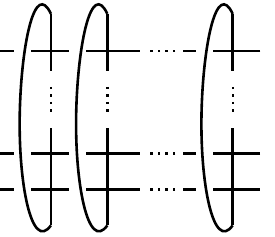}
	\caption{The cylindric web diagram for the non-stationary Ruijsenaars functions $\nonstrui$.}	\label{fig: loop}
\end{figure}

\begin{figure}[th!]
\centering
 \includegraphics[width=5.5cm]{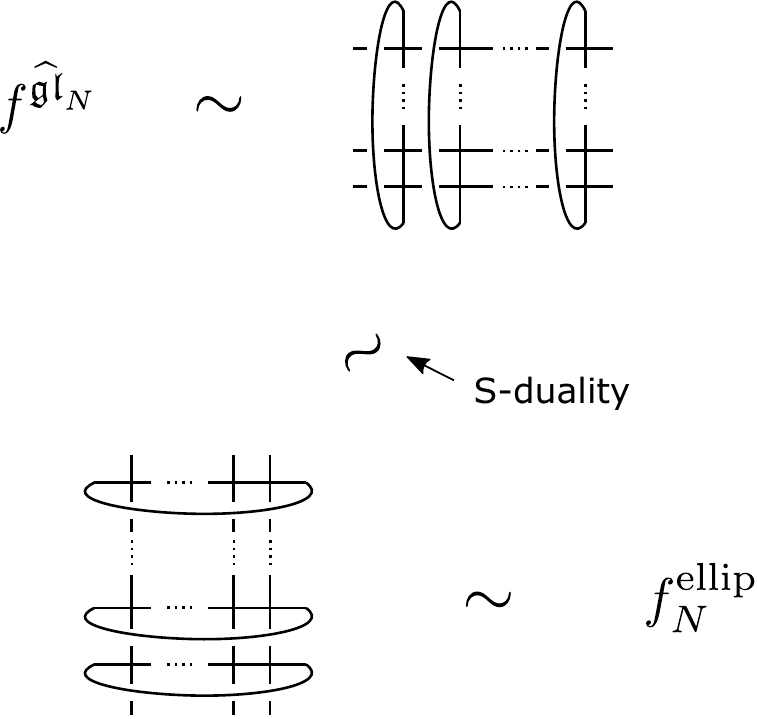}
	\caption{The sketch of the proof.}	\label{fig: proofsketch}
\end{figure}

Suppose we consider the trace in the preferred or vertical direction,
instead of the matrix element as above.
See Fig.~\ref{fig: loop} below.
We will prove that the $p$-trace associated with the web on a cylinder
gives us the non-stationary Ruijsenaars function $\nonstrui$
for the special parameter $\kappa=t^{-1/N}$ for $N\geq 2$,
and for generic $\kappa$ for $N=1$.

Thanks to the $S$-duality,
we can flip the diagram, obtaining the picture as in Fig.~\ref{fig: proofsketch}.
Finally, the trace in the horizontal direction can be calculated by the
standard technique,
thereby giving the elliptic lift $\fellip_{N}$ of the asymptotically
free eigenfunction $\ordmac$ for the Macdonald $q$-difference operator.
In Section~\ref{sec: nonst.R and Mukade} are given
our proofs of Theorems~\ref{thm: fgln=fEG} and~\ref{thm: N=1} which go along this idea.

Some explanations
are in order, concerning the physical background and recent related works
on the non-stationary systems,
including the non-stationary Heun, Lam\'{e}, and elliptic Calogero--Sutherland equations.
These non-stationary equations have been extensively studied
based on the perturbative approach
by Atai and Langmann in \cite{AL2018series}.
Recently in \cite{AL2019exact},
they obtained an integral formula for the eigenfunctions for the non-stationary elliptic
Calogero--Sutherland equation
for some special choices of the parameter in the ``time derivative'' term,
using the kernel function technique.

Note that
no explicit equations have been obtained for the non-stationary Ruijsenaars function $\nonstrui$
unfortunately.\footnote{In \cite{LNS2020Basic}
is given a conjecture about an eigenvalue equation for the non-stationary Ruijsenaars functions $\nonstrui$
using a certain operator $\mathcal{T}^{\hat{\mathfrak{gl}}_N}$ which contains
$q^{\frac{1}{2}\Delta}$ (where $\Delta$ is the ordinary Laplacian),
is derived another version of the coefficients of $\nonstrui$
without using the Nekrasov's factor $\sfN^{(i|N)}_{\lambda,\mu}(u|q,\kappa)$,
and is proved that $\nonstrui$ absolutely converges in a certain domain.}
So the authors have been lead to use the representation theories of the DIM algebra,
to bypass the troublesome situation without any eigenvalue equations or associated kernel functions.
One may find, nevertheless,
the resulting formula in Theorem~\ref{thm: fgln=fEG} can be regarded
as a $q$-difference analogue of Atai and Langmann's integral formula in~\cite{AL2019exact},
suggesting the existence of $q$-analogue of the kernel functions.

The function $\nonstrui$ is by definition a series
which can be regarded as a Jackson integral,
whereas the eigenfunction in \cite{AL2019exact} is given in terms of a contour integral.
The functions $\nonstrui$ is parametrized by the continuous parameters $s_i$,
while Atai and Langmann's integral formula contains
a partition as a set of discrete parameters.
The integral formula in \cite{AL2019exact} works not only for $\kappa'=g$ but also for
$\kappa'=kg$ ($k=2,3,\ldots$),
where $\kappa'$ and $g$ are parameters corresponding to~$\kappa$ and~$t$
as $\kappa={\rm e}^{\kappa' \hbar}$, $t={\rm e}^{g \hbar}$.
It is an interesting problem to find a similar construction of the non-stationary Ruijsenaars functions for $\kappa =t^{-k/N}$
($k=2,3,\ldots$).

In this occasion, we address some other related problems in the representation
theory of the DIM algebra.
First,
we derive an integral operator
$I(s_1/s_0,\ldots,s_N/s_0)$ introduced in
\cite{shiraishi2005commutative1,shiraishi2005commutative2,Shiraishi2006family}
from the intertwining operators of the DIM algebra.
As for the definition of $I(s_1/s_0,\ldots,s_N/s_0)$, see
Definition~\ref{def: operator I}.
In \cite{shiraishi2005commutative1,shiraishi2005commutative2,Shiraishi2006family},
it was conjectured that the integral operator
$I(s_1/s_0,\ldots,s_N/s_0)$ commutes with
Macdonald's difference operator.
A proof has been given in \cite{NS2012direct}.
We provide yet another perspective based on the vertex operator.
We also discuss the elliptic analogue of
the integral operator.
Our elliptic integral operator can be constructed by
taking the loop of intertwining operators.
It commutes with the Ruijsenaars operator.

Secondly, we study the conformal limit $q \rightarrow 1$ of the $2N$-valent intertwining operator $\cV(z)$,
which is a main object in the authors' previous paper \cite{FOS2019Generalized}, restricting ourselves to the simplest nontrivial case $N=2$.
The operator $\cV(z)$ is defined by the relations with the $q$-$W_N$ algebra, the $q$-Vir algebra for $N=2$.
We will derive the well-known relation of the Virasoro primary fields
from those defining relations in the limit $q\rightarrow 1$.
In \cite{FOS2019Generalized}, a formula for the matrix elements of $\cV(z)$ with respect to
generalized Macdonald functions (Fact~\ref{fact:existence thm of Gn Mac})
are proved.
The result Theorem~\ref{thm: the Vir rel} and the matrix elements formula for $\cV(z)$
prove that the generalized Macdonald functions are reduced to
Alba, Fateev Litvinov and Tarnopolskii's basis~\cite{AFLT2011combinatorial}.
Hence, the matrix element formula for~$\cV(z)$
provides us with another proof of the 4-dimensional AGT correspondence~\cite{AGT2010liouville}.

This paper is organized as follows.
Basic facts with respect to the intertwining operators of the DIM algebra
are summarized in Section~\ref{sec: DIM intertwiner Mukade}.
In Section~\ref{sec: nonst.R and Mukade},
we reproduce the non-stationary Ruijsenaars functions from the intertwiners on the cylindric web diagrams.
We also prove Theorem~\ref{thm: fgln=fEG} by using the $S$-duality.
In Section~\ref{sec: int op.},
we derive the integral operator $I(s_1/s_0,\ldots,s_N/s_0)$
and prove the commutativity with the Macdonald $q$-difference operator.
The elliptic extension of the integral operator is also discussed.
We take the conformal limit $q\rightarrow 1$
in Section~\ref{sec: q->1}.
In~Appen\-dix~\ref{sec: asymp mac},
some facts with respect to Macdonald functions $\ordmac$ are explained.
In~Appen\-dix~\ref{sec: nonst. Ruijsenaars and affine sc.},
we describe the construction of the non-stationary Ruijsenaars functions
from the affine screening operators in the case of general $\kappa$.
In~Appen\-dix~\ref{sec: proofs}
are given some straightforward but cumbersome details needed for our proofs.

\subsection*{Notation}
We use the following standard symbols for
the $q$-shifted factorials, the theta functions and the elliptic gamma functions \cite{GR2004basic}:
\begin{gather*}
(a;q)_{\infty}:=\prod_{n=1}^{\infty}\big(1-q^{n-1}a\big),\qquad
(a;q)_{m}:= \frac{(a;q)_\infty}{(a q^m ;q)_\infty} ,
\\
\theta_q(a):=(a;q)_{\infty}\big(qa^{-1};q\big)_{\infty} ,
\\
(a; q, p)_\infty := \prod_{n,m=1}^{\infty}\big(1-q^{n-1}p^{m-1}a\big) ,
\qquad
\Gamma(a; q, p) := \frac{(qp/a;q,p)_\infty}{(a;q,p)_\infty} .
\end{gather*}
For integers $n\leq m$, we use the following symbols for the tensor product and the ordered products
\begin{gather*}
\bigotimes_{n\leq i \leq m}^{\curvearrowright} A_i:= A_{n} \otimes A_{n+1} \otimes \cdots \otimes A_{m},
\\
\prod_{n\leq i \leq m}^{\curvearrowright} A_i:= A_{n} \times A_{n+1} \times \cdots \times A_{m},
\\
\prod_{n\leq i \leq m}^{\curvearrowleft} A_i:= A_{m} \times A_{m-1} \times \cdots \times A_{n}.
\end{gather*}

A partition $\lambda=(\lambda_1, \lambda_2,\ldots )$
is a sequence of nonnegative integers such that $\lambda_1 \geq \lambda_2 \geq \cdots$ with finitely many nonzero elements $\lambda_j$.
The empty diagram is denoted by $\emptyset = (0,0,\ldots)$.
$\parset$~denotes the set of all partitions.
For $\lambda \in \parset$,
we write $|\lambda|=\sum_{i\geq 1}\lambda_i$
and $\ell(\lambda)=\#\{i\,|\,\lambda_i \neq 0 \}$.
The conjugate partition of $\lambda$ is
denoted by $\lambda'$.
If $\lambda_i \leq \mu_i$ for all $i$,
we write $\lambda \subset \mu$.
For an $N$-tuple of partitions $\vl=\big(\lo, \lt, \ldots, \lN\big) \in \parset^N$,
write $|\vl|=\big|\lo\big|+\cdots +\big|\lN\big|$.
For a pair of positive integers $(i,j)\in \mathbb{Z}^2_{>0}$,
the arm length $a_{\lambda}(i,j)$ and the leg length $\ell_{\lambda}(i,j)$
are defined by
\begin{gather*}
a_{\lambda}(i,j)= \lambda_i-j , \qquad
\ell_{\lambda}(i,j)=\lambda'_j-i .
\end{gather*}

\section{DIM algebra, intertwiners, and Mukad\'e operators}
\label{sec: DIM intertwiner Mukade}

We briefly recall the definition of the Ding--Iohara--Miki (DIM) algebra
\cite{DI1997Generalization,Miki:2007}, its intertwining operators and the $S$-duality formula for the
intertwining operators.
Let $q$ and $t$ be generic complex parameters
with $|q|$, $|t^{-1}|<1$.

\begin{Definition}
The DIM algebra,
which we denote by $\cU=\cU_{q,t}$,
is a unital associative algebra generated
by the currents $x^{\pm}(z)=\sum_{n\in \mathbb{Z}}x^{\pm}_n z^{-n}$,
$\psi^{\pm}(z)=\sum_{\pm n \in \mathbb{Z}_{\geq 0}}\psi^{\pm}_{n}z^{-n}$
and the central elements $c^{\pm 1/2}$.
The defining relations are
\begin{gather*}
\psi^+(z)x^\pm(w)=g\big(c^{\mp 1/2}w/z\big)^{\mp1} x^\pm(w)\psi^+(z),
\\
\psi^-(z)x^\pm(w)=g\big(c^{\mp 1/2}z/w\big)^{\pm1} x^\pm(w)\psi^-(z),
\\
\psi^\pm(z) \psi^\pm(w)= \psi^\pm(w) \psi^\pm(z),\qquad
\psi^+(z)\psi^-(w)=\dfrac{g\big(c^{+1} w/z\big)}{g\big(c^{-1}w/z\big)}\psi^-(w)\psi^+(z),
\\
\big[x^+(z),x^-(w)\big]=\dfrac{(1-q)(1-1/t)}{1-q/t}\big( \delta\big(c^{-1}z/w\big)\psi^+\big(c^{1/2}w\big)-
\delta(c z/w)\psi^-\big(c^{-1/2}w\big) \big),
\\
G^{\mp}(z/w)x^\pm(z)x^\pm(w)=G^{\pm}(z/w)x^\pm(w)x^\pm(z) ,
\end{gather*}
where
\begin{gather*}
g(z)=\dfrac{G^+(z)}{G^-(z)} ,\qquad
G^\pm(z)=\big(1-q^{\pm1}z\big)\big(1-t^{\mp 1}z\big)\big(1-q^{\mp1}t^{\pm 1}z\big) ,\qquad
\delta(z) = \sum_{n\in\mathbb{Z}}z^n .
\end{gather*}
\end{Definition}

\begin{fact}[\cite{DI1997Generalization}]
The Drinfeld coproduct
\begin{gather*}
\Delta\big(\hg^{\pm 1/2}\big)=\hg^{\pm 1/2} \otimes \hg^{\pm 1/2} ,
\\
\Delta \big(x^+(z)\big)=x^+(z)\otimes 1+\psi^-\big(c_{(1)}^{1/2}z\big)\otimes x^+(c_{(1)}z) ,
\\
\Delta \big(x^-(z)\big)=x^-(c_{(2)}z)\otimes \psi^+\big(c_{(2)}^{1/2}z\big)+1 \otimes x^-(z) ,
\\
\Delta \big(\psi^\pm(z)\big)=\psi^\pm \big(c_{(2)}^{\pm 1/2}z\big)\otimes \psi^\pm \big(c_{(1)}^{\mp 1/2}z\big)
\end{gather*}
gives rise to a bialgebra structure.
Further, $\cU$ has a Hopf algebra structure.
We omit the counit and the antipode.
\end{fact}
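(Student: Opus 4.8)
The statement recalls the (topological) bialgebra and Hopf structure of \cite{DI1997Generalization}, and the natural plan is to verify all the axioms directly on the Drinfeld generators. The crux is to prove that $\Delta$ is a homomorphism of topological algebras, that is, that the images $\Delta(c^{\pm1/2})$, $\Delta(x^\pm(z))$ and $\Delta(\psi^\pm(z))$ satisfy every defining relation of $\cU$ inside a suitable completion $\cU\,\widehat\otimes\,\cU$ of the tensor square. I would carry this out relation by relation. The grouplike relation $\Delta(c^{\pm1/2})=c^{\pm1/2}\otimes c^{\pm1/2}$ is central and immediate, and the $\psi^\pm$--$\psi^\pm$ relations follow from the corresponding relations in each slot. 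For the mixed relations $\psi^\pm(z)x^\pm(w)=\cdots$, one inserts the coproduct formulas and reduces the identity, slot by slot, to the same mixed relations together with the scalar identity $g(z)g(1/z)=1$, which holds because $z^{3}G^{+}(1/z)=-G^{-}(z)$; the rescalings of the spectral variable by the central factors $c_{(1)}=c\otimes1$ and $c_{(2)}=1\otimes c$ are exactly what makes the two slots align.

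The genuinely delicate relations are the quadratic ones $G^{\mp}(z/w)x^{\pm}(z)x^{\pm}(w)=G^{\pm}(z/w)x^{\pm}(w)x^{\pm}(z)$ and the commutator
\begin{gather*}
\big[x^+(z),x^-(w)\big]=\frac{(1-q)(1-1/t)}{1-q/t}\big(\delta\big(c^{-1}z/w\big)\psi^+\big(c^{1/2}w\big)-\delta(cz/w)\psi^-\big(c^{-1/2}w\big)\big).
\end{gather*}
For the quadratic relation I would expand $\Delta(x^{\pm}(z))\Delta(x^{\pm}(w))$, move every $\psi^{\mp}$ factor past the $x^{\pm}$ currents using the mixed relations just verified, and collect the resulting structure functions; the claim then reduces to a scalar identity in which the ratios of $G^{\pm}$ produced by the reordering combine with $g$ to reproduce the relation in each slot. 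For the commutator I would compute $\Delta(x^+(z))\Delta(x^-(w))-\Delta(x^-(w))\Delta(x^+(z))$: the genuinely mixed contributions of the form $x^+\otimes x^-$ cancel pairwise, while the diagonal terms produce two $\delta$-function contributions whose supports freeze the spectral argument via $h(w)\delta(z/w)=h(z)\delta(z/w)$, matching the $\psi^+$ and $\psi^-$ terms together with their central shifts $c^{\pm1/2}$. This is the step I expect to be the main obstacle, since it is where the normalization of $g$, the factorized form of $G^{\pm}$, and the bookkeeping of the $c_{(1)},c_{(2)}$ insertions must all fit together at once.

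It remains to verify the coalgebra axioms and to produce the antipode. Coassociativity $(\Delta\otimes\mathrm{id})\Delta=(\mathrm{id}\otimes\Delta)\Delta$ is checked on generators; applied to $x^+(z)$, say, both sides yield the same threefold sum once the nested central rescalings are unwound. For the counit I would take the algebra map $\varepsilon$ fixed by $\varepsilon(c^{\pm1/2})=1$, $\varepsilon(x^\pm(z))=0$ and $\varepsilon(\psi^\pm(z))=1$, and verify $(\varepsilon\otimes\mathrm{id})\Delta=\mathrm{id}=(\mathrm{id}\otimes\varepsilon)\Delta$ on generators: applying $\varepsilon$ to a slot sends the corresponding central factor to $1$, so the argument shifts disappear and only the generator survives. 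Together with the homomorphism property of $\Delta$ and $\varepsilon$ this gives the bialgebra structure. For the Hopf structure I would define the antipode $S$ as the unique algebra anti-homomorphism solving the antipode equations $m\circ(S\otimes\mathrm{id})\circ\Delta=\eta\circ\varepsilon=m\circ(\mathrm{id}\otimes S)\circ\Delta$ on generators, obtaining in particular $S(c^{\pm1/2})=c^{\mp1/2}$ and determining $S(\psi^\pm(z))$ and $S(x^\pm(z))$ after the appropriate central rescaling; well-definedness once more reduces to compatibility with the defining relations.

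One structural caveat should be recorded throughout: because $\Delta(x^\pm(z))$ involves the entire series $\psi^{\mp}$, the coproduct does not land in the algebraic tensor product $\cU\otimes\cU$ but in the completion $\cU\,\widehat\otimes\,\cU$, so all the verifications above are to be read in that topological setting and the bialgebra and Hopf structures are understood in the corresponding sense. This is also why the present statement is phrased as a Fact with the counit and antipode omitted: the mathematical content is the relation-preservation of $\Delta$, which is precisely the computation singled out above as the crux.
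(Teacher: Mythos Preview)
The paper does not give a proof of this statement at all: it is recorded as a \emph{Fact} with a citation to \cite{DI1997Generalization}, and the counit and antipode are explicitly omitted. There is therefore no ``paper's own proof'' to compare against. Your outline is the standard direct verification one would expect in the cited reference, and it is essentially correct as a sketch; the identification of the $[x^+,x^-]$ commutator as the crux and the remark about working in a completed tensor product are both on point.
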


A $\cU$-module is called of level-$(n,m)$ if
the central elements
act as $c=(t/q)^{n/2}$ and $(\psi^+_0/\psi^-_0)^{1/2}\allowbreak =(q/t)^{m/2}$.
In this paper, we use two kinds of $\cU$-modules.
The first one is a free field representation with the following boson.
Let $\mathcal{H}$
be the Heisenberg algebra
generated by $\{a_n \,|\, n\in \mathbb{Z}\}$ with the commutation relation
\begin{gather*}
[a_n,a_m]=n\frac{1-q^{|n|}}{1-t^{|n|}}\delta_{n+m,0}.
\end{gather*}
Let $\ket{0}$ and $\bra{0}$ be
the highest weight vectors defined
by $a_n \ket{0}=0$ ($n\geq 0$) and $\bra{0}a_n=0$ ($n\leq 0$), respectively.
Denote by $\cF$ (resp.~$\cF^*$) the Fock space
generated from the highest weight vector $\ket{0}$ (resp.~$\bra{0}$).
The bilinear form $\cF^* \otimes \cF \rightarrow \mathbb{C}$ is defined by setting $\braket{0|0}=1$.

\begin{Definition}
Define the vertex operators $\eta(z)$, $\xi(z)$ and
$ \varphi^{\pm}(z) \in \mathrm{End}(\cF) \big[\big[z^{\pm 1}\big]\big]$ by
\begin{gather*}
\eta(z)=\exp\bigg( \sum_{n=1}^{\infty} \dfrac{1-t^{-n}}{n}a_{-n} z^{n} \bigg)
\exp\bigg({-}\sum_{n=1}^{\infty} \dfrac{1-t^{n} }{n}a_n z^{-n}\bigg), 
\\
\xi(z)=\exp\bigg({-}\sum_{n=1}^{\infty} \dfrac{1-t^{-n}}{n}q^{-n/2}t^{n/2}a_{-n} z^{n}\bigg)
\exp\bigg( \sum_{n=1}^{\infty} \dfrac{1-t^{n}}{n} q^{-n/2}t^{n/2} a_n z^{-n}\bigg),
\\
\varphi^{+}(z)=\exp\bigg({-}\sum_{n=1}^{\infty} \dfrac{1-t^{n}}{n} \big(1-t^n q^{-n}\big)
 q^{n/4}t^{-n/4} a_n z^{-n} \bigg),
 \\
\varphi^{-}(z)=\exp\bigg(\sum_{n=1}^{\infty}
\dfrac{1-t^{-n}}{n} \big(1-t^n q^{-n}\big) q^{n/4}t^{-n/4} a_{-n}z^{n}\bigg).
\end{gather*}
\end{Definition}

\begin{fact}[\cite{FHHSY2009commutative}]
\label{fact: lv 1,M mod}
Let $u$ be an indeterminate and $M$ be an integer.
The algebra homomorphism $\rho_u\colon \cU \rightarrow \mathrm{End}(\cF)$
defined by
\begin{gather*}
\hg^{1/2}\mapsto (t/q)^{1/4},\qquad
x^+(z)\mapsto u z^{-M} q^{-M/2}t^{M/2}\eta(z),\qquad
x^-(z)\mapsto u^{-1} z^{M}q^{M/2}t^{-M/2} \xi(z) ,
\\
\psi^+(z)\mapsto q^{M/2}t^{-M/2}\varphi^+(z),\qquad
\psi^-(z)\mapsto q^{-M/2}t^{M/2}\varphi^-(z)
\end{gather*}
endows $\cF$ with the level $(1,M)$-module structure.
\end{fact}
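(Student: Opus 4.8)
The plan is to check directly that the stated formulas define an algebra homomorphism $\rho_u\colon\cU\to\mathrm{End}(\cF)$---i.e.\ that the images of the generators satisfy all the defining relations of $\cU$---and that under it the central elements act by the scalars required for level $(1,M)$. Since $\eta(z)$, $\xi(z)$ and $\varphi^\pm(z)$ are all normally ordered exponentials in the single Heisenberg algebra $\mathcal{H}$, each relation becomes an operator product expansion (OPE) identity, and the only tool required is the contraction formula: if $V(z)={:}{\rm e}^{X(z)}{:}$ and $W(w)={:}{\rm e}^{Y(w)}{:}$ with $X,Y$ linear in the $a_n$, then $V(z)W(w)=C_{VW}(w/z)\,{:}V(z)W(w){:}$, where $C_{VW}(w/z)=\exp\big([X_-(z),Y_+(w)]\big)$ is a scalar rational function ($X_-$ and $Y_+$ being the annihilation and creation parts), while $W(w)V(z)$ equals the same normal-ordered product times $C_{WV}(z/w)$, obtained from $C_{VW}$ by exchanging $z$ and $w$.

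First I would settle the level and the relations needing only bare OPEs. One has $\rho_u(\hg)=(t/q)^{1/2}$, and since $\varphi^\pm(z)$ has constant term $1$ the images of $\psi^\pm_0$ are $q^{\pm M/2}t^{\mp M/2}$, so $\rho_u$ indeed yields a level $(1,M)$ module. The relations $\psi^\pm(z)\psi^\pm(w)=\psi^\pm(w)\psi^\pm(z)$ are immediate, as $\varphi^+(z)$ involves only annihilation modes and $\varphi^-(z)$ only creation modes; in particular $\eta(w)\varphi^+(z)$ is already normally ordered. For $\psi^+(z)\psi^-(w)=\frac{g(\hg w/z)}{g(\hg^{-1}w/z)}\psi^-(w)\psi^+(z)$, the four mixed relations $\psi^\pm(z)x^\pm(w)=\dots$, and the quadratic relations $G^\mp(z/w)x^\pm(z)x^\pm(w)=G^\pm(z/w)x^\pm(w)x^\pm(z)$, I would compute the relevant contractions; in every case the outcome is a ratio of the polynomials $G^\pm$ with arguments rescaled by the appropriate power of $\hg=(t/q)^{1/2}$, exactly as the right-hand sides demand, and the extra scalar prefactors $q^{\pm M/2}t^{\mp M/2}$, $u^{\pm1}$ and the monomials $z^{\mp M}$, $w^{\mp M}$ occur symmetrically on the two sides and cancel upon reordering. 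For instance $C_{\eta\eta}(x)=\frac{(1-x)(1-qt^{-1}x)}{(1-qx)(1-t^{-1}x)}$ gives $\eta(z)\eta(w)\big/\eta(w)\eta(z)=C_{\eta\eta}(w/z)/C_{\eta\eta}(z/w)=G^+(z/w)/G^-(z/w)=g(z/w)$, and likewise $C_{\xi\xi}(w/z)/C_{\xi\xi}(z/w)=G^-(z/w)/G^+(z/w)$, the two results matching the $x^+$ and $x^-$ forms of the relation (the roles of $G^+$ and $G^-$ being interchanged consistently on both sides).

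The hard part will be the relation $[x^+(z),x^-(w)]=\frac{(1-q)(1-1/t)}{1-q/t}\big(\delta(\hg^{-1}z/w)\psi^+(\hg^{1/2}w)-\delta(\hg z/w)\psi^-(\hg^{-1/2}w)\big)$, which is not a bare OPE but involves delta-function localization. Here I would first compute $\eta(z)\xi(w)=R(w/z)\,{:}\eta(z)\xi(w){:}$ with
\begin{gather*}
R(x)=\frac{\big(1-(qt)^{1/2}x\big)\big(1-(qt)^{-1/2}x\big)}{\big(1-\hg^{-1}x\big)\big(1-\hg x\big)}=1+\frac{(1-q)(1-1/t)}{1-q/t}\left(\frac{1}{1-\hg x}-\frac{1}{1-\hg^{-1}x}\right),
\end{gather*}
a rational function invariant under $x\mapsto 1/x$ with simple poles only at $x=\hg^{\pm1}$. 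Since ${:}\eta(z)\xi(w){:}={:}\xi(w)\eta(z){:}$ and $\xi(w)\eta(z)=R(z/w)\,{:}\eta(z)\xi(w){:}$, the commutator $[\eta(z),\xi(w)]$ is the difference of the expansions of $R$ at $z/w=\infty$ and at $z/w=0$ times ${:}\eta(z)\xi(w){:}$; using the elementary fact that the expansion of $(1-\alpha x)^{-1}$ at $x=\infty$ minus its expansion at $x=0$ is $-\delta(\alpha x)$, the partial-fraction form of $R$ gives $[\eta(z),\xi(w)]=\frac{(1-q)(1-1/t)}{1-q/t}\big(\delta(\hg^{-1}z/w)-\delta(\hg z/w)\big)\,{:}\eta(z)\xi(w){:}$. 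It then remains to reinstate the prefactors and specialize: the images of $x^+(z)$ and $x^-(w)$ jointly contribute $(w/z)^M$, which on the support of $\delta(\hg^{-1}z/w)$ equals $\hg^{-M}=q^{M/2}t^{-M/2}$ and on that of $\delta(\hg z/w)$ equals $\hg^{M}=q^{-M/2}t^{M/2}$, precisely the scalars that the definition of $\rho_u$ attaches to $\psi^+$ and $\psi^-$; and a one-line check shows that substituting $z=\hg w$ (resp.\ $z=\hg^{-1}w$) makes the creation (resp.\ annihilation) part of ${:}\eta(z)\xi(w){:}$ vanish identically, leaving ${:}\eta(\hg w)\xi(w){:}=\varphi^+(\hg^{1/2}w)$ (resp.\ ${:}\eta(\hg^{-1}w)\xi(w){:}=\varphi^-(\hg^{-1/2}w)$). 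Assembling these pieces reproduces exactly $\rho_u$ applied to the right-hand side, with the correct overall constant $\frac{(1-q)(1-1/t)}{1-q/t}$. The genuinely delicate points are thus the legitimacy of comparing the two OPE expansions and careful tracking of the fractional powers of $q$ and $t$ in the last specialization; everything else is routine, and this OPE/localization pattern---the one introduced in \cite{FHHSY2009commutative}---handles all the relations uniformly.
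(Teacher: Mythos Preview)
Your verification is correct and follows the standard free-field check: compute all OPEs of $\eta$, $\xi$, $\varphi^\pm$ using the Heisenberg commutation relations, and extract the delta-function terms in $[\eta(z),\xi(w)]$ via the partial-fraction decomposition of the contraction $R(w/z)$. The identification ${:}\eta(\hg^{\pm1}w)\xi(w){:}=\varphi^{\pm}(\hg^{\pm1/2}w)$ and the absorption of the monomial $(w/z)^M$ into the scalar prefactors of $\psi^\pm$ are handled correctly.

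Note, however, that the paper does \emph{not} give its own proof of this statement: it is recorded as a Fact and attributed to \cite{FHHSY2009commutative}. So there is no paper proof to compare against; what you have written is essentially the argument one would find (or reconstruct) from that reference, and it is the expected route.
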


We denote by $\cF^{(1,M)}_u$ the Fock space endowed with the level $(1,M)$-module structure.
The dual space $\cF^*$ can also be endowed with the right $\cU$-module structure
through $\rho_u$.
Then it is denoted by $\cF^{(1,M)*}_u$.
The $\rho_u$ is called the horizontal representation.

Next, we consider the level $(0,1)$-module.
Let $\cF^{(0,1)}$ be the vector space
spanned by the vectors $\ket{\lambda}$ with $\lambda \in \parset $.
Define $( \bra{\lambda} \,|\, \lambda \in \parset )$ to be the dual basis
such that $\braket{\lambda|\mu}=\delta_{\lambda, \mu}$.

\begin{fact}[\cite{FFJMM2011semiinfinite,FT2011Equivariant}]
\label{fact: lv 0,1 mod}
Let $u$ be an indeterminate.
The following action gives the level $(0,1)$-module structure to $\cF^{(0,1)}$:
\begin{gather*}
c^{1/2} \ket{\lambda}=\ket{\lambda},
\\
x^+(z) \ket{\lambda}=\sum_{i=1}^{\ell(\lambda)+1}
A^+_{\lambda,i}\delta\big(q^{\lambda_i}t^{-i+1}u/z\big)\ket{\lambda+{\bf 1}_i},
\\
x^-(z) \ket{\lambda}=q^{1/2}t^{-1/2}\sum_{i=1}^{\ell(\lambda)}
A^-_{\lambda,i}\delta\big(q^{\lambda_i-1}t^{-i+1}u/z\big)\ket{\lambda-{\bf 1}_i},
\\
\psi^+(z)\ket{\lambda}=q^{1/2}t^{-1/2}B^+_\lambda(u/z)\ket{\lambda},
\\
\psi^-(z)\ket{\lambda}=q^{-1/2}t^{1/2} B^-_\lambda(z/u)\ket{\lambda}.
\end{gather*}
Here,
$A^{\pm}_{\lambda,i}\in \mathbb{Q}(q,t)$
and $B^+_\lambda(z) \in \mathbb{Q}(q,t)[[z]]$
are defined by
\begin{gather*}
A^+_{\lambda,i}=(1-t)\prod_{j=1}^{i-1}{\big(1-q^{\lambda_i-\lambda_j}t^{-i+j+1}\big)
\big(1-q^{\lambda_i-\lambda_j+1}t^{-i+j-1}\big)\over
\big(1-q^{\lambda_i-\lambda_j}t^{-i+j}\big)\big(1-q^{\lambda_i-\lambda_j+1}t^{-i+j}\big)},
\\
A^-_{\lambda,i}=(1-t^{-1})
{1-q^{\lambda_{i+1}-\lambda_i} \over 1-q^{\lambda_{i+1}-\lambda_i+1} t^{-1}}
\prod_{j=i+1}^{\infty}{\big(1-q^{\lambda_j-\lambda_i+1}t^{-j+i-1}\big)
\big(1-q^{\lambda_{j+1}-\lambda_i}t^{-j+i}\big)\over
\big(1-q^{\lambda_{j+1}-\lambda_i+1}t^{-j+i-1}\big)\big(1-q^{\lambda_j-\lambda_i}t^{-j+i}\big)} ,
\\
B^+_\lambda(z)={1-q^{\lambda_{1}-1} t z \over 1-q^{\lambda_{1}} z}
\prod_{i=1}^{\infty}{\big(1-q^{\lambda_i}t^{-i}z\big)
\big(1-q^{\lambda_{i+1}-1}t^{-i+1}z\big)\over
\big(1-q^{\lambda_{i+1}}t^{-i}z\big)\big(1-q^{\lambda_i-1}t^{-i+1}z\big)},
\\
B^-_\lambda(z)={1-q^{-\lambda_{1}+1} t^{-1} z \over 1-q^{-\lambda_{1}} z}
\prod_{i=1}^{\infty}{\big(1-q^{-\lambda_i}t^{i}z\big)
\big(1-q^{-\lambda_{i+1}+1}t^{i-1}z\big)\over
\big(1-q^{-\lambda_{i+1}}t^{i}z\big)\big(1-q^{-\lambda_i+1}t^{i-1}z\big)}.
\end{gather*}
\end{fact}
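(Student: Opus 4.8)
The plan is to verify directly that the formulas in the statement respect every defining relation of $\cU$ on the basis $\{\ket{\lambda}\mid\lambda\in\parset\}$, and then to identify the level. Since $c^{1/2}$ acts as the identity we have $c=1$, so the level in the first slot is $0$; and because $\psi^\pm(z)$ act diagonally with $\psi^+(z)\ket{\lambda}=q^{1/2}t^{-1/2}B^+_\lambda(u/z)\ket{\lambda}$, $\psi^-(z)\ket{\lambda}=q^{-1/2}t^{1/2}B^-_\lambda(z/u)\ket{\lambda}$, and $B^\pm_\lambda(0)=1$ is evident from the product formulas, one reads off $\psi^+_0=q^{1/2}t^{-1/2}$, $\psi^-_0=q^{-1/2}t^{1/2}$, hence $(\psi^+_0/\psi^-_0)^{1/2}=(q/t)^{1/2}$; thus the module is of level $(0,1)$. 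The relations $\psi^\pm(z)\psi^\pm(w)=\psi^\pm(w)\psi^\pm(z)$ are immediate from diagonality, and $\psi^+(z)\psi^-(w)=\psi^-(w)\psi^+(z)$ follows likewise once one notes that at $c=1$ the coefficient $g(cw/z)/g(c^{-1}w/z)$ equals $1$.

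The mixed relations, which at $c=1$ read $\psi^+(z)x^\pm(w)=g(w/z)^{\mp1}x^\pm(w)\psi^+(z)$ and $\psi^-(z)x^\pm(w)=g(z/w)^{\pm1}x^\pm(w)\psi^-(z)$, I would reduce to functional identities using the delta functions. Acting by $x^+(w)$ sends $\ket{\lambda}$ to a combination of $\ket{\lambda+{\bf 1}_i}$ with coefficient proportional to $\delta\big(q^{\lambda_i}t^{-i+1}u/w\big)$, and on that support $w$ is frozen to $q^{\lambda_i}t^{-i+1}u$, so the first relation becomes
\[
\frac{B^+_{\lambda+{\bf 1}_i}(u/z)}{B^+_\lambda(u/z)}=g\big(q^{\lambda_i}t^{-i+1}u/z\big)^{-1},
\]
and similarly for the remaining three cases. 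Each such identity holds because adding (or removing) a single box alters the set of addable/removable boxes only locally, so the left-hand ratio telescopes to a quotient of finitely many linear factors that, by $g=G^+/G^-$ and $G^\pm(z)=(1-q^{\pm1}z)(1-t^{\mp1}z)(1-q^{\mp1}t^{\pm1}z)$, coincides with the right-hand side. This is mechanical bookkeeping with the explicit products for $A^\pm_{\lambda,i}$ and $B^\pm_\lambda$.

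The real content is in the quadratic relations. For $G^\mp(z/w)x^\pm(z)x^\pm(w)=G^\pm(z/w)x^\pm(w)x^\pm(z)$ one expands both sides on $\ket{\lambda+{\bf 1}_i+{\bf 1}_j}$ (coefficient $A^+_{\lambda,i}A^+_{\lambda+{\bf 1}_i,j}$ times two delta functions), checks that the $i=j$ configurations are consistent using the zeros of $G^\mp$ at $z/w\in\{q^{\pm1},t^{\mp1},q^{\mp1}t^{\pm1}\}$, and checks that for $i\ne j$ the prefactors $G^\pm(z/w)$ exactly absorb the asymmetry between $A^+_{\lambda,i}A^+_{\lambda+{\bf 1}_i,j}$ and $A^+_{\lambda,j}A^+_{\lambda+{\bf 1}_j,i}$ once $z,w$ sit on the delta supports; the $x^-$ case is analogous. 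The step I expect to be the main obstacle is
\[
[x^+(z),x^-(w)]=\frac{(1-q)(1-1/t)}{1-q/t}\big(\delta(z/w)\psi^+(w)-\delta(z/w)\psi^-(w)\big)
\]
(at $c=1$). Here $x^+(z)x^-(w)\ket{\lambda}$ removes a box from a row $j$ and then adds one to a row $i$, while $x^-(w)x^+(z)\ket{\lambda}$ does the opposite; the contributions in which the removed and added boxes are distinct must cancel between the two orderings — the required identity equates $A^-_{\lambda,j}A^+_{\lambda-{\bf 1}_j,i}$ with $A^+_{\lambda,i}A^-_{\lambda+{\bf 1}_i,j}$ on the relevant delta supports — and the surviving $\ket{\lambda}$-terms, where a box is added back right after being removed or removed right after being added, must combine into $\frac{(1-q)(1-1/t)}{1-q/t}\,\delta(z/w)\big(q^{1/2}t^{-1/2}B^+_\lambda(u/w)-q^{-1/2}t^{1/2}B^-_\lambda(w/u)\big)$. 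I would establish the latter by observing that $B^+_\lambda(u/w)$, expanded at $w=\infty$, has simple poles precisely at $w=q^{\lambda_i}t^{-i+1}u$ for each addable box of $\lambda$, while $B^-_\lambda(w/u)$, expanded at $w=0$, has simple poles at $w=q^{\lambda_j-1}t^{-j+1}u$ for each removable box; the equality then amounts to the partial-fraction/residue expansions of $B^+_\lambda(u/w)$ and $B^-_\lambda(w/u)$, whose residues must turn out to be exactly the $A^+A^-$-products times the displayed constant. Checking this residue identity directly is the one genuinely delicate computation; alternatively one may deduce the whole statement from the shuffle-algebra presentation or from the equivariant $K$-theory of Hilbert schemes in the cited works (where it becomes a fixed-point localization statement), or — since the Miki $S$-duality automorphism interchanges the two kinds of Fock modules — by transporting the level-$(1,0)$ free-field representation of Fact~\ref{fact: lv 1,M mod}, at the cost of identifying its image with the span of the $\ket{\lambda}$. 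Assembling these verifications proves the claim.
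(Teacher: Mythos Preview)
The paper does not prove this statement at all: it is recorded as a \emph{fact} imported from the cited references \cite{FFJMM2011semiinfinite,FT2011Equivariant}, with no argument given. So there is no ``paper's own proof'' to compare your proposal against.

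That said, your outline is the standard direct-verification route used in those original sources: check the $\psi^\pm$ relations trivially from diagonality, reduce the $\psi^\pm x^\pm$ relations to the single-box ratio identities $B^\pm_{\lambda\pm{\bf 1}_i}/B^\pm_\lambda=g(\cdots)^{\mp1}$, handle the quadratic $x^\pm x^\pm$ relations by comparing the two orderings on $\ket{\lambda\pm{\bf 1}_i\pm{\bf 1}_j}$, and extract the commutator $[x^+,x^-]$ from the partial-fraction/residue structure of $B^\pm_\lambda$. Your identification of the residue computation as the one genuinely delicate step is accurate; in the literature this is often packaged via the shuffle-algebra or $K$-theoretic pictures you mention, which make the combinatorics more transparent. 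Nothing in your sketch is wrong, but be aware that turning ``mechanical bookkeeping'' into a complete argument for the $x^\pm x^\pm$ and $[x^+,x^-]$ relations does require writing out the telescoping carefully (in particular the cancellation of the off-diagonal $i\neq j$ terms in the commutator is not entirely automatic and uses the precise form of the $A^\pm$).
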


We denote this module by $\cF^{(0,1)}_u$.
This is called the vertical representation or the preferred direction.
By using the two representations,
the trivalent intertwiners $\Phi$, $\Phi^\ast$ of the DIM algebra were introduced in \cite{AFS2012quantum}.

\begin{fact}[\cite{AFS2012quantum}]
\label{fact: intertwiner}
Let $M$ be an integer.
If $w=-vu$,
there exists a unique linear operator
\begin{gather*}
\Phi\left[(1,M+1),w \atop (0,1),v; (1,M) ,u\right]\colon\ \cF^{(0,1)}_v\otimes \cF^{(1,M)}_u\longrightarrow
\cF^{(1,M+1)}_{w}
\end{gather*}
such that $\bra{0}\Phi(\ket{\emptyset}\otimes \ket{0})=1$ and
\begin{gather*}
a \Phi= \Phi \Delta(a) \qquad (\forall\, a\in \cU).
\end{gather*}
Similarly,
there exists a unique linear operator
\begin{gather*}
\Phi^*\left[ (1,M) ,u;(0,1),v \atop (1,M+1),-vu\right]\colon\ \cF^{(1,M+1)}_{-uv}\longrightarrow
\cF^{(1,M)}_u\otimes \cF^{(0,1)}_v
\end{gather*}
such that $(\bra{0}\otimes \bra{\emptyset}) \Phi^* \ket{0}=1$ and
\begin{gather*}
\Delta(a) \Phi^*= \Phi^* a \qquad (\forall\, a\in \cU).
\end{gather*}
\end{fact}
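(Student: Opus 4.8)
The plan is to reduce the statement to computing, and then verifying, the components of $\Phi$ along the basis $\{\ket{\lambda}\}_{\lambda\in\parset}$ of the preferred Fock space. For $\lambda\in\parset$ put $\Phi_\lambda:=\Phi(\ket{\lambda}\otimes -)\in\mathrm{Hom}\big(\cF^{(1,M)}_u,\cF^{(1,M+1)}_w\big)$, so that giving $\Phi$ is the same as giving the family $(\Phi_\lambda)_\lambda$. Specializing the intertwining relation $a\,\Phi=\Phi\,\Delta(a)$ to $a=\psi^\pm(z)$ and inserting the Drinfeld coproduct $\Delta$ together with the vertical action of Fact~\ref{fact: lv 0,1 mod} and the horizontal action of Fact~\ref{fact: lv 1,M mod} turns it into a prescribed commutation relation between each $\Phi_\lambda$ and $\varphi^\pm(z)$; comparing the actions of the zero modes on source and target also fixes the level of the target and forces $w=-vu$. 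For $\lambda=\emptyset$ these relations pin down $\Phi_\emptyset$, up to a single overall scalar $\gamma$, as an explicit normal-ordered product of exponentials in the Heisenberg generators $a_n$, and the normalization $\bra{0}\Phi(\ket{\emptyset}\otimes\ket{0})=1$ forces $\gamma=1$.

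Next I would specialize $a=x^+(z)$. By Fact~\ref{fact: lv 0,1 mod} the current $x^+(z)$ acts on $\ket{\lambda}$ by the finite sum $\sum_i A^+_{\lambda,i}\,\delta\big(q^{\lambda_i}t^{-i+1}v/z\big)\ket{\lambda+\mathbf{1}_i}$, so extracting on the right-hand side $\Phi\,\Delta(x^+(z))$ the coefficient of each $\delta$-function expresses $\Phi_{\lambda+\mathbf{1}_i}$ through $\Phi_\lambda$ composed with a residue in $z$ of $\eta(z)$ evaluated near the point $q^{\lambda_i}t^{-i+1}v$. Iterating this recursion from $\Phi_\emptyset$ determines every $\Phi_\lambda$ explicitly, as $\Phi_\emptyset$ times a normal-ordered product of $\eta$'s at the box contents of $\lambda$ times an explicit scalar built from the $A^+_{\lambda,i}$. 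This already yields uniqueness: any two intertwiners with the prescribed normalization have identical components.

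The remaining, and main, task is existence: one must take the candidate $\Phi$ assembled from this recursion and check that it obeys \emph{all} the defining relations of $\cU$ — not just the $\psi^\pm$ and $x^+$ relations used to construct it, but also the relation involving $x^-(z)$ and the quadratic relations $G^\mp(z/w)x^\pm(z)x^\pm(w)=G^\pm(z/w)x^\pm(w)x^\pm(z)$. Concretely this comes down to (i) the operator-product identities among $\Phi_\emptyset$, $\eta$, $\xi$, $\varphi^\pm$, whose contractions are controlled by the structure functions $g(z)$, $G^\pm(z)$, and (ii) the combinatorial identities satisfied by $A^\pm_{\lambda,i}$, $B^\pm_\lambda$ in Fact~\ref{fact: lv 0,1 mod} — the same identities that make the vertical formulas a genuine $\cU$-action. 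In particular one verifies that the recursion read off from $\Delta(x^-(z))$, in which $x^-(z)$ now \emph{removes} a box, is compatible with the $x^+$-recursion, so that both reconstruct the same $\Phi_\lambda$, and that the quadratic relation holds because the double-residue prescription is symmetric in the two inserted $\eta$'s up to the $G^\pm$ factors. The statement for $\Phi^*$ is entirely parallel: set $\Phi^*_\lambda:=(\mathrm{id}\otimes\bra{\lambda})\Phi^*$, fix $\Phi^*_\emptyset$ from the $\psi^\pm$ part of $\Delta(a)\Phi^*=\Phi^*a$ together with $(\bra{0}\otimes\bra{\emptyset})\Phi^*\ket{0}=1$, and obtain the box-removing recursion from $\Delta(x^-(z))\Phi^*=\Phi^*x^-(z)$; alternatively $\Phi^*$ follows from $\Phi$ by transposition against the bilinear forms on $\cF$ and on $\cF^{(0,1)}$ once one records how $\Delta$ interacts with the relevant anti-automorphism of $\cU$.
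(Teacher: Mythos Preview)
The paper does not prove this statement: it is stated as a Fact with a citation to \cite{AFS2012quantum} and no argument is given. So there is no in-paper proof to compare against. Your outline is essentially the construction carried out in the cited reference: one resolves $\Phi$ into its $\lambda$-components, pins down $\Phi_\emptyset$ from the $\psi^\pm$-part of the intertwining relation together with the normalization, reads off from the $x^+$-part a box-adding recursion that produces every $\Phi_\lambda$ as a normal-ordered product of $\eta$'s over the boxes of $\lambda$ times an explicit scalar, and then checks the remaining relations (for $x^-$ and the quadratic relations) directly from the operator product expansions. That this recursion lands on the explicit formulas recorded just after the Fact (namely $\Phi_\lambda=\hat t(\lambda,u,v,M)\,{:}\Phi_\emptyset(v)\eta_\lambda(v){:}$ and the dual formula for $\Phi^*_\lambda$) is exactly what \cite{AFS2012quantum} shows. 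One small remark: the condition $w=-vu$ is a hypothesis in the statement, not a consequence you need to derive, although you are right that it is forced by matching the $\psi^\pm_0$-eigenvalues on source and target; and for the ``pin down $\Phi_\emptyset$ up to scalar'' step you are implicitly using that a vertex operator on the Heisenberg Fock space with prescribed commutation with all $a_n$ is unique up to a scalar, which is standard.
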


It is known that these intertwining operators can be realized as follows.
\begin{Definition}
For a partition $\lambda$,
define the $\lambda$-component $\Phi_\lambda$ of $\Phi$
\begin{gather*}
\Phi_\lambda\left[(1,M+1),-uv \atop (0,1),v; (1,M) ,u\right]
\colon\ \cF^{(1,M)}_{u} \rightarrow \cF^{(1,M+1)}_{-uv}
\end{gather*}
by
\begin{gather*}
\Phi_\lambda (\alpha)=\Phi(\ket{\lambda} \otimes \alpha )
\qquad \big(\forall\, \alpha \in \cF^{(1,M)}_u\big).
\end{gather*}
Similarly,
define the $\lambda$-component $\Phi^\ast_\lambda$ of $\Phi^\ast$
\begin{gather*}
\Phi^*_{\lambda}\left[ (1,M) ,u;(0,1),v \atop (1,M+1),-vu\right]
\colon\ \cF^{(1,M+1)}_{-uv} \rightarrow \cF^{(1,M)}_u
\end{gather*}
by
\begin{gather*}
\Phi^*( \alpha )
=\sum_{\lambda\in \parset} \Phi^*_\lambda (\alpha)\otimes \ket{\lambda}
\qquad \big(\forall\, \alpha \in \cF^{(1,M+1)}_{-uv}\big).
\end{gather*}
\end{Definition}

\begin{notation}
For $\lambda \in \parset$,
set
\begin{gather*}
n(\lambda)=\sum_{i\geq 1}(i-1)\lambda_i, \qquad
f_{\lambda}=(-1)^{|\lambda|}q^{n(\lambda')+|\lambda|/2}t^{-n(\lambda)-|\lambda|/2},
\\
c_{\lambda}:= \prod_{(i,j)\in \lambda}\big(1-q^{a_{\lambda}(i,j)}t^{\ell_{\lambda}(i,j)+1}\big), \qquad
c'_{\lambda}:= \prod_{(i,j)\in \lambda}\big(1-q^{a_{\lambda}(i,j)+1}t^{\ell_{\lambda}(i,j)}\big).
\end{gather*}
\end{notation}

\begin{fact}
[\cite{AFS2012quantum}]
$\Phi_{\lambda}$ is of the form
\begin{gather*}
\Phi_\lambda\left[ (1,M+1),-vu \atop (0,1),v; (1,M) ,u\right] = \hat{t}(\lambda,u,v,M) \widehat{\Phi}_\lambda(v) ,
\end{gather*}
where
\begin{gather*}
\hat t(\lambda,u,v,M)=(-vu)^{|\lambda|} (-v)^{-(M+1)|\lambda|}
f_\lambda^{-M-1}q^{n(\lambda')}/ c_\lambda ,
\\
\widehat{\Phi}_\lambda(v) = {:}\Phi_{\emptyset}(v) \eta_\lambda(v){:},
\\
\Phi_{\emptyset}(v) = \exp \bigg(
 {-}\sum_{n=1}^{\infty} \dfrac{1}{n}\dfrac{1}{1-q^n} a_{-n}v^n \bigg)
 \exp\bigg({-} \sum_{n=1}^{\infty} \dfrac{1}{n}\dfrac{q^n}{1-q^{n}} a_{n}v^{-n} \bigg),
 \\
\eta_\lambda(v)={:}\prod_{i=1}^{\ell(\lambda)}\prod_{j=1}^{\lambda_i}\eta(q^{j-1}t^{-i+1} v){:}.
\end{gather*}
The symbol ${:}\cdots{:}$ means the usual normal ordering product.
Similarly, $\Phi^*_{\lambda}$ is of the form
\begin{gather*}
\Phi^*_\lambda\left[ (1,M) ,v;(0,1),u \atop (1,M+1),-vu\right] = \hat{t}^*(\lambda,u,v,M) \wPhi^*_\lambda(u) ,
\end{gather*}
where\footnote{
Note that we modify the normalization of $\Phi^*_\lambda$
from the previous paper \cite{FOS2019Generalized}
by $c_{\lambda}$ and $c'_{\lambda}$. }
\begin{gather*}
\hat{t}^*(\lambda,u,v,M)=(q^{-1} v)^{-|\lambda|} (-u)^{M|\lambda|}
f_\lambda^{M}q^{n(\lambda')}/ c'_\lambda ,
\\
\wPhi^*_\lambda (u) ={:}\Phi^*_{\emptyset}(u) \xi_\lambda(u){:},
\\
\Phi^*_{\emptyset}(u) =\exp \bigg(\sum_{n=1}^{\infty} \dfrac{1}{n}\dfrac{1}{1-q^n} q^{-n/2}t^{n/2}a_{-n}u^n\bigg)
\exp\bigg(\sum_{n=1}^{\infty} \dfrac{1}{n}\dfrac{q^n}{1-q^{n}} q^{-n/2}t^{n/2}a_{n}u^{-n}\bigg),
\\
\xi_\lambda(u)={:}\prod_{i=1}^{\ell(\lambda)}\prod_{j=1}^{\lambda_i}\xi(q^{j-1}t^{-i+1} u){:}.
\end{gather*}
\end{fact}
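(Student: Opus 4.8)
The plan is to peel off the $\bra{\lambda}$-components of the intertwining relation $a\Phi=\Phi\Delta(a)$ of Fact~\ref{fact: intertwiner} on the generating currents of $\cU$, and to solve the resulting recursion in $\lambda$. First I would take $a=x^+(z)$ and apply both sides to $\ket{\lambda}\otimes\alpha$ with $\alpha\in\cF^{(1,M)}_u$. Since $c^{1/2}$ acts as $1$ on $\cF^{(0,1)}_v$, the Drinfeld coproduct collapses on this tensor factor to $x^+(z)\otimes 1+\psi^-(z)\otimes x^+(z)$, so by the vertical action of Fact~\ref{fact: lv 0,1 mod} the right-hand side becomes $\sum_i A^+_{\lambda,i}\,\delta(q^{\lambda_i}t^{-i+1}v/z)\,\Phi_{\lambda+{\bf 1}_i}(\alpha)+q^{-1/2}t^{1/2}B^-_\lambda(z/v)\,\Phi_\lambda(\rho_u(x^+(z))\alpha)$, while the left-hand side is $\rho_w(x^+(z))\,\Phi_\lambda(\alpha)$, where $\rho_w$ is the horizontal representation of Fact~\ref{fact: lv 1,M mod} at parameter $w=-vu$. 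Because $\rho_u(x^+(z))$ and $\rho_w(x^+(z))$ are both $\eta(z)$ up to a monomial prefactor, moving the $\psi^-$-term to the left and comparing residues at $z=q^{\lambda_i}t^{-i+1}v$ (the content of the box added to $\lambda$ in row $i$) produces a recursion $\Phi_{\lambda+{\bf 1}_i}\propto{:}\eta(q^{\lambda_i}t^{-i+1}v)\Phi_\lambda{:}$, with an explicit proportionality scalar built from $A^+_{\lambda,i}$, the residue of $B^-_\lambda$, and the monomial prefactors of $\rho_u,\rho_w$.

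Next I would iterate this recursion from $\lambda=\emptyset$; it gives $\Phi_\lambda=\hat t(\lambda,u,v,M)\,{:}\Phi_\emptyset(v)\eta_\lambda(v){:}$ with $\eta_\lambda(v)={:}\prod_{(i,j)\in\lambda}\eta(q^{j-1}t^{-i+1}v){:}$. One must check that the result is independent of the order in which the boxes of $\lambda$ are filled; this follows from compatibility of the relation $G^{\mp}(z_1/z_2)x^+(z_1)x^+(z_2)=G^{\pm}(z_1/z_2)x^+(z_2)x^+(z_1)$ with the normal-ordering function of $\eta$. The vacuum operator $\Phi_\emptyset(v)$ is then pinned down by the $\lambda=\emptyset$ cases of the defining relations (the support condition in the $x^\pm$-relation fixes its normal-ordering structure, the $\psi^\pm$-relations fix the remaining functional freedom) together with the normalization $\bra{0}\Phi(\ket{\emptyset}\otimes\ket{0})=1$, which forces exactly the vertex operator displayed in the statement. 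That the object so constructed intertwines all of $\cU$ (not just $x^+$) is automatic from the uniqueness in Fact~\ref{fact: intertwiner}, once one checks the $x^-$ and $\psi^\pm$ relations, which is a symmetric computation.

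Finally I would compute $\hat t(\lambda,u,v,M)$ by accumulating, box by box along the recursion, three kinds of contributions: (i)~the rational factors $A^+_{\lambda,i}$ and the $B^-_\lambda$-residues; (ii)~the powers of $q$, $t$ and $w=-vu$ from the prefactors ($w z^{-(M+1)}q^{-(M+1)/2}t^{(M+1)/2}$ for $\rho_w$ versus $u z^{-M}q^{-M/2}t^{M/2}$ for $\rho_u$), which is where the exponent $(M+1)|\lambda|$ and the framing factor $f_\lambda^{-M-1}$ originate; and (iii)~the $c$-number normal-ordering constants produced when ${:}\eta\cdots\eta{:}$ is rewritten as a single normal-ordered exponential, each pair $\eta(q^at^bv)$, $\eta(q^{a'}t^{b'}v)$ contributing a fixed rational function of $q^{a-a'}t^{b-b'}$. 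Standard arm/leg (hook-length) identities then convert the resulting products over $(i,j)\in\lambda$ into $q^{n(\lambda')}$, $c_\lambda=\prod_{(i,j)\in\lambda}(1-q^{a_\lambda(i,j)}t^{\ell_\lambda(i,j)+1})$ and the advertised powers of $-vu$ and $-v$, yielding $\hat t(\lambda,u,v,M)=(-vu)^{|\lambda|}(-v)^{-(M+1)|\lambda|}f_\lambda^{-M-1}q^{n(\lambda')}/c_\lambda$. The statement for $\Phi^*_\lambda$ comes from the mirror-image computation: run $\Delta(a)\Phi^*=\Phi^*a$ with $x^-$, $\xi$ in the roles of $x^+$, $\eta$ and the dual half of the coproduct, producing ${:}\Phi^*_\emptyset(u)\xi_\lambda(u){:}$ times an analogous scalar; the appearance of $c'_\lambda$ rather than $c_\lambda$ in $\hat t^*$ is precisely the renormalization flagged in the footnote. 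The hard part will be step (iii) together with the matching in (i)--(ii): each ingredient is elementary, but the sign and $q,t$-power bookkeeping is delicate and must be closed using the standard identities for $n(\lambda)$, $n(\lambda')$, $c_\lambda$ and $c'_\lambda$.
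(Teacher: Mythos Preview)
The paper does not prove this statement; it is recorded as a \emph{Fact} quoted from \cite{AFS2012quantum} and used as input. There is therefore no ``paper's own proof'' to compare against. That said, your sketch is the standard derivation from \cite{AFS2012quantum}: extract the $\ket{\lambda}$-component of the intertwining relation for $x^+(z)$, use the delta-function support of the vertical action to obtain a box-adding recursion $\Phi_{\lambda+\mathbf{1}_i}\propto{:}\eta(q^{\lambda_i}t^{-i+1}v)\,\Phi_\lambda{:}$, fix $\Phi_\emptyset$ from the $\lambda=\emptyset$ relations plus the normalization, and collect the scalars into $\hat t$. The outline is correct, and the mirror argument for $\Phi^*_\lambda$ with $x^-$ and $\xi$ is likewise the standard one.
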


\begin{figure}[h]\centering
\includegraphics{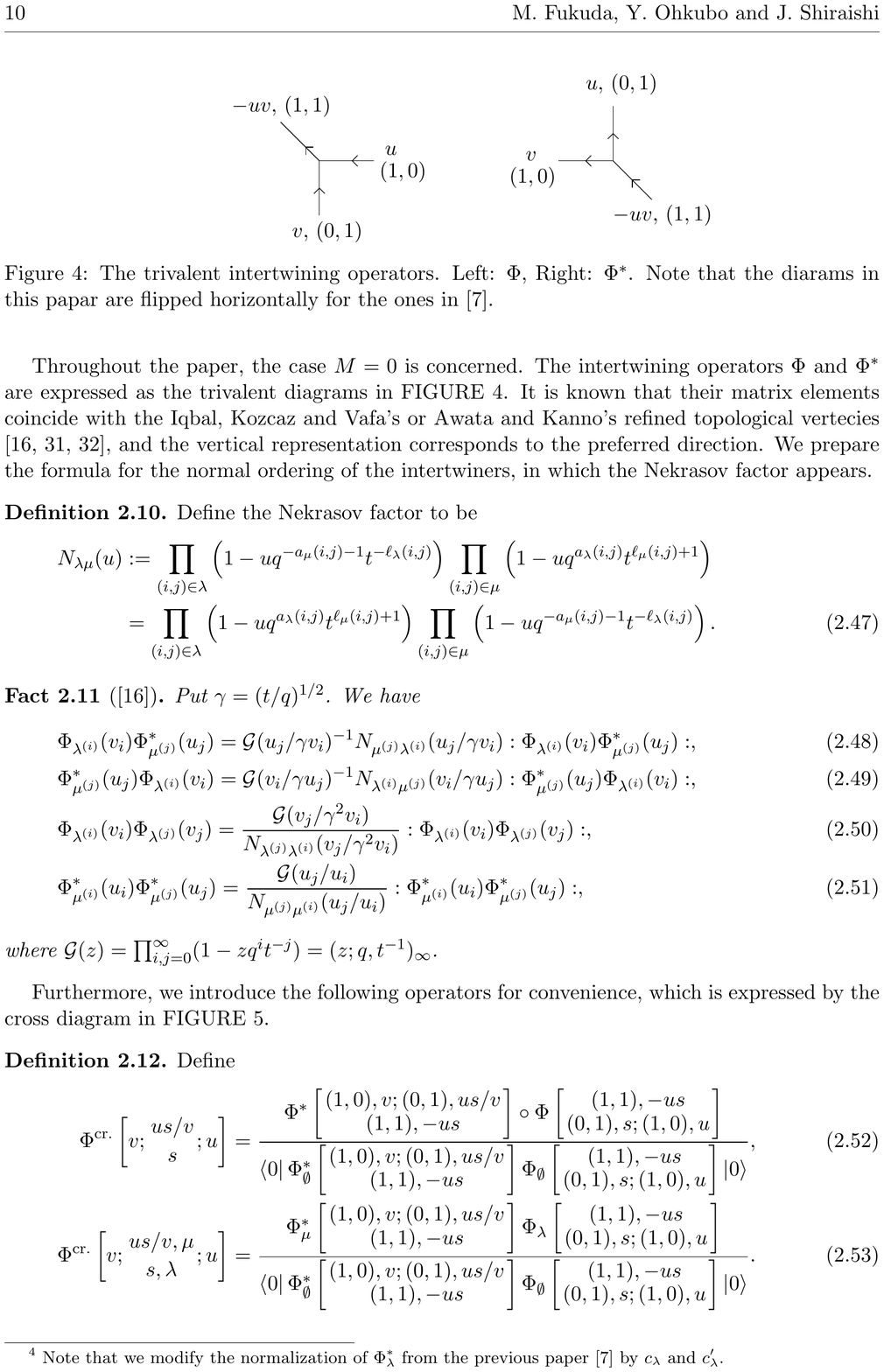}
\caption{The trivalent intertwining operators. Left: $\Phi$, Right: $\Phi^*$.
Note that the diagrams in this papar are flipped horizontally for the ones in \cite{FOS2019Generalized}.}
\label{fig: trivalent}
\end{figure}

Throughout the paper,
the case $M=0$ is concerned.
The intertwining operators $\Phi$ and $\Phi^*$
are expressed as the trivalent diagrams in Fig.~\ref{fig: trivalent}.
It is known that
their matrix elements coincide with the Iqbal, Kozcaz and Vafa's or Awata and Kanno's
refined topological vertecies \cite{AFS2012quantum,AK2008Refined,IKV2007Refined},
and the vertical representation corresponds to the preferred direction.
We prepare the formula for the normal ordering of the intertwiners,
in which the Nekrasov factor appears.

\begin{Definition}
Define the Nekrasov factor to be
\begin{gather*}
N_{\lambda \mu}(u):=
\prod_{(i,j)\in \lambda} \big( 1- u q^{-a_{\mu}(i,j)-1}t^{-\ell_{\lambda}(i,j)} \big) \prod_{(i,j)\in \mu} \big( 1- u q^{a_{\lambda}(i,j)} t^{\ell_{\mu}(i,j)+1} \big)
\\ \hphantom{N_{\lambda \mu}(u):\!}
{}= \prod_{(i,j)\in \lambda} \big( 1- u q^{a_{\lambda}(i,j)}t^{\ell_{\mu}(i,j)+1} \big) \prod_{(i,j)\in \mu} \big( 1- u q^{-a_{\mu}(i,j)-1} t^{-\ell_{\lambda}(i,j)} \big).
\end{gather*}
\end{Definition}

\begin{fact}[\cite{AFS2012quantum}]
\label{fact: intertwiner OPE}
Put $\gamma = (t/q)^{1/2}$.
We have
\begin{gather*}
\Phi_{\lambda^{(i)}}(v_i) \Phi^*_{\mu^{(j)}}(u_j) = \mathcal{G}(u_j/\gamma v_i)^{-1} N_{\mu^{(j)} \lambda^{(i)}}(u_j/\gamma v_i){:}\Phi_{\lambda^{(i)}}(v_i) \Phi^*_{\mu^{(j)}}(u_j){:},
\\
\Phi^*_{\mu^{(j)}}(u_j)\Phi_{\lambda^{(i)}}(v_i)
= \mathcal{G}(v_i/\gamma u_j)^{-1} N_{ \lambda^{(i)} \mu^{(j)}}(v_i/\gamma u_j){:}\Phi^*_{\mu^{(j)}}(u_j)\Phi_{\lambda^{(i)}}(v_i){:},
\\
\Phi_{\lambda^{(i)}}(v_i) \Phi_{\lambda^{(j)}}(v_j)
= \frac{\mathcal{G}(v_j/\gamma^2 v_i)}{N_{\lambda^{(j)}\lambda^{(i)}}(v_j/\gamma^2 v_i)}{:}\Phi_{\lambda^{(i)}}(v_i) \Phi_{\lambda^{(j)}}(v_j){:},
\\
\Phi^*_{\mu^{(i)}}(u_i) \Phi^*_{\mu^{(j)}}(u_j)
= \frac{\mathcal{G}(u_j/u_i)}{N_{\mu^{(j)}\mu^{(i)}}(u_j/u_i)}{:}\Phi^*_{\mu^{(i)}}(u_i) \Phi^*_{\mu^{(j)}}(u_j){:},
\end{gather*}
where $\mathcal{G}(z) = \prod_{i,j=0}^\infty \big(1 - z q^i t^{-j}\big) = \big(z;q,t^{-1}\big)_\infty$.
\end{fact}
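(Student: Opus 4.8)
The plan is to prove all four relations by a direct normal-ordering (Wick) computation in the Heisenberg algebra $\mathcal{H}$, using the explicit bosonic formulas for $\widehat{\Phi}_\lambda$ and $\widehat{\Phi}^*_\mu$ recalled above. Since $\Phi_{\lambda^{(i)}}(v_i)$ and $\Phi^*_{\mu^{(j)}}(u_j)$ differ from $\widehat{\Phi}_{\lambda^{(i)}}(v_i)$ and $\widehat{\Phi}^*_{\mu^{(j)}}(u_j)$ only by the scalar prefactors $\hat{t},\hat{t}^*$, and these prefactors multiply both a product and its normal-ordered form and hence cancel in each of the four claimed ratios, it suffices to compute the ratios for the hatted operators. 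Each hatted operator is a normal-ordered exponential linear in the modes $a_n$, so for any two of them $A,B$ one has $AB=\mathrm{e}^{C}\,{:}AB{:}$, where $C$ is the sum of the commutators of the annihilation part of $A$ with the creation part of $B$, all obtained from $[a_n,a_{-n}]=n\frac{1-q^n}{1-t^n}$.

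First I would reduce $C$ to elementary contractions. With $\widehat{\Phi}_\lambda(v)={:}\Phi_{\emptyset}(v)\prod_{i=1}^{\ell(\lambda)}\prod_{j=1}^{\lambda_i}\eta\big(q^{j-1}t^{-i+1}v\big){:}$ and $\widehat{\Phi}^*_\mu(u)={:}\Phi^*_{\emptyset}(u)\prod_{k=1}^{\ell(\mu)}\prod_{l=1}^{\mu_k}\xi\big(q^{l-1}t^{-k+1}u\big){:}$, the exponent for $\widehat{\Phi}_\lambda(v)\widehat{\Phi}^*_\mu(u)$ splits into one $\Phi_{\emptyset}$--$\Phi^*_{\emptyset}$ contraction, one $\Phi_{\emptyset}$--$\xi$ contraction per box of $\mu$, one $\eta$--$\Phi^*_{\emptyset}$ contraction per box of $\lambda$, and one $\eta$--$\xi$ contraction per pair of boxes of $\lambda\times\mu$. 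Each is a one-line geometric summation; one finds $\Phi_{\emptyset}(v)\Phi^*_{\emptyset}(u)=\mathcal{G}(u/\gamma v)^{-1}\,{:}\Phi_{\emptyset}(v)\Phi^*_{\emptyset}(u){:}$, $\eta\big(q^{j-1}t^{-i+1}v\big)\Phi^*_{\emptyset}(u)=\big(1-\gamma u q^{1-j}t^{i-1}/v\big)\,{:}\cdots{:}$, $\eta(z)\xi(w)=\frac{(1-q\gamma w/z)(1-t^{-1}\gamma w/z)}{(1-\gamma w/z)(1-q t^{-1}\gamma w/z)}\,{:}\eta(z)\xi(w){:}$, and the analogous $\Phi_{\emptyset}$--$\xi$ ratio. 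Writing $\chi_{(i,j)}=q^{j-1}t^{-i+1}$ for the content of a box, substituting and multiplying over boxes then expresses $\mathrm{e}^{C}$ as $\mathcal{G}(u/\gamma v)^{-1}$ times an explicit product of factors $(1-\text{monomial})^{\pm1}$ labelled by the boxes of $\lambda$, of $\mu$, and of $\lambda\times\mu$.

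The remaining, and central, step is the purely combinatorial identity that this product over boxes equals $N_{\mu\lambda}(u/\gamma v)$. I would prove it by induction on $|\lambda|+|\mu|$. The base case $\lambda=\mu=\emptyset$ is the $\Phi_{\emptyset}$--$\Phi^*_{\emptyset}$ contraction computed above, together with $N_{\emptyset\emptyset}=1$. For the inductive step, adding one addable box $b$ to $\mu$ multiplies the box-product by the contraction factor of $\Phi_{\emptyset}(v)$ with $\xi(\chi_b u)$ times those of each $\eta(\chi_s v)$ (over $s\in\lambda$) with $\xi(\chi_b u)$; the doubly-infinite tail of this factor telescopes to a finite product, which must coincide with the ratio of the Nekrasov factor of the pair obtained by adding $b$ to $\mu$ to $N_{\mu\lambda}$. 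This coincidence is precisely the arm/leg recursion underlying the equality of the two product expressions in the definition of $N_{\lambda\mu}$, specialized to $b$; adding a box to $\lambda$ is symmetric, and uses the other of the two expressions. The remaining three relations $\Phi^*\Phi$, $\Phi\Phi$, $\Phi^*\Phi^*$ follow by the same scheme, the only change being which exponential supplies the annihilation versus creation modes---so that the $\eta$--$\eta$ and $\xi$--$\xi$ contractions come out as the reciprocals of the $\eta$--$\xi$ one---which yields the reciprocal placement $\mathcal{G}/N$ in place of $\mathcal{G}^{-1}N$ together with the spectral shifts $\gamma^{\pm2}$, $v_j/v_i$, $u_j/u_i$ seen in the statement.

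The hard part will be the bookkeeping in the inductive step: pinning down exactly which of the infinitely many factors produced by the $\Phi_{\emptyset}$--$\Phi^*_{\emptyset}$ and $\Phi_{\emptyset}$--$\xi$ contractions cancel against factors coming from the cross product over $\lambda\times\mu$, so that only the finitely many arm/leg factors survive. A convenient device is to collapse each one-row or one-column subproduct of geometric factors into a ratio of $q$-shifted factorials via $\prod_{j=1}^{m}\big(1-aq^{j-1}\big)=(a;q)_m$, after which the telescoping against the infinite products in $\mathcal{G}$ becomes a routine rearrangement and the final identity reduces to the standard hook/content rewriting of the Nekrasov factor. It is also worth isolating as a short lemma the equality of the two product expressions defining $N_{\lambda\mu}$---a box-by-box comparison of arm and leg lengths in $\lambda$ and in $\mu$---since this symmetry is exactly what keeps the two inductive steps mutually consistent.
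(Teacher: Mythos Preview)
The paper does not give a proof of this statement: it is recorded as a Fact with a citation to \cite{AFS2012quantum}, so there is no ``paper's own proof'' to compare against. Your proposal is the standard route and is correct in outline. The reduction to the hatted operators, the four elementary contractions $\Phi_\emptyset$--$\Phi^*_\emptyset$, $\Phi_\emptyset$--$\xi$, $\eta$--$\Phi^*_\emptyset$, $\eta$--$\xi$, and the observation that the $\Phi\Phi$ and $\Phi^*\Phi^*$ cases differ only by inverting the $\eta$--$\eta$, $\xi$--$\xi$ OPE factors are all exactly right; your $\eta$--$\xi$ contraction and the identification $\mathrm{e}^{C_{\emptyset\emptyset}}=\mathcal{G}(u/\gamma v)^{-1}$ check out.

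One remark on the ``hard part''. Induction on $|\lambda|+|\mu|$ works, but the bookkeeping you anticipate can be bypassed by a closed-form rewriting: collapsing the $\eta$--$\Phi^*_\emptyset$ and $\Phi_\emptyset$--$\xi$ factors into $q$-factorials and then telescoping the $\eta$--$\xi$ cross product column-by-column gives directly
\[
\prod_{(i,j)\in\lambda}\big(1-u\,q^{a_\lambda(i,j)}t^{\ell_\mu(i,j)+1}/\gamma v\big)
\prod_{(i,j)\in\mu}\big(1-u\,q^{-a_\mu(i,j)-1}t^{-\ell_\lambda(i,j)}/\gamma v\big),
\]
which is one of the two defining expressions for $N_{\mu\lambda}(u/\gamma v)$. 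This avoids having to invoke the equivalence of the two arm/leg expressions for $N_{\lambda\mu}$ as a separate lemma (though proving that equivalence, as you suggest, is also a clean way to organize the argument). Either way, the combinatorial identity you isolate is indeed the crux, and your plan leads to a complete proof.
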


Furthermore, we introduce the following operators for convenience,
which is expressed by the cross diagram in Fig.~\ref{fig: cross op.}.

\begin{figure}[h]\centering
\includegraphics{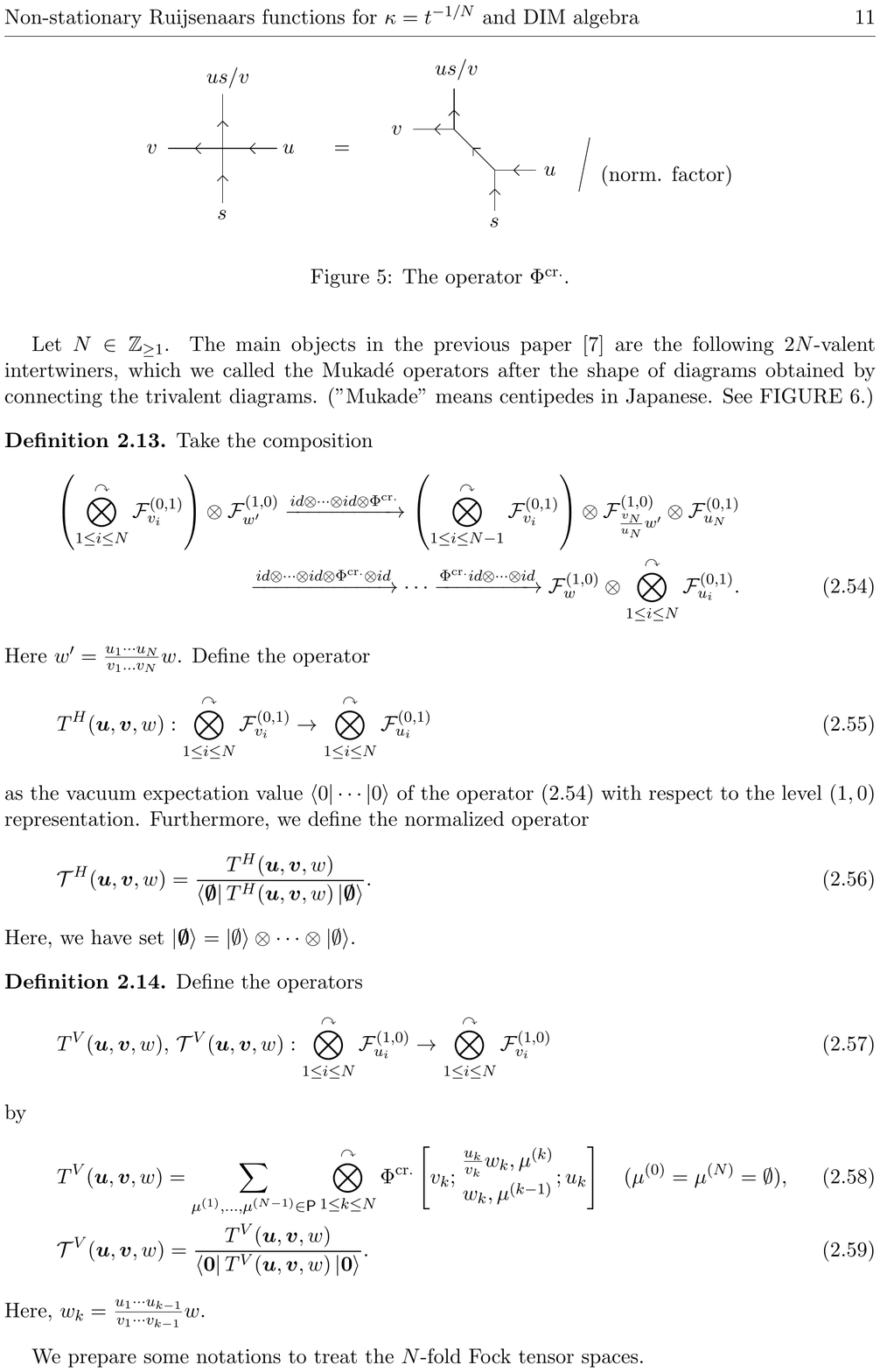}
\caption{The operator $\Phicross$.}\label{fig: cross op.}
\end{figure}

\begin{Definition}
Define
\begin{gather*}
\Phicross \left[ v ;{ us/v\atop s} ;u \right]
=
\dfrac{
\Phi^* \left[ \displaystyle (1,0) ,v ;(0,1),us/v \atop \displaystyle (1,1), -us\right]
\circ
\Phi\left[ \displaystyle (1,1),-us \atop \displaystyle (0,1),s ; (1,0) , u\right]}
{\bra{0}\Phi^*_{\emptyset} \left[ \displaystyle (1,0) ,v ;(0,1),us/v \atop\displaystyle (1,1), -us\right]
\Phi_{\emptyset}\left[ \displaystyle (1,1),-us \atop\displaystyle (0,1),s ; (1,0) , u\right]\ket{0}},
\\
\Phicross \left[ v ;{ us/v, \mu \atop s, \lambda} ;u \right]
=
\dfrac{
\Phi^*_{\mu} \left[ \displaystyle (1,0) ,v ;(0,1),us/v \atop \displaystyle (1,1), -us\right]
\Phi_{\lambda}\left[ \displaystyle (1,1),-us \atop \displaystyle (0,1),s ; (1,0) , u\right]}
{\bra{0}\Phi^*_{\emptyset} \left[ \displaystyle (1,0) ,v ;(0,1),us/v \atop\displaystyle (1,1), -us\right]
\Phi_{\emptyset}\left[ \displaystyle (1,1),-us \atop\displaystyle (0,1),s ; (1,0) , u\right]\ket{0}
}.
\end{gather*}
\end{Definition}

Let $N \in \mathbb{Z}_{\geq 1}$.
The main objects in the previous paper \cite{FOS2019Generalized}
are the following $2N$-valent intertwiners,
which we called the Mukad\'{e} operators after the shape of diagrams
obtained by connecting the trivalent diagrams.
(``Mukade" means centipedes in Japanese. See Fig.~\ref{fig: duality}.)

\begin{figure}[h]
\centering
 \includegraphics[width=14cm]{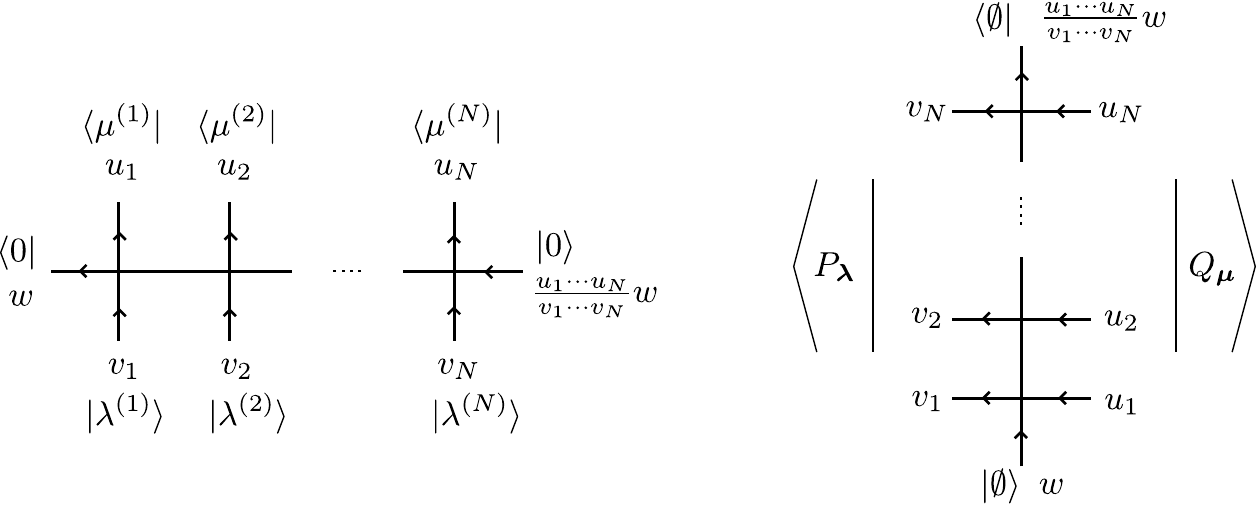}
	\caption{The $S$-duality formula.}	\label{fig: duality}
\end{figure}

\begin{Definition}
Take the composition
\begin{gather}\label{eq: comp. for PhiH}
\bigg( \bigotimes_{1\leq i \leq N}^{\curvearrowright} \cF^{(0,1)}_{v_i} \bigg)\otimes
\cF^{(1,0)}_{w'}\xrightarrow{{\rm id}\otimes \cdots \otimes {\rm id} \otimes \Phicross}
\bigg(\bigotimes_{1\leq i \leq N-1}^{\curvearrowright} \cF^{(0,1)}_{v_i}\bigg)\otimes
\cF^{(1,0)}_{\frac{v_N}{u_N}w'} \otimes \cF^{(0,1)}_{u_N} \nonumber
\\ \hphantom{\bigg( \bigotimes_{1\leq i \leq N}^{\curvearrowright} \cF^{(0,1)}_{v_i} \bigg)\otimes }
\xrightarrow{{\rm id}\otimes \cdots \otimes {\rm id} \otimes \Phicross \otimes {\rm id}}
\cdots \xrightarrow{\Phicross {\rm id}\otimes \cdots \otimes {\rm id}}
\cF^{(1,0)}_{w}\otimes \bigotimes_{1\leq i\leq N}^{\curvearrowright} \cF^{(0,1)}_{u_i}.
\end{gather}
Here $w'=\frac{u_1\cdots u_N}{v_1 \cdots v_N}w$.
Define the operator
\begin{gather*}
T^{H}(\vu, \vv,w)\colon\
\bigotimes_{1\leq i \leq N}^{\curvearrowright} \cF^{(0,1)}_{v_i}\rightarrow
\bigotimes_{1\leq i \leq N}^{\curvearrowright} \cF^{(0,1)}_{u_i}
\end{gather*}
as the vacuum expectation value $\bra{0} \cdots \ket{0}$
of the operator $(\ref{eq: comp. for PhiH})$
with respect to the level $(1,0)$ representation.
Furthermore, we define the normalized operator
\begin{gather*}
\mathcal{T}^{H}(\vu, \vv,w)=\frac{T^{H}(\vu, \vv,w)}
{\bra{\boldsymbol{\emptyset}}T^{H}(\vu, \vv,w) \ket{\boldsymbol{\emptyset}}}.
\end{gather*}
Here, we have set $\ket{\boldsymbol{\emptyset}}=\ket{\emptyset} \otimes \cdots \otimes \ket{\emptyset}$.
\end{Definition}

\begin{Definition}
Define the operators
\begin{gather*}
T^{V}(\vu,\vv,w), \mathcal{T}^{V}(\vu,\vv,w)\colon \
\bigotimes_{1\leq i\leq N}^{\curvearrowright} \cF^{(1,0)}_{u_i}\rightarrow
\bigotimes_{1\leq i\leq N}^{\curvearrowright} \cF^{(1,0)}_{v_i}
\end{gather*}
by
\begin{gather*}
T^{V}(\vu,\vv,w)=
\sum_{\mu^{(1)}, \ldots, \mu^{(N-1)} \in \parset}
\bigotimes^{\curvearrowright}_{1\leq k \leq N}
\Phicross \left[ v_k ;{ \frac{u_k}{v_k}w_k, \mu^{(k)} \atop w_k, \mu^{(k-1)}} ;u_k \right] \qquad \big(\mu^{(0)}=\mu^{(N)}=\emptyset\big),
\\
\mathcal{T}^{V}(\vu,\vv,w)=\frac{T^{V}(\vu,\vv,w)}{\brazero T^{V}(\vu,\vv,w) \ketzero}.
\end{gather*}
Here, $w_k=\frac{u_1\cdots u_{k-1}}{v_1\cdots v_{k-1}}w$.
\end{Definition}

We prepare some notations to treat the $N$-fold Fock tensor spaces.

\begin{notation}
We write
\begin{gather*}
\cF_{\vu}^{(N,0)}=\bigotimes_{1\leq i\leq N}^{\curvearrowright} \cF^{(1,0)}_{u_i}, \qquad
\cF_{\vu}^{(N,0)*}=\bigotimes_{1\leq i\leq N}^{\curvearrowright} \cF^{(1,0)*}_{u_i},
\\
\cF_{\vu}^{(0,N)}=\bigotimes_{1\leq i\leq N}^{\curvearrowright} \cF^{(0,1)}_{u_i} \qquad
(\vu=(u_1,\ldots, u_N)),
\\
a^{(i)}_n=\overbrace{1\otimes \cdots \otimes 1}^{i-1}\otimes
a_n\otimes\overbrace{ 1 \otimes \dots \otimes 1}^{N-i}.
\end{gather*}
For $\vl=\big(\lo,\ldots,\lN\big)\in \parset^N$, set
\begin{gather*}
\ket{\vl}=\bigotimes_{1\leq i \leq N}^{\curvearrowright} \ket{\lambda^{(i)}}, \qquad
\bra{\vl}=\bigotimes_{1\leq i \leq N}^{\curvearrowright} \bra{\lambda^{(i)}},
\\
\ket{a_{\vl}}=a^{(1)}_{-\lo_1}a^{(1)}_{-\lo_2}\cdots a^{(2)}_{-\lt_1}a^{(2)}_{-\lt_2}
\cdots a^{(N)}_{-\lN_1}a^{(N)}_{-\lN_2}\ketzero,
\\
\bra{a_{\vl}}=\brazero\cdots a^{(N)}_{\lN_2}a^{(N)}_{\lN_1}
\cdots a^{(2)}_{\lt_2}a^{(2)}_{\lt_1}\cdots a^{(1)}_{\lo_2} a^{(1)}_{\lo_1}.
\end{gather*}

\end{notation}

In \cite{FOS2019Generalized},
the $S$-duality formula for the matrix elements
of $\mathcal{T}^{V}$ and $\mathcal{T}^{H}$
is proved.
Recall that the matrix elements of $\mathcal{T}^{H}$
with respect to
the basis $( \ket{\lo}\otimes \cdots \otimes \ket{\lN} )$
can be easily calculated by operator products.
On the other hand, the basis on $\cF^{(N,0)}_{\vu}$
which corresponds to $\ket{\lo}\otimes \cdots \otimes \ket{\lN}$
is defied as the eigenfunctions of the operator $X^{(1)}_0$ given as follows.
\begin{Definition}
Define the operator $\Xo(z)=\sum_{n \in \mathbb{Z}} \Xo_n z^{-n}
\in \mathrm{End}\big(\cF^{(N,0)}_{\vu}\big)$ by
\begin{gather*}
\Xo(z) =(\rho_{u_1}\otimes\rho_{u_2}\otimes\cdots \otimes\rho_{u_N})
\circ\Delta^{(N)}(x^+(z)).
\end{gather*}
Here,
\begin{gather*}
\Delta^{(1)}:={\rm id}, \qquad
\Delta^{(N)}:=(\underbrace{{\rm id} \otimes \cdots \otimes{\rm id}}_{N-2} \otimes \Delta)
\circ \cdots \circ({\rm id} \otimes \Delta) \circ \Delta \qquad
(N\geq 2).
\end{gather*}
\end{Definition}

\begin{Definition}\label{Def_X^k}
For $k = 1,2,\dots,N$, set
\begin{gather*}
\Lambda^{(i)}(z) := \varphi^-(\gamma^{1/2}z)\otimes \cdots \otimes\varphi^-(\gamma^{i-3/2}z)\otimes\overbrace{\eta(\gamma^{i-1} z)}^{i\text{-th Fock space}}\otimes 1\otimes\cdots\otimes1 .
\end{gather*}
\end{Definition}

\begin{fact}
On $\cF^{(N,0)}_{\vu}$, we get
\begin{gather*}
\Xo(z)=\sum_{i=1}^N u_i \Lambda^{(i)}(z).
\end{gather*}
\end{fact}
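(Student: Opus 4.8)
The plan is to compute the iterated coproduct $\Delta^{(N)}\big(x^+(z)\big)$ inside $\cU^{\otimes N}$ explicitly and then push it through the horizontal representation $\rho_{u_1}\otimes\cdots\otimes\rho_{u_N}$ with $M=0$. Write $c^{(l)}$ for the central element $c$ placed in the $l$-th tensor slot; the $c^{(l)}$ are central and mutually commute. The inductive claim, to be proved by induction on $N$, is
\[
\Delta^{(N)}\big(x^+(z)\big)=\sum_{i=1}^{N}\Bigg(\bigotimes_{j=1}^{i-1}\psi^-\!\bigg(\Big(\prod_{l=1}^{j-1}c^{(l)}\Big)\big(c^{(j)}\big)^{1/2}z\bigg)\Bigg)\otimes x^+\!\bigg(\Big(\prod_{l=1}^{i-1}c^{(l)}\Big)z\bigg)\otimes\underbrace{1\otimes\cdots\otimes1}_{N-i},
\]
where empty products are $1$ and empty tensor factors are omitted.

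For $N=1$ there is nothing to prove, since $\Delta^{(1)}=\mathrm{id}$. For the inductive step one uses $\Delta^{(N)}=\big(\mathrm{id}^{\otimes(N-2)}\otimes\Delta\big)\circ\Delta^{(N-1)}$, which is immediate from the definition of $\Delta^{(N)}$; note that no coassociativity is needed because the coproduct is always inserted in the last slot. Applying $\mathrm{id}^{\otimes(N-2)}\otimes\Delta$ to the $(N-1)$-version of the displayed formula, every summand with $i\le N-2$ carries a $1$ in its last slot and merely picks up one more trivial slot, reproducing the corresponding summand for $N$; the summand with $i=N-1$ has $x^+\big(\big(\prod_{l\le N-2}c^{(l)}\big)z\big)$ in the last slot, and inserting this into $\Delta\big(x^+(w)\big)=x^+(w)\otimes 1+\psi^-\big(c_{(1)}^{1/2}w\big)\otimes x^+(c_{(1)}w)$ (here $c_{(1)}=c^{(N-1)}$) yields precisely the $i=N-1$ and $i=N$ summands for $N$, using once more that the $c^{(l)}$ commute.

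It then remains to evaluate the right-hand side under $\rho_{u_1}\otimes\cdots\otimes\rho_{u_N}$ with $M=0$. By Fact~\ref{fact: lv 1,M mod}, in the $i$-th factor one has $c^{1/2}\mapsto(t/q)^{1/4}=\gamma^{1/2}$, hence $c\mapsto\gamma$, together with $x^+(w)\mapsto u_i\,\eta(w)$ and $\psi^-(w)\mapsto\varphi^-(w)$ (both using $M=0$). In the $i$-th summand the scalar $\big(\prod_{l=1}^{j-1}c^{(l)}\big)\big(c^{(j)}\big)^{1/2}$ becomes $\gamma^{j-1}\gamma^{1/2}=\gamma^{j-1/2}$ and $\prod_{l=1}^{i-1}c^{(l)}$ becomes $\gamma^{i-1}$, so, extracting the scalar $u_i$ from the $i$-th factor, the image of the $i$-th summand is
\[
u_i\Big(\varphi^-\big(\gamma^{1/2}z\big)\otimes\cdots\otimes\varphi^-\big(\gamma^{i-3/2}z\big)\otimes\eta\big(\gamma^{i-1}z\big)\otimes1\otimes\cdots\otimes1\Big)=u_i\,\Lambda^{(i)}(z)
\]
by Definition~\ref{Def_X^k}. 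Summing over $i$ and recalling $\Xo(z)=(\rho_{u_1}\otimes\cdots\otimes\rho_{u_N})\circ\Delta^{(N)}\big(x^+(z)\big)$ gives the assertion.

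The computation is entirely routine; the only thing that needs care is the bookkeeping of the central-element insertions $c^{(l)}$ that accumulate in the spectral parameters as the coproduct is iterated — in particular, keeping track of which tensor leg each $c_{(1)}$ in the coproduct formula refers to at each step, and the extra half-power of $c$ distinguishing the argument of a $\psi^-$ from that of its neighbouring $x^+$. I expect this indexing to be the main (mild) obstacle.
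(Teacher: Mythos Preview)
Your proof is correct; the iterated-coproduct computation and the bookkeeping of the central-element shifts are both accurate, and the final identification with $\Lambda^{(i)}(z)$ matches Definition~\ref{Def_X^k}. The paper itself offers no proof of this Fact---it is stated as a direct consequence of the definitions---so your argument supplies exactly the routine verification that the paper omits, and there is no alternative approach to compare.
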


Let $\Lambda$ be the ring of symmetric functions,
and $p_n$ be the power sum symmetric function of degree $n$.
Then the map
\begin{gather}\label{eq: F and Lam}
\cF \ni \ket{a_{\lambda}} \mapsto \prod_{i\geq 1} p_{\lambda_i} \in \Lambda
\end{gather}
gives the isomorphism as graded vector spaces
between $\cF$ and $\Lambda$.
If $N=1$,
the opera\-tor~$X^{(1)}_0$ is essentially the same
as Macdonald's difference operator under this isomorphism \cite{AMOS1995collective}.
Therefore,
its eigenfunctions can be identified with the ordinary Macdonald functions.
In the case of general~$N$,
the eigenfunctions of $\Xo_0$ can be viewed
as a generalization of Macdonald functions.
Their existence theorem
is given in terms of the following generalized dominance partial ordering.

\begin{Definition}
We write $\vl \geq^{\mathrm{L}} \vm$
(resp.~$\vl \geq^{\mathrm{R}} \vm$)
if and only if $|\vl|=|\vm|$ and
\begin{gather*}
|\lambda^{(N)}|+ \cdots + |\lambda^{(j+1)}|+\sum_{k=1}^i \lambda^{(j)}_k
\geq |\mu^{(N)}|+ \cdots + |\mu^{(j+1)}|+\sum_{k=1}^i \mu^{(j)}_k
\\
\bigg(\mathrm{resp.~}|\lambda^{(1)}|+ \cdots + |\lambda^{(j-1)}|+\sum_{k=1}^i \lambda^{(j)}_k
\geq |\mu^{(1)}|+ \cdots + |\mu^{(j-1)}|+\sum_{k=1}^i \mu^{(j)}_k \bigg)
\end{gather*}
for all $i\geq 1$ and $1 \leq j \leq N$.
\end{Definition}

Let us prepare the notation for the vectors
corresponding to the monomial symmetric functions.

\begin{notation}
Let $m_{\lambda}(a_{-n}) \in \mathbb{C}[a_{-1},a_{-2},\ldots]$
be the element in the Heisenberg algebra $\mathcal{H}$
such that $m_{\lambda}(a_{-n})\ket{0}$ coincides with
the monomial symmetric function under the identification~(\ref{eq: F and Lam}).
$m_{\lambda}(a_{-n})$ is the abbreviation for
$m_{\lambda}(a_{-1},a_{-2},\ldots)$.
Note that we often substitute $a_{n}$ or another boson for $a_{-n}$.
\end{notation}

We state
the existence theorem of the generalized Macdonald functions.

\begin{fact}[existence and uniqueness \cite{AFHKSY2011notes,AFOCrystallization2015}]
\label{fact:existence thm of Gn Mac}
For an $N$-tuple of partitions $\vl$,
there exists a~unique vector $\Ket{P_{\vl}} =\Ket{P_{\vl}(\vu)} \in \mathcal{F}^{(N,0)}_{\vu}$
such that
\begin{gather*}
\bullet \quad \Ket{P_{\vl}(\vu)}
 = \prod_{i=1}^N m_{\lambda^{(i)}}(a^{(i)}_{-n}) \ketzero
 + \sum_{\vm <^{\mathrm{L}} \vl} v_{\vl, \vm} \prod_{i=1}^N m_{\mu^{(i)}}(a^{(i)}_{-n}) \ketzero,
\qquad v_{\vl, \vm}\in \mathbb{C}(\vu);
\\
\bullet \quad X^{(1)}_0 \Ket{P_{\vl}(\vu)} = \epsilon_{\vl}(\vu) \Ket{P_{\vl}(\vu)}, \qquad
\epsilon_{\vl}(\vu) \in \mathbb{C}(\vu).
\end{gather*}
Similarly, there exists a unique vector
$\Bra{P_{\vl}}=\Bra{P_{\vl}(\vu) }\in \mathcal{F}_{\vu}^{(N,0)*}$ such that
\begin{gather*}
 \bullet \quad\Bra{P_{\vl}(\vu)}
 = \brazero \prod_{i=1}^N m_{\lambda^{(i)}}(a^{(i)}_{n})
 + \sum_{\vm <^{\mathrm{R}} \vl} v_{\vl, \vm}^* \brazero \prod_{i=1}^N m_{\mu^{(i)}}(a^{(i)}_{n}),
\qquad v^*_{\vl, \vm}\in \mathbb{C}(\vu);
\\
 \bullet \quad \Bra{P_{\vl}(\vu)} \Xo_0 = \epsilon_{\vl}^*(\vu) \Bra{P_{\vl}(\vu)},\qquad
\epsilon_{\vl}^*(\vu) \in \mathbb{C}(\vu).
\end{gather*}
The eigenvalues $\epsilon_{\vl}$ and $\epsilon^*_{\vl}$ are of the forms
\begin{gather*}
\epsilon_{\vl}(\vu)=
\epsilon_{\vl}^*(\vu)=
\sum_{k=1}^N u_k e_{\lambda^{(k)}}, \qquad
e_{\lambda}:= 1+(t-1) \sum_{i \geq 1} (q^{\lambda_i}-1)t^{-i}.
\end{gather*}
\end{fact}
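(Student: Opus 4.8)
The plan is to reduce the whole statement to the single assertion that $\Xo_0$ acts on each graded component of $\cF^{(N,0)}_{\vu}$ by a matrix that is triangular with respect to $\geq^{\mathrm{L}}$, whose diagonal entries are the rational functions $\epsilon_{\vl}(\vu)=\sum_{k=1}^N u_k e_{\lambda^{(k)}}$ and are pairwise distinct over the field $\mathbb{C}(\vu)$. Granting this, $\Ket{P_{\vl}(\vu)}$ and its eigenvalue are produced by the standard triangular-system argument (solve for the coefficients $v_{\vl,\vm}$ recursively down the order $\geq^{\mathrm{L}}$; distinctness of the diagonal entries makes each step invertible over $\mathbb{C}(\vu)$), and the bra statement comes out the same way with $\geq^{\mathrm{R}}$ in place of $\geq^{\mathrm{L}}$.

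First I would check that $\Xo_0$ preserves the grading of $\cF^{(N,0)}_{\vu}=\bigotimes_{1\le i\le N}\cF^{(1,0)}_{u_i}$ by total Heisenberg degree $|\vl|=\sum_i|\lambda^{(i)}|$, the graded pieces being finite-dimensional. Using $\Xo(z)=\sum_i u_i\Lambda^{(i)}(z)$ and the explicit bosonizations, every mode $a_{-n}z^{n}$ occurring in a $\varphi^-$-factor (which is built from creation operators only) carries $z$-exponent equal to the degree it raises, while $\eta$ has balanced modes; hence every monomial that survives in the constant-in-$z$ part $\Xo_0$ alters the total degree by exactly its net $z$-exponent, namely $0$.

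The crux is the triangularity. In the $i$-th summand $u_i\Lambda^{(i)}(z)$ the first $i-1$ tensor slots carry $\varphi^-$-factors and the $i$-th carries $\eta(\gamma^{i-1}z)$. Selecting the constant term $1$ from every $\varphi^-$-exponential collapses its zero mode to $u_i$ times the $N=1$ operator $\eta_0$ acting on the $i$-th Fock factor alone, which by the classical theory recalled above for $N=1$ is triangular with respect to dominance on $\lambda^{(i)}$, with diagonal entry $u_i e_{\lambda^{(i)}}$. Any other choice of terms from the $\varphi^-$-exponentials feeds strictly positive total degree into slots of index $<i$, forcing, by the grading property, a strictly negative change of degree in slot $i$. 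I would then verify directly from the two-index partial-sum definition of $\geq^{\mathrm{L}}$ that both sorts of off-diagonal contribution — the single-slot dominance moves and the inter-slot transfers of degree toward smaller indices — yield $N$-tuples strictly below $\vl$ in $\geq^{\mathrm{L}}$ (for a dominance move in slot $i$ the sums $\sum_{c>i}|\lambda^{(c)}|$ are unchanged while the partial sums of $\lambda^{(i)}$ drop; for a transfer out of slot $i$ the sums $\sum_{c>j}|\lambda^{(c)}|$ with $j<i$ strictly drop). Summing over $i$, $\Xo_0$ sends $\prod_{i=1}^N m_{\lambda^{(i)}}(a^{(i)}_{-n})\ketzero$ to $\epsilon_{\vl}(\vu)\prod_{i=1}^N m_{\lambda^{(i)}}(a^{(i)}_{-n})\ketzero$ plus a combination of $\prod_{i=1}^N m_{\mu^{(i)}}(a^{(i)}_{-n})\ketzero$ with $\vm<^{\mathrm{L}}\vl$, where $\epsilon_{\vl}(\vu)=\sum_{k=1}^N u_k e_{\lambda^{(k)}}$. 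Distinctness is then immediate: $\epsilon_{\vl}=\epsilon_{\vm}$ in $\mathbb{C}(q,t,\vu)$ forces $e_{\lambda^{(k)}}=e_{\mu^{(k)}}$ for each $k$ on comparing coefficients of the independent variables $u_k$, and $e_{\lambda}=1+(t-1)\sum_i(q^{\lambda_i}-1)t^{-i}$ separates partitions, so $\vl=\vm$.

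Finally, the existence and uniqueness of $\Ket{P_{\vl}(\vu)}$ follow from the triangular-system argument as above, and the bra statement is obtained by the mirror-image analysis of the right action of $\Xo_0$ on $\cF_{\vu}^{(N,0)*}$: passing to the dual reverses the order of the Fock tensor factors, which is exactly what turns $\geq^{\mathrm{L}}$ into $\geq^{\mathrm{R}}$, while the diagonal entries are computed by the same reduction to the $N=1$ Macdonald operator and are unchanged, so $\epsilon^*_{\vl}(\vu)=\sum_k u_k e_{\lambda^{(k)}}=\epsilon_{\vl}(\vu)$. The main obstacle is precisely the triangularity step: one must track carefully how the zero mode $\Xo_0$ redistributes degree among the tensor factors and match every resulting off-diagonal term against the order $\geq^{\mathrm{L}}$ (resp.\ $\geq^{\mathrm{R}}$), the one genuinely external input being the classical triangularity of Macdonald's $q$-difference operator under the dominance order on a single Fock space.
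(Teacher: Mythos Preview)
The paper does not supply its own proof of this Fact; it is quoted with citations to \cite{AFHKSY2011notes,AFOCrystallization2015} and used as input. Your overall strategy---prove that $\Xo_0$ is triangular in the monomial basis with respect to $\geq^{\mathrm{L}}$, read off the diagonal entries $\epsilon_{\vl}(\vu)=\sum_k u_k e_{\lambda^{(k)}}$, observe their pairwise distinctness over $\mathbb{C}(\vu)$, and then solve the unitriangular system---is exactly the one used in those references, and the parts you do spell out (grading preservation, distinctness, the recursion, and the mirror argument for $\Bra{P_{\vl}}$) are fine.

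The gap is in your triangularity argument for case~(b). When a nontrivial $\varphi^-$-term feeds degree into slots $<i$, you only check the two-index inequality for $j<i$ (the total-size sums $\sum_{c>j}|\cdot|$). You say nothing about $j=i$, where one must have $\sum_{k=1}^r \mu^{(i)}_k \le \sum_{k=1}^r \lambda^{(i)}_k$ for every $r$. This is equivalent to the single-Fock assertion that the positive modes $\eta_d$ ($d>0$) send each $m_\lambda$ into the span of $m_\nu$ with all partial sums of $\nu$ bounded by those of $\lambda$. That is \emph{not} the classical dominance-triangularity of $\eta_0$ you invoke; it is a separate statement about nonzero modes and holds only after cancellations. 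For example, in $\eta_1\, m_{(1,1,1)}$ the naive contributions to $m_{(2)}$---whose presence would violate the $j=i$, $r=1$ inequality when $\lambda^{(i)}=(1,1,1)$---are individually nonzero and cancel only upon summing the three terms coming from $[\eta^-]_{z^k}\cdot[\eta^+ m_{(1,1,1)}]_{z^{-k-1}}$ for $k=0,1,2$. So ``I would then verify directly'' understates what is required: you need, as an honest lemma, that $\eta_d$ for $d>0$ is dominance-nonincreasing on a single Fock factor (one route is the plethystic description $\eta^+(z)f[X]=f[X-(1-q)z^{-1}]$ combined with the closed form $\sum_{k}(-1)^k g_k\,e_{m-k}=t^{-m}e_m$ for the $\eta^-$-factor, extended from $e_n$ to general $m_\lambda$); only with that in hand does your case~(b) actually place $\vm$ strictly below $\vl$ in $\geq^{\mathrm{L}}$.
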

\begin{Definition}
Set
\begin{gather*}
\ket{Q_{\vl}}:=\prod_{i=1}^N\frac{c_{\lambda^{(i)}}}{c'_{\lambda^{(i)}}}
\, \ket{P_{\vl}}.
\end{gather*}
\end{Definition}

\begin{fact}[\cite{AFHKSY2011notes}]
It follows that
\begin{gather*}
\braket{P_{\vl}|Q_{\vm}}=\delta_{\vl,\vm}.
\end{gather*}
\end{fact}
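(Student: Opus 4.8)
The plan is to reduce the statement to two facts about the pairing $\cF^{(N,0)*}_{\vu}\otimes\cF^{(N,0)}_{\vu}\to\mathbb{C}$: the orthogonality $\braket{P_{\vl}|P_{\vm}}=0$ for $\vl\neq\vm$, and the norm evaluation $\braket{P_{\vl}|P_{\vl}}=\prod_{i=1}^N c'_{\lambda^{(i)}}/c_{\lambda^{(i)}}$. Granting both, the definition $\ket{Q_{\vm}}=\prod_{i=1}^N\big(c_{\mu^{(i)}}/c'_{\mu^{(i)}}\big)\ket{P_{\vm}}$ immediately gives $\braket{P_{\vl}|Q_{\vm}}=\prod_i\big(c_{\mu^{(i)}}/c'_{\mu^{(i)}}\big)\braket{P_{\vl}|P_{\vm}}=\delta_{\vl,\vm}$.

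For the orthogonality I would use that $\Xo_0$ is symmetric for the pairing: the right $\cU$-action on $\cF^{(N,0)*}_{\vu}$ is, by the standard convention for the dual module, the transpose of the left action on $\cF^{(N,0)}_{\vu}$, so $\Bra{P_{\vl}}\Xo_0\ket{P_{\vm}}$ is unambiguous. Evaluating it by the eigenvalue equations of the existence theorem and using $\epsilon^*_{\vl}(\vu)=\epsilon_{\vl}(\vu)$ yields $\big(\epsilon_{\vl}(\vu)-\epsilon_{\vm}(\vu)\big)\braket{P_{\vl}|P_{\vm}}=0$. Since $\epsilon_{\vl}(\vu)=\sum_{k=1}^N u_k e_{\lambda^{(k)}}$ with $\vu$ generic, $\epsilon_{\vl}=\epsilon_{\vm}$ forces $e_{\lambda^{(k)}}=e_{\mu^{(k)}}$ for every $k$; as $\lambda\mapsto e_\lambda$ is injective on partitions for generic $q,t$ (simplicity of the spectrum of Macdonald's $q$-difference operator), this holds iff $\vl=\vm$. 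Hence $\braket{P_{\vl}|P_{\vm}}=0$ whenever $\vl\neq\vm$ (this also covers the case $|\vl|\neq|\vm|$, which in any event is forced to zero by the total $\mathbb{Z}_{\geq0}$-grading of the pairing).

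For the norm I would induct on $N$, the base $N=1$ being Macdonald's classical evaluation $\langle P_\lambda,P_\lambda\rangle_{q,t}=c'_\lambda/c_\lambda$. Splitting off the first Fock factor, coassociativity of the Drinfeld coproduct together with the formula for $\Delta(x^+)$ gives, on $\cF^{(1,0)}_{u_1}\otimes\cF^{(N-1,0)}_{(u_2,\dots,u_N)}$, the identity $\Xo(z)=u_1\,\eta(z)\otimes1+\varphi^-\big(\gamma^{1/2}z\big)\otimes Y(\gamma z)$, where $Y(z)$ is the operator of the same type for the remaining $N-1$ Fock spaces. As the boson $\varphi^-$ involves only the creation modes $a_{-n}$ $(n\geq1)$, extracting the $z^0$-part shows that $\Xo_0$ equals $D:=u_1\,\eta_0\otimes1+1\otimes Y_0$ modulo terms that strictly raise the degree in the first tensor factor. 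One checks that every $\vn<^{\mathrm{L}}\vl$ has $|\nu^{(1)}|\geq|\lambda^{(1)}|$, so the piece of $\ket{P_{\vl}}$ of minimal first-factor degree sits at degree $|\lambda^{(1)}|$; applying the argument of the previous paragraph to $D$ (whose spectrum is again simple), together with the induction hypothesis and the $N=1$ identification of $\eta_0$ with Macdonald's operator, identifies this piece with $\ket{P_{\lambda^{(1)}}}\otimes\ket{P_{(\lambda^{(2)},\dots,\lambda^{(N)})}}$. Dually, the piece of $\Bra{P_{\vl}}$ of maximal first-factor (annihilation) degree also sits at degree $|\lambda^{(1)}|$ and equals $\Bra{P_{\lambda^{(1)}}}\otimes\Bra{P_{(\lambda^{(2)},\dots,\lambda^{(N)})}}$. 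Since the pairing couples first-factor degree $d$ on $\cF^{(N,0)}_{\vu}$ only with first-factor degree $d$ on $\cF^{(N,0)*}_{\vu}$, only the degree-$|\lambda^{(1)}|$ pieces survive, giving $\braket{P_{\vl}|P_{\vl}}=\langle P_{\lambda^{(1)}},P_{\lambda^{(1)}}\rangle_{q,t}\,\braket{P_{(\lambda^{(2)},\dots,\lambda^{(N)})}|P_{(\lambda^{(2)},\dots,\lambda^{(N)})}}$; iterating completes the computation.

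The orthogonality step is the routine ``self-adjoint operator with simple spectrum'' argument and is essentially immediate from the existence theorem. I expect the diagonal normalization to be the real obstacle: one must control how the filtration by first-factor degree interacts with the two dominance-type orderings $<^{\mathrm{L}}$, $<^{\mathrm{R}}$ and with the spectral rescalings by powers of $\gamma$, so that the degree-raising part of $\Xo_0$ does not contaminate the extreme-degree pairing --- equivalently, that the generalized-Macdonald norm factorizes as $\braket{P_{\vl}|P_{\vl}}=\prod_i\langle P_{\lambda^{(i)}},P_{\lambda^{(i)}}\rangle_{q,t}$, which is the substantive input taken from \cite{AFHKSY2011notes}.
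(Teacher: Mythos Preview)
The paper does not prove this statement; it is quoted as a fact from \cite{AFHKSY2011notes} with no argument given, so there is no ``paper's proof'' to compare against.

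Your proposal is correct. The orthogonality step is the standard self-adjointness-plus-simple-spectrum argument and needs no further comment. Your inductive computation of the norm via the first-Fock-factor filtration is also sound. The key structural point --- that the $\varphi^-$ factor in $\Lambda^{(i)}$ for $i\geq2$ contains only creation modes $a^{(1)}_{-n}$, so that $\Xo_0$ equals $u_1\eta_0\otimes1+1\otimes[\Xo_{N-1}]_0$ modulo operators that strictly raise the first-factor degree --- is immediate from the definitions. The combinatorial checks on $\leq^{\mathrm{L}}$ and $\leq^{\mathrm{R}}$ (that $\vm\leq^{\mathrm{L}}\vl$ forces $|\mu^{(1)}|\geq|\lambda^{(1)}|$, while $\vm\leq^{\mathrm{R}}\vl$ forces $|\mu^{(1)}|\leq|\lambda^{(1)}|$, with equality restricting to the analogous orderings on the remaining $N-1$ factors together with ordinary dominance on the first) go through as you indicate, and they also pin down the leading-monomial coefficient as $1$, so the proportionality constant in the identification of the extreme-degree piece is exactly $1$. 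Since the graded pairing couples only matching first-factor degrees, and the bra lives in degrees $\leq|\lambda^{(1)}|$ while the ket lives in degrees $\geq|\lambda^{(1)}|$, only the degree-$|\lambda^{(1)}|$ pieces contribute, giving the factorization $\braket{P_{\vl}|P_{\vl}}=\langle P_{\lambda^{(1)}},P_{\lambda^{(1)}}\rangle_{q,t}\cdot\braket{P_{(\lambda^{(2)},\dots,\lambda^{(N)})}|P_{(\lambda^{(2)},\dots,\lambda^{(N)})}}$ and hence $\prod_i c'_{\lambda^{(i)}}/c_{\lambda^{(i)}}$ by Macdonald's norm formula.
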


The following is the $S$-duality formula
for changing the preferred directions.
See also Fig.~\ref{fig: duality}.

\begin{Theorem}[\cite{FOS2019Generalized}]\label{fact: chang pref. direc.}
We have
\begin{gather*}
\bra{\vm} \cTH(\vu,\vv;w) \ket{\vl}=
\bra{P_{\vl}} \cTV(\vu,\vv;w) \ket{Q_{\vm}} \times (-1)^{|\vl|+|\vm|}.
\end{gather*}	
\end{Theorem}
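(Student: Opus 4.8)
The statement is the $S$-duality identity
\[
\bra{\vm}\cTH(\vu,\vv;w)\ket{\vl}
= (-1)^{|\vl|+|\vm|}\,\bra{P_{\vl}}\cTV(\vu,\vv;w)\ket{Q_{\vm}},
\]
and the natural strategy is to compute both sides as explicit power series in the boson modes / partition sums and match them termwise. Concretely, the left-hand side comes from the horizontal composition~(\ref{eq: comp. for PhiH}): after taking the level-$(1,0)$ vacuum expectation value, $\bra{\vm}\cTH\ket{\vl}$ is a matrix element of a normal-ordered product of the $\wPhi_{\lambda^{(i)}}$ and $\wPhi^*_{\mu^{(j)}}$, so by Fact~\ref{fact: intertwiner OPE} it is a completely explicit product of Nekrasov factors $N_{\bullet\bullet}$ and $\mathcal{G}$-factors times the ``empty'' normalization; the normalization by $\bra{\boldsymbol{\emptyset}}T^H\ket{\boldsymbol{\emptyset}}$ cancels the $\Phi_\emptyset$, $\Phi^*_\emptyset$ contributions. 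On the right, $\cTV$ is by definition the sum over intermediate partitions $\mu^{(1)},\dots,\mu^{(N-1)}$ of a product of cross operators $\Phicross$ acting on the $N$-fold Fock space $\cF^{(N,0)}_{\vu}$; pairing with $\bra{P_{\vl}}$ and $\ket{Q_{\vm}}$ — the dual eigenbases of $X^{(1)}_0$ from Fact~\ref{fact:existence thm of Gn Mac} — produces the generalized-Macdonald matrix elements.

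**Key steps.** First I would recall from~\cite{FOS2019Generalized} the analogous identity at the level of the \emph{unnormalized} operators: the $S$-duality for $T^H$ versus $T^V$ amounts to a geometric statement about flipping the preferred (vertical) and the other (horizontal) legs of the Mukad\'e web, which at the level of matrix elements is a resummation identity among refined topological vertices. The cleanest route is to invoke the already-established matrix-element version of that flip for the trivalent building blocks and propagate it through the composition; the key inputs are (i) the explicit form of $\Phi_\lambda,\Phi^*_\lambda$ and their OPEs (Facts after Fig.~\ref{fig: trivalent}, Fact~\ref{fact: intertwiner OPE}), and (ii) the fact that $\ket{P_{\vl}}$, $\ket{Q_{\vm}}$ are precisely the images under the $X^{(1)}_0$-diagonalization of the product basis $\ket{\lambda^{(1)}}\otimes\cdots\otimes\ket{\lambda^{(N)}}$ on the vertical side — this is what converts the naive vertical-leg basis into the $P/Q$ basis. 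Second, I would reduce to $N=2$ (or even to a single cross $\Phicross$), prove the identity there by direct OPE computation using Fact~\ref{fact: intertwiner OPE} — both sides become finite products of $N_{\lambda\mu}$'s — and then argue the general $N$ case by induction on the number of crosses, gluing one trivalent vertex at a time and tracking how the intermediate vertical partition sum on one side corresponds to a change of intermediate horizontal Fock module on the other. Third, I would check that the normalization factors match: the ratio $T^H/\bra{\boldsymbol\emptyset}T^H\ket{\boldsymbol\emptyset}$ on one side and $T^V/\brazero T^V\ketzero$ on the other both amount to stripping the $\emptyset$-vertex OPE scalars, which are literally the same function of $\vu,\vv,w$ up to the sign $(-1)^{|\vl|+|\vm|}$; that sign is exactly the one bookkeeping the framing factors $f_\lambda$ and the $(-1)^{|\lambda|}$ in $\hat t,\hat t^*$, together with the $c_\lambda/c'_\lambda$ appearing in the definition of $\ket{Q_{\vm}}$.

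**Main obstacle.** The genuinely delicate point is not any single OPE — those are mechanical given Fact~\ref{fact: intertwiner OPE} — but the identification of the $X^{(1)}_0$-eigenbasis $\{\ket{P_{\vl}}\}$ on the horizontal Fock side with the combinatorial vertical-leg basis $\{\ket{\lambda^{(1)}}\otimes\cdots\otimes\ket{\lambda^{(N)}}\}$, i.e.\ establishing that the operator $\cTV$ written in the cross-operator form actually has the stated matrix elements in the $P$/$Q$ basis rather than in the naive power-sum basis. This is precisely where the triangularity in Fact~\ref{fact:existence thm of Gn Mac} and the eigenvalue statement are needed: one must show that the composition of crosses intertwines $X^{(1)}_0$ on the source with $X^{(1)}_0$ on the target (up to spectral shifts), so that it is forced to be block-diagonal in the eigenbasis, and then pin down the diagonal/triangular coefficients by comparing leading terms with the horizontal side. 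I expect that step to occupy the bulk of the proof, with the remaining verification being a careful but routine bookkeeping of prefactors, signs, and the $\mathcal{G}$-function normalizations.
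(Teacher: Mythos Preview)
Your overall strategy---compute both sides as explicit products of Nekrasov factors via the OPE formulas and match prefactors---is correct and is essentially what the paper does. However, you are proposing to do considerably more work than the paper actually carries out.

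The paper's proof (Appendix~\ref{sec: pf of chang pref. direc.}) is very short because it takes the hard step as a black box: the matrix elements of $\cTV$ in the generalized Macdonald basis are already known explicitly as Fact~\ref{fact: mat el. of TV}, proved in~\cite{FOS2019Generalized}. With that in hand, the right-hand side $\bra{P_{\vl}}\cTV\ket{Q_{\vm}}$ is immediately a monomial $\zeta^\sharp$ times a product of $N_{\lambda^{(i)},\mu^{(j)}}$'s divided by diagonal Nekrasov factors and $c,c'$'s. For the left-hand side, one computes $\bra{\vm}\cTH\ket{\vl}$ directly from the OPEs (Fact~\ref{fact: intertwiner OPE}) together with the swap identity $N_{\lambda,\mu}(\gamma^{-1}x)=N_{\mu,\lambda}(\gamma^{-1}x^{-1})x^{|\lambda|+|\mu|}f_\lambda/f_\mu$, obtaining the \emph{same} rational function of Nekrasov factors times a different monomial $\zeta^\flat$. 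The entire proof then reduces to the elementary verification $\zeta^\sharp=(-1)^{|\vl|+|\vm|}\zeta^\flat$, a bookkeeping of the framing factors $\hat t,\hat t^*$, the $f_\lambda$'s, and the $\xi^{(\pm)}$ normalizations.

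So what you flag as the ``main obstacle''---establishing the $\cTV$ matrix elements in the $P/Q$-basis via intertwining of $X^{(1)}_0$ and triangularity---is indeed the deep point, but it is not proved in this paper; it is imported wholesale from~\cite{FOS2019Generalized} as Fact~\ref{fact: mat el. of TV}. Your proposed induction on the number of crosses and the $N=2$ base case would amount to re-proving that fact, which is legitimate but not what is asked here. The paper is explicit about this: ``Theorem~\ref{fact: chang pref. direc.} is essentially proved in~\cite{FOS2019Generalized}. See Appendix~\ref{sec: pf of chang pref. direc.} as to the appearance of the factor $(-1)^{|\vl|+|\vm|}$.'' If you want your write-up to match the paper, simply cite Fact~\ref{fact: mat el. of TV}, compute $\bra{\vm}\cTH\ket{\vl}$ by OPE, and verify the monomial prefactors agree up to the sign.
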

Theorem~\ref{fact: chang pref. direc.}
is essentially proved in \cite{FOS2019Generalized}.
See Appendix~\ref{sec: pf of chang pref. direc.}
as to the appearance of the factor $(-1)^{|\vl|+|\vm|}$.
For the explicit form of the matrix elements,
see Fact~\ref{fact: mat el. of TV}.

\section{Proofs of main theorems}\label{sec: nonst.R and Mukade}

\subsection{Non-stationary Ruijsenaars functions and intertwining operators}\label{sec: nonst.R and intertwiner}

In \cite{Shiraishi2019affine},
an operator formula is given for
the non-stationary Ruijsenaars functions
by using the affine screening currents \cite{FKSW2007integrals1,FKSW2007integrals2,KS2008integrals}.
In this subsection,
we show that
the affine screening currents can be reproduced from the intertwiners
of the DIM algebra in the special case of~$\kappa=t^{-1}$, giving an expression of the non-stationary Ruijsenaars functions
in terms of the Mukad\'e operators.
To~help the interested readers, the operator product formulas for the affine screenings given in~\cite{Shiraishi2019affine} are reproduced in Appendix~\ref{sec: nonst. Ruijsenaars and affine sc.}.


\begin{Definition}
Define
\begin{gather*}
\mathcal{A}(z)=
\exp\bigg( {-}\sum_{n>0}\frac{1-t^{-n}}{n(1-q^n)} a_{-n}z^n\bigg)
\exp\bigg( \sum_{n>0}\frac{1-t^n}{n(1-q^{-n})} a_{n} z^{-n}\bigg),
\\
\mathcal{A}^*(z)=
\exp\bigg( \sum_{n>0}\frac{1-t^{-n}}{n(1-q^n)}\gamma^n a_{-n}z^n\bigg)
\exp\bigg( {-}\sum_{n>0}\frac{1-t^n}{n(1-q^{-n})} \gamma^{n}a_{n} z^{-n}\bigg).
\end{gather*}
\end{Definition}

These operators $\mathcal{A}(z)$ and $\mathcal{A}^*(z)$
appear in the following decomposition of
the intertwi\-ners~$\wPhi_{\lambda}(z)$ and $\wPhi^*_{\lambda}(z)$.

\begin{Proposition}\label{prop: decomp. of Phi and A}
For a partition $\lambda = (m_1, \dots, m_{\ell})$,
\begin{gather*}
\wPhi_{(m_1,\ldots, m_{\ell})}(z)=
{:}\wPhi_{\emptyset}\big(t^{-\ell}z\big) \mathcal{A}(q^{m_1}z)
\mathcal{A}\big(q^{m_2}t^{-1}z\big)\cdots \mathcal{A}\big(q^{m_{\ell}}t^{-\ell+1}z\big){:},
\\
\wPhi^*_{(m_1,\ldots, m_{\ell})}(z)=
{:}\wPhi_{\emptyset}^*\big(t^{-\ell}z\big) \mathcal{A}^*\big(q^{m_1}z\big)\mathcal{A}^*\big(q^{m_2}t^{-1}z\big)\cdots \mathcal{A}^*\big(q^{m_{\ell}}t^{-\ell+1}z\big){:}.
\end{gather*}
\end{Proposition}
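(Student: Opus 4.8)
The plan is to prove Proposition~\ref{prop: decomp. of Phi and A} by a direct computation at the level of the normally-ordered exponentials, comparing both sides as elements of $\mathrm{End}(\cF)[[z^{\pm1}]]$. Since everything in sight is already written as a normal-ordered product of vertex operators of exponential type, the equality of the two sides is equivalent to the equality of the arguments of the exponentials, i.e.\ an identity among the coefficients of $a_{-n}$ and $a_n$ for each $n\geq 1$.

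First I would recall that $\wPhi_\lambda(z)={:}\Phi_\emptyset(z)\eta_\lambda(z){:}$ with $\eta_\lambda(z)={:}\prod_{i=1}^{\ell(\lambda)}\prod_{j=1}^{\lambda_i}\eta(q^{j-1}t^{-i+1}z){:}$, so that for $\lambda=(m_1,\dots,m_\ell)$ one has
\begin{gather*}
\wPhi_{(m_1,\dots,m_\ell)}(z)={:}\Phi_\emptyset(z)\prod_{i=1}^{\ell}\prod_{j=1}^{m_i}\eta\big(q^{j-1}t^{-i+1}z\big){:}.
\end{gather*}
On the other hand, I would expand $\mathcal{A}(q^{m_i}t^{-i+1}z)$ using the definition of $\mathcal{A}(z)$ and the observation that the factor $1/(1-q^{n})$ in the exponent is the generating function $\sum_{j\geq0}q^{nj}$ of a geometric series; truncating at $j=m_i-1$ reproduces a finite product of $\eta$'s times a tail. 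Concretely, the key telescoping identity I would establish is that the product $\mathcal{A}(q^{m_1}z)\mathcal{A}(q^{m_2}t^{-1}z)\cdots\mathcal{A}(q^{m_\ell}t^{-\ell+1}z)$, when normally ordered, equals ${:}\Phi_\emptyset(z)^{-1}\Phi_\emptyset(t^{-\ell}z)^{-1}\prod_{i,j}\eta(q^{j-1}t^{-i+1}z){:}$ up to the normal-ordering scalars, where the $\Phi_\emptyset(t^{-\ell}z)$ factor accounts for the shift on the left-hand side; this is because the ``missing'' part of each geometric series $\sum_{j\geq m_i}q^{nj}t^{-n(i-1)}$ summed over $i$ telescopes to the difference $\frac{1}{1-q^n}\big(1-t^{-n\ell}\big)$, which is exactly the exponent content of $\Phi_\emptyset(z)$ versus $\Phi_\emptyset(t^{-\ell}z)$. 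I would carry this out first for the positive modes $a_{-n}$ and then, symmetrically, for $a_n$, using the matching $q$-factor $q^n/(1-q^n)$ in $\Phi_\emptyset$ and $(1-t^n)/(1-q^{-n})=-(1-t^n)q^n/(1-q^n)\cdot$(appropriate power) in $\mathcal{A}$.

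The second identity, for $\wPhi^*_\lambda(z)={:}\Phi^*_\emptyset(z)\xi_\lambda(z){:}$ with $\xi_\lambda(z)={:}\prod_{i,j}\xi(q^{j-1}t^{-i+1}z){:}$, goes through in exactly the same way: the extra $\gamma^n=(t/q)^{n/2}$ twist in $\mathcal{A}^*(z)$ versus $\mathcal{A}(z)$ is precisely the twist relating $\xi(z)$ to $\eta(z)$ (namely $\xi(z)$ is $\eta$ with $a_{\pm n}\mapsto q^{\mp n/2}t^{\pm n/2}a_{\pm n}$ up to sign), and the shift factor $\Phi^*_\emptyset(t^{-\ell}z)$ plays the role of $\Phi^*_\emptyset(z)$ after the same telescoping. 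So I would phrase the proof so that the $\wPhi^*$ case is deduced from the $\wPhi$ case by the explicit involution $a_n\mapsto q^{-n/2}t^{n/2}a_n$ together with the sign bookkeeping in the definitions of $\xi$ and $\eta$, rather than redoing the computation.

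The main obstacle is purely bookkeeping: getting the powers of $q$ and $t$ inside each $\eta$-argument to line up with the indices coming from the $\mathcal{A}$-product, and making sure the normal-ordering constants (the scalar OPE factors between the various $\eta$'s, between $\eta$ and $\Phi_\emptyset$, and between the $\mathcal{A}$'s) cancel correctly so that the stated identity holds on the nose with no spurious prefactor. I expect this to reduce to the single geometric-series telescoping $\sum_{i=1}^{\ell}\sum_{j\geq m_i}(q^{j-1}t^{-i+1})^n=\text{(tail controlled by }t^{-n\ell}\text{)}$ after reindexing, so the real content is checking that the finite part $\sum_{i=1}^\ell\sum_{j=1}^{m_i}(q^{j-1}t^{-i+1}z)^{\pm n}$ matches $\eta_\lambda(z)$ and the infinite tail reassembles into the shifted $\Phi_\emptyset$. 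Since both sides are manifestly well-defined formal operators (each $\mathcal{A}$ and each $\eta$ acts locally nilpotently on $\cF$), no convergence issue arises, and the proof is complete once the exponents agree.
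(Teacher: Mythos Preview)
Your approach is correct and is exactly the direct computation the paper leaves implicit (the proposition is stated without proof). One clarification will make your write-up cleaner: since both $\wPhi_\lambda(z)={:}\Phi_\emptyset(z)\eta_\lambda(z){:}$ and the right-hand side ${:}\Phi_\emptyset(t^{-\ell}z)\prod_i\mathcal{A}(q^{m_i}t^{-i+1}z){:}$ are \emph{already} normal-ordered, there are no OPE scalars to track at all --- the proof is purely the identity of exponents you describe. Concretely, for the $a_{-n}$-part one checks
\[
-\frac{1}{1-q^n}+ (1-t^{-n})\sum_{i=1}^\ell\frac{1-q^{nm_i}}{1-q^n}\,t^{-n(i-1)}
= -\frac{t^{-n\ell}}{1-q^n}-\frac{1-t^{-n}}{1-q^n}\sum_{i=1}^\ell q^{nm_i}t^{-n(i-1)},
\]
which follows from $(1-t^{-n})\sum_{i=1}^\ell t^{-n(i-1)}=1-t^{-n\ell}$; the $a_n$-part is the analogous identity with $q\mapsto q^{-1}$, $t\mapsto t^{-1}$. (Incidentally, in your sketch the expression ${:}\Phi_\emptyset(z)^{-1}\Phi_\emptyset(t^{-\ell}z)^{-1}\prod\eta{:}$ should read ${:}\Phi_\emptyset(z)\,\Phi_\emptyset(t^{-\ell}z)^{-1}\prod\eta{:}$.) The $\wPhi^*$ case then follows by the $\gamma$-twist you indicate.
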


Let $N\geq 2$ in this subsection. The case $N=1$ will be considered in Section~\ref{sec: N=1}.
Define the screening currents as follows.

\begin{notation}
For an $N$-tuple of the parameters $\vu=(u_1,\ldots, u_N)$,
we write
\begin{gather*}
t^{\alpha_i}\cdot \vu:=\big(u_1,\ldots, u_{i-1}, t u_i, t^{-1} u_{i+1}, u_{i+2},\ldots, u_N\big),
\qquad 1\leq i\leq N-1,
\\
t^{\alpha_0}\cdot \vu:=\big(t^{-1}u_1, u_2,\ldots,u_{N-1}, t u_N\big).
\end{gather*}
Here, $\alpha_0$, $\alpha_1$, \dots, $\alpha_{N-1}$ are regarded as the classical part of the real simple roots of the affine Lie algebra $\widehat{\mathfrak{gl}}_N$.
\end{notation}

\begin{Definition}\label{def: scr. current}
Define the screening currents
$S^{(i)}(y)\colon \cF_{t^{\alpha_i} \cdot \vu} \to \cF_{\vu}$ by
\begin{gather*}
\widetilde{S}^{(i)}(z) := \overbrace{1\otimes\cdots\otimes 1}^{i-1}\otimes
\mathcal{A}^*\big(\gamma^{-i}z\big)\otimes \mathcal{A}\big(\gamma^{-i}z\big)
\otimes \overbrace{1\otimes \cdots \otimes 1}^{N-i-1} ,\qquad i=1,\dots,N-1,
\\
\widetilde{S}^{(0)}(z):=\mathcal{A}(z)\otimes\overbrace{1\otimes\cdots\otimes 1}^{N-2}\otimes\mathcal{A}^*\big(\gamma^{-N}t^{-1}z\big).
\end{gather*}
We cyclically identify $\widetilde{S}^{(i+N)}(z) = \widetilde{S}^{(i)}(z) $.
\end{Definition}

\begin{Remark}
Note that these screening currents essentially coincide with
those in \cite{FKSW2007integrals1,FKSW2007integrals2,KS2008integrals}
when $\kappa = t^{-1}$.
\end{Remark}

\begin{Proposition}
We have
\begin{gather*}
\cA(z) \cA(w) = \frac{(qw/tz;q)_\infty}{(qw/z;q)_\infty}{:}\cA(z) \cA(w){:} ,\qquad
\cAs(z) \cAs(w) = \frac{(w/z;q)_\infty}{(tw/z;q)_\infty}{:}\cAs(z) \cAs(w){:} ,
\\
\cA(z) \cAs(w) = \frac{(q\gamma w/z;q)_\infty}{(q\gamma w/t z;q)_\infty}{:}\cA(z) \cAs(w){:},\quad
\cAs(z) \cA(w) = \frac{(q\gamma w/z;q)_\infty}{(q\gamma w/t z;q)_\infty}{:}\cAs(z) \cA(w){:},
\end{gather*}
and for $N\geq 3$,\footnote{For $N=2$, we have a different form of the normal ordering between $S^{(0)}$ and $S^{(1)}$. However, our results in what follows hold for general $N\geq 2$. } we obtain
\begin{gather*}
 \wS^{(i)}(z)\wS^{(i)}(w) = (1- w/z) \frac{(q w/ t z ; q)_\infty}{(t w/z ; q)_\infty}{:}\wS^{(i)}(z)\wS^{(i)}(w){:} \qquad
 (i = 0,\dots, N-1 ),
 \\
 \wS^{(i)}(z)\wS^{(i+1)}(w) = \frac{(q w/ z ; q)_\infty}{( qw/tz ; q)_\infty}{:}\wS^{(i)}(z)\wS^{(i+1)}(w){:} \qquad
 (i = 0,\dots, N-2 ) ,
 \\
 \wS^{(i+1)}(z)\wS^{(i)}(w)= \frac{(tw/ z ; q)_\infty}{( w/ z ; q)_\infty}{:}\wS^{(i+1)}(z)\wS^{(i)}(w){:} \qquad
 (i = 0,\dots, N-2 ),
 \\
\wS^{(0)}(z)\wS^{(N-1)}(w) = \frac{( t^2 w/ z ; q)_\infty}{( t w/ z ; q)_\infty}{:}\wS^{(0)}(z)\wS^{(N-1)}(w){:},
\\
\wS^{(N-1)}(z)\wS^{(0)}(w) = \frac{( qw/t z ; q)_\infty}{(q w/t^2 z ; q)_\infty}{:}\wS^{(N-1)}(z)\wS^{(0)}(w){:},
\\
\wS^{(i)}(z)\wS^{(j)}(w) = {:}\wS^{(i)}(z)\wS^{(j)}(w): \qquad
\text{for } |i-j|>2.
\end{gather*}
\end{Proposition}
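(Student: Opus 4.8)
The plan is to compute each normal-ordering coefficient directly from the definitions of $\wS^{(i)}(z)$ in Definition~\ref{def: scr. current} together with the four basic contraction formulas for $\cA$ and $\cAs$ stated in the immediately preceding Proposition. Since each $\wS^{(i)}(z)$ for $1\le i\le N-1$ is supported on the two tensor slots $i,i+1$ and equals $\cAs(\gamma^{-i}z)$ in slot $i$ and $\cA(\gamma^{-i}z)$ in slot $i+1$, while $\wS^{(0)}(z)$ is supported on slots $1$ and $N$ with $\cA(z)$ in slot $1$ and $\cAs(\gamma^{-N}t^{-1}z)$ in slot $N$, the ordered product $\wS^{(i)}(z)\wS^{(j)}(w)$ factors as a tensor product over the slots, and a nontrivial contraction can only occur in a slot where both operators act. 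First I would record which slots overlap for each pair $(i,j)$: for $|i-j|>2$ (and the pair not being the cyclically-adjacent $0,N-1$ case) the supports $\{i,i+1\}$ and $\{j,j+1\}$ are disjoint, giving the trivial identity at the bottom of the list.

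Next I would handle $i=j$: both factors act on slots $i,i+1$. In slot $i$ we get $\cAs(\gamma^{-i}z)\cAs(\gamma^{-i}w)$, contributing $(w/z;q)_\infty/(tw/z;q)_\infty$ by the second basic formula (the $\gamma^{-i}$ cancels in the ratio); in slot $i+1$ we get $\cA(\gamma^{-i}z)\cA(\gamma^{-i}w)$, contributing $(qw/tz;q)_\infty/(qw/z;q)_\infty$. Multiplying, the claimed coefficient should be $(w/z;q)_\infty(qw/tz;q)_\infty / \big((tw/z;q)_\infty(qw/z;q)_\infty\big)$, which equals $(1-w/z)(qw/tz;q)_\infty/(tw/z;q)_\infty$ after using $(w/z;q)_\infty = (1-w/z)(qw/z;q)_\infty$; this matches the stated formula, and the same bookkeeping applies to $\wS^{(0)}(z)\wS^{(0)}(w)$ (with $\cA$ in slot $1$ and $\cAs$ in slot $N$, each ratio again independent of the overall scale factor). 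For $j=i+1$ with $1\le i\le N-2$ the only common slot is $i+1$, where $\wS^{(i)}$ contributes $\cA(\gamma^{-i}z)$ and $\wS^{(i+1)}$ contributes $\cAs(\gamma^{-i-1}w)$; applying the $\cA$–$\cAs$ formula with $w\mapsto \gamma^{-i-1}w$, $z\mapsto\gamma^{-i}z$ gives argument $q\gamma\cdot\gamma^{-i-1}w/(\gamma^{-i}z)=qw/z$ in the numerator, producing $(qw/z;q)_\infty/(qw/tz;q)_\infty$ as claimed; the reversed order $\wS^{(i+1)}(z)\wS^{(i)}(w)$ uses the $\cAs$–$\cA$ formula in the same slot and yields $(tw/z;q)_\infty/(w/z;q)_\infty$ after simplifying $(qw/z;q)_\infty/(qw/tz;q)_\infty$ times the $\cA$–$\cA$ or $\cAs$–$\cAs$ overlap — more precisely, here slot $i+1$ carries $\cAs(\gamma^{-i-1}z)$ from $\wS^{(i+1)}$ and $\cA(\gamma^{-i}w)$ from $\wS^{(i)}$, giving directly $(q\gamma\cdot\gamma^{-i}w/\gamma^{-i-1}z;q)_\infty/(\cdots/t;q)_\infty = (tw/z;q)_\infty/(w/z;q)_\infty$ after the identity $(qw/tz\cdot t;q)=(qw/z;q)$ — I would simply verify the exponents of $\gamma$ and $t$ bookkeep correctly.

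Finally the two cyclic cases $\wS^{(0)}(z)\wS^{(N-1)}(w)$ and $\wS^{(N-1)}(z)\wS^{(0)}(w)$: the supports $\{1,N\}$ and $\{N-1,N\}$ overlap only in slot $N$ (assuming $N\ge 4$ so that $1\ne N-1$; for $N=3$ they also overlap in slot $1$, but then one is in the $|i-j|\le 2$ regime and the same method with an extra slot-$1$ contraction applies, and for $N=2$ the paper's footnote already excludes this). In slot $N$, $\wS^{(0)}(z)$ contributes $\cAs(\gamma^{-N}t^{-1}z)$ and $\wS^{(N-1)}(w)$ contributes $\cA(\gamma^{-N+1}w)$; feeding these into the $\cAs$–$\cA$ formula gives argument $q\gamma\cdot\gamma^{-N+1}w/(\gamma^{-N}t^{-1}z) = q\gamma^{2}tw/z = q\cdot(t/q)\cdot tw/z = t^{2}w/z$ in the numerator and $t^{2}w/(tz)=tw/z$ in the denominator, matching $(t^2w/z;q)_\infty/(tw/z;q)_\infty$; the reversed product uses the $\cA$–$\cAs$ formula in slot $N$ with the scales swapped, giving $(qw/tz;q)_\infty/(qw/t^2z;q)_\infty$ as stated. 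I expect the main obstacle to be purely organizational rather than conceptual: keeping track of the powers of $\gamma=(t/q)^{1/2}$ and the shift $t^{-1}$ hidden in $\wS^{(0)}$ so that every $q$-Pochhammer argument comes out exactly as written, and treating the small-$N$ degenerate overlaps ($N=2,3$) carefully — though for $N=3$ one just multiplies a slot-$1$ contraction by a slot-$N$ contraction and checks the product still telescopes to the claimed single ratio, and $N=2$ is explicitly set aside.
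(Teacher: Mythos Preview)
Your approach is correct and is exactly what the paper does: the Proposition is stated without an explicit proof because it follows by routine OPE bookkeeping from the Heisenberg commutators, precisely along the lines you outline. Two minor remarks: the four $\cA/\cAs$ contraction formulas are part of \emph{this} Proposition (not a preceding one), so you should first derive them directly from $[a_n,a_m]=n\tfrac{1-q^{|n|}}{1-t^{|n|}}\delta_{n+m,0}$ and the standard $e^Ae^B=e^{[A,B]}{:}e^Ae^B{:}$ identity; and your worry about extra overlap for $N=3$ in the cyclic pair is misplaced, since $\{1,N\}\cap\{N-1,N\}=\{N\}$ already for $N=3$ --- the genuine double overlap occurs only for $N=2$, which the footnote explicitly sets aside.
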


Let us introduce the following vertex operator.%
\footnote{Comparing the notation in \cite{FOS2019Generalized},
we have $\phi_0(z)=\Phi^{(0)}(t^{-1}z)$ with exception for the
spectral parameters of the Fock space.}

\begin{notation}
Write
\begin{gather*}
\gamma^{-1}t^{\pm \delta_i}\cdot \vu:=
\big(\gamma^{-1}u_1,\ldots,\gamma^{-1}u_{i-1}, \gamma^{-1}t^{\pm 1} u_i, \gamma^{-1}u_{i+1}, \ldots, \gamma^{-1}u_N\big).
\end{gather*}
\end{notation}

\begin{Definition}
Define $\phi_0(z)\colon \cF^{(N,0)}_{\gamma^{-1}t^{-\delta_i}\cdot \vu}
\rightarrow \cF^{(N,0)}_{\vu}$ by
\begin{gather*}
\phi_0(z) =
\bigotimes_{1\leq k\leq N}^{\curvearrowright}
{:}\Phi^*_\emptyset\big(t^{-1}\gamma^{-k}z\big)
\Phi_\emptyset\big(t^{-1+\delta_{k,1}}\gamma^{-k+1}z \big){:}.
\end{gather*}
\end{Definition}

\begin{Proposition}\label{prop: phi0 S OPE}
We have
\begin{gather*}
\phi_0(z) \wS^{(1)}(w)=\frac{(q w/ z ; q)_\infty}{(qw/tz ; q)_\infty}{:}\phi_0(z)\wS^{(1)}(w){:}
\\
\phi_0(z) \wS^{(i)}(w)={:}\phi_0(z) \wS^{(i)}(w){:} \qquad (2 \leq i \leq N-1),
\\
\wS^{(1)}(z)\phi_0(w)=\frac{(tw/ z ; q)_\infty}{( w/ z ; q)_\infty}{:}\wS^{(1)}(z)\phi_0(w){:},
\\
\wS^{(i)}(z)\phi_0(w)={:}\wS^{(i)}(z)\phi_0(w){:} \qquad (2 \leq i \leq N-1 ),
\\
\phi_0(z) \wS^{(0)}(w)=\frac{( w/ z ; q)_\infty}{(tw/z ; q)_\infty}{:}\phi_0(z)\wS^{(0)}(w){:},
\\
\wS^{(0)}(z)\phi_0(w)=\frac{(qw/ t z ; q)_\infty}{( qw/ z ; q)_\infty}{:}\wS^{(0)}(z)\phi_0(w){:},
\\
\phi_0(z)\phi_0(w)=\frac{(qw/ t z ; q)_\infty}{( tw/ z ; q)_\infty}{:}\phi_0(z)\phi_0(w){:}.
\end{gather*}
\end{Proposition}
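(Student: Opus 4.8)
The statement is a list of operator product expansions among the vertex operators $\phi_0(z)$ and the screening currents $\wS^{(i)}(w)$. Since every operator involved is an exponential of bosons $a_{\pm n}$ (or, in the tensor case, $a^{(k)}_{\pm n}$), each OPE reduces to a computation of the scalar factor arising when moving the annihilation part of the left operator past the creation part of the right one. The plan is to assemble all the ingredients from the definitions and from the already-established normal-ordering formulas, then chase the tensor-component bookkeeping.

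First I would recall that for two exponentials $A = \exp(\sum_n \alpha_n a_{-n})\exp(\sum_n \beta_n a_n)$ and $B = \exp(\sum_n \gamma_n a_{-n})\exp(\sum_n \delta_n a_n)$ one has $AB = \exp(-\sum_{n>0} n \tfrac{1-q^n}{1-t^n}\beta_n\gamma_n)\,{:}AB{:}$, using $[a_n,a_{-n}] = n\tfrac{1-q^n}{1-t^n}$. So each OPE coefficient is a single sum $\exp(\sum_{n>0}\tfrac1n c_n z^{-n}w^n)$ for an appropriate $c_n$, which we then recognize as a ratio of $q$-shifted factorials via $\exp(\sum_{n>0}\tfrac1n a^n) = (a;q)_\infty^{-1}\cdot(\text{finite products})$ — more precisely $\exp(-\sum_{n>0}\tfrac1n\tfrac{a^n}{1-q^n}) = (a;q)_\infty$ and $\exp(\sum_{n>0}\tfrac1n a^n) = \frac{1}{1-a}$. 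Concretely: the two-body OPEs among $\cA,\cAs$ are already given in the Proposition just above; the OPEs of $\Phi_\emptyset,\Phi^*_\emptyset$ with each other follow from Fact~\ref{fact: intertwiner OPE} specialized to $\lambda=\mu=\emptyset$ (where $N_{\emptyset\emptyset}=1$ and $\mathcal{G}(z)=(z;q,t^{-1})_\infty$), and the mixed $\Phi_\emptyset$–$\cA$ type OPEs are elementary from their explicit bosonic form.

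The substantive step is the tensor bookkeeping. The operator $\phi_0(z)$ is a $\curvearrowright$-ordered product over $k=1,\dots,N$ of $ {:}\Phi^*_\emptyset(t^{-1}\gamma^{-k}z)\Phi_\emptyset(t^{-1+\delta_{k,1}}\gamma^{-k+1}z){:}$ acting in the $k$-th Fock factor, while $\wS^{(i)}(w)$ ($1\le i\le N-1$) acts only in factors $i$ and $i+1$ (as $\cAs(\gamma^{-i}w)\otimes\cA(\gamma^{-i}w)$), and $\wS^{(0)}(w)$ acts in factors $1$ and $N$ (as $\cA(w)\otimes\cAs(\gamma^{-N}t^{-1}w)$). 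So for $\phi_0(z)\wS^{(i)}(w)$ with $2\le i\le N-1$ the overlap is in factors $i$ and $i+1$: I would multiply the $\Phi^*_\emptyset\Phi_\emptyset$ pieces there against $\cAs,\cA$ respectively, using the four mixed two-body coefficients, substitute the explicit spectral arguments (powers of $q,t,\gamma$ with $\gamma=(t/q)^{1/2}$), and check that the resulting product of $q$-shifted-factorial ratios collapses to $1$ — this cancellation is what makes the interior screenings commute with $\phi_0$. For $i=1$ the factor-$1$ argument carries the extra $q^{\delta_{k,1}}=q$ shift in $\Phi_\emptyset$, which breaks the cancellation and leaves the stated $(qw/z;q)_\infty/(qw/tz;q)_\infty$; for $i=0$ the overlap sits in factors $1$ and $N$ and one combines the $k=1$ piece of $\phi_0$ with $\cA(w)$ and the $k=N$ piece with $\cAs(\gamma^{-N}t^{-1}w)$; for $\phi_0(z)\phi_0(w)$ one sums the within-factor contributions over all $k=1,\dots,N$ and telescopes the $\gamma^{-k}$ shifts. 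The reversed-order formulas follow by swapping $z\leftrightarrow w$ and using that each scalar coefficient is the corresponding one with $z,w$ exchanged; consistency with the displayed symmetric or antisymmetric forms is a check on the computation rather than a separate argument.

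\textbf{Main obstacle.} The only real difficulty is organizing the arithmetic of the spectral shifts so that the many $q$-shifted-factorial factors visibly cancel: one must track, in each relevant Fock factor, the exact powers of $q$, $t$ and $\gamma=(t/q)^{1/2}$ attached to $z$ and to $w$, apply the four elementary coefficients $\cA$–$\cA$, $\cA$–$\cAs$, $\cAs$–$\cA$, $\cAs$–$\cAs$ (together with the $\Phi_\emptyset,\Phi^*_\emptyset$ self-pairings from Fact~\ref{fact: intertwiner OPE}), and verify the telescoping. This is routine but error-prone; I would relegate the full spectral-shift ledger to an appendix (in the spirit of Appendix~\ref{sec: proofs}) and present only the structure here.
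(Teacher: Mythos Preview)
Your proposal is correct and matches the paper's approach: the paper states this proposition without proof, treating it as a direct bosonic normal-ordering computation, and your plan is precisely that computation with the tensor-factor bookkeeping made explicit. One small simplification you will find when executing it: for $k\geq 2$ the creation part of the $k$-th component ${:}\Phi^*_\emptyset(t^{-1}\gamma^{-k}z)\Phi_\emptyset(t^{-1}\gamma^{-k+1}z){:}$ of $\phi_0(z)$ actually vanishes (since $\gamma\cdot t^{-1}\gamma^{-k}z = t^{-1}\gamma^{-k+1}z$), so the ``telescoping over $k$'' you anticipate for $\phi_0(z)\phi_0(w)$ collapses to a single factor-$1$ contribution, and the reversed-order relations $\wS^{(i)}(z)\phi_0(w)={:}\cdots{:}$ for $i\geq 2$ are immediate.
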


These screening currents and $\phi_0(z)$ can be obtained by a specialization of
the Mukad\'e operators.
Firstly,
we consider the non-affine case and
derive the Macdonald functions from specialized Mukad\'e operators to fix our starting point for making the $p$-traces (Fig.~\ref{fig: cross intro}).

\begin{Definition}
For $1\leq i \leq N$, define
\begin{gather*}
\tilTV_i(z) =\tilTV_i(\vu; z):=
T^V(\vv, \vu ;z)
\Big|_{\substack{v_k \to \gamma^{-1} t^{-\delta_{k,i}}u_k \\ (1\leq k\leq N)}},
 \\
\tilTH_i(z) =\tilTH_i(\vu; z)
:=
T^H(\vv, \vu ;z)
\Big|_{\substack{v_k \to \gamma^{-1} t^{-\delta_{k,i}}u_k \\ (1\leq k\leq N)}}.
\end{gather*}
\end{Definition}

When we construct $T^{V}$,
we need to compose many $\Phicross$'s producing a big summation running over the set of the partitions in $\mathsf{P}^{N-1}$.
By giving a certain condition to the spectral parameters attached to the internal edges, we have the ``restricted operator" $\tilTV_i(z)$.
Then, one finds that all the internal partitions are allowed to run over the one row diagrams satisfying certain interlacing conditions among them.

\begin{fact}[Appendix A in \cite{FOS2019Generalized}]
\label{fact: til Ti = PhiSS..}
We have
\begin{gather*}
\tilTV_i(\vu; z) = \bigg( \frac{(q/t;q)_{\infty}}{(q;q)_{\infty}}\bigg)^{i-1}
 \!\!\!\sum_{0\leq m_{1}\leq m_{2}\leq \leq m_{i-1} <\infty}\!\!\!
\phi_0(z)\prod_{1\leq j \leq i-1}^{\curvearrowright} \widetilde{S}^{(j)}(q^{m_{j}}z)
\prod_{k=1}^{i-1}(u_{k+1}/u_{k})^{m_{k}}.
\end{gather*}
\end{fact}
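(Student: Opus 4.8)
The plan is to prove Fact~\ref{fact: til Ti = PhiSS..} by directly unwinding the definition of $\tilTV_i(z)$ as a sum over $(N-1)$-tuples of partitions and showing that, under the specialization $v_k \to \gamma^{-1}t^{-\delta_{k,i}}u_k$, all but one of the internal partition summations collapse to a single (empty) partition, while the remaining summation reduces from partitions to one-row diagrams. First I would write out $T^V(\vv,\vu;z)$ as
\begin{gather*}
\sum_{\mu^{(1)},\ldots,\mu^{(N-1)}\in\parset}
\bigotimes_{1\leq k\leq N}^{\curvearrowright}
\Phicross\left[ v_k ;{\tfrac{u_k}{v_k}w_k, \mu^{(k)} \atop w_k, \mu^{(k-1)}} ; u_k\right],
\end{gather*}
with $\mu^{(0)}=\mu^{(N)}=\emptyset$ and $w_k = \tfrac{u_1\cdots u_{k-1}}{v_1\cdots v_{k-1}}z$, and then substitute the specialized spectral parameters. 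Using Fact~\ref{fact: intertwiner} (the condition $w=-vu$ linking the three spectral parameters of a trivalent vertex), each $\Phicross$ factor is built from $\Phi^*_{\mu^{(k)}}\circ\Phi_{\mu^{(k-1)}}$ acting between horizontal Fock spaces; its $(\mu^{(k)},\mu^{(k-1)})$ matrix element is governed by the normal-ordering rules in Fact~\ref{fact: intertwiner OPE}, hence by products of Nekrasov factors $N_{\mu^{(k)}\mu^{(k-1)}}(\cdot)$.

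The key mechanism I expect is the standard \emph{vanishing of Nekrasov factors}: when the ratio of spectral parameters is specialized to a value of the form $q^a t^b$ with suitable $(a,b)$, the factor $N_{\lambda\mu}(u)$ vanishes unless $\lambda$ and $\mu$ are related by an interlacing condition (in particular, $N_{\lambda\mu}(t) = 0$ unless $\mu/\lambda$, or $\lambda/\mu$, is a horizontal strip, and setting further specializations forces partitions to be empty). The choice $v_k = \gamma^{-1}t^{-\delta_{k,i}}u_k$ tunes exactly these ratios: for $k \neq i$ the shift is trivial ($v_k = \gamma^{-1}u_k$), which I expect forces $\mu^{(k)} = \emptyset$ for $k \geq i$ and reduces $\mu^{(k)}$ for $k < i$ to one-row diagrams, while the single extra factor of $t^{-1}$ at position $i$ is what opens up the one summation that survives, with the $m_j$'s being the lengths of those one-row partitions and the interlacing $0\leq m_1\leq\cdots\leq m_{i-1}$ emerging from the chain of horizontal-strip conditions. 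So the steps are: (1) expand $T^V$ and specialize; (2) identify which Nekrasov factors in the operator-product coefficients acquire zeros and read off the resulting constraints on the $\mu^{(k)}$; (3) for the partitions forced to be single rows, relabel $\mu^{(j)} = (m_j)$ and check the interlacing; (4) match the surviving normal-ordered product of $\Phi^*_\emptyset$, $\Phi_\emptyset$, and the one-row factors against the definitions of $\phi_0(z)$ and $\widetilde S^{(j)}(z)$ via Proposition~\ref{prop: decomp. of Phi and A} (the decomposition $\wPhi^*_{(m_1,\ldots,m_\ell)}$, $\wPhi_{(m_1,\ldots,m_\ell)}$ into $\cA^*$'s and $\cA$'s); (5) collect the leftover scalar prefactors into the stated $\big((q/t;q)_\infty/(q;q)_\infty\big)^{i-1}$ and the monomial weights $\prod_k (u_{k+1}/u_k)^{m_k}$, the latter coming from the $\hat t$, $\hat t^*$ normalization factors and the spectral-parameter powers $(\cdot)^{|\mu^{(k)}|}$ in the intertwiner formulas.

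The main obstacle will be step (2)–(3): bookkeeping the precise arguments of all the Nekrasov factors through the composition of $N$ cross operators with their nested spectral parameters $w_k$, and verifying that the zeros land exactly where claimed so that no spurious partitions survive and no needed ones are killed — in particular handling the ``wrap-around'' vertex at $k=N$ (with $\mu^{(N)}=\emptyset$) and the distinguished vertex at $k=i$ correctly, and confirming that for $N=2$ the slightly different $S^{(0)}$–$S^{(1)}$ normal ordering (flagged in the footnote to the Proposition above) does not affect this reduction. Once the combinatorial collapse is established, the remaining identification with $\phi_0$ and the $\widetilde S^{(j)}$ is a direct comparison of normal-ordered boson exponentials using Proposition~\ref{prop: decomp. of Phi and A}, and the scalar prefactor is a finite computation; I would relegate those to a computational appendix (cf.\ Appendix~\ref{sec: proofs}). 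Since the excerpt attributes this to Appendix~A of \cite{FOS2019Generalized}, I would in practice cite that argument and only reproduce the specialization bookkeeping needed here.
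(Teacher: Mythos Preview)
Your approach is correct and is exactly the one taken in Appendix~A of \cite{FOS2019Generalized} (and replayed in the paper's proof of Proposition~\ref{prop: aff. scr. vertex} for the affine analogue): the specialization produces internal Nekrasov factors $N_{\mu^{(k-1)},\mu^{(k)}}(t^{\delta_{k,i}})$, whose vanishing criteria $N_{\nu,\rho}(1)\neq 0 \Leftrightarrow \nu\subset\rho$ and $N_{\nu,\rho}(t)\neq 0 \Leftrightarrow \bar\nu\subset\rho$ force $\mu^{(k)}=\emptyset$ for $k\geq i$ and $\mu^{(k)}=(m_k)$ with $0\leq m_1\leq\cdots\leq m_{i-1}$ for $k<i$, after which Proposition~\ref{prop: decomp. of Phi and A} yields the identification with $\phi_0$ and the $\widetilde S^{(j)}$. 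Two minor corrections: the precise condition at $k=i$ is $\bar\mu^{(i-1)}\subset\mu^{(i)}$ (not a horizontal-strip condition), and in this non-affine statement there is no wrap-around vertex and $\widetilde S^{(0)}$ never appears, so those worries are moot.
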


We call $\tilTV_i$ the ``screened vertex operator".
From these screened vertex operators,
we can construct the Macdonald functions.

\begin{Definition}\label{def: ordinary Mac}
Let $\vs=(s_1,\ldots, s_N)$,
$\vx=(x_1,\ldots, x_N)$ be $N$-tuples of indeterminates.
Define the formal series $\ordmac(\vx;\vs|q,t) \in
\mathbb{Q}(q,t,\vs)[[x_2/x_1,\ldots, x_N/x_{N-1}]]$ by\footnote{$\ordmac(\vx;\vs|q,t)$
coincides with $p_N(\vx;\vs|q,t)$ in \cite{FOS2019Generalized}.}
\begin{gather*}
\ordmac(\boldsymbol{x};\boldsymbol{s}|q,t) =
\sum_{\theta \in \mathsf{M}_N} c_N(\theta;\boldsymbol{s}|q,t)
\prod_{1\leq i<j\leq N}(x_j/x_i)^{\theta_{ij}},
\end{gather*}
where $\mathsf{M}_N=\{ (\theta_{ij})_{1\leq i, j\leq N}\,|\,
\theta_{ij} \in \mathbb{Z}_{\geq 0},\ \theta_{kl}=0 \mbox{ if } k \geq l\}$
is the set of
$N \times N$ strictly upper triangular matrices
with nonnegative integer entries,
and
the coefficient $c_N(\theta;\boldsymbol{s}|q,t)$ is defined~by
\begin{gather*}
c_N(\theta;\boldsymbol{s}|q,t)
=\prod_{k=2}^{N}\prod_{1\le i<j\le k}
\dfrac{\big(q^{\sum_{a>k}(\theta_{ia}-\theta_{ja})}ts_j/s_i;q\big)_{\theta_{ik}}}
{\big(q^{\sum_{a>k}(\theta_{ia}-\theta_{ja})}qs_j/s_i;q\big)_{\theta_{ik}}}
\\ \phantom{c_N(\theta;\boldsymbol{s}|q,t)=}
{}\times\prod_{k=2}^N\prod_{1\le i\le j<k}
\dfrac{\big(q^{-\theta_{jk}+\sum_{a>k}(\theta_{ia}-\theta_{ja})}qs_j/ts_i;q\big)_{\theta_{ik}}}
{\big(q^{-\theta_{jk}+\sum_{a>k}(\theta_{ia}-\theta_{ja})}s_j/s_i;q\big)_{\theta_{ik}}}.
\end{gather*}
\end{Definition}

It is known that $\ordmac(\vx;\vs|q,t)$
is an eigenfunction of Macdonald's difference operator \cite{BFS2014Macdonald, NS2012direct,Shiraishi2005conjecture}.
For some basic facts about $\ordmac(\vx;\vs|q,t)$,
see Appendix~\ref{sec: asymp mac}.
This function can be reproduced as follows.

\begin{fact}[Appendix A of \cite{FOS2019Generalized}]
\label{fact: macdonald from mukade}
It follows that
\begin{gather}
\brazero \tilTV_1(\vx; s_1)\tilTV_2(s_2) \cdots \tilTV_N(s_N) \ketzero
=\prod_{1\leq i<j \leq N }
\frac{(qx_j/x_i;q)_{\infty}}{(tx_j/x_i;q)_{\infty}} \,
\ordmac(\boldsymbol{s};\boldsymbol{x}|q,q/t). \label{eq: Mac from TV}
\end{gather}
\end{fact}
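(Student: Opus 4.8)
The plan is to compute the left-hand side of~\eqref{eq: Mac from TV} directly by expanding each screened vertex operator $\tilTV_i$ via Fact~\ref{fact: til Ti = PhiSS..} and then performing the resulting normal-ordering calculation in the level $(1,0)$ Fock tensor space. First I would substitute the formula of Fact~\ref{fact: til Ti = PhiSS..} into $\brazero \tilTV_1(\vx;s_1)\tilTV_2(s_2)\cdots\tilTV_N(s_N)\ketzero$. This turns the vacuum expectation value into an $N$-fold sum: for each $i$ we get a sum over interlacing indices $0\le m^{(i)}_1\le\cdots\le m^{(i)}_{i-1}<\infty$, together with the prefactors $\big((q/t;q)_\infty/(q;q)_\infty\big)^{i-1}$ and the monomial weights $\prod_{k}(u_{k+1}/u_k)^{m^{(i)}_k}$; the operators appearing inside are $\phi_0(s_i)$ and the screening currents $\wS^{(j)}(q^{m^{(i)}_j}s_i)$. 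Relabel the summation matrix so that $\theta_{jk}$ (for $1\le j<k\le N$) records the shift $q^{m}$ attached to the $j$-th screening current coming from the $k$-th factor $\tilTV_k$; matching the index ranges with the set $\mathsf{M}_N$ of strictly upper-triangular matrices is a bookkeeping step.

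Next I would collect all the two-point functions. The operators present are $\phi_0(s_1),\dots,\phi_0(s_N)$ and the various $\wS^{(j)}(q^{m}s_i)$; every pairwise normal-ordering constant among them is given explicitly in Proposition~\ref{prop: phi0 S OPE} and the Proposition immediately preceding it (the $\wS$--$\wS$ OPE list), all of which are ratios of $(\,\cdot\,;q)_\infty$-factors. Since all operators are built from $\mathcal{A}$, $\mathcal{A}^*$, $\Phi_\emptyset$, $\Phi^*_\emptyset$ with arguments of the form $q^{\text{integer}}t^{\text{integer}}\gamma^{\text{integer}}\times(\text{spectral parameter})$, taking $\brazero\cdots\ketzero$ of the full normal-ordered product contributes only an overall constant (the $\phi_0$--$\phi_0$ contractions with $s_i/s_j$ arguments, which produce precisely the $\prod_{i<j}(qx_j/x_i;q)_\infty/(tx_j/x_i;q)_\infty$ prefactor on the right, after the spectral parameters $u_k$ are identified with $x_k$); the remaining screening-dependent contractions assemble, term by term in the multi-sum, into a product of $q$-shifted factorials $(q^{\bullet}s_j/s_i;q)_{\theta_{ik}}$ in numerator and denominator. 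The claim is that this product is exactly $c_N(\theta;\vs|q,q/t)$ of Definition~\ref{def: ordinary Mac}, i.e. $c_N$ evaluated at $t\mapsto q/t$; the replacement $t\to q/t$ is exactly what one expects from the appearance of $q/t$ (rather than $t$) in the OPE constants of Proposition~\ref{prop: phi0 S OPE}, cf.\ the occurrence of $c^{\mathrm{ellip}}_N(\theta;\vs|q,q/t,p)$ in Definition~\ref{def: ellip Mac}.

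The main obstacle is the combinatorial identity matching the normal-ordering product with $c_N(\theta;\vs|q,q/t)$: one must carefully track which screening currents are "adjacent" ($j$ and $j\pm1$), which are "far apart" (trivial OPE), and—crucially—how the interlacing conditions on the $m$-indices within a single $\tilTV_k$ interact with the shifts coming from screenings in the other factors $\tilTV_{k'}$. Organizing the telescoping of the infinite products $(\,\cdot\,;q)_\infty/(\,\cdot\,;q)_\infty$ into finite Pochhammer symbols $(\,\cdot\,;q)_{\theta_{ik}}$, with the correct arguments $q^{\sum_{a>k}(\theta_{ia}-\theta_{ja})}$ and $q^{-\theta_{jk}+\sum_{a>k}(\theta_{ia}-\theta_{ja})}$, is where the real work lies; I would handle it by induction on $N$, peeling off the last factor $\tilTV_N(s_N)$ (equivalently the last column of $\theta$) and checking that the newly produced factors are precisely the $k=N$ terms in the two products defining $c_N$. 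The cyclic identification $\wS^{(i+N)}=\wS^{(i)}$ and the special role of $\wS^{(0)}$ do not enter here since $\phi_0$ only interacts nontrivially with $\wS^{(0)}$ and $\wS^{(1)}$, and in the $\tilTV_i$ expansion only $\wS^{(1)},\dots,\wS^{(i-1)}$ appear; this is why the non-affine Macdonald function, rather than its elliptic/affine lift, comes out. Since all these normal-ordering formulas are already recorded in the excerpt, the argument is a (lengthy but) routine verification, and indeed the statement is quoted from Appendix A of~\cite{FOS2019Generalized}.
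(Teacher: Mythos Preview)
Your direct OPE strategy is sound and in fact mirrors the paper's proof of the elliptic analogue (Proposition~\ref{prop: trace wcT = fellip}). However, you have the roles of $\vx$ and $\vs$ swapped in the final identification. In $\tilTV_i(\vx;s_i)$ the \emph{positions} are the $s_i$ and the \emph{spectral parameters} are the $x_k$: the contraction $\phi_0(s_i)\phi_0(s_j)$ therefore produces $(qs_j/ts_i;q)_\infty/(ts_j/s_i;q)_\infty$, not an $x$-dependent factor as you claim (identifying the spectral parameters with $x_k$ has no effect on an OPE constant, which depends only on the positions). All screening OPE constants likewise involve ratios $s_j/s_i$, while the monomial weights $(u_{k+1}/u_k)^{m_k}$ give powers of $x_{k+1}/x_k$. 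Hence the coefficients you assemble are indeed $c_N(\theta;\vs|q,q/t)$ as you say, but paired with monomials $\prod(x_j/x_i)^{\theta_{ij}}$; the output of your computation is therefore
\[
\prod_{1\le i<j\le N}\frac{(qs_j/s_i;q)_\infty}{(ts_j/s_i;q)_\infty}\,\ordmac(\vx;\vs\,|\,q,q/t),
\]
which is the \emph{bispectral dual} of the stated right-hand side (cf.\ the Remark immediately following Fact~\ref{fact: macdonald from mukade}). You still need one invocation of Fact~\ref{fact: bispec dual} to reach the form with the $x$-prefactor and $\ordmac(\vs;\vx\,|\,q,q/t)$ that appears in~\eqref{eq: Mac from TV}.

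The paper itself takes a different route: it does not redo this OPE computation here but simply cites Appendix~A of \cite{FOS2019Generalized} for the proportionality, and then fixes the overall constant by extracting the constant term in the $s_i$'s and applying the $q$-binomial theorem, essentially the same computation carried out in Lemma~\ref{lem: Mukade VEV}.
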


In Appendix A in \cite{FOS2019Generalized},
(\ref{eq: Mac from TV}) was proved up to proportionality.
We can easily calculate the proportional constant
by taking the constant term of $s_i$'s
and using $q$-binomial theorem.

\begin{Remark}
By using the bispectral duality proved in \cite{NS2012direct},
the right hand side in (\ref{eq: Mac from TV}) can be rewritten as
\begin{gather*}
\prod_{1\leq i<j \leq N }
\frac{(qx_j/x_i;q)_{\infty}}{(tx_j/x_i;q)_{\infty}} \,
\ordmac(\boldsymbol{s};\boldsymbol{x}|q,q/t)=
\prod_{1\leq i<j \leq N }
\frac{(qs_j/s_i;q)_{\infty}}{(ts_j/s_i;q)_{\infty}} \,
\ordmac(\boldsymbol{x};\boldsymbol{s}|q,q/t).
\end{gather*}
We can also obtain this equation
by applying the $S$-duality formula for the intertwiners
(Theorem~\ref{fact: chang pref. direc.})
to the left hand side in (\ref{eq: Mac from TV})
and using Fact~\ref{fact: macdonald from mukade} again.
\end{Remark}

The formula (\ref{eq: Mac from TV}) should be understood as the equation
as formal power series in $s_{i+1}/s_i$ and $x_{i+1}/x_i$
($i=1,\ldots, N-1$).
By Fact~\ref{fact: analyticity},
we can also treat the variables $x_i$ and $s_i$ as complex numbers.
We will give an affine analogue of the above facts.
Since analyticity of the non-stationary Ruijsenaars functions
has not been clarified,
we treat $x_i$'s and $s_i$'s as indeterminates in the affine case.

Let $p$ be an indeterminate,
and
consider the following ``loop operator" obtained by the loop of the Mukad\'e operator.
(See Fig.~\ref{fig: mkdloop}.)

\begin{figure}[h]
\centering
\includegraphics[width=8cm]{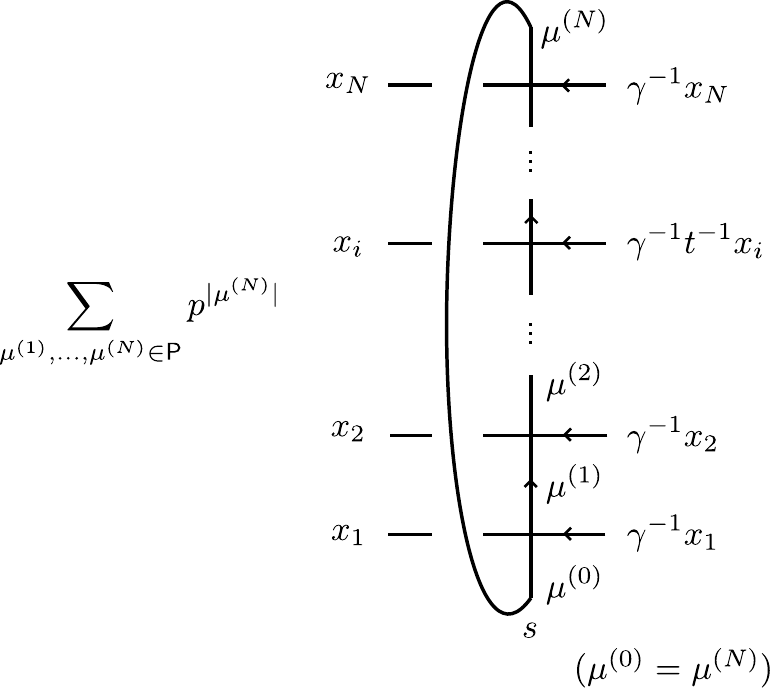}	\caption{$\mkdloop_i(\vx,p;s)$. }	\label{fig: mkdloop}
\end{figure}

\begin{Definition}
Define $\mkdloop_i(\vx,p; s)\colon \cF^{(N,0)}_{\gamma^{-1}t^{-\delta_i}\cdot \vx}\rightarrow \cF^{(N,0)}_{\vx}$ by
\begin{gather*}\label{eq: mkd loop def}
\mkdloop_i(\vx,p; s)=
\sum_{\substack{\mu^{(1)}, \ldots, \mu^{(N)} \in \parset\\ (\mu^{(0)}=\mu^{(N)})}}
p^{|\mN|}
\bigotimes^{\curvearrowright}_{1\leq k \leq N}
\Phicross
\left[ x_k ;
{ \frac{y_1\cdots y_{k}}{x_1\cdots x_{k}}s, \mu^{(k)} \atop \frac{y_1\cdots y_{k-1}}{x_1\cdots x_{k-1}} s, \mu^{(k-1)}} ;
y_k \right]
\Bigg|_{y_k \rightarrow \gamma^{-1} t^{-\delta_{i,k}}x_k}.
\end{gather*}
\end{Definition}

\begin{Definition}
Set the shifted screening currents
\begin{gather*}
S_i(z)=\widetilde{S}^{(i)}\big(t^{-i/N}z\big), \qquad 0 \leq i \leq N-1.
\end{gather*}
\end{Definition}

The screening currents $S_i(z)$ are
the realization of the operator in Appendix~\ref{sec: nonst. Ruijsenaars and affine sc.} in the case of~$\kappa=t^{-1}$.
In Fact~\ref{fact: til Ti = PhiSS..},
we expressed $\tilTV_i$ by composition of screening currents $\widetilde{S}^{(i)}(z)$ and~$\phi_0(z)$.
In the affine case,
we compose the screening currents as follow.

\begin{Definition}
Define the affine screened vertex operators
\begin{gather*}
\phi_i(z)={:}\phi_{i-1}\big(t^{-{1}/{N}}z\big) S_i(z){:} \qquad (i=1,\ldots, N-1),
\\
\phi_{i+N}(z)=\phi_{i}(z) ,
\\
\phi^i_{\lambda} (z) =\phi_{i-\ell(\lambda)} \big(t^{-(\ell(\lambda) + 1)/N} z\big)
\prod^{\curvearrowleft}_{1\leq j\leq \ell(\lambda)} S_{i-j+1}\big(t^{-j/N} q^{\lambda_j} z\big),
\\
\Phi^i_{\lambda}(z)=\left( \frac{(q/t;q)_{\infty}}{(q;q)_{\infty}} \right)^{\ell(\lambda)} \phi^i_{\lambda}(z).
\end{gather*}
\end{Definition}

The operator $\mkdloop_i$ can be expressed as follows.
This is an affine analogue of Fact~\ref{fact: til Ti = PhiSS..}.

\begin{Proposition}\label{prop: aff. scr. vertex}
Let $i=1,\ldots, N$. Then we have
\begin{gather}
\mkdloop_i(\vx,p; s) =\sum_{\lambda \in \parset}
p^{|\lambda|^{(i-1)}}\Phi^{i-1}_{\lambda} \big(t^{i/N}s\big)
\prod_{1\leq k \leq N} x_k^{|\lambda|^{(i-k)}-|\lambda|^{(i-k-1)}} ,
\label{eq: aff. scr. vertex}
\end{gather}
where $|\lambda|^{(i)} = \sum_{j \equiv i+1 (\mathrm{mod} N)}\lambda_j$.
\end{Proposition}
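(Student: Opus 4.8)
The plan is to reuse, almost verbatim, the degeneration mechanism behind Fact~\ref{fact: til Ti = PhiSS..} — which rewrites an open (non-affine) composition of specialized cross operators as $\phi_0$ dressed by screening currents — and to account for the single new ingredient, namely the preferred-direction $p$-trace that glues the $N$-fold Mukad\'e web into a cylinder.

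\emph{Step 1 (degenerating the crosses).} First I would insert the operator-product formulas of Fact~\ref{fact: intertwiner OPE} together with the decomposition of Proposition~\ref{prop: decomp. of Phi and A} into each of the $N$ factors $\Phicross$ occurring in $\mkdloop_i(\vx,p;s)$. Under $y_k\to\gamma^{-1}t^{-\delta_{i,k}}x_k$ one checks that every vertical spectral parameter becomes a pure power of $q,t$ times $s$ (since $y_j/x_j=\gamma^{-1}t^{-\delta_{i,j}}$ carries no $x$), and that the spectral ratio controlling the $k$-th cross equals $x_k/(\gamma y_k)=t^{\delta_{i,k}}$; hence the Nekrasov factor produced in slot $k$ is $N_{\mu^{(k-1)}\mu^{(k)}}(1)$ for $k\neq i$ and $N_{\mu^{(i-1)}\mu^{(i)}}(t)$ for $k=i$ — exactly the specializations handled in the proof of Fact~\ref{fact: til Ti = PhiSS..}. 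Their vanishing, together with the vanishing of $\wS^{(j)}(z)\wS^{(j)}(w)$ at $w/z=1$, collapses the sum over $(\mu^{(1)},\dots,\mu^{(N)})$; using the loop identification $\mu^{(0)}=\mu^{(N)}$, the surviving configurations are in bijection with partitions $\lambda\in\parset$, the preferred partitions being read off from $\lambda$ by grouping its parts according to residue class modulo $N$ as one winds around the cylinder, so that $\ell(\lambda)=\sum_{k=1}^{N}\ell(\mu^{(k)})$ and $|\mu^{(k)}|=|\lambda|^{(i-k-1)}$ for all $k$ — in particular $|\mu^{(N)}|=|\lambda|^{(i-1)}$.

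\emph{Step 2 (reassembling).} After reorganizing the sum over $\lambda$, Proposition~\ref{prop: decomp. of Phi and A} writes the surviving $\wPhi_{\mu^{(k-1)}}$ and $\wPhi^{*}_{\mu^{(k)}}$ as $\Phi_\emptyset,\Phi^{*}_\emptyset$ times strings of $\cA,\cAs$; the $\cAs$'s attached to $\mu^{(k)}$ (sitting in slot $k$) pair with the $\cA$'s attached to the same $\mu^{(k)}=\mu^{((k+1)-1)}$ (sitting in slot $k+1$) to form $\wS^{(k)}$-type screening currents at the points $q^{m}t^{-\bullet}$ prescribed by the parts $m$ of $\mu^{(k)}$, while the leftover $\Phi_\emptyset,\Phi^{*}_\emptyset$ content re-shifts — via elementary identities such as ${:}\Phi^{*}_\emptyset(t^{-1}z)\cAs(z){:}=\Phi^{*}_\emptyset(z)$ — into the vacuum intertwiners occurring inside $\phi_{i-1-\ell(\lambda)}$. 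Comparing with the recursions $\phi_j(z)={:}\phi_{j-1}(t^{-1/N}z)S_j(z){:}$, $S_j(z)=\wS^{(j)}(t^{-j/N}z)$ and $\phi^{i-1}_\lambda(z)=\phi_{i-1-\ell(\lambda)}\big(t^{-(\ell(\lambda)+1)/N}z\big)\prod^{\curvearrowleft}_{1\le j\le\ell(\lambda)}S_{i-j}\big(t^{-j/N}q^{\lambda_j}z\big)$, one identifies the surviving operator with $\phi^{i-1}_\lambda(t^{i/N}s)$, the constants $(q/t;q)_\infty/(q;q)_\infty$ thrown off by normal-ordering the screenings reproducing the prefactor $\big((q/t;q)_\infty/(q;q)_\infty\big)^{\ell(\lambda)}$ and hence $\Phi^{i-1}_\lambda(t^{i/N}s)$. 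The power of $p$ is $p^{|\mu^{(N)}|}=p^{|\lambda|^{(i-1)}}$ by Step 1, and since the only $x$-dependence of $\mkdloop_i$ lies in the prefactors $\hat t,\hat t^{*}$ of the intertwiners (which for $M=0$ contribute, from slot $k$, a factor $x_k^{|\mu^{(k-1)}|}$ from $\hat t$ and $x_k^{-|\mu^{(k)}|}$ from $\hat t^{*}$), the accompanying monomial is $\prod_{k=1}^{N}x_k^{|\mu^{(k-1)}|-|\mu^{(k)}|}=\prod_{k=1}^{N}x_k^{|\lambda|^{(i-k)}-|\lambda|^{(i-k-1)}}$; this is precisely (\ref{eq: aff. scr. vertex}).

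\emph{Main obstacle.} The genuine content is the combinatorial claim in Step 1: matching the tuples $(\mu^{(1)},\dots,\mu^{(N)})$ that survive the specialization with partitions $\lambda$, \emph{compatibly} with all the vertical spectral parameters and the intertwiner prefactors, so that the surviving term is literally $\Phi^{i-1}_\lambda(t^{i/N}s)$ with the stated $p$- and $x$-weights. Unlike the open situation of Fact~\ref{fact: til Ti = PhiSS..}, the cyclic closure allows unboundedly many screening insertions (a single $\lambda$ of arbitrary length, rather than the fixed number $i-1$), so the reduction does not follow formally from that fact; I would establish it by induction on $\ell(\lambda)$ — equivalently, by matching coefficients order by order in $p$ and in the ratios $x_{k+1}/x_k$ — using Proposition~\ref{prop: phi0 S OPE} and the screening OPEs to control the normal orderings at each stage.
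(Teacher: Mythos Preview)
Your overall strategy matches the paper's: normal-order each cross using Fact~\ref{fact: intertwiner OPE}, read off the Nekrasov factors $N_{\mu^{(k-1)}\mu^{(k)}}(t^{\delta_{k,i}})$, use their non-vanishing to reduce to a single partition $\lambda$, and reassemble the $\cA,\cAs,\Phi_\emptyset,\Phi^*_\emptyset$ pieces into $\phi^{i-1}_\lambda(t^{i/N}s)$. The bijection, the identification $|\mu^{(k)}|=|\lambda|^{(i-k-1)}$, and the $x$- and $p$-powers are all as in the paper.

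However, you have misidentified the main obstacle, and this hides a genuine gap. The bijection with $\lambda$ is actually the routine part: it follows directly from $N_{\nu,\rho}(1)\neq 0\iff \nu\subset\rho$ and $N_{\nu,\rho}(t)\neq 0\iff\bar\nu\subset\rho$, giving the cyclic interlacing $\mu^{(i)}\subset\mu^{(i+1)}\subset\cdots\subset\mu^{(i-1)}$, $\bar\mu^{(i-1)}\subset\mu^{(i)}$ (the vanishing of $\wS^{(j)}(z)\wS^{(j)}(w)$ at $w=z$ is the mechanism for the \emph{open} case of Fact~\ref{fact: til Ti = PhiSS..}, not here). What is genuinely nontrivial is the scalar prefactor. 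Each cross carries not only the Nekrasov factor but also the monomials $\hat t,\hat t^*$, which involve $c_{\mu^{(k)}}$, $c'_{\mu^{(k)}}$, $f_{\mu^{(k)}}$, $q^{n(\mu^{(k)\prime})}$; after normal-ordering the full operator becomes ${:}\phi^{i-1}_\lambda{:}$, not $\phi^{i-1}_\lambda$. Your assertion that the leftover constants are simply $\big((q/t;q)_\infty/(q;q)_\infty\big)^{\ell(\lambda)}$ is incorrect: the normal-ordering constant relating $\Phi^{i-1}_\lambda$ to ${:}\phi^{i-1}_\lambda{:}$ is
\[
t^{-|\lambda|^{(0)}}\,\frac{\sfN^{(0|N)}_{\lambda\lambda}(t|q,t^{-1/N})}{\sfN^{(0|N)}_{\lambda\lambda}(1|q,t^{-1/N})}
\]
(this is Lemma~\ref{lem: OPE of Phi_lambda(z)}), and one must separately show that the crossed prefactors $\prod_k N_{\mu^{(k-1)},\mu^{(k)}}(t^{\delta_{k,i}})\cdot q^{2n(\mu^{(k)\prime})}/(c_{\mu^{(k)}}c'_{\mu^{(k)}})\cdot(\cdots)$ collapse to exactly this expression times the $x$-monomial. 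The paper does this via the combinatorial identity of Lemma~\ref{lem: rational num eq},
\[
\prod_{k=1}^N \frac{N_{\mu^{(k-1)},\mu^{(k)}}(t^{\delta_{k,i}})}{N_{\mu^{(k)},\mu^{(k)}}(1)}
=\frac{\sfN^{(0|N)}_{\lambda\lambda}(t|q,t^{-1/N})}{\sfN^{(0|N)}_{\lambda\lambda}(1|q,t^{-1/N})},
\]
together with $c_{\mu}c'_{\mu}=(-1)^{|\mu|}q^{n(\mu')+|\mu|}t^{n(\mu)}N_{\mu,\mu}(1)$. Your ``direct matching of ordered products'' would in principle bypass these two lemmas, but only if you tracked every $\cA$--$\cAs$, $\Phi_\emptyset$--$\cA$, $\Phi^*_\emptyset$--$\cAs$ OPE and verified that the within-slot orderings in the crosses reproduce the $\phi\cdot S\cdots S$ ordering of $\phi^{i-1}_\lambda$; that bookkeeping is the same work in a different guise, and your proposal does not carry it out.
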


For the proof, we prepare two lemmas.

\begin{Lemma}\label{lem: rational num eq}
Let $i=1,\ldots, N$.
For a partition $\lambda \in \parset$,
set $\mu^{(k)}=
(\lambda_j ;\ 1\leq j \leq \ell(\lambda),\ i-j \equiv k \mod N)$.
Then, we have
\begin{gather*}
\prod_{1\leq k \leq N}
\frac{ N_{\mu^{(k-1)}, \mu^{(k)}}(t^{\delta_{k,i}}) }
{N_{\mu^{(k)}, \mu^{(k)}}(1)}
=\frac{\sfN^{(0|N)}_{\lambda \lambda}\big(t|q,t^{-1/N}\big)}
{\sfN^{(0|N)}_{\lambda \lambda}\big(1|q,t^{-1/N}\big) } ,
\end{gather*}
where $\sfN^{(k|N)}_{\lambda \mu}(z|q,\kappa)$ is defined in Definition~{\rm \ref{def: non-st. Ruij}}.
\end{Lemma}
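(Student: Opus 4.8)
The plan is to prove this purely combinatorial identity by bringing both sides to one and the same explicit finite product over the boxes of $\lambda$.

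\emph{The right-hand side.} First I would make $\sfN^{(0|N)}_{\lambda\lambda}(u\mid q,\kappa)$ explicit from Definition~\ref{def: non-st. Ruij}. It is built out of the doubly-shifted factorials $(\,\cdot\,;q,\kappa^N)_\infty$, which encode the affine (preferred) direction. Specialising $\kappa^N=t^{-1}$ and using $(a;q,\kappa^N)_\infty=\prod_{m\ge0}(a\,t^{-m};q)_\infty$ turns $\sfN^{(0|N)}_{\lambda\lambda}(u\mid q,t^{-1/N})$ into an $m$-indexed product of ordinary Nekrasov-type $q$-Pochhammer expressions with argument rescaled by $t^{-m}$. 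Since multiplying $u$ by $t=\kappa^{-N}$ merely shifts $m\mapsto m-1$, the ratio $\sfN^{(0|N)}_{\lambda\lambda}(t\mid q,t^{-1/N})\big/\sfN^{(0|N)}_{\lambda\lambda}(1\mid q,t^{-1/N})$ telescopes and collapses to a single finite product over the boxes of $\lambda$; call it $\mathcal{R}(\lambda)$. This collapse to a finite rational function is special to $\kappa=t^{-1/N}$, and it is what makes the ratio comparable with the manifestly finite left-hand side.

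\emph{The left-hand side.} Next I would expand each Nekrasov factor $N_{\mu^{(k-1)},\mu^{(k)}}$ and $N_{\mu^{(k)},\mu^{(k)}}$ as its product over boxes, and feed in the combinatorics of the $N$ slices $\mu^{(k)}=(\lambda_j:i-j\equiv k\bmod N)$: the $a$-th row of $\mu^{(k)}$ corresponds to a row of $\lambda$ in the residue class $j\equiv i-k$, arm lengths are unchanged under this correspondence, and the leg-length data of all the slices is reconstructed from that of $\lambda$ by distributing the rows of $\lambda$ according to their residue modulo $N$. Hence, in the cyclic chain $\prod_{k=1}^N N_{\mu^{(k-1)},\mu^{(k)}}(t^{\delta_{k,i}})$ (with $\mu^{(0)}=\mu^{(N)}$) the box factors contributed by successive slices interleave so as to rebuild the columns of $\lambda$, the accompanying powers $t^{\ell_{\mu^{(k)}}+1}$ telescope across the $N$ slices, and each self-factor $N_{\mu^{(k)},\mu^{(k)}}(1)$ in the denominator cancels exactly the diagonal part of the numerator. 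The lone twist $t^{\delta_{k,i}}$, equal to $\kappa^{-N}$ at $k=i$ and to $1$ otherwise, is precisely what stops the cyclic cancellation from being total; tracking the surviving powers of $q$ and $t$ one finds that what remains is exactly $\mathcal{R}(\lambda)$, whence the identity.

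\emph{Main obstacle.} The real work sits in the second step: matching the arm/leg data of the slices $\mu^{(k)}$ with that of $\lambda$ through the position-modulo-$N$ correspondence, verifying that the denominators $N_{\mu^{(k)},\mu^{(k)}}(1)$ cancel precisely the diagonal box contributions around the full cycle, and confirming that the boundary product left over by the twist at $k=i$ coincides with $\mathcal{R}(\lambda)$, exponents included. I would make this manageable by organising it as a column-by-column comparison of the two product expansions, or alternatively by induction on $|\lambda|$ — computing how each side changes when one box is added to $\lambda$ and checking that the two changes agree, with the trivial base case $\lambda=\emptyset$.
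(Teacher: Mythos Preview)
Your overall plan---reduce both sides to explicit finite products over row/column data of $\lambda$ via the slice correspondence, then match---is exactly the route the paper takes. The paper carries this out by rewriting each $N_{\mu^{(k-1)},\mu^{(k)}}$ and $N_{\mu^{(k)},\mu^{(k)}}$ directly in the $\lambda$-indexing (rows labelled by residues mod $N$), then splitting each into two factors, $N_{\mu^{(k-1)},\mu^{(k)}}=A^{(k-1,k)}\widetilde{A}^{(k-1,k)}$ and $N_{\mu^{(k)},\mu^{(k)}}=B^{(k)}\widetilde{B}^{(k)}$, so that $\prod_k A^{(k-1,k)}(t^{\delta_{k,i}})/B^{(k)}(1)=1$ cancels exactly, $\prod_k\widetilde{B}^{(k)}(1)=\sfN^{(0|N)}_{\lambda\lambda}(1\mid q,t^{-1/N})$, and $\widetilde{A}^{(i-1,i)}(t)\prod_{k\ne i}\widetilde{A}^{(k-1,k)}(1)=\sfN^{(0|N)}_{\lambda\lambda}(t\mid q,t^{-1/N})$. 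Your ``column-by-column'' telescoping is the informal version of this $A$/$\widetilde{A}$, $B$/$\widetilde{B}$ split.

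There is, however, a concrete error in your description of the right-hand side. By Definition~\ref{def: non-st. Ruij}, $\sfN^{(0|N)}_{\lambda\lambda}(u\mid q,\kappa)$ is \emph{already} a finite product of ordinary $q$-Pochhammer symbols $(\,\cdot\,;q)_n$ indexed by pairs of rows of $\lambda$ with a congruence condition; it is not built from doubly-shifted factorials $(\,\cdot\,;q,\kappa^N)_\infty$, and the ratio $\sfN^{(0|N)}_{\lambda\lambda}(t)/\sfN^{(0|N)}_{\lambda\lambda}(1)$ is a finite rational function for \emph{every} $\kappa$, not just $\kappa=t^{-1/N}$. So your ``telescoping in $m$'' step is based on a misreading of the definition and should be dropped; simply write out the two finite products and compare. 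Once you correct this, your plan and the paper's proof coincide.
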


The proof is given in Section~\ref{sec: pf of Nek fac lemm}.

\begin{Lemma}\label{lem: OPE of Phi_lambda(z)}
Let $i=1,\ldots, N$.
Then we have
\begin{gather*}
\Phi^{i-1}_{\lambda}(z)=t^{-|\lambda|^{(0)}}
\frac{\sfN^{(0|N)}_{\lambda \lambda}\big(t|q,t^{-1/N}\big)}
{\sfN^{(0|N)}_{\lambda \lambda}(1|q,t^{-1/N}\big) }{:}\phi^{i-1}_{\lambda}(z){:} .
\end{gather*}
\end{Lemma}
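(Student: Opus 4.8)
The plan is to compute the normal ordering of the product of vertex operators defining $\phi^{i-1}_\lambda(z)$ directly, collecting all the scalar OPE factors, and then identify the result with the Nekrasov-factor ratio via Lemma~\ref{lem: rational num eq}. Recall that by definition
\[
\Phi^{i-1}_\lambda(z) = \left(\frac{(q/t;q)_\infty}{(q;q)_\infty}\right)^{\ell(\lambda)}
\phi_{i-1-\ell(\lambda)}\big(t^{-(\ell(\lambda)+1)/N}z\big)
\prod^{\curvearrowleft}_{1\le j\le \ell(\lambda)} S_{i-j}\big(t^{-j/N}q^{\lambda_j}z\big),
\]
where each $\phi_k(z)$ is itself a normal-ordered string built from $\phi_0$ and the $S_j$'s. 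So the first step is to unfold $\phi_{i-1-\ell(\lambda)}$ into its constituent $\phi_0$ and screening pieces, so that $\Phi^{i-1}_\lambda(z)$ becomes, up to the explicit constant, a single (unordered) product of one $\phi_0$-type operator and a collection of shifted screening currents $\widetilde S^{(j)}$ evaluated at various points $t^{-\ast/N}q^{\ast}z$.

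Next I would multiply out all the pairwise OPE factors using the contraction formulas already established: the $\mathcal A$–$\mathcal A$, $\mathcal A^*$–$\mathcal A^*$, $\mathcal A$–$\mathcal A^*$ formulas in the Proposition preceding the definition of $\phi_0$, the $\widetilde S^{(i)}$–$\widetilde S^{(j)}$ formulas, and the $\phi_0$–$\widetilde S^{(j)}$ and $\phi_0$–$\phi_0$ formulas in Proposition~\ref{prop: phi0 S OPE}. Since the normal-ordered product $:\phi^{i-1}_\lambda(z):$ is by construction the same string with all contractions stripped, the ratio $\Phi^{i-1}_\lambda(z)/{:}\phi^{i-1}_\lambda(z){:}$ equals the explicit constant $\big((q/t;q)_\infty/(q;q)_\infty\big)^{\ell(\lambda)}$ times the product of all these scalar contraction factors. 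The claim is that this product collapses to $t^{-|\lambda|^{(0)}}\,\sfN^{(0|N)}_{\lambda\lambda}(t|q,t^{-1/N})/\sfN^{(0|N)}_{\lambda\lambda}(1|q,t^{-1/N})$.

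To make the collapse transparent I would reorganize the screening currents by their ``colour'' $k\pmod N$, grouping the arguments $q^{\lambda_j}t^{-j/N}z$ with $i-j\equiv k$ into the one-row pieces of $\mu^{(k)}$ exactly as in Lemma~\ref{lem: rational num eq}, and recognize that the product of the $(\cdots;q)_\infty$ factors coming from contractions within and between adjacent colour classes reassembles — after telescoping the infinite $q$-products into finite $q$-shifted factorials — into the Nekrasov factors $N_{\mu^{(k-1)},\mu^{(k)}}(t^{\delta_{k,i}})$ in the numerator and $N_{\mu^{(k)},\mu^{(k)}}(1)$ in the denominator. At that point Lemma~\ref{lem: rational num eq} finishes the identification. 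The prefactor $\big((q/t;q)_\infty/(q;q)_\infty\big)^{\ell(\lambda)}$ and the contractions involving the $\phi_0$-legs account for the normalizations $N_{\mu^{(k)},\mu^{(k)}}(1)$ and contribute the overall power $t^{-|\lambda|^{(0)}}$ (this is where the colour-$0$ class, i.e.\ the $j\equiv i$ indices, enters asymmetrically, matching the $t^{-i/N}$-type shifts built into $S_i(z)=\widetilde S^{(i)}(t^{-i/N}z)$).

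The main obstacle is bookkeeping: tracking precisely which spectral arguments appear in the unfolded product (the nested $t^{-1/N}$ shifts in $\phi_k \to :\phi_{k-1}(t^{-1/N}\cdot)S_k:$ compound with the $q^{\lambda_j}t^{-j/N}$ shifts), and then showing that the tangle of infinite $q$-products reorganizes colour-class by colour-class into Nekrasov factors without leftover factors. I expect the cleanest route is to first prove the analogous non-affine contraction identity implicit in Fact~\ref{fact: til Ti = PhiSS..} (where the same $\phi_0$–$\widetilde S$ structure produces $c_N(\theta;\vs)$) and then observe that the loop/affine case differs only by the cyclic wrap-around term — the single $S_{i-\ell(\lambda)}$–versus–$\phi_0$ contraction that turns the open Nekrasov chain into the closed one with the $t^{\delta_{k,i}}$ insertion — so that most of the computation is inherited rather than redone.
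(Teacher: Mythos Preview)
The paper does not give an in-house proof of this lemma: it simply cites \cite[Section~2.5]{Shiraishi2019affine}, where the computation is carried out for general~$\kappa$ directly from the screening-current OPEs. Your plan---unfold $\phi^{i-1}_\lambda$ into a single string of $\phi_0$ and shifted $\widetilde S^{(j)}$'s, then multiply out all pairwise contractions---is exactly that computation, so the approach is the same in substance.

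One comment on your proposed organisation. You intend to reassemble the contraction factors into the ``ordinary'' Nekrasov ratios $\prod_k N_{\mu^{(k-1)},\mu^{(k)}}(t^{\delta_{k,i}})/N_{\mu^{(k)},\mu^{(k)}}(1)$ and then invoke Lemma~\ref{lem: rational num eq}. That is a legitimate route, but be aware that in the paper's architecture those $N_{\mu^{(k-1)},\mu^{(k)}}$ factors arise on the \emph{intertwiner} side (via Fact~\ref{fact: intertwiner OPE} applied to the $\Phicross$ composition in the proof of Proposition~\ref{prop: aff. scr. vertex}), not from the screening OPEs. The screening OPEs produce ratios of infinite $(a;q)_\infty$ products that telescope more naturally to the \emph{affine} factors $\sfN^{(0|N)}_{\lambda\lambda}(t)/\sfN^{(0|N)}_{\lambda\lambda}(1)$ directly, without passing through the $N_{\mu^{(k)}}$'s. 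So your detour through Lemma~\ref{lem: rational num eq} works but is one extra translation layer compared to the direct route in \cite{Shiraishi2019affine}; either way the bookkeeping you flag is the only real content.
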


The proof is given in \cite[Section~2.5]{Shiraishi2019affine}.

\begin{proof}[Proof of Proposition~\ref{prop: aff. scr. vertex}]

First,
the Nekrasov factors
$\prod_{k=1}^N N_{\mu^{(k-1)}, \mu^{(k)}}\big(t^{\delta_{k,i}}\big)$
arise
from the normal ordering product of the operator
\begin{gather*}
\bigotimes^{\curvearrowright}_{1\leq k \leq N}
\Phicross
\left[ x_k ;
{ \frac{y_1\cdots y_{k}}{x_1\cdots x_{k}}s, \mu^{(k)} \atop \frac{y_1\cdots y_{k-1}}{x_1\cdots x_{k-1}} s, \mu^{(k-1)}} ;
y_k \right]
\Bigg|_{y_k \rightarrow \gamma^{-1} t^{-\delta_{i,k}}x_k}.
\end{gather*}
(See Fact~\ref{fact: intertwiner OPE}.)
In general,
for partitions $\nu$ and $\rho$,
we have $N_{\nu, \rho}(1)\neq 0$
(resp.~$N_{\nu, \rho}(t)\neq 0$)
if and only if $\nu\subset \rho $
(resp.~$\bar{\nu}\subset\rho $).
Here,
we put $\bar{\nu}=(\nu_2, \nu_3, \ldots)$
for $\nu=(\nu_1,\nu_2,\nu_3, \ldots)$.
Thus the partitions $\mu^{(k)}$ in (\ref{eq: mkd loop def}) are
restricted by the cyclic interlacing conditions
\begin{gather*}
\mu^{(k-1)} \subset \mu^{(k)}, \qquad k=1,\ldots, N\qquad (k\neq i);
\\
\bar{\mu}^{(i-1)} \subset \mu^{(i)}.
\end{gather*}
Therefore, the $\mu^{(k)}$'s can be expressed by
the single partition
\begin{gather*}
\lambda=\big(\mu^{(i-1)}_1,\mu^{(i-2)}_1,\ldots, \mu^{(1)}_1, \mu^{(N)}_1,\mu^{(N-1)}_1,\ldots, \mu^{(i)}_1,
\\ \hphantom{\lambda=\big(}
\mu^{(i-1)}_2,\mu^{(i-2)}_2,\ldots, \mu^{(1)}_2,\mu^{(N)}_2,\ldots,\mu^{(i)}_2,\mu^{(i-1)}_3,\ldots\big).
\end{gather*}
By using this $\lambda$,
the partitions $\mu^{(k)}$'s can be written as
\begin{gather*}
\mu^{(k)}=(\lambda_j ;\ 1\leq j \leq \ell(\lambda),\ i-j \equiv k \mod N).
\end{gather*}
Recalling Proposition~\ref{prop: decomp. of Phi and A}
and Definition~\ref{def: scr. current},
we have
\begin{gather}
\bigotimes^{\curvearrowright}_{1\leq k \leq N}
\Phicross\left[x_k;{\frac{y_1\cdots y_{k}}{x_1\cdots x_{k}}s, \mu^{(k)}
\atop \frac{y_1\cdots y_{k-1}}{x_1\cdots x_{k-1}} s, \mu^{(k-1)}};y_k \right]
\Bigg|_{y_k \rightarrow \gamma^{-1} t^{-\delta_{i,k}}x_k}\nonumber
\\ \hphantom{\bigotimes^{\curvearrowright}_{1\leq k \leq N}\Phicross}
{}=\prod_{1\leq k \leq N} N_{\mu^{(k-1)}, \mu^{(k)}}\big(t^{\delta_{k,i}}\big)
\frac{q^{2 n(\mu^{(k)'})}}{c_{\mu^{(k)}}c'_{\mu^{(k)}}}
\big(\gamma^{-1}t^{-\delta_{k,i}} x_k\big)^{|\mu^{(k-1)}|}f^{-1}_{\mu^{(k-1)}}
q^{|\mu^{(k)}|}x_k^{-|\mu^{(k)}|}\nonumber
\\ \hphantom{\bigotimes^{\curvearrowright}_{1\leq k \leq N}\Phicross\quad}
{} \times {:} \phi_0(t^{-l'}s)\prod_{k=1}^{i-1} \prod_{\alpha=0}^{l'}
\widetilde{S}^{(k)}\big(q^{\mu^{(k)}_{\alpha+1}} t^{-\alpha}s\big)
\prod_{\alpha=0}^{l'-1}
\widetilde{S}^{(0)}\big(q^{\mu^{(0)}_{\alpha+1}} t^{-\alpha}s\big)\nonumber
\\ \hphantom{\bigotimes^{\curvearrowright}_{1\leq k \leq N}\Phicross\quad}
{}\times \prod_{k=i}^{N-1} \prod_{\alpha=1}^{l'}
\widetilde{S}^{(k)}\big(q^{\mu^{(k)}_{\alpha}} t^{-\alpha}s\big){:},
\label{eq: decomp. by scr}
\end{gather}
where we put $l'=\big\lfloor \frac{\ell(\lambda)-i}{N} \big\rfloor+1$.
For $a\in \mathbb{Q}$,
$\lfloor a \rfloor$ is defined to be the integer $n$ satisfying
$n \leq a <n+1$.
Lemma~\ref{lem: rational num eq} and the equation
\begin{gather*}
c_{\lambda}c'_{\lambda}=(-1)^{(|\lambda|)} q^{n(\lambda') +|\lambda|}t^{n(\lambda)}N_{\lambda,\lambda}(1)
\end{gather*}
show that
\begin{gather*}
\prod_{1\leq k \leq N} N_{\mu^{(k-1)}, \mu^{(k)}}(t^{\delta_{k,i}})
\frac{q^{2 n(\mu^{(k)'})}}{c_{\mu^{(k)}}c'_{\mu^{(k)}}}
\big(\gamma^{-1}t^{-\delta_{k,i}} x_k\big)^{|\mu^{(k-1)}|}f^{-1}_{\mu^{(k-1)}}
q^{|\mu^{(k)}|}x_k^{-|\mu^{(k)}|}
\\ \hphantom{\prod_{1\leq k \leq N}}
{}=t^{-|\lambda|^{(0)}}
\frac{\sfN^{(0)}_{\lambda \lambda}\big(t|q,t^{-1/N}\big)}
{\sfN^{(0)}_{\lambda \lambda}\big(1|q,t^{-1/N}\big) }
\prod_{1\leq k \leq N} x_k^{|\lambda|^{(i-k)}-|\lambda|^{(i-k-1)}}.
\end{gather*}
Furthermore, by using the shifted screening current $S_k(z)$'s,
the operator part in (\ref{eq: decomp. by scr})
can be rewritten as
\begin{gather*}
{:} \phi_0(t^{-l'}s)\prod_{k=1}^{i-1} \prod_{\alpha=0}^{l'}
\widetilde{S}^{(k)}\big(q^{\mu^{(k)}_{\alpha+1}} t^{-\alpha}s\big)
\prod_{\alpha=0}^{l'-1}
\widetilde{S}^{(0)}\big(q^{\mu^{(0)}_{\alpha+1}} t^{-\alpha}s\big)
 \prod_{k=i}^{N-1} \prod_{\alpha=1}^{l'}
\widetilde{S}^{(k)}\big(q^{\mu^{(k)}_{\alpha}} t^{-\alpha}s\big){:}
\\ \hphantom{{:} \phi_0(t^{-l'}s)}
{}={:}\phi_0\big(t^{-l'}s\big)S_{i-\ell(\lambda)-1}\big(t^{-l'}t^{[i-\ell(\lambda)-1]/N}s\big)
\cdots S_2\big(t^{-l'}t^{2/N}\big)S_1\big(t^{-l'}t^{1/N}s\big)
\\ \hphantom{{:} \phi_0(t^{-l'}s)={:}}
{} \times \prod_{1\leq j \leq \ell(\lambda)}S_{i-j}\big(t^{-j/N}q^{\lambda_j} t^{i/N} s\big){:}
={:}\phi^{i-1}_{\lambda}\big(t^{i/N}s\big){:}.
\end{gather*}
Here, [a] is the integer satisfying that $0 \leq [a] <N$
and $[a] \equiv a \mod N$.
Therefore, by Lemma~\ref{lem: OPE of Phi_lambda(z)},
we can show that (\ref{eq: decomp. by scr}) coincides with
the RHS of (\ref{eq: aff. scr. vertex}).
\end{proof}

This proposition says that the vertex operators $\mkdloop_i(\vx,p; s)$ can be identified with the screened vertex operators which are used to construct the non-stationary Ruijsenaars function in \cite{Shiraishi2019affine}, though in our case, $\kappa$ should be specialized to $t^{-1/N}$.
(See Appendix~\ref{sec: nonst. Ruijsenaars and affine sc.}.)
This motivates us to state the affine analogue of Fact~\ref{fact: macdonald from mukade}, that is, to construct the non-stationary Ruijsenaars function as the matrix element of the composition of $\mkdloop_i(\vx,p; s)$'s.

In order to state the claim, we introduce the non-stationary Ruijsenaars function.
\begin{Definition}[\cite{Shiraishi2019affine}]\label{def: non-st. Ruij}
Let $\vs=(s_1,\ldots, s_N)$,
$\vx=(x_1,\ldots, x_N)$ be $N$-tuples of indeterminates.
Define $\nonstrui(\vx,p|\vs,\kappa|q,t)$
in $\mathbb{Q}(q,t,\vs)[[px_2/x_1,\ldots, px_N/x_{N-1}, px_1/x_N]]$ by
\begin{gather*}
f^{\widehat{\mathfrak{gl}}_N}(\vx,p|\vs,\kappa|q,t)
=
\sum_{\lambda^{(1)},\ldots,\lambda^{(N)}\in {\mathsf P}}
\prod_{i,j=1}^N
{\sfN^{(j-i|N)}_{\lambda^{(i)},\lambda^{(j)}} (ts_j/s_i|q,\kappa) \over \sfN^{(j-i|N)}_{\lambda^{(i)},\lambda^{(j)}} (s_j/s_i|q,\kappa)}
 \prod_{\beta=1}^N\prod_{\alpha\geq 1} ( p x_{\alpha+\beta}/tx_{\alpha+\beta-1})^{\lambda^{(\beta)}_\alpha}.
\end{gather*}
We cyclically identify $x_{i+N}=x_i$
and put
\begin{gather*}
\sfN^{(k|N)}_{\lambda,\mu}(u|q,\kappa)=\sfN^{(k)}_{\lambda,\mu}(u|q,\kappa)
\\ \hphantom{\sfN^{(k|N)}_{\lambda,\mu}(u|q,\kappa)}
{}= \!\!\!\prod_{j\geq i\geq 1 \atop j-i \equiv k ({\rm mod} N)}\!\!\!\!\!
\big(u q^{-\mu_i+\lambda_{j+1}} \kappa^{-i+j};q\big)_{\lambda_j-\lambda_{j+1}}
\!\!\!\!\!\!\!\prod_{\beta\geq \alpha \geq 1 \atop \beta-\alpha \equiv -k-1 ({\rm mod} N)}\!\!\!\!\!\!\!\!\!\!\!
\big(u q^{\lambda_{\alpha}-\mu_\beta} \kappa^{\alpha-\beta-1};q\big)_{\mu_{\beta}-\mu_{\beta+1}}.
\end{gather*}
\end{Definition}

Then, we obtain the following theorem.
(See also Fig.~\ref{fig: loop_script}.)

\begin{figure}[h]
\centering
 \includegraphics[width=10cm]{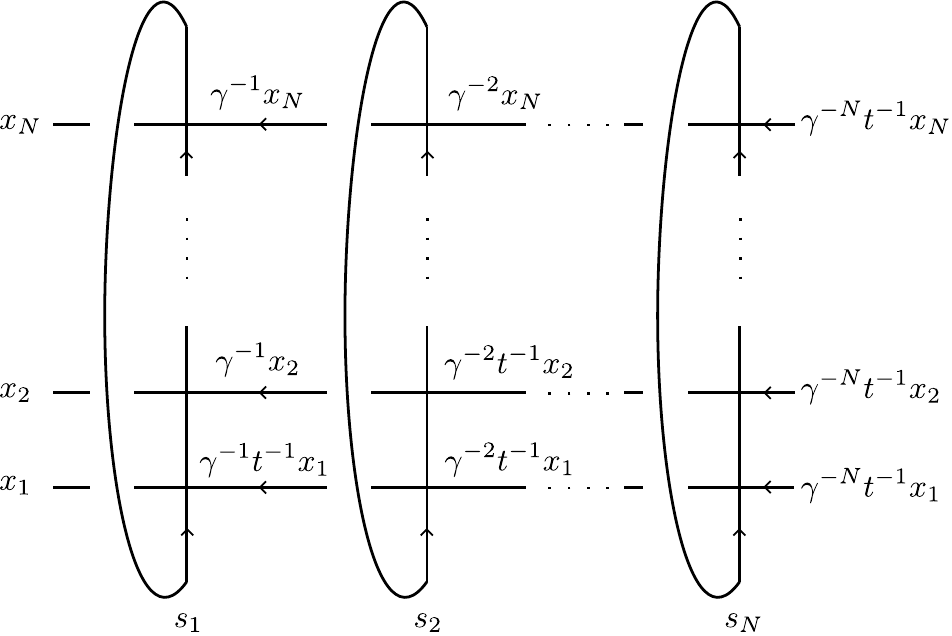}
	\caption{Non-stationary Ruijsenaars function $\nonstrui\big(\vx',p^{{1}/{N}}|\vs',t^{-{1}/{N}}|q,t\big)$. }
	\label{fig: loop_script}
\end{figure}

\begin{Theorem}\label{thm: Tr TH TH...}
Let
\begin{gather*}
\vx^{(i)}=\big(\gamma^{-i}t^{-1} x_1, \ldots, \gamma^{-i}t^{-1} x_{i},
\gamma^{-i} x_{i+1}, \ldots,\gamma^{-i} x_N\big),\qquad
0\leq i \leq N.
\end{gather*}
Then we obtain
\begin{gather*}
\brazero
\mkdloop_1\big(\vx^{(0)},p; s_1\big) \mkdloop_2\big(\vx^{(1)},p; s_2\big) \cdots \mkdloop_N\big(\vx^{(N-1)},p; s_N\big)\ketzero
\\ \hphantom{\brazero\mkdloop_1\big(}
{} =\prod_{1\leq i<j\leq N}
\frac{(qs_j/s_i;q)_{\infty}}{(ts_j/s_i;q)_{\infty}}\,
\nonstrui\big(\vx',p^{{1}/{N}}|\vs',t^{-{1}/{N}}|q,t\big).
\end{gather*}
Here, we set
\begin{gather*}
\vs'=(s'_1,\ldots ,s'_N),\qquad s'_k =t^{{k}/{N}}s_k,
\\
\vx'=(x'_1,\ldots ,x'_N),\qquad x'_k= p^{-{k}/{N}}x_k.
\end{gather*}
\end{Theorem}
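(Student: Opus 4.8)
The plan is to compute the vacuum expectation value on the left-hand side by first expanding each loop operator $\mkdloop_i$ using Proposition~\ref{prop: aff. scr. vertex}, so that the whole product becomes a multiple sum over partitions $\lambda^{(1)},\ldots,\lambda^{(N)}$ (one for each $\mkdloop_i$) of a product of normal-ordered vertex-operator monomials built out of $\phi_0$ and the shifted screening currents $S_j$. The spectral-parameter shifts hidden in $\vx^{(i)}$ and in the rescaled arguments $t^{i/N}s_i$ are precisely chosen so that, after using Lemma~\ref{lem: OPE of Phi_lambda(z)} to extract the $q,t$-prefactors of each $\Phi^{i-1}_\lambda$, the remaining operator part is a single global normal-ordered product ${:}\,\phi_0(\cdots)\prod S_j(\cdots)\,{:}$ whose vacuum expectation value is $1$ up to the contraction factors.

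Next I would collect the contraction factors. There are three sources: (i) the self-contractions inside each $\mkdloop_i$, which by Lemma~\ref{lem: rational num eq} and Lemma~\ref{lem: OPE of Phi_lambda(z)} combine into $t^{-|\lambda^{(i)}|^{(0)}}\,\sfN^{(0|N)}_{\lambda^{(i)}\lambda^{(i)}}(t|q,t^{-1/N})/\sfN^{(0|N)}_{\lambda^{(i)}\lambda^{(i)}}(1|q,t^{-1/N})$ together with the monomial $\prod_k x_k^{|\lambda^{(i)}|^{(i-k)}-|\lambda^{(i)}|^{(i-k-1)}}$; (ii) the cross-contractions between $\Phi^{i-1}_{\lambda^{(i)}}$ and $\Phi^{j-1}_{\lambda^{(j)}}$ for $i\neq j$, which are governed by the OPEs in Proposition~\ref{prop: phi0 S OPE} and the screening-current OPEs in the Proposition preceding it; these produce $q$-shifted factorials whose structure, after the substitutions $s'_k=t^{k/N}s_k$, reorganizes exactly into the ratio of Nekrasov factors $\sfN^{(j-i|N)}_{\lambda^{(i)}\lambda^{(j)}}(ts_j/s_i|q,t^{-1/N})/\sfN^{(j-i|N)}_{\lambda^{(i)}\lambda^{(j)}}(s_j/s_i|q,t^{-1/N})$ appearing in Definition~\ref{def: non-st. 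Ruij}; and (iii) the ``$\phi_0$–$\phi_0$'' and $\phi_0$-independent vacuum contractions, which furnish the global prefactor $\prod_{i<j}(qs_j/s_i;q)_\infty/(ts_j/s_i;q)_\infty$ on the right-hand side. The $p$-dependence is tracked through the weights $p^{|\lambda|^{(i-1)}}$ in Proposition~\ref{prop: aff. scr. vertex}; summing these over the $N$ operators and matching against the cyclic index-shift conventions yields the total power $p^{\sum_\beta\sum_\alpha \lambda^{(\beta)}_\alpha}$, i.e.\ the $p^{1/N}$-grading recorded in $x'_k=p^{-k/N}x_k$.

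The key cross-check is bookkeeping: one must verify that the exponents of $p$, of the $x_k$, and of the $s_k$ produced by (i)–(iii) assemble into exactly $\prod_{\beta=1}^N\prod_{\alpha\geq1}(px_{\alpha+\beta}/tx_{\alpha+\beta-1})^{\lambda^{(\beta)}_\alpha}$ after the shifts $\vx'$, $\vs'$, using the cyclic identification $x_{i+N}=x_i$. The cleanest way to organize this is to first prove the identity for a single pair $(\mkdloop_i,\mkdloop_j)$ — i.e.\ isolate the coefficient of a fixed monomial — and then argue that the full product is the ``obvious'' product of such pairwise contributions because all vertex operators are exponentials of bosons, so the log of the whole correlator is a sum of pairwise contractions plus self-terms. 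A convenient sanity check along the way is the limit $p\to0$: the sum collapses to $\lambda^{(\beta)}=\emptyset$ for all $\beta$, the right-hand side reduces to the $p=0$ term $1$ times the prefactor, and one recovers (the $p\to0$ shadow of) Fact~\ref{fact: macdonald from mukade}, which fixes all normalization constants.

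The main obstacle I anticipate is step (ii): showing that the product of $q$-shifted factorials coming from the cross-OPEs of the screened vertex operators — which a priori depend on the \emph{one-row} decomposition $\mu^{(k)}$ of each $\lambda^{(i)}$ and on the interlacing/cyclic structure — actually reassembles into the closed Nekrasov-factor form $\sfN^{(j-i|N)}_{\lambda^{(i)}\lambda^{(j)}}(u|q,\kappa)$ of Definition~\ref{def: non-st. Ruij} with $\kappa=t^{-1/N}$. This is the affine analogue of the non-affine identity underlying Fact~\ref{fact: macdonald from mukade}, and I expect it to require a careful induction on the number of boxes, or an appeal to the combinatorial identity for $\sfN^{(k|N)}$ already established in \cite{Shiraishi2019affine}. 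The remaining contraction computations are routine $q$-shifted-factorial manipulations, and the prefactor $\prod_{i<j}(qs_j/s_i;q)_\infty/(ts_j/s_i;q)_\infty$ drops out of the empty-partition ($p^0$) term just as in Fact~\ref{fact: macdonald from mukade}.
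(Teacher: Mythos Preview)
Your approach is essentially the same as the paper's: both start by applying Proposition~\ref{prop: aff. scr. vertex} to each $\mkdloop_i$ and then track the monomial and contraction factors. The paper's proof is more streamlined because, after collecting the monomial factors (your steps~(i) and the $p$-bookkeeping), it does not recompute the cross-contractions from scratch but instead directly invokes Fact~\ref{fact: <Phi...>=f} --- the screened-vertex-operator formula from \cite{Shiraishi2019affine} --- which packages your entire step~(ii) and the prefactor~(iii) into a single cited identity. Your anticipated ``main obstacle'' is precisely the content of that fact, so your own suggestion to appeal to the combinatorial identity already established in \cite{Shiraishi2019affine} is exactly the right move and is what the paper does.
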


\begin{proof}
Proposition~\ref{prop: aff. scr. vertex} gives
\begin{gather}
\brazero\mkdloop_1\big(\vx^{(0)},p; s_1\big) \mkdloop_2\big(\vx^{(1)},p; s_2\big) \cdots \mkdloop_N\big(\vx^{(N-1)},p; s_N\big)\ketzero \nonumber
\\ \hphantom{\brazero\mkdloop_1\big(}
{}=\sum_{\lambda^{(1)}, \ldots, \lambda^{(N)}}
\brazero \Phi^0_{\lo}\big(t^{1/N}s_1\big) \Phi^1_{\lt}\big(t^{2/N}s_2\big)\cdots
\Phi^{N-1}_{\lN}\big(t^{N/N}s_N\big) \ketzero \nonumber
\\\hphantom{\brazero\mkdloop_1\big(=}
{} \times \prod_{i=1}^N \prod_{k=1}^N (\gamma^{-i+1} t^{-\delta_{k\leq i-1}} x_k)^{|\lambda^{(i)}|^{(i-k)}-|\lambda^{(i)}|^{(i-k-1)}}
 \prod_{i=1}^N p^{|\lambda^{(i)}|^{(i-1)}}.
\label{eq: cal Tloop}
\end{gather}
Here, $\delta_{a\leq b}$ is $1$ if $a \leq b$ or $0$ if $a >b$.
Then, we have
\begin{gather*}
\prod_{i=1}^N \prod_{k=1}^N (\gamma^{-i+1})^{|\lambda^{(i)}|^{(i-k)}-|\lambda^{(i)}|^{(i-k-1)}}
=1,
\\
\prod_{i=1}^N \prod_{k=1}^N
(t^{-\delta_{k\leq i-1}})^{|\lambda^{(i)}|^{(i-k)}-|\lambda^{(i)}|^{(i-k-1)}}
=\prod_{i=1}^N t^{-|\lambda^{(i)}|^{(i-1)}+|\lambda^{(i)}|^{(0)}}
\end{gather*}
and
\begin{gather*}
\prod_{i=1}^N \prod_{k=1}^N
x_k^{|\lambda^{(i)}|^{(i-k)}-|\lambda^{(i)}|^{(i-k-1)}}
 \prod_{i=1}^N p^{|\lambda^{(i)}|^{(i-1)}}=
\prod_{i=1}^N \prod_{j=1}^{\ell(\lambda^{(i)})}
(x_{i-j+1}/x_{i-j})^{\lambda^{(i)}_j} \prod_{i=1}^N p^{|\lambda^{(i)}|^{(i-1)}}
\\ \hphantom{\prod_{i=1}^N \prod_{k=1}^N
x_k^{|\lambda^{(i)}|^{(i-k)}-|\lambda^{(i)}|^{(i-k-1)}}
 \prod_{i=1}^N p^{|\lambda^{(i)}|^{(i-1)}}}
{}=\prod_{i=1}^N \prod_{j=1}^{\ell(\lambda^{(i)})}
(p^{1/N}x'_{i-j+1}/x'_{i-j})^{\lambda^{(i)}_j}.
\end{gather*}
Therefore, Fact~\ref{fact: <Phi...>=f} shows that (\ref{eq: cal Tloop})
is equal to
\begin{gather}
\prod_{1\leq i<j\leq N}
\frac{(qs_j/s_i;q)_{\infty}}{(ts_j/s_i;q)_{\infty}}\,
\nonstrui\big((1/x'_N,\ldots, 1/x'_1),p^{{1}/{N}}\big|(1/s'_N,\ldots, 1/s'_N),t^{-{1}/{N}}\big|q,t\big).
\label{eq: transformed nonstrui}
\end{gather}
Finally,
it can be easily shown that
the non-stationary Ruijsenaars function in (\ref{eq: transformed nonstrui})
coincides with
$\nonstrui\big(\vx',p^{{1}/{N}}|\vs',t^{-{1}/{N}}|q,t\big)$.
\end{proof}

\begin{Remark}\label{rem: T^H trace}
The LHS in this theorem can be rewritten by the trace of the operators
$\tilTH$.
For an operator $A \in \mathrm{End}\big((\cF^{(0,1)})^{\otimes N} \big)$, set
the formal power series
\begin{gather*}
\mathrm{Tr}_p ( A )= \sum_{\vl \in \parset^N} p^{|\vl|} \bra{\vl} A \ket{\vl}.
\end{gather*}
Then it is clear that
\begin{gather*}
\brazero
\mkdloop_1\big(\vx^{(0)},p; s_1\big) \cdots \mkdloop_N\big(\vx^{(N-1)},p ; s_N\big)
\ketzero
=\mathrm{Tr}_p \big(\tilTH_N \big(\vs^{(N-1)};x_N\big) \cdots
\tilTH_1 \big( \vs^{(0)};x_1\big)\big).
\end{gather*}
Here,
\begin{gather*}
\vs^{(i)}=\big(\gamma^{-i}t^{-1} s_1, \ldots, \gamma^{-i}t^{-1} s_{i},
\gamma^{-i} s_{i+1}, \ldots,\gamma^{-i} s_N\big),\qquad
0\leq i \leq N.
\end{gather*}
\end{Remark}

\subsection[Lift to elliptic hypergeometric series and non-stationary Ruijsenaars function]
{Lift to elliptic hypergeometric series\\ and non-stationary Ruijsenaars function}
In the previous subsection,
we took the loop at vertical direction of
the reticulate diagram.
Next,
we calculate a loop at horizontal direction.
This loop can reproduce the lift
$\fellip_N(\vx,\vs|q,t,p)$
of the Macdonald function $\ordmac$ by the elliptic gamma functions.
Let us recall the definition of $\fellip_N(\vx,\vs|q,t,p)$.

\begin{Definition}\label{def: ellip Mac}
Define $\fellip_N(\vx;\vs|q,t,p) \in \mathbb{Q}(q,t,\vs)[[p,x_2/x_1,\ldots,x_N/x_{N-1}]]$ by
\begin{gather*}
\fellip_N(\vx;\vs|q,t,p)
=\sum_{\theta \in \mathsf{M}_N} \cellip_N(\theta;\boldsymbol{s}|q,q/t,p)
\prod_{1\leq i<j\leq N} (x_j/x_i)^{\theta_{ij}},
\end{gather*}
where
\begin{gather*}
\cellip_N(\theta;\boldsymbol{s}|q,t,p)
=\prod_{k=2}^{N}\prod_{1\le i<j\le k}
\dfrac{\wg{q^{\sum_{a>k}(\theta_{ia}-\theta_{ja})}ts_j/s_i}{\theta_{ik}}}
{\wg{q^{\sum_{a>k}(\theta_{ia}-\theta_{ja})}qs_j/s_i}{\theta_{ik}}}
\\ \hphantom{\cellip_N(\theta;\boldsymbol{s}|q,t,p)=}
\times
\prod_{k=2}^N
\prod_{1\le i\le j<k}
\dfrac{\wg{q^{-\theta_{jk}+\sum_{a>k}(\theta_{ia}-\theta_{ja})}qs_j/ts_i}{\theta_{ik}}}
{\wg{q^{-\theta_{jk}+\sum_{a>k}(\theta_{ia}-\theta_{ja})}s_j/s_i}{\theta_{ik}}}.
\end{gather*}
$\wg{-}{n}$ is defined in~(\ref{eq: def wg}).
\end{Definition}

It is clear that
the function $\fellip_N(\vx,\vs|q,t,p)$ is reduced to the ordinary Macdonald function,
i.e.,
\begin{gather*}
\fellip_N(\vx,\vs|q,t,p)
\underset{p \rightarrow 0}{\longrightarrow}
f^{\mathfrak{gl}_N}(\vx,\vs|q,q/t).
\end{gather*}

We give a realization of this elliptic lift by taking the trace of the Mukad\'e operators.
\begin{Definition}
For $A \in \mathrm{End}(\cF^{\otimes N})$,
define the $p$-trace $\tr (p^d A)$ to be
\begin{gather*}
\tr (p^d A) =
\sum_{\vl \in \parset^N} p^{|\vl|}
\frac{\bra{a_{\vl}} A \ket{a_{\vl}}}{\braket{a_{\vl}|a_{\vl}}}.
\end{gather*}
Note that the trace $\tr (p^d {-})$ certainly does not depend
on bases.
\end{Definition}

Our main purpose in this subsection is to compute the trace of the following operator.
See also Fig.~\ref{fig: loop_script_hor}.

\begin{figure}[h]
\centering
 \includegraphics[width=9cm]{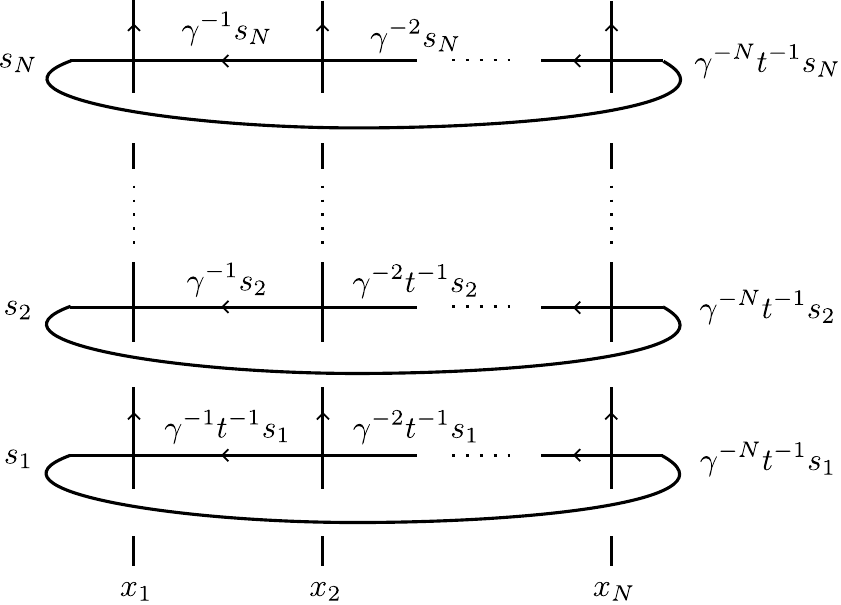}
	\caption{$\tr \big( p^d \wcT^N(\vs;\vx) \big)$. }
	\label{fig: loop_script_hor}
\end{figure}

\begin{Definition}
Define the $N$-compositions of operators $\tilTV_i$ as
\begin{gather*}
\wcT^N(\vs;\vx) := \tilTV_1\big(\vs^{(0)}; x_1\big) \tilTV_2\big(\vs^{(1)}; x_2\big)\cdots \tilTV_N\big(\vs^{(N-1)}; x_N\big),
\end{gather*}
where
\begin{gather}\label{eq: notation of s^i}
\vs^{(i)} = \big(\gamma^{-i} t^{-1} s_1, \dots, \gamma^{-i} t^{-1} s_{i},\gamma^{-i}s_{i+1},\dots,\gamma^{-i}u_N \big) .
\end{gather}
\end{Definition}

We state the key property of the $p$-trace.

\begin{notation}
Write
\begin{gather*}
\widehat{\mathcal{H}}^{N}[z]
:=
\left\{{\rm e}^{\sum_{r>0,1\leq i \leq N} R^{(i)}_{-r} a^{(i)}_{-r} z^r}
{\rm e}^{\sum_{r>0,1\leq i \leq N} R^{(i)}_{r} a^{(i)}_{r} z^{-r}}
\in \mathrm{End}\big(\cF^{\otimes N}\big)\big[\big[z^{\pm 1}\big]\big] \,|\,
R^{(i)}_r \in \mathbb{C} \right\}.
\end{gather*}
\end{notation}

\begin{Lemma}\label{lem: tr}
Let $A_i(z) \in \widehat{\mathcal{H}}^{N}[z]$ $(i=1,\ldots, n)$
be operators which satisfy
\begin{gather*}
A_i(z)A_j(w)=\prod_{l} \frac{1-a^{i,j}_lw/z}{1-b^{i,j}_lw/z}{:}A_i(z)A_j(w){:}
\qquad (a^{i,j}_l,b^{i,j}_l \in \mathbb{C}, \; i,j=1,\ldots, n),
\end{gather*}
where the product with respect to $l$
can be either finite or infinite if it converges.
Then, it follows that
\begin{gather*}
\tr \big( p^d A_1(z_1) \cdots A_n(z_n)\big)=
\frac{1}{(p;p)_{\infty}^N}\prod_{1\leq i<j\leq n}
\prod_l \frac{(a^{i,j}_lz_j/z_i;p)_{\infty}}{(b^{i,j}_lz_j/z_i;p)_{\infty}}
 \prod_{1\leq j\leq i\leq n}
\prod_l \frac{(pa^{i,j}_lz_j/z_i;p)_{\infty}}{(pb^{i,j}_lz_j/z_i;p)_{\infty}}.
\end{gather*}
In particular,
if the operators satisfy
\begin{gather*}
A_1(z)A_2(w) = \frac{(a w/z;q)_\infty}{(b w/z;q)_\infty}{:}A_1(z)A_2(w){:} ,\quad\
A_2(w)A_1(z) = \frac{(q z/ bw ;q)_\infty}{(q z/a w;q)_\infty}{:}A_2(w)A_1(z){:},
\end{gather*}
then we can rewrite the result by the elliptic gamma functions:
\begin{gather*}
\tr \big(p^d A_1(z)A_2(w)\big)=\frac{1}{(p;p)_\infty^N}
\frac{\Gamma(b w/z;q,p)}{\Gamma(a w/z;q,p)}\prod_{i=1}^2
\prod_l \frac{(pa^{i,i}_l;p)_{\infty}}{(pb^{i,i}_l;p)_{\infty}}.
\end{gather*}

\begin{proof}
Let $A^{\pm}_i(z)$ be the operators of the forms
\begin{gather*}
A^{-}_i(z)={\rm e}^{\sum_{r>0,1\leq i \leq N} R^{(i)}_{-r} a^{(i)}_{-r} z^r},\qquad
A^{+}_i(z)={\rm e}^{\sum_{r>0,1\leq i \leq N} R^{(i)}_{r} a^{(i)}_{r} z^{-r}} \qquad
\big(R^{(i)}_{\pm r} \in \mathbb{C}\big)
\end{gather*}
such that $A_i(z)=A^{-}_i(z)A^{+}_i(z)$.
Then we have
\begin{gather*}
\tr \big( p^d A_1(z_1) \cdots A_n(z_n)\big)
\\ \qquad
{}=\prod_{1\leq i<j\leq n}
\prod_l \frac{1-a^{i,j}_lz_j/z_i}{1-b^{i,j}_lz_j/z_i}\,
\tr \big(p^dA^-_1(z_1) \cdots A^-_n(z_n) A^+_1(z_1) \cdots A^+_n(z_n)\big)
\\ \qquad
{}=\prod_{1\leq i<j\leq n}
\prod_l \frac{1-a^{i,j}_lz_j/z_i}{1-b^{i,j}_lz_j/z_i}\,
\tr \big( p^d A^+_1\big(p^{-1}z_1\big) \cdots A^+_n(p^{-1}z_n)
A^-_1(z_1) \cdots A^-_n(z_n)\big)
\\ \qquad
{}=\prod_{1\leq i<j\leq n}
\prod_l \frac{1-a^{i,j}_lz_j/z_i}{1-b^{i,j}_lz_j/z_i}
\prod_{i,j=1}^n\prod_l \frac{1-p a^{i,j}_lz_j/z_i}{1-p b^{i,j}_lz_j/z_i}
\\ \phantom{\qquad{}=\prod_{1\leq i<j\leq n}}
\times\tr \big(p^d A^-_1(z_1)\cdots A^-_n(z_n)A^+_1\big(p^{-1}z_1\big) \cdots A^+_n\big(p^{-1}z_n\big)\big).
\end{gather*}
Since $\tr(p^d\cdot 1)=\frac{1}{(p;p)^N_{\infty}}$,
by repeating the calculation above,
it can shown that for any $m \in \mathbb{Z}_{>0}$,
\begin{gather*}
\tr \big( p^d A_1(z_1) \cdots A_n(z_n)\big)
\\ \qquad
{}=\frac{1}{(p;p)_{\infty}^N} \prod_{1\leq i<j\leq n}
\prod_l \frac{\big(a^{i,j}_lz_j/z_i;p\big)_{m}}{\big(b^{i,j}_lz_j/z_i;p\big)_{m}}
 \prod_{1\leq j\leq i\leq n}
\prod_l \frac{\big(pa^{i,j}_lz_j/z_i;p\big)_{m}}{\big(pb^{i,j}_lz_j/z_i;p\big)_{m}}
+\mathcal{O}(p^m).
\end{gather*}
This completes the proof.
\end{proof}
\end{Lemma}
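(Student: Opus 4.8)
The plan is to evaluate the $p$-trace by the standard torus-correlation-function technique: convert $\tr(p^d\,\cdot)$ into an infinite product by repeatedly cycling the annihilation part of the operator around the trace. Concretely, write $A_i(z)=A_i^-(z)A_i^+(z)$ with $A_i^\mp(z)$ the creation/annihilation pieces as in the statement, and note that the $A_i^+$'s mutually commute and so do the $A_i^-$'s (positive modes commute with positive modes). Normal-ordering $A_1(z_1)\cdots A_n(z_n)$, that is, moving every $A^-$ to the left, produces exactly the prefactor $\prod_{1\le i<j\le n}\prod_l\frac{1-a^{i,j}_lz_j/z_i}{1-b^{i,j}_lz_j/z_i}$ times ${:}A_1(z_1)\cdots A_n(z_n){:}=A_1^-(z_1)\cdots A_n^-(z_n)A_1^+(z_1)\cdots A_n^+(z_n)$.

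The engine of the computation is cyclicity of the trace together with $p^{-d}A_i^+(z)p^{d}=A_i^+(p^{-1}z)$, where $d$ is the grading operator ($d\ket{a_{\vl}}=|\vl|\ket{a_{\vl}}$). Cycling the block $A_1^+(z_1)\cdots A_n^+(z_n)$ from the right end of the trace past $p^d$ turns it into $A_1^+(p^{-1}z_1)\cdots A_n^+(p^{-1}z_n)$; pushing it back past all the $A^-$'s then costs a factor $\prod_{i,j=1}^n\prod_l\frac{1-pa^{i,j}_lz_j/z_i}{1-pb^{i,j}_lz_j/z_i}$, since the shifted argument $p^{-1}z_i$ turns each $z_j/z_i$ into $pz_j/z_i$ in the OPE. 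Iterating $m$ times rescales the $A^+$-arguments to $p^{-m}z_i$ and multiplies in $\prod_{k=1}^m\prod_{i,j}\prod_l\frac{1-p^ka^{i,j}_lz_j/z_i}{1-p^kb^{i,j}_lz_j/z_i}$; since everything is a formal power series in $p$, only finitely many rounds contribute to each coefficient and the residual operator tends to $1$. Using $\tr(p^d\cdot1)=\prod_{n\ge1}(1-p^n)^{-N}=(p;p)_\infty^{-N}$ (one factor per bosonic Fock space) then gives $\tr(p^dA_1(z_1)\cdots A_n(z_n))$ equal to $(p;p)_\infty^{-N}$ times the above prefactor times $\prod_{k\ge1}\prod_{i,j=1}^n\prod_l\frac{1-p^ka^{i,j}_lz_j/z_i}{1-p^kb^{i,j}_lz_j/z_i}$.

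To reach the stated form I would rewrite the $k$-product as $\prod_{i,j}\prod_l\frac{(pa^{i,j}_lz_j/z_i;p)_\infty}{(pb^{i,j}_lz_j/z_i;p)_\infty}$, split $\prod_{i,j}=\prod_{i<j}\prod_{i=j}\prod_{i>j}$, and merge the $i<j$ part with the leading single-variable factor via $(x;p)_\infty=(1-x)(px;p)_\infty$, which yields $\prod_{1\le i<j\le n}\prod_l\frac{(a^{i,j}_lz_j/z_i;p)_\infty}{(b^{i,j}_lz_j/z_i;p)_\infty}$ while the $i=j$ and $i>j$ pieces combine into $\prod_{1\le j\le i\le n}\prod_l\frac{(pa^{i,j}_lz_j/z_i;p)_\infty}{(pb^{i,j}_lz_j/z_i;p)_\infty}$. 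For the $n=2$ elliptic-gamma specialization I would read off from $A_1(z)A_2(w)=\frac{(aw/z;q)_\infty}{(bw/z;q)_\infty}{:}A_1A_2{:}$ that $a^{1,2}_m=aq^m$, $b^{1,2}_m=bq^m$, and from $A_2(w)A_1(z)=\frac{(qz/bw;q)_\infty}{(qz/aw;q)_\infty}{:}A_2A_1{:}$ that $a^{2,1}_m=q^{m+1}/b$, $b^{2,1}_m=q^{m+1}/a$; then $(x;q,p)_\infty=\prod_{j\ge0}(q^jx;p)_\infty$ converts $\prod_{m\ge0}\frac{(aq^mw/z;p)_\infty}{(bq^mw/z;p)_\infty}$ and $\prod_{m\ge0}\frac{(pq^{m+1}z/(bw);p)_\infty}{(pq^{m+1}z/(aw);p)_\infty}$ into $\frac{(aw/z;q,p)_\infty\,(pqz/(bw);q,p)_\infty}{(bw/z;q,p)_\infty\,(pqz/(aw);q,p)_\infty}$, which is $\Gamma(bw/z;q,p)/\Gamma(aw/z;q,p)$, leaving the diagonal contributions as $\prod_{i=1}^2\prod_l\frac{(pa^{i,i}_l;p)_\infty}{(pb^{i,i}_l;p)_\infty}$.

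The step requiring the most care is the iteration: one has to check that cycling the $A^+$-block around the trace and re-normal-ordering is a valid identity of formal power series in $p$, so that the infinite $k$-product is meaningful, and — when the coefficients $a^{i,j}_l,b^{i,j}_l$ themselves come from infinite $l$-products, as in the elliptic case — that all the interchanges of $\prod_k$ and $\prod_l$ remain absolutely convergent on the relevant domain. Everything else is routine bookkeeping with $q$-shifted factorials.
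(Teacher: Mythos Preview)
Your proposal is correct and follows essentially the same route as the paper's proof: decompose $A_i=A_i^-A_i^+$, normal order to extract the $i<j$ prefactor, cycle the $A^+$-block around the trace using $p^{-d}A_i^+(z)p^d=A_i^+(p^{-1}z)$, re-normal order to pick up the $\prod_{i,j}$ factor with arguments shifted by $p$, iterate, and use $\tr(p^d\cdot 1)=(p;p)_\infty^{-N}$ together with the $\mathcal{O}(p^m)$ control to conclude. Your treatment of the elliptic-gamma specialization is in fact more explicit than the paper's, which simply states the ``in particular'' without spelling out the identification $a^{1,2}_m=aq^m$, etc.; otherwise the two arguments coincide step for step.
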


\begin{Proposition}\label{prop: trace wcT = fellip}
We obtain
\begin{gather*}
 \tr \big( p^d \wcT^N(\vs;\vx) \big)
=
\left(\frac{(pq/t;q,p)_{\infty}}{(p;p)_{\infty}(pt;q,p)_{\infty}} \right)^{N}
\prod_{1\leq i<j\leq N} \frac{\Gamma(tx_j/x_i;q,p)}{\Gamma(qx_j/x_i;q,p)}
\, \fellip_N(\vs;\vx|q,t,p).
\end{gather*}
\end{Proposition}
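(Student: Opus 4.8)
The plan is to realize this as the $p$-trace analogue of Fact~\ref{fact: macdonald from mukade}. First I would unfold $\wcT^N(\vs;\vx)=\tilTV_1\big(\vs^{(0)};x_1\big)\cdots\tilTV_N\big(\vs^{(N-1)};x_N\big)$ by Fact~\ref{fact: til Ti = PhiSS..}, so that $\wcT^N(\vs;\vx)$ becomes a sum, over tuples of nondecreasing integer sequences $\big(0\le m^{(i)}_1\le\cdots\le m^{(i)}_{i-1}\big)_{i=1}^N$, of an ordered product of the $N$ operators $\phi_0(x_i)$ and the $\binom{N}{2}$ operators $\wS^{(j)}\big(q^{m^{(i)}_j}x_i\big)$, weighted by a monomial in the ratios of the components of $\vs^{(i-1)}$ and a fixed power of $(q/t;q)_\infty/(q;q)_\infty$. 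All of these operators lie in $\widehat{\mathcal H}^N[z]$ and all of their pairwise normal orderings are among those collected in Proposition~\ref{prop: phi0 S OPE} and in the list of $\wS^{(i)}$--$\wS^{(j)}$ relations stated just before it; hence Lemma~\ref{lem: tr} evaluates the $p$-trace of each term as an explicit product of $(\cdot;p)_\infty$-factors, equivalently of elliptic gamma functions. The interchange of the infinite summation with the trace is justified by reading both sides as formal power series in $p$ and in the ratios $x_{i+1}/x_i$, each coefficient being a finite sum.

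The key structural observation is that, by Lemma~\ref{lem: tr}, taking $\tr\big(p^d\,\cdot\,\big)$ in place of $\brazero\,\cdot\,\ketzero$ has two effects on the computation underlying \eqref{eq: Mac from TV}: (a) it contributes the $\theta$-independent ``diagonal'' factors of the lemma, namely the prefactor $(p;p)_\infty^{-N}$ together with one self-contraction factor per operator; (b) it replaces each two-point contraction factor $\frac{(a\,w/z;q)_\infty}{(b\,w/z;q)_\infty}$ coming from the $i<j$ ordered pairs by the same factor multiplied by its ``wrap-around'' $p$-shifted partner, i.e.\ it performs the substitution $(a;q)_n\mapsto\wg{a}{n}$ of \eqref{eq: def wg} on the finite $q$-Pochhammers and $(a;q)_\infty\mapsto 1/\Gamma(a;q,p)$ on the infinite ones. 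For the $N$ operators $\phi_0$, whose self-contraction OPE is $\phi_0(z)\phi_0(w)=\frac{(qw/tz;q)_\infty}{(tw/z;q)_\infty}{:}\phi_0(z)\phi_0(w){:}$, the diagonal factors in (a) produce exactly $\big(\frac{(pq/t;q,p)_\infty}{(p;p)_\infty(pt;q,p)_\infty}\big)^N$; the self-contractions of the $\binom{N}{2}$ operators $\wS^{(j)}$, together with the fixed power of $(q/t;q)_\infty/(q;q)_\infty$ above, are $\theta$-independent and, I expect, cancel against $\theta$-independent portions of the cross-contractions, leaving no extra constant.

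It then remains to match the remaining ($\theta$-summed) part with $\prod_{1\le i<j\le N}\frac{\Gamma(tx_j/x_i;q,p)}{\Gamma(qx_j/x_i;q,p)}\fellip_N(\vs;\vx|q,t,p)$. For this I would specialize $p=0$: the $p$-trace then collapses to the vacuum matrix element, which is (after exchanging the roles of $\vx$ and $\vs$ and absorbing the $\gamma$- and $t$-shifts in $\vs^{(i-1)}$) the left-hand side of \eqref{eq: Mac from TV}; Fact~\ref{fact: macdonald from mukade}, in the bispectrally dual form recorded just after it, evaluates it to $\prod_{1\le i<j\le N}\frac{(qx_j/x_i;q)_\infty}{(tx_j/x_i;q)_\infty}\,\ordmac(\vs;\vx|q,q/t)$, and comparing with Definitions~\ref{def: ordinary Mac} and~\ref{def: ellip Mac} one sees that applying the substitution (b) to every $q$-Pochhammer occurring here turns $\frac{(qx_j/x_i;q)_\infty}{(tx_j/x_i;q)_\infty}$ into $\frac{\Gamma(tx_j/x_i;q,p)}{\Gamma(qx_j/x_i;q,p)}$ and each coefficient $c_N(\theta;\vs|q,q/t)$ into $\cellip_N(\theta;\vs|q,q/t,p)$, whose $\theta$-sum is $\fellip_N(\vs;\vx|q,t,p)$. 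Assembling (a), (b) and this identification completes the proof.

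The main obstacle is precisely the verification of (b): one must carry the spectral-parameter shifts ($\gamma^{-i}$, $t^{\pm1}$, $t^{-i/N}$ and the summation variables $q^{m^{(i)}_j}$) through the normal-ordering relations of Proposition~\ref{prop: phi0 S OPE} and of the $\wS$--$\wS$ normal-ordering relations, and check that, once the wrap-around contractions demanded by Lemma~\ref{lem: tr} are included, the products of $(\cdot;p)_\infty$-factors collapse exactly --- and in the precise index arrangement of Definition~\ref{def: ellip Mac} --- to the elliptic shifted products $\wg{-}{n}$ in $\cellip_N(\theta;\vs|q,q/t,p)$, while simultaneously the $\wS^{(j)}$ self-contractions and the leftover $q/t$-powers cancel so that the only surviving constant is $\big(\frac{(pq/t;q,p)_\infty}{(p;p)_\infty(pt;q,p)_\infty}\big)^N$. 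I would perform this check by direct comparison with the closed formula for $\cellip_N$, not by manipulating the operator products in the abstract.
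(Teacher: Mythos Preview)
Your plan is essentially the paper's: unfold $\wcT^N$ via Fact~\ref{fact: til Ti = PhiSS..}, evaluate the $p$-trace term by term via Lemma~\ref{lem: tr}, and match the result against the closed formula for $\cellip_N$ after a change of summation variables $m^{(i)}_j\mapsto\theta$. Your ``ellipticization'' heuristic (b) is the right intuition and is exactly what drives the replacement $c_N\mapsto\cellip_N$, but as you correctly anticipate it is not a shortcut: the index bookkeeping is the bulk of the work either way, and the paper simply carries it out directly. Two small corrections to your expectations: first, the $\wS^{(j)}$ self-contractions do \emph{not} cancel --- each contributes an extra $(p;p)_\infty\,(pq/t;q,p)_\infty/(pt;q,p)_\infty$, and these, together with the $\big((q/t;q)_\infty/(q;q)_\infty\big)^{\binom{N}{2}}$ from Fact~\ref{fact: til Ti = PhiSS..}, recombine into $\big(\Gamma(q;q,p)/\Gamma(q/t;q,p)\big)^{\binom{N}{2}}$ and are absorbed into the $i=j$ factors $\wg{q/t}{\theta}/\wg{q}{\theta}$ inside $\cellip_N$ rather than disappearing; second, the paper's direct computation lands on $\fellip_N(1/s_N,\ldots,1/s_1;1/x_N,\ldots,1/x_1|q,q/t,p)$ and closes with the reversal symmetry $\fellip_N(1/x_N,\ldots;1/s_N,\ldots)=\fellip_N(x_1,\ldots;s_1,\ldots)$, a step your outline does not anticipate (your appeal to the bispectrally dual form of Fact~\ref{fact: macdonald from mukade} plays the analogous role only at $p=0$).
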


\begin{proof}
Using Fact~\ref{fact: til Ti = PhiSS..} and Lemma~\ref{lem: tr},
we can compute the trace as
\begin{gather*}
\tr \big( p^d \wcT^N(\vs;\vx) \big)
\\ \hphantom{\tr \big(}
{}=\left( \frac{(q/t;q)_{\infty}}{(q;q)_{\infty}}\right)^{N(N-1)/2}(p;p)^{N(N-3)/2}
\left(\frac{(pq/t;q,p)_\infty}{(pt;q,p)_\infty}\right)^{N(N+1)/2}
\\ \hphantom{\tr \big(=}
{}\times\prod_{1\leq i< j \leq N}\frac{\elG{tx_j/x_i}}{\elG{qx_j/tx_i}}
\sum_{(m_{i,j})_{1 \leq i<j\leq N}}
\prod_{1\leq i<j\leq N} \frac{\elG{ q q^{m_{i,j} - m_{i-1,j}}/t}}{\elG{q q^{m_{i,j} - m_{i-1,j}}}}
\\ \hphantom{\tr \big(=}
\times \prod_{1\leq i< j\leq N }
\frac{\elG{qq^{m_{1,j}} x_j/tx_i}}{\elG{q q^{m_{1,j}}x_j/x_i}}
\\ \hphantom{\tr \big(=}
 \times \prod_{k=2}^{N-1}\prod_{k\leq i < j \leq N }
\frac{\elG{q q^{m_{k-1,j} - m_{k-1,i}}x_j/x_i}}{\elG{q^{m_{k-1,j} - m_{k-1,i}}x_j/x_i}}
\frac{\elG{tq^{m_{k-1,j} - m_{k-1,i}}x_j/x_i}}{\elG{q q^{m_{k-1,j} - m_{k-1,i}}x_j/tx_i}}
\\ \hphantom{\tr \big(=}
 \times \prod_{k=2}^{N-1}\prod_{k\leq i < j \leq N }
\frac{\elG{q^{m_{k-2,j} - m_{k-1,i}}x_j/x_i}}{\elG{t q^{m_{i-2,j} - m_{k-1,i}}x_j/x_i}}
\frac{\elG{qq^{m_{k,j} - m_{k-1,i}}x_j/tx_i}}{\elG{q q^{m_{k,j} - m_{k-1,i}}x_j/x_i}}
\\ \hphantom{\tr \big(=}
\times\prod_{k=2}^N \prod_{i=1}^{k-1} \big(t^{\delta_{i,k-1}}s_{i+1}/s_{i}\big)^{m_{i,k}}.
\end{gather*}
Here, the summation runs over all integers such that
$m_{j-1,j}\geq m_{j-2,j} \geq \cdots \geq m_{1,j} \geq 0$
($j=2,\ldots , N$).
Then, it can be shown that
\begin{gather*}
\left( \frac{(q/t;q)_{\infty}}{(q;q)_{\infty}}\right)^{N(N-1)/2} (p;p)^{N(N-3)/2} \left(\frac{(pq/t;q,p)_\infty}{(pt;q,p)_\infty}\right)^{N(N+1)/2}
\\ \qquad
=\left(\frac{\Gamma(q;q,p)}{\Gamma(q/t;q,p)}\right)^{N(N-1)/2}
\left(\frac{(pq/t;q,p)_{\infty}}{(p;p)_{\infty}(pt;q,p)_{\infty}} \right)^{N}.
\end{gather*}
Put $\theta_{1,j}:=m_{1,j}$ ($j=2,\ldots, N$)
and $\theta_{i,j}:=m_{i,j}-m_{i-1,j}$ ($j \geq 3$, $i=2,\ldots, j-1$).
We have
\begin{gather*}
\left(\frac{\Gamma(q;q,p)}{\Gamma(q/t;q,p)}\right)^{N(N-1)/2}
\prod_{1\leq i<j\leq N} \frac{\elG{ q q^{m_{i,j} - m_{i-1,j}}/t}}{\elG{q q^{m_{i,j} - m_{i-1,j}}}} =
\prod_{1\leq i<j\leq N}\frac{\wg{q/t}{\theta_{i,j}}}{\wg{q}{\theta_{i,j}}},
\\
\prod_{1\leq i< j\leq N }
\frac{\elG{qq^{m_{1,j}} x_j/tx_i}}{\elG{q q^{m_{1,j}}x_j/x_i}}
=
\prod_{1\leq i< j\leq N }
\frac{\wg{qx_j/tx_i}{\theta_{1,j}}}{\wg{qx_j/x_i}{\theta_{1,j}}}
\frac{\elG{qx_j/tx_i}}{\elG{qx_j/x_i}}
\end{gather*}
and
\begin{gather}
\prod_{k=2}^{N-1}\prod_{k\leq i < j \leq N }
\frac{\elG{q q^{m_{k-1,j} - m_{k-1,i}}x_j/x_i}}{\elG{q^{m_{k-1,j} - m_{k-1,i}}x_j/x_i}}
\frac{\elG{tq^{m_{k-1,j} - m_{k-1,i}}x_j/x_i}}{\elG{q q^{m_{k-1,j} - m_{k-1,i}}x_j/tx_i}}\nonumber
\\ \quad
\times \prod_{k=2}^{N-1}\prod_{k\leq i < j \leq N }
\frac{\elG{q^{m_{k-2,j} - m_{k-1,i}}x_j/x_i}}{\elG{t q^{m_{i-2,j} - m_{k-1,i}}x_j/x_i}}
\frac{\elG{qq^{m_{k,j} - m_{k-1,i}}x_j/tx_i}}{\elG{q q^{m_{k,j} - m_{k-1,i}}x_j/x_i}}\nonumber
\\ \quad
=\prod_{k=2}^{N-1}\prod_{k\leq i < j\leq N}\!\!
\frac{\wg{tq^{m_{k-2,j}-m_{k-1,i}} x_j/x_i }{\theta_{k-1,j}}}
{\wg{q^{m_{k-2,j}-m_{k-1,i}} x_j/x_i }{\theta_{k-1,j}}}
\frac{\wg{qq^{m_{k-1,j}-m_{k-1,i}} x_j/tx_i }{\theta_{k,j}}}
{\wg{q q^{m_{k-1,j}-m_{k-1,i}} x_j/x_i }{\theta_{k,j}}}.\!\! \label{eq: elG trf}
\end{gather}
The exponent of $q$ in (\ref{eq: elG trf}) can be rewritten as
\begin{gather*}
m_{k-2,j}-m_{k-1,i}= -\theta_{k-1,i}+\sum_{a=1}^{k-2}(\theta_{a,j}-\theta_{a,i}),
\qquad
m_{k-1,j}-m_{k-1,i}= \sum_{a=1}^{k-1}(\theta_{a,j}-\theta_{a,i}).
\end{gather*}
Moreover, we have
\begin{gather*}
\prod_{k=2}^N \prod_{i=1}^{k-1} \big(t^{\delta_{i,k-1}}s_{i+1}/s_{i} \big)^{m_{i,k}}=
\prod_{1\leq i<j\leq N}t^{\theta_{i,j}} (s_j/s_i)^{\theta_{i,j}}.
\end{gather*}
By the calculation above,
we obtain
\begin{gather*}
 \tr \big( p^d \wcT^N(\vs;\vx) \big)=
\left(\frac{(pq/t;q,p)_{\infty}}{(p;p)_{\infty}(pt;q,p)_{\infty}} \right)^{N}
\prod_{1\leq i< j\leq N }
\frac{\elG{tx_j/x_i}}{\elG{qx_j/x_i}}
\\ \hphantom{\tr \big( p^d \wcT^N(\vs;\vx) \big)=}
{}\times\sum_{\theta \in \mathsf{M}_N}
\prod_{1\leq i< j\leq N }\frac{\wg{q/t}{\theta_{i,j}}}{\wg{q}{\theta_{i,j}}}
\prod_{1\leq i< j\leq N }
\frac{\wg{qx_j/tx_i}{\theta_{1,j}}}{\wg{qx_j/x_i}{\theta_{1,j}}}
\\ \hphantom{\tr \big( p^d \wcT^N(\vs;\vx) \big)=}
{}\times\prod_{k=2}^{N-1}\prod_{k\leq i < j\leq N}
\frac{\wg{tq^{-\theta_{k-1,i}+\sum_{a=1}^{k-2}(\theta_{a,j}-\theta_{a,i})} x_j/x_i }{\theta_{k-1,j}}}
{\wg{q^{-\theta_{k-1,i}+\sum_{a=1}^{k-2}(\theta_{a,j}-\theta_{a,i})} x_j/x_i }{\theta_{k-1,j}}}
\\ \hphantom{\tr \big( p^d \wcT^N(\vs;\vx) \big)=}
{}\times
\frac{\wg{qq^{\sum_{a=1}^{k-1}(\theta_{a,j}-\theta_{a,i})} x_j/tx_i }{\theta_{k,j}}}
{\wg{q q^{\sum_{a=1}^{k-1}(\theta_{a,j}-\theta_{a,i})} x_j/x_i }{\theta_{k,j}}}
\prod_{1\leq i<j\leq N}t^{\theta_{i,j}} (s_j/s_i)^{\theta_{i,j}}
\\ \hphantom{\tr \big( p^d \wcT^N(\vs;\vx) \big)}
{}=\left(\frac{(pq/t;q,p)_{\infty}}{(p;p)_{\infty}(pt;q,p)_{\infty}} \right)^{N}
\prod_{1\leq i< j\leq N }
\frac{\elG{tx_j/x_i}}{\elG{qx_j/x_i}}
\\ \hphantom{\tr \big( p^d \wcT^N(\vs;\vx) \big)=}
{}\times\sum_{\theta \in \mathsf{M}_N}
\prod_{k=1}^{N-1}\prod_{k< i \leq j\leq N}
\frac{\wg{tq^{-\theta_{k,i}+\sum_{a=1}^{k-1}(\theta_{a,j}-\theta_{a,i})} x_j/x_i }{\theta_{k,j}}}
{\wg{q^{-\theta_{k,i}+\sum_{a=1}^{k-1}(\theta_{a,j}-\theta_{a,i})} x_j/x_i }{\theta_{k,j}}}
\\ \hphantom{\tr \big( p^d \wcT^N(\vs;\vx) \big)=}
{}\times\prod_{k=1}^{N-1}\prod_{k\leq i < j\leq N}
\frac{\wg{qq^{\sum_{a=1}^{k-1}(\theta_{a,j}-\theta_{a,i})} x_j/tx_i }{\theta_{k,j}}}
{\wg{q q^{\sum_{a=1}^{k-1}(\theta_{a,j}-\theta_{a,i})} x_j/x_i }{\theta_{k,j}}}
\prod_{1\leq i<j\leq N}(s_j/s_i)^{\theta_{i,j}}.
\end{gather*}
Since $\theta_{i,j}$, $x_{i}$ and $s_i$
correspond to $\theta_{N-j+1,N-i+1}$,
$1/s_{N-i+1}$ and $1/x_{N-i+1}$
in Definition~\ref{def: ellip Mac}, respectively,
we have
\begin{gather*}
\tr \big( p^d \wcT^N(\vs;\vx) \big)=
\left(\frac{(pq/t;q,p)_{\infty}}{(p;p)_{\infty}(pt;q,p)_{\infty}} \right)^{N}
\prod_{1\leq i< j\leq N }
\frac{\elG{tx_j/x_i}}{\elG{qx_j/x_i}}
\\ \hphantom{\tr \big( p^d \wcT^N(\vs;\vx) \big)=}
{} \times
\fellip_N(1/s_N,\ldots,1/s_1;1/x_N,\ldots,1/x_1|q,q/t,p).
\end{gather*}
Therefore, Proposition~\ref{prop: trace wcT = fellip} follows from the symmetry
\begin{gather*}
\fellip_N(1/x_N,\ldots,1/x_1;1/s_N,\ldots,1/s_1|q,t,p)=
\fellip_N(x_1,\ldots,x_N;s_1,\ldots,s_N|q,t,p). \tag*{\qed}
\end{gather*}
\renewcommand{\qed}{}
\end{proof}

Now, we obtain a relationship between
the non-stationary Ruijsenaars functions and
the functions $\fellip$.

\begin{Theorem}\label{thm: fgln=fEG}
As formal series in $p$, $s_{i+1}/s_i$, $x_{i+1}/x_i$ ($i=1,\ldots, N-1$)
and $px_1/x_N$,
we obtain
\begin{gather*}
f^{\widehat{\mathfrak{gl}}_N}\big(\vx {}',p^{{1}/{N}}|\vs',t^{-{1}/{N}}|q,t\big)
= \mathfrak{C} \times \fellip_N(\boldsymbol{s};\boldsymbol{x}|q,t,p),
\end{gather*}
where we put
\begin{gather*}
\mathfrak{C}: =\left(\frac{(pq/t;q,p)_{\infty}}{(p;p)_{\infty}(pt;q,p)_{\infty}} \right)^{N}
\prod_{1\leq i<j\leq N} \frac{\Gamma(tx_j/x_i;q,p)}{\Gamma(qx_j/x_i;q,p)}
\prod_{1\leq i<j\leq N} \frac{(ts_j/s_i;q)_{\infty}}{(qs_j/s_i;q)_{\infty}}.
\end{gather*}
$\vx'$ and $\vs'$ are the same ones in Theorem~{\rm \ref{thm: Tr TH TH...}}:
\begin{gather*}
\vs'=(s'_1,\ldots ,s'_N),\qquad s'_k =t^{{k}/{N}}s_k,
\\
\vx'=(x'_1,\ldots ,x'_N),\qquad x'_k= p^{-{k}/{N}}x_k.
\end{gather*}
\end{Theorem}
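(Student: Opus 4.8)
The plan is to recognise the two sides of the asserted identity, up to the explicit prefactor~$\mathfrak{C}$, as two evaluations of one and the same $p$-trace of Mukad\'e operators over the $N$-fold Fock tensor space --- one in the vertical (preferred) direction, one in the horizontal direction --- and to pass between them by the $S$-duality formula Theorem~\ref{fact: chang pref. direc.}. Two of the three needed evaluations are already available: combining Theorem~\ref{thm: Tr TH TH...} with Remark~\ref{rem: T^H trace} gives the vertical loop, and Proposition~\ref{prop: trace wcT = fellip} gives the horizontal loop,
\begin{gather*}
\mathrm{Tr}_p\big(\tilTH_N(\vs^{(N-1)};x_N)\cdots\tilTH_1(\vs^{(0)};x_1)\big)
=\prod_{1\le i<j\le N}\frac{(qs_j/s_i;q)_\infty}{(ts_j/s_i;q)_\infty}\,
\nonstrui\big(\vx',p^{1/N}|\vs',t^{-1/N}|q,t\big),
\\
\tr\big(p^d\wcT^N(\vs;\vx)\big)
=\Big(\frac{(pq/t;q,p)_\infty}{(p;p)_\infty(pt;q,p)_\infty}\Big)^{N}
\prod_{1\le i<j\le N}\frac{\Gamma(tx_j/x_i;q,p)}{\Gamma(qx_j/x_i;q,p)}\,
\fellip_N(\vs;\vx|q,t,p).
\end{gather*}
So it remains to prove that the two $p$-traces coincide.

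For this I would insert the resolution of identity $\sum_{\vn\in\parset^N}\ket{\vn}\bra{\vn}$ between consecutive factors of $\tilTH_N\cdots\tilTH_1$, apply Theorem~\ref{fact: chang pref. direc.} to every resulting matrix element, $\bra{\vn}\tilTH_i\ket{\vn'}=(-1)^{|\vn|+|\vn'|}\bra{P_{\vn'}}\tilTV_i\ket{Q_{\vn}}$ (legitimate since each $\tilTH_i$, $\tilTV_i$ is the same specialisation of $T^H$, $T^V$, and the duality holds for arbitrary spectral parameters), and then telescope using the dual resolution $\sum_{\vn}\ket{Q_{\vn}}\bra{P_{\vn}}=\mathrm{id}$ on~$\cF^{(N,0)}$. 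In this telescoping the product of the signs is~$+1$, because every intermediate label, and the pair of boundary labels that are identified around the loop, each occur exactly twice; the $S$-duality moreover reverses the order of composition, so the decreasing product $\tilTH_N\cdots\tilTH_1$ becomes the increasing product $\tilTV_1\cdots\tilTV_N=\wcT^N(\vs;\vx)$; and the resulting sum is exactly $\tr\big(p^d\wcT^N(\vs;\vx)\big)$ since $\{\bra{P_{\vn}}\}$ and $\{\ket{Q_{\vn}}\}$ are degree-homogeneous dual bases and $\tr(p^d-)$ is basis-independent. Combining this with the two displayed evaluations and cancelling the common factor $\prod_{1\le i<j\le N}\frac{(qs_j/s_i;q)_\infty}{(ts_j/s_i;q)_\infty}$ then produces $\nonstrui(\vx',p^{1/N}|\vs',t^{-1/N}|q,t)=\mathfrak{C}\,\fellip_N(\vs;\vx|q,t,p)$ with $\mathfrak{C}$ exactly as in the statement, all identities being read as equalities of formal power series in~$p$, $s_{i+1}/s_i$, $x_{i+1}/x_i$ $(i=1,\dots,N-1)$ and $px_1/x_N$.

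The step I expect to be the real obstacle is the bookkeeping inside this $S$-duality argument. One has to verify that the chain of spectral-parameter shifts hidden in the $\vs^{(i)}$ (and $\vx^{(i)}$) makes the compositions $\tilTH_N\cdots\tilTH_1$ and $\tilTV_1\cdots\tilTV_N$ well-defined, with matching source and target Fock spaces at each stage, so that Theorem~\ref{fact: chang pref. direc.} really applies term by term. Most delicately, because the $\tilTH_i$, $\tilTV_i$ are the \emph{unnormalised} operators, applying the $S$-duality factor by factor produces a ratio $\bra{\boldsymbol{\emptyset}}T^H\ket{\boldsymbol{\emptyset}}/\brazero T^V\ketzero$ at each step, and one must show that all of these are trivial --- equivalently, that the non-looped (``strip'') version of the identity holds, which is the standard refined-topological-vertex / Macdonald input already underlying Facts~\ref{fact: macdonald from mukade} and~\ref{fact: p->0 lim}. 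A smaller point is to check that the formal-power-series senses of $\mathrm{Tr}_p$ and of $\tr(p^d-)$ genuinely coincide under the $S$-duality pairing, so that the two evaluations are being compared inside the same ring.
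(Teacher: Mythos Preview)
Your proposal is correct and follows essentially the same route as the paper: equate the two $p$-traces via the $S$-duality (Theorem~\ref{fact: chang pref. direc.}) applied factor by factor, then invoke Theorem~\ref{thm: Tr TH TH...}/Remark~\ref{rem: T^H trace} and Proposition~\ref{prop: trace wcT = fellip}. The ``real obstacle'' you single out---the normalization ratios $\brazero T^V\ketzero/\bra{\boldsymbol{\emptyset}}T^H\ket{\boldsymbol{\emptyset}}$---is exactly what the paper isolates as Lemma~\ref{lem: Mukade VEV}, which computes both vacuum expectation values explicitly (by OPE for $\tilTH_k$, by the $q$-binomial theorem for $\tilTV_k$) and shows they are equal, so the product of ratios is~$1$.
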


For the proof,
we prepare the following lemma.

\begin{Lemma}\label{lem: Mukade VEV}
Let $1\leq k\leq N$.
The vacuum expectation values of the Mukad\'e operators are
\begin{gather*}
\bra{\boldsymbol{\emptyset}}
\tilTH_k \left( \vu, ;x\right)
\ket{\boldsymbol{\emptyset}}
=\prod_{1\leq i < k }
\frac{(qu_k/tu_i;q)_{\infty}}{(u_k/u_i;q)_{\infty}},
\qquad
\brazero \tilTV_k(\vu; x)\ketzero
=
\prod_{i=1}^{k-1} \frac{(qu_k/tu_i;q)_{\infty}}{(u_k/u_i;q)_{\infty}}.
\end{gather*}
\end{Lemma}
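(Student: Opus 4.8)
Both identities come down to the same recipe: expand the restricted Mukad\'e operator into vertex operators, evaluate the vacuum expectation value as a product of pairwise contraction (operator product) factors, and resum the leftover one-row interlacing sum by the $q$-binomial theorem $\sum_{n\ge 0}\frac{(a;q)_n}{(q;q)_n}z^n=\frac{(az;q)_\infty}{(z;q)_\infty}$. The two answers being literally equal is, diagrammatically, just the fact that a closed topological-vertex amplitude does not depend on the choice of preferred direction.

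\textbf{The vertical case.} I start from Fact~\ref{fact: til Ti = PhiSS..}, which writes $\tilTV_k(\vu;z)$ as $\big((q/t;q)_\infty/(q;q)_\infty\big)^{k-1}$ times $\sum_{0\le m_1\le\cdots\le m_{k-1}}\phi_0(z)\,\widetilde{S}^{(1)}(q^{m_1}z)\cdots\widetilde{S}^{(k-1)}(q^{m_{k-1}}z)\,\prod_{j}(u_{j+1}/u_j)^{m_j}$. Each factor has the shape (exponential in the $a_{-n}$)(exponential in the $a_n$), so $\brazero(\cdots)\ketzero$ is the product of the pairwise contraction scalars. Since $k\le N$, only $\widetilde{S}^{(1)},\dots,\widetilde{S}^{(k-1)}$ occur, so no ``affine wraparound'' contraction ($\phi_0$ with $\widetilde{S}^{(0)}$, or $\widetilde{S}^{(0)}$ with $\widetilde{S}^{(N-1)}$) intervenes; by Proposition~\ref{prop: phi0 S OPE} and the screening-current operator-product formulas the only nontrivial contractions are $\phi_0(z)$ with $\widetilde{S}^{(1)}(q^{m_1}z)$ and consecutive pairs $\widetilde{S}^{(j)}(q^{m_j}z)$ with $\widetilde{S}^{(j+1)}(q^{m_{j+1}}z)$ (non-consecutive screenings sit on disjoint tensor factors and contract to $1$). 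Their product is $\frac{(q^{m_1+1};q)_\infty}{(q^{m_1+1}/t;q)_\infty}\prod_{j=1}^{k-2}\frac{(q^{m_{j+1}-m_j+1};q)_\infty}{(q^{m_{j+1}-m_j+1}/t;q)_\infty}$, manifestly independent of $z$. Writing $(q^{n+1};q)_\infty=(q;q)_\infty/(q;q)_n$ (and the same with $q/t$), each factor becomes $\frac{(q;q)_\infty}{(q/t;q)_\infty}\cdot\frac{(q/t;q)_n}{(q;q)_n}$, and the $k-1$ copies of $(q;q)_\infty/(q/t;q)_\infty$ cancel the prefactor of Fact~\ref{fact: til Ti = PhiSS..}. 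Finally substitute $n_1=m_1$, $n_j=m_j-m_{j-1}$ ($2\le j\le k-1$): the range becomes $n_j\ge 0$, the weight becomes $\prod_{j=1}^{k-1}\frac{(q/t;q)_{n_j}}{(q;q)_{n_j}}$, and the monomial telescopes, $\prod_{l}(u_{l+1}/u_l)^{m_l}=\prod_{i=1}^{k-1}(u_k/u_i)^{n_i}$. The sum factorizes over $i$ and the $q$-binomial theorem gives $\prod_{i=1}^{k-1}\frac{(qu_k/tu_i;q)_\infty}{(u_k/u_i;q)_\infty}$.

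\textbf{The horizontal case.} I unfold the definition of $T^H$ through the composition~(\ref{eq: comp. for PhiH}): it is the level-$(1,0)$ vacuum expectation value of a chain of operators $\Phicross=\Phi^*\circ\Phi$ threaded through one auxiliary $(1,0)$-Fock space, each $\Phicross$ normalized by its own $\brazero\Phi^*_\emptyset\Phi_\emptyset\ketzero$. Feeding $\ket{\boldsymbol{\emptyset}}$ into the incoming preferred-direction legs and pairing against $\bra{\boldsymbol{\emptyset}}$ forces every intertwiner label to be $\emptyset$, so $\bra{\boldsymbol{\emptyset}}\tilTH_k(\vu;z)\ket{\boldsymbol{\emptyset}}=\brazero\prod_{l=1}^{N}\big(\Phi^*_\emptyset(a_l)\Phi_\emptyset(b_l)\big)\ketzero\big/\prod_{l=1}^{N}\brazero\Phi^*_\emptyset(a_l)\Phi_\emptyset(b_l)\ketzero$, where, after the specialization defining $\tilTH_k$, the $b_l$ are the preferred-direction parameters $\propto\gamma^{-1}t^{-\delta_{l,k}}u_l$ and the $a_l$ the cascading horizontal parameters $\propto\frac{u_1\cdots u_{l-1}}{v_1\cdots v_{l-1}}w$ read off from~(\ref{eq: comp. for PhiH}). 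The denominator removes exactly the diagonal $\Phi^*_\emptyset(a_l)$--$\Phi_\emptyset(b_l)$ contractions, leaving for each $l<m$ the four cross-contractions $\mathcal{G}(a_m/a_l)\,\mathcal{G}(b_m/\gamma^2 b_l)\,\mathcal{G}(a_m/\gamma b_l)^{-1}\,\mathcal{G}(b_m/\gamma a_l)^{-1}$, obtained from Fact~\ref{fact: intertwiner OPE} with $\lambda=\mu=\emptyset$ (where the Nekrasov factors are $1$). Since $\mathcal{G}(z)=(z;q,t^{-1})_\infty$ and a ratio of two $\mathcal{G}$'s differing by a power of $t^{-1}$ collapses to a single $q$-Pochhammer symbol, substituting the explicit parameters telescopes the whole product to $\prod_{1\le i<k}\frac{(qu_k/tu_i;q)_\infty}{(u_k/u_i;q)_\infty}$. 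Alternatively, one may invoke the preferred-direction independence of this closed amplitude to identify it with the vertical answer just computed.

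\textbf{Main obstacle.} The vertical identity is routine once one records that $k\le N$ excludes the affine wraparound contractions. The genuine bookkeeping is in the horizontal identity: pinning down the exact parameters $a_l,b_l$ produced by the cascade in~(\ref{eq: comp. for PhiH}) after the $\delta_{l,k}$-specialization, and verifying that the resulting product of $\mathcal{G}$-factors telescopes precisely to the stated single product --- in particular that the lone extra $t^{-1}$ inserted in the $k$-th leg is exactly what converts the a priori double products into $(qu_k/tu_i;q)_\infty/(u_k/u_i;q)_\infty$.
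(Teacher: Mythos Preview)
Your approach is essentially the paper's own. The vertical case is literally identical: start from Fact~\ref{fact: til Ti = PhiSS..}, read off the nearest-neighbour OPE scalars, change to increments $n_j=m_j-m_{j-1}$, and apply the $q$-binomial theorem. For the horizontal case the paper does exactly the direct $\mathcal G$-factor computation you sketch, writing
\[
\bra{\boldsymbol{\emptyset}}\tilTH_k(\vu;x)\ket{\boldsymbol{\emptyset}}
=\prod_{1\le i<j\le N}
\frac{\mathcal G\!\big(t^{-\delta_{j,k}+\delta_{i,k}}u_j/u_i\big)}{\mathcal G\!\big(t^{\delta_{i,k}}u_j/u_i\big)}
\frac{\mathcal G(qu_j/tu_i)}{\mathcal G\!\big(qt^{-\delta_{j,k}-1}u_j/u_i\big)},
\]
and then observing that every pair with $i,j\neq k$ cancels, every pair with $i=k$ cancels, and each pair with $j=k$ collapses (via $\mathcal G(t^{-1}z)/\mathcal G(z)=1/(z;q)_\infty$) to a single factor $(qu_k/tu_i;q)_\infty/(u_k/u_i;q)_\infty$. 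That is the telescoping you flagged as the ``main obstacle''; it is indeed straightforward once the four $\mathcal G$-arguments are written down.

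One caveat on your alternative: invoking preferred-direction independence (Theorem~\ref{fact: chang pref. direc.}) does not by itself yield the equality of the two unnormalized vacuum values, because that theorem compares the \emph{normalized} operators $\cTH$ and $\cTV$; plugging $\vl=\vm=\boldsymbol\emptyset$ there gives only $1=1$. This is precisely why the paper computes the two vacuum expectation values separately and then uses their equality in the proof of Theorem~\ref{thm: fgln=fEG}.
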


\begin{proof}
The first vacuum expectation value can be directly calculated
as
\begin{gather*}
\bra{\boldsymbol{\emptyset}}\tilTH_k (\vu, ;x)
\ket{\boldsymbol{\emptyset}}=\prod_{1\leq i <j\leq N}
\frac{\mathcal{G}(t^{-\delta_{j,k}+\delta_{i,k}} u_j/u_i)}
{\mathcal{G}(t^{\delta_{i,k}} u_j/u_i)}
\frac{\mathcal{G}(q u_j/tu_i)}
{\mathcal{G}(q t^{-\delta_{j,k}-1} u_j/u_i)}
\\ \hphantom{\bra{\boldsymbol{\emptyset}}\tilTH_k (\vu,;x)\ket{\boldsymbol{\emptyset}}}
{}=\prod_{1\leq i < k }\frac{\mathcal{G}( u_j/tu_i)}{\mathcal{G}(u_j/u_i)}
\frac{\mathcal{G}(q u_j/tu_i)}{\mathcal{G}(q u_j/t^2u_i)}
=\prod_{1\leq i < k }
\frac{(qu_k/tu_i;q)_{\infty}}{(u_k/u_i;q)_{\infty}}.
\end{gather*}
The second one can be calculated
by using the $q$-binomial theorem:
\begin{gather*}
\brazero \tilTV_k(\vu; x)\ketzero =\left( \frac{(q/t;q)_{\infty}}{(q;q)_{\infty}}\right)^{k-1}
\sum_{0\leq m_1\leq \cdots \leq m_{k-1}}
\frac{(qq^{m_1};q)_{\infty}}{(qq^{m_1}/t;q)_{\infty}}\prod_{i=2}^{k-1}
\frac{(qq^{m_i-m_{i-1}};q)_{\infty}}{(qq^{m_i-m_{i-1}}/t;q)_{\infty}}
\\ \hphantom{\brazero \tilTV_k(\vu; x)\ketzero =}
{}\times\prod_{i=1}^{k-1}\left(\frac{u_{i+1}}{u_i} \right)^{m_i}
=\sum_{n_1, \ldots, n_{k-1} \in \mathbb{Z}_{\geq 0}}
\prod_{i=1}^{k-1} \frac{(q/t;q)_{n_i}}{(q;q)_{n_i}}
\prod_{i=1}^{k-1} \left(\frac{u_{k}}{u_i} \right)^{n_i}
\\ \hphantom{\brazero \tilTV_k(\vu; x)\ketzero}
{}=\prod_{i=1}^{k-1} \frac{(qu_k/tu_i;q)_{\infty}}{(u_k/u_i;q)_{\infty}}. \tag*{\qed}
\end{gather*}
\renewcommand{\qed}{}
\end{proof}

\begin{proof}[Proof of Theorem~\ref{thm: fgln=fEG}]
In this proof,
we use the same notation as in
(\ref{eq: notation of s^i}).
For the sketch of the proof,
see Fig.~\ref{fig: proofsketch} in Introduction.
By virtue of Theorem~\ref{fact: chang pref. direc.},
we have
\begin{gather*}
\tr \big( p^d \wcT^N(\vs;\vx) \big)
=\sum_{\vm_0, \ldots, \vm_{N-1} \in \parset^N}p^{|\vm_0|}\prod_{i=1}^{N}
\bra{P_{\vm_{i-1}}} \tilTV_i\big(\vs^{(i-1)}; x_i\big) \ket{Q_{\vm_i} }
\\ \hphantom{\tr \big( p^d \wcT^N(\vs;\vx) \big)}
{}=\sum_{\vm_0, \ldots, \vm_{N-1} \in \parset^N}p^{|\vm_0|}
\prod_{i=1}^{N}\bra{\vm_{i}}\tilTH_i \big(\vs^{(i-1)}, ;x_i\big)
\ket{\vm_{i-1} }\prod_{i=1}^N
(-1)^{|\vm_{i}|+|\vm_{i-1}|}
\\ \hphantom{\tr \big( p^d \wcT^N(\vs;\vx) \big)= }
{}\times\prod_{i=1}^N
\frac{ \brazero \tilTV_i\big(\vs^{(i-1)}; x_i\big)\ketzero }
{ \bra{\boldsymbol{\emptyset}}\tilTH_i \big( \vs^{(i-1)}, ;x_i\big)
 \ket{\boldsymbol{\emptyset}} },
\end{gather*}
where $\vm_i=\big(\mu^{(i,1)}, \ldots, \mu^{(i,N)}\big) \in \parset^N$ and
$\vm_0=\vm_N$.
It is clear that
\begin{gather*}
\prod_{i=1}^N (-1)^{|\vm_{i}|+|\vm_{i-1}|}=1.
\end{gather*}
By Lemma~\ref{lem: Mukade VEV}, we have
\begin{gather*}
\prod_{i=1}^N
\frac{ \brazero \tilTV_i\big(\vs^{(i-1)}; x_i\big)\ketzero }
{ \bra{\boldsymbol{\emptyset}}
\tilTH_i \big(\vs^{(i-1)}, ;x_i\big)
 \ket{\boldsymbol{\emptyset}} } =1.
\end{gather*}
Therefore, it follows that
\begin{gather*}
\tr \big( p^d \wcT^N(\vs;\vx) \big)=
\sum_{\vm_0, \ldots, \vm_{N-1} \in \parset^N}
p^{|\vm_0|}\prod_{i=1}^{N}\bra{\vm_{i}}
\tilTH_i \big( \vs^{(i-1)} ;x_i\big)
\ket{\vm_{i-1} }
\\ \hphantom{\tr \big( p^d \wcT^N(\vs;\vx) \big)}
=\mathrm{Tr}_p
\big(\tilTH_N \big(\vs^{(N-1)} ;x_N\big)\cdots\tilTH_1 \big(\vs^{(0)} ;x_1\big)\big).
\end{gather*}
As a result, by Theorem~\ref{thm: Tr TH TH...}
(see also Remark~\ref{rem: T^H trace}), we obtain
\begin{gather}\label{eq: Tr TN in terms of fglN}
\tr \big( p^d \wcT^N(\vs;\vx) \big)=
\prod_{1\leq i<j\leq N} \frac{(qs_j/s_i;q)_{\infty}}{(ts_j/s_i;q)_{\infty}}\,
\nonstrui\big(\vx',p^{{1}/{N}}|\vs',t^{-{1}/{N}}|q,t\big).
\end{gather}
Combining Proposition~\ref{prop: trace wcT = fellip} and (\ref{eq: Tr TN in terms of fglN}) yields Theorem~\ref{thm: fgln=fEG}.
\end{proof}

\subsection{Another expression}

In the previous subsection,
we have established the relationship between
$\nonstrui$ and $\fellip$ by taking traces of intertwiners.
By changing the computation method to take the trace,
another expression can be obtained.
That is, we use the generalized Macdonald functions as a basis.
We first fix the normalization of
the generalized Macdonald functions $\ket{P_{\vl}}$,
which simplifies the matrix elements of the Mukad\'e operators.

\begin{Definition}
Define
\begin{gather*}
\mathcal{C}_{\vl}^{(+)}(\vu) := \xi_{\vl}^{(+)}(\vu) \times
\prod_{1\leq i<j \leq N} N_{\lambda^{(i)}, \lambda^{(j)}}(qu_i/tu_j)
\prod_{k=1}^N c_{\lambda^{(k)}}, 
\\
\mathcal{C}_{\vl} ^{(-)}(\vu) := \xi_{\vl}^{(-)}(\vu) \times
\prod_{1\leq i<j \leq N} N_{\lambda^{(j)}, \lambda^{(i)}}(qu_j/tu_i)
\prod_{k=1}^N c_{\lambda^{(k)}},
\\
\xi_{\vl}^{(+)}(\vu):=\prod_{i=1}^N(-1)^{(N-i+1)|\lambda^{(i)}|}
u_{i}^{(-N+i)|\lambda^{(i)}|+\sum_{k=1}^{i}|\lambda^{(k)}|}
\\ \hphantom{\xi_{\vl}^{(+)}(\vu):=}
{}\times \prod_{i=1}^N
(q/t)^{\left(\frac{1-i}{2}\right)|\lambda^{(i)}|}
q^{(i-N)(n(\lambda^{(i)'})+|\lambda^{(i)}|)}
t^{(N-i-1) (n(\lambda^{(i)})+|\lambda^{(i)}|)},
\\
\xi_{\vl}^{(-)}(\vu):=\prod_{i=1}^N(-1)^{i|\lambda^{(i)}|}
u_{i}^{(-i+1)|\lambda^{(i)}|+\sum_{k=i}^{N}|\lambda^{(k)}|}
\\ \hphantom{\xi_{\vl}^{(+)}(\vu) :=}
{}\times \prod_{i=1}^N
(q/t)^{\left(\frac{i-1}{2}\right)|\lambda^{(i)}|} t^{|\lambda^{(i)}|}
q^{(1-i)(n(\lambda^{(i)'})+|\lambda^{(i)}|)}
t^{(i-2) ( n(\lambda^{(i)})+|\lambda^{(i)}| )}.
\end{gather*}
\end{Definition}

\begin{Definition}
Define $\ket{K_{\vl}}=\ket{K_{\vl}(\vu)} \in \mathcal{F}_{\vu}$ and
$\bra{K_{\vl}}=\bra{K_{\vl}(\vu)} \in \mathcal{F}^*_{\vu}$ by
\begin{gather*}
\ket{K_{\vl}(\vu)}:=\mathcal{C}_{\vl}^{(+)}(\vu) \ket{P_{\vl}(\vu)}, \qquad
\bra{K_{\vl}(\vu)}:=\mathcal{C}_{\vl} ^{(-)}(\vu)\bra{P_{\vl}(\vu)}.
\end{gather*}
\end{Definition}

This normalization is based on our yet unfinished study of Conjecture 3.38 in \cite{FOS2019Generalized}.
Note, however, that we do not need the conjecture itself here.
\begin{fact}[\cite{FOS2019Generalized}]
We have
\begin{gather*}
 \braket{K_{\vl}(\vu)|K_{\vl}(\vu)}= \overline{\mathcal{M}}(\vu;\vl)
\prod_{i,j=1}^N N_{\lambda^{(i)}, \lambda^{(j)}}(qu_i/tu_j) ,
\end{gather*}
where
\begin{gather*}
\overline{\mathcal{M}}(\vu;\vl) = \big((-1)^{N}\gamma^2 e_N(\vu)\big)^{|\vl|}
\prod_{i=1}^N
\big(u_i^{|\lambda^{(i)}|}\gamma^{-2|\lambda^{(i)}|}g_{\lambda^{(i)}}\big)^{(2-N)}, \qquad
g_{\lambda}=q^{n(\lambda')}t^{-n(\lambda)},
\end{gather*}
with $e_{N}(\vu) = u_1\cdots u_N$.
\end{fact}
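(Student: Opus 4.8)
\emph{Proof proposal.}

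The plan is to reduce the computation to the already known pairing of the generalized Macdonald functions and then to verify a purely combinatorial identity between two explicit scalar factors. First I would use the definitions $\ket{K_{\vl}(\vu)}=\mathcal{C}_{\vl}^{(+)}(\vu)\ket{P_{\vl}(\vu)}$ and $\bra{K_{\vl}(\vu)}=\mathcal{C}_{\vl}^{(-)}(\vu)\bra{P_{\vl}(\vu)}$ to write $\braket{K_{\vl}(\vu)|K_{\vl}(\vu)}=\mathcal{C}_{\vl}^{(-)}(\vu)\,\mathcal{C}_{\vl}^{(+)}(\vu)\,\braket{P_{\vl}(\vu)|P_{\vl}(\vu)}$. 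The square norm $\braket{P_{\vl}|P_{\vl}}$ is not stated directly, but it follows at once from $\braket{P_{\vl}|Q_{\vm}}=\delta_{\vl,\vm}$ together with $\ket{Q_{\vl}}=\prod_{i=1}^{N}(c_{\lambda^{(i)}}/c'_{\lambda^{(i)}})\ket{P_{\vl}}$, giving $\braket{P_{\vl}|P_{\vl}}=\prod_{i=1}^{N} c'_{\lambda^{(i)}}/c_{\lambda^{(i)}}$.

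Next I would assemble the Nekrasov factors. The products $\prod_{1\le i<j\le N}N_{\lambda^{(i)},\lambda^{(j)}}(qu_i/tu_j)$ and $\prod_{1\le i<j\le N}N_{\lambda^{(j)},\lambda^{(i)}}(qu_j/tu_i)$ occurring in $\mathcal{C}_{\vl}^{(+)}$ and $\mathcal{C}_{\vl}^{(-)}$ combine, after relabelling $i\leftrightarrow j$ in the second, into $\prod_{i\ne j}N_{\lambda^{(i)},\lambda^{(j)}}(qu_i/tu_j)$. The remaining $c$-factors collect into $\prod_{k=1}^{N}c_{\lambda^{(k)}}^{2}\cdot\prod_{k=1}^{N}c'_{\lambda^{(k)}}/c_{\lambda^{(k)}}=\prod_{k=1}^{N}c_{\lambda^{(k)}}c'_{\lambda^{(k)}}$. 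Here I would invoke the identity $c_{\lambda}c'_{\lambda}=(-1)^{|\lambda|}q^{n(\lambda')}t^{n(\lambda)+|\lambda|}N_{\lambda,\lambda}(q/t)$ — the $u=q/t$ counterpart of the $u=1$ relation already used in the proof of Proposition~\ref{prop: aff. scr. vertex}, proved in the same one line from the definitions of $c_{\lambda}$, $c'_{\lambda}$, $N_{\lambda,\lambda}$ using $\sum_{(i,j)\in\lambda}a_{\lambda}(i,j)=n(\lambda')$ and $\sum_{(i,j)\in\lambda}\ell_{\lambda}(i,j)=n(\lambda)$. Since $qu_i/tu_j=q/t$ when $i=j$, the diagonal factors $\prod_{k}N_{\lambda^{(k)},\lambda^{(k)}}(q/t)$ then combine with $\prod_{i\ne j}N_{\lambda^{(i)},\lambda^{(j)}}(qu_i/tu_j)$ to produce exactly $\prod_{i,j=1}^{N}N_{\lambda^{(i)},\lambda^{(j)}}(qu_i/tu_j)$.

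What remains is the scalar identity $\overline{\mathcal{M}}(\vu;\vl)=\xi_{\vl}^{(+)}(\vu)\,\xi_{\vl}^{(-)}(\vu)\prod_{k=1}^{N}(-1)^{|\lambda^{(k)}|}q^{n(\lambda^{(k)'})}t^{n(\lambda^{(k)})+|\lambda^{(k)}|}$, which I would check by comparing exponents of $u_i$, of $q$, of $t$, and the signs separately. The $(q/t)^{(1-i)|\lambda^{(i)}|/2}$ from $\xi_{\vl}^{(+)}$ cancels the $(q/t)^{(i-1)|\lambda^{(i)}|/2}$ from $\xi_{\vl}^{(-)}$; for $u_i$ the partial sums $\sum_{k\le i}|\lambda^{(k)}|$ and $\sum_{k\ge i}|\lambda^{(k)}|$ telescope to $|\vl|+|\lambda^{(i)}|$, which together with the leading $(1-N)|\lambda^{(i)}|$ reproduces $e_N(\vu)^{|\vl|}\prod_i u_i^{(2-N)|\lambda^{(i)}|}$; the coefficients $(i-N)+(1-i)+1=2-N$ and $(N-i-1)+(i-2)+1=N-2$ of $n(\lambda^{(i)'})$ and $n(\lambda^{(i)})$ assemble $g_{\lambda^{(i)}}^{2-N}$; and the leftover $q,t$-powers collect into powers of $\gamma^{2}=t/q$. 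The main obstacle is nothing conceptual — it is precisely this bookkeeping of the many index shifts and exponents; all the structural input is the definitions, Fact~\ref{fact:existence thm of Gn Mac}, the duality $\braket{P_{\vl}|Q_{\vm}}=\delta_{\vl,\vm}$, and the $N_{\lambda,\lambda}(q/t)$ identity. (Alternatively, the statement being recorded in \cite{FOS2019Generalized}, one may simply cite it there; the above is the self-contained derivation.)
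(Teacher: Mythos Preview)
Your derivation is correct. The paper itself does not prove this statement: it is recorded as a Fact with a bare citation to \cite{FOS2019Generalized}, so there is no in-text argument to compare against. Your route --- reducing to $\braket{P_{\vl}|P_{\vl}}=\prod_k c'_{\lambda^{(k)}}/c_{\lambda^{(k)}}$ via the duality $\braket{P_{\vl}|Q_{\vm}}=\delta_{\vl,\vm}$, supplying the diagonal Nekrasov factors through the identity $c_{\lambda}c'_{\lambda}=(-1)^{|\lambda|}q^{n(\lambda')}t^{n(\lambda)+|\lambda|}N_{\lambda,\lambda}(q/t)$, and then matching the monomial $\xi^{(+)}_{\vl}\xi^{(-)}_{\vl}\prod_k(-1)^{|\lambda^{(k)}|}q^{n(\lambda^{(k)'})}t^{n(\lambda^{(k)})+|\lambda^{(k)}|}$ against $\overline{\mathcal{M}}(\vu;\vl)$ --- is exactly the kind of direct verification one would expect, and the exponent bookkeeping you outline (signs giving $(-1)^{N|\vl|}$, $u_i$-powers telescoping to $(2-N)|\lambda^{(i)}|+|\vl|$, the $n(\lambda'),n(\lambda)$ coefficients summing to $2-N$ and $N-2$, and the residual $q,t$-powers in $|\lambda^{(i)}|$ assembling $\gamma^{2(N-1)|\lambda^{(i)}|}$) checks out. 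Your closing remark is apt: the paper simply cites, and you have supplied the self-contained computation.
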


\begin{fact}[\cite{FOS2019Generalized}]
\label{fact: mat el. of TV}
We have
\begin{gather*}
\bra{K_{\vl}(\vv)} \mathcal{T}^V(\vu, \vv, x) \ket{K_{\vm}(\vu)}
=\mathcal{M}(\vu,\vv; \vl, \vm;x)
\prod_{i,j=1}^N N_{\lambda^{(i)},\mu^{(j)}}(v_i/\gamma u_j).
\end{gather*}
Here
\begin{gather*}
\mathcal{M}(\vu,\vv; \vl, \vm;x)=
\frac{\big((-\gamma)^N e_{N}(\vu)x \big)^{|\vl|}}
{(\gamma^2 x)^{|\vm|}}
\prod_{i=1}^N
\frac{u_i^{|\mu^{(i)}|}g_{\mu^{(i)}}}
{\big(v_i^{|\lambda^{(i)}|}g_{\lambda^{(i)}}\big)^{N-1}} .
\end{gather*}
\end{fact}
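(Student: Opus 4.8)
The plan is to reduce the $\mathcal{T}^V$ matrix element, taken in the renormalized generalized Macdonald basis, to a matrix element of the horizontal operator $\mathcal{T}^H$ in the combinatorial vertical basis $\ket{\vl}$, $\ket{\vm}$, where the explicit action of the AFS vertices makes everything computable, and then to convert between the two normalizations. First I would strip off the $K$-normalization. By definition $\ket{K_{\vm}(\vu)}=\mathcal{C}^{(+)}_{\vm}(\vu)\ket{P_{\vm}(\vu)}$ and $\bra{K_{\vl}(\vv)}=\mathcal{C}^{(-)}_{\vl}(\vv)\bra{P_{\vl}(\vv)}$, while $\ket{Q_{\vm}}=\prod_{i}(c_{\mu^{(i)}}/c'_{\mu^{(i)}})\ket{P_{\vm}}$, so that
\begin{gather*}
\bra{K_{\vl}(\vv)}\mathcal{T}^V(\vu,\vv,x)\ket{K_{\vm}(\vu)}
=\mathcal{C}^{(-)}_{\vl}(\vv)\,\mathcal{C}^{(+)}_{\vm}(\vu)\prod_{i=1}^{N}\frac{c'_{\mu^{(i)}}}{c_{\mu^{(i)}}}\,
\bra{P_{\vl}(\vv)}\mathcal{T}^V(\vu,\vv,x)\ket{Q_{\vm}(\vu)}.
\end{gather*}
The $S$-duality formula (Theorem~\ref{fact: chang pref. direc.}) then turns the last matrix element into $(-1)^{|\vl|+|\vm|}\bra{\vm}\mathcal{T}^H(\vu,\vv,x)\ket{\vl}$, moving the whole computation to the horizontal side.

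Next I would compute $\bra{\vm}\mathcal{T}^H(\vu,\vv,x)\ket{\vl}$ directly. By definition $T^H$ is the level $(1,0)$ vacuum expectation value of the reticulate composition of cross operators; fixing the incoming vertical legs to $\vl$ and the outgoing ones to $\vm$ replaces each $\Phicross$ by its $(\lambda^{(k)},\mu^{(k)})$-component, that is, a product of the $\Phi_{\lambda^{(k)}}$ and $\Phi^*_{\mu^{(k)}}$ realized by the AFS vertices. Normal-ordering all these factors with the OPE formulas of Fact~\ref{fact: intertwiner OPE} produces, pair by pair, the Nekrasov factors $N_{\lambda^{(i)},\mu^{(j)}}$ together with $\mathcal{G}$-functions, each evaluated at the spectral parameter imposed by the web (the shifts $w_k$, the powers of $\gamma$, and $x$). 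Taking the horizontal vacuum expectation value and dividing by $\bra{\boldsymbol{\emptyset}}T^H\ket{\boldsymbol{\emptyset}}$ cancels the $\mathcal{G}$-function background, leaving
\begin{gather*}
\bra{\vm}\mathcal{T}^H(\vu,\vv,x)\ket{\vl}
=(\text{prefactor})\times\prod_{i,j=1}^{N} N_{\lambda^{(i)},\mu^{(j)}}(v_i/\gamma u_j),
\end{gather*}
where the prefactor gathers the constants $\hat t$, $\hat t^*$ of the AFS vertices and the monomials in $u_i$, $v_i$, $x$, $q$, $t$, $\gamma$ coming from the cross operators.

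Finally I would reassemble. Substituting the horizontal result back into the first display, the sign $(-1)^{|\vl|+|\vm|}$ from $S$-duality cancels, and the residual $c_{\mu^{(i)}}$, $c'_{\mu^{(i)}}$ together with the self-pairing Nekrasov factors $N_{\lambda\lambda}$, $N_{\mu\mu}$ produced by the vertices are interconverted using $c_\lambda c'_\lambda=(-1)^{|\lambda|}q^{n(\lambda')+|\lambda|}t^{n(\lambda)}N_{\lambda\lambda}(1)$. The factors $\mathcal{C}^{(\pm)}$, whose $\xi^{(\pm)}_{\vl}$ parts carry exactly the $u_i$, $v_i$, $q$, $t$, and $g_\lambda=q^{n(\lambda')}t^{-n(\lambda)}$ powers that occur, are chosen precisely so that after this collection the only surviving object is $\mathcal{M}(\vu,\vv;\vl,\vm;x)\prod_{i,j}N_{\lambda^{(i)},\mu^{(j)}}(v_i/\gamma u_j)$ with $\mathcal{M}$ in the stated compact form.

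The main obstacle is exactly this bookkeeping. One must track the arguments $v_i/\gamma u_j$ of the off-diagonal Nekrasov factors through the reticulate composition and the $S$-duality flip, and verify that the many accumulated powers of $q$, $t$, $\gamma$, $u_i$, $v_i$, $x$ collapse into $\mathcal{M}$. No step is conceptually deep, but the simultaneous cancellation of the $\hat t$, $\hat t^*$, $c_\lambda$, $c'_\lambda$ contributions against $\xi^{(\pm)}_{\vl}$, forced by the specific definition of $\mathcal{C}^{(\pm)}$, is delicate and carries the whole weight of the computation; this is where the argument essentially reproduces the calculation of \cite{FOS2019Generalized}.
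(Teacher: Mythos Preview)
Your reduction has a circularity problem. You invoke the $S$-duality formula (Theorem~\ref{fact: chang pref. direc.}) to pass from $\bra{P_{\vl}}\mathcal{T}^V\ket{Q_{\vm}}$ to $\bra{\vm}\mathcal{T}^H\ket{\vl}$, and then compute the latter directly via the AFS OPEs. But if you look at how Theorem~\ref{fact: chang pref. direc.} is established in this paper (Appendix~\ref{sec: pf of chang pref. direc.}), the very first line is ``By Fact~\ref{fact: mat el. of TV}, we have \dots''. In other words, the $S$-duality identity is proved here by computing \emph{both} sides independently---the $\mathcal{T}^V$ side via Fact~\ref{fact: mat el. of TV} and the $\mathcal{T}^H$ side via the direct intertwiner calculation you sketch---and then checking that the two monomial prefactors $\zeta^\sharp$ and $\zeta^\flat$ match up to $(-1)^{|\vl|+|\vm|}$. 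So Fact~\ref{fact: mat el. of TV} is logically prior to Theorem~\ref{fact: chang pref. direc.}, not a consequence of it; your argument assumes what it is meant to prove.

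The actual content of Fact~\ref{fact: mat el. of TV} (proved in \cite{FOS2019Generalized}) is that the Mukad\'e operator $\mathcal{T}^V$ acts diagonally enough in the generalized Macdonald basis for its matrix elements to factor into Nekrasov factors. This cannot be extracted from the horizontal normal-ordering alone: the generalized Macdonald vectors $\ket{P_{\vl}}$, $\ket{K_{\vl}}$ are defined only implicitly as eigenvectors of $X^{(1)}_0$, so one needs an argument that identifies how $\mathcal{T}^V$ interacts with that eigenbasis (in \cite{FOS2019Generalized} this goes through the defining commutation relations of $\mathcal{T}^V$ with the $X^{(i)}(z)$ and a recursive characterization of $\ket{K_{\vl}}$). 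Your steps 2 and 3 correctly reproduce the $\mathcal{T}^H$ half of the Appendix~\ref{sec: pf of chang pref. direc.} computation, but the bridge you use to reach the $\mathcal{T}^V$ side is exactly the statement under discussion.
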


By this matrix element formula,
the trace of $\wcT^N(\vu;\vx)$ can be calculated as follows.

\begin{Proposition}\label{prop: tr TN by Nek}
It follows that
\begin{gather}
\tr \big( p^d \wcT^N(\vu;\vx) \big)=
\gamma^{-N^2}t^{-N}
\prod_{1\leq i<j\leq N} \frac{(qs_j/s_i;q)_{\infty}}{(ts_j/s_i;q)_{\infty}}\nonumber
\\ \hphantom{\tr \big( p^d \wcT^N(\vu;\vx) \big)=}
{}\times \sum_{\vl_0, \ldots, \vl_{N-1} \in \mathsf{P}^{N-1}}
\prod_{k=0}^{N-1} \prod_{i,j=1}^N
\frac{ N_{\lambda^{(k,i)},\lambda^{(k+1,j)}}(u_{k,i}/\gamma u_{k+1,j})}
{ N_{\lambda^{(k,i)},\lambda^{(k,j)}}(q u_{k,i}/ tu_{k,j})}
\, (p\gamma^{N} t)^{|\vl_0|} \nonumber
\\ \hphantom{\tr \big( p^d \wcT^N(\vu;\vx) \big)=}
{}\times\prod_{k=0}^{N-1}\big( x_{k+1}/\gamma^{2N} t x_k\big)^{|\vl_k|}.
\label{eq: tr TN by Nek}
\end{gather}
\end{Proposition}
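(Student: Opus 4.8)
The plan is to re-derive the $p$-trace of $\wcT^N(\vu;\vx)$ a second time, this time inserting the generalized Macdonald bases $\ket{K_{\vl}}$, $\bra{K_{\vl}}$ instead of the boson bases. First I would record that, since the trace $\tr(p^d\,-)$ is basis-independent and $\braket{P_{\vl}|Q_{\vm}}=\delta_{\vl,\vm}$, one has $\tr(p^d A)=\sum_{\vl}p^{|\vl|}\bra{P_{\vl}}A\ket{Q_{\vl}}$; converting this to the $\ket{K_{\vl}}$ normalization introduces the factor $\overline{\mathcal{M}}(\vu;\vl)^{-1}\prod_{i,j}N_{\lambda^{(i)},\lambda^{(j)}}(qu_i/tu_j)^{-1}$ coming from $\braket{K_{\vl}|K_{\vl}}$. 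Expanding $\wcT^N(\vs;\vx)=\tilTV_1(\vs^{(0)};x_1)\cdots\tilTV_N(\vs^{(N-1)};x_N)$ and inserting a complete set $\sum_{\vl_k}\ket{K_{\vl_k}}\bra{K_{\vl_k}}/\braket{K_{\vl_k}|K_{\vl_k}}$ between each consecutive pair of factors turns the trace into a cyclic sum over $\vl_0,\ldots,\vl_{N-1}\in\parset^N$ of products of matrix elements $\bra{K_{\vl_k}(\vv_k)}\mathcal{T}^V(\vu_{k+1},\vv_k,x_{k+1})\ket{K_{\vl_{k+1}}(\vu_{k+1})}$ together with the normalization ratio $\brazero\tilTV_i\ketzero/\brazero\tilTH_i\ketzero$ type factors; here the spectral parameters $\vv_k,\vu_{k+1}$ are the specializations $v\to\gamma^{-1}t^{-\delta}u$ built into $\tilTV_i=\tilTV_i(\vs^{(i-1)};x_i)$.

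Next I would substitute Fact~\ref{fact: mat el. of TV}: each matrix element equals $\mathcal{M}(\vu,\vv;\vl_k,\vl_{k+1};x_{k+1})\prod_{i,j}N_{\lambda^{(k,i)},\lambda^{(k+1,j)}}(v_{k,i}/\gamma u_{k+1,j})$. With the specialization $v_{k,i}=\gamma^{-1}t^{-\delta_{i,\cdot}}u_{k+1,i}$ the argument $v_{k,i}/\gamma u_{k+1,j}$ becomes $u_{k,i}/\gamma u_{k+1,j}$ in the notation of the Proposition (after relabeling $u_{k,i}$ for the $k$-th specialized spectral tuple). The denominators $\prod_{i,j}N_{\lambda^{(k,i)},\lambda^{(k,j)}}(qu_{k,i}/tu_{k,j})$ come precisely from the $\braket{K_{\vl_k}|K_{\vl_k}}$ in each resolution of the identity. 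What then remains is a bookkeeping computation: collect all the scalar prefactors — the $\mathcal{M}(\cdots)$ factors, the $\overline{\mathcal{M}}(\vu;\vl_k)^{-1}$ factors, the $\xi,g_\lambda,\gamma$-powers, the $(-1)$-signs, and the vacuum-ratio factors $\brazero\tilTV_i\ketzero$ — and show that, after the telescoping forced by the cyclic condition $\vl_0=\vl_N$, the $\vl$-dependent part collapses to $(p\gamma^N t)^{|\vl_0|}\prod_k(x_{k+1}/\gamma^{2N}tx_k)^{|\vl_k|}$ and the $\vl$-independent part to the overall constant $\gamma^{-N^2}t^{-N}\prod_{i<j}(qs_j/s_i;q)_\infty/(ts_j/s_i;q)_\infty$. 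The $\prod_{i<j}(qs_j/s_i;q)_\infty/(ts_j/s_i;q)_\infty$ is the one already produced in Lemma~\ref{lem: Mukade VEV} (via the $q$-binomial theorem for $\brazero\tilTV_k\ketzero$), so I would quote that lemma rather than redo the computation.

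The main obstacle will be the prefactor bookkeeping in the previous step: one must track the powers of $\gamma$, $t$, $q$, the $g_\lambda=q^{n(\lambda')}t^{-n(\lambda)}$ factors, the signs $(-1)^{(N-i+1)|\lambda^{(i)}|}$ etc. from $\xi_{\vl}^{(\pm)}$, and the $e_N(\vu)$, $u_i^{\cdots}$ monomials from $\mathcal{M}$ and $\overline{\mathcal{M}}$, and verify they all cancel or telescope correctly under the cyclic gluing. The only genuinely nontrivial simplifications are (i) that the $N$-dependent $g_{\lambda^{(k,i)}}$ powers coming with exponents $(2-N)$ from $\overline{\mathcal{M}}$ and $(N-1)$ from $\mathcal{M}$ combine to exponent $1$, leaving just $p^{|\vl_0|}$ modified by the $\gamma,t$-shifts; (ii) that the $S$-duality shifts $\vs^{(i-1)}$ convert the naive $v/\gamma u$ arguments into the clean $u_{k,i}/\gamma u_{k+1,j}$; and (iii) the $\prod_k(x_{k+1}/\gamma^{2N}tx_k)^{|\vl_k|}$ term, whose $x$-dependence comes entirely from the $x_{k+1}^{|\vl_k|}/(\gamma^2x_{k+1})^{|\vl_{k+1}|}$ structure of $\mathcal{M}$ after reindexing. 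I expect these to be ``straightforward but cumbersome'' in the sense of Appendix~\ref{sec: proofs}, so I would organize the computation as a single displayed chain of equalities, grouping the $\vl$-independent constants first and then the $\vl$-dependent monomials, and relegate the most tedious exponent arithmetic to a remark or to the appendix.
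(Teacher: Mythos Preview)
Your approach is essentially the paper's own: expand the trace in the $K$-basis, insert resolutions of the identity between the $\tilTV_k$'s, apply Fact~\ref{fact: mat el. of TV} for the Nekrasov numerators and the inner-product formula $\braket{K_{\vl}|K_{\vl}}=\overline{\mathcal{M}}\prod N_{\lambda^{(i)}\lambda^{(j)}}(qu_i/tu_j)$ for the denominators, and reduce the remaining scalar bookkeeping to the single identity $\prod_{k}\mathcal{M}\big(\vu^{(k+1)},\vu^{(k)};\vl_k,\vl_{k+1};x_{k+1}\big)\big/\overline{\mathcal{M}}\big(\vu^{(k)};\vl_k\big)=\gamma^{-N^2}t^{-N}\prod_k(x_{k+1}/\gamma^{2N}tx_k)^{|\vl_k|}$. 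One point the paper makes explicit that you leave vague: the extra factor $(\gamma^{N}t)^{|\vl_0|}$ on top of $p^{|\vl_0|}$ arises because the cyclic composition shifts the spectral tuple globally, $\vu^{(N)}=\gamma^{-N}t^{-1}\vu^{(0)}$, so closing the trace produces the ratio $\xi^{(+)}_{\vl_0}\!\big(\vu^{(0)}\big)/\xi^{(+)}_{\vl_0}\!\big(\vu^{(N)}\big)=(\gamma^{N}t)^{|\vl_0|}$; also, no $\tilTH$ enters this proof---the infinite-product prefactor is simply $\prod_{k}\brazero\tilTV_k(\vs^{(k-1)};x_k)\ketzero=\prod_{i<j}(qs_j/s_i;q)_\infty/(ts_j/s_i;q)_\infty$ from Lemma~\ref{lem: Mukade VEV}.
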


\begin{proof}
By Fact~\ref{fact: mat el. of TV} and Lemma~\ref{lem: Mukade VEV},
we have
\begin{gather*}
\tr \big( p^d \wcT^N(\vu;\vx) \big)=
\sum_{\vl_0 } p^{|\vl_0|}
\frac{\bra{K_{\vl_0}\big(\vu^{(0)}\big)}\wcT^N(\vu;\vx)\ket{K_{\vl_0}\big(\vu^{(0)}\big)}}
{\langle K_{\vl_0}\big(\vu^{(0)}\big) \ket{K_{\vl_0} \big(\vu^{(0)}\big)}}
 \\ \hphantom{\tr \big( p^d \wcT^N(\vu;\vx) \big)}
{}=\sum_{\vl_0} p ^{|\vl_0|}
\frac{\xi^{(+)}_{\vl_0} \big(\vu^{(0)}\big)}{\xi^{(+)}_{\vl_0} \big(\vu^{(N)}\big)}
\frac{\bra{K_{\vl_0}\big(\vu^{(0)}\big)}\wcT^N(\vu;\vx)\ket{K_{\vl_0}\big(\vu^{(N)}\big)}}
{\langle K_{\vl_0}\big(\vu^{(0)}\big) \ket{K_{\vl_0} \big(\vu^{(0)}\big)}}
\\ \hphantom{\tr \big( p^d \wcT^N(\vu;\vx) \big)}
{}=\sum_{\vl_0, \ldots, \vl_{N-1}} \big(p \gamma^{N} t\big)^{|\vl_0|}
\prod_{k=0}^{N-1}
\frac{\bra{K_{\vl_k}\big(\vu^{(k)}\big)}\tilTV_{k+1}\big(\vu^{{(k)}};x_{k+1}\big)\ket{K_{\vl_{k+1}}\big(\vu^{(k+1)}\big)}}
{\langle K_{\vl_k}\big(\vu^{(k)}\big) \ket{K_{\vl_k} \big(\vu^{(k)}\big)}}
 \\ \hphantom{\tr \big( p^d \wcT^N(\vu;\vx) \big)}
{}=\sum_{\vl_0, \ldots, \vl_{N-1}} \big(p \gamma^{N} t\big)^{|\vl_0|}
\prod_{k=0}^{N-1}
\frac{\mathcal{M}\big(\vu^{(k+1)},\vu^{(k)}; \vl_k, \vl_{k+1};x_{k+1}\big)}{ \overline{\mathcal{M}}\big(\vu^{(k)};\vl_k\big)}
 \\ \hphantom{\tr \big( p^d \wcT^N(\vu;\vx) \big)=}
{}\times\prod_{1\leq i<j\leq N} \frac{(qs_j/s_i;q)_{\infty}}{(ts_j/s_i;q)_{\infty}}
\prod_{k=0}^{N-1} \prod_{i,j=1}^N
\frac{ N_{\lambda^{(k,i)},\lambda^{(k+1,j)}}(u_{k,i}/\gamma u_{k+1,j})}
{ N_{\lambda^{(k,i)},\lambda^{(k,j)}}(q u_{k,i}/ tu_{k,j})}.
\end{gather*}
Furthermore,
it can be shown that
\begin{gather*}
\prod_{k=0}^{N-1}
\frac{\mathcal{M}\big(\vu^{(k+1)},\vu^{(k)}; \vl_k, \vl_{k+1};x_{k+1}\big)}{ \overline{\mathcal{M}}\big(\vu^{(k)};\vl_k\big)}
=
\gamma^{-N^2}t^{-N}
\prod_{k=1}^{N}\big( x_{k+1}/\gamma^{2N} t x_k\big)^{|\vl_k|}.
\end{gather*}
Thus, 
we have (\ref{eq: tr TN by Nek}).
\end{proof}

\begin{Corollary}
We obtain
\begin{gather*}
f^{\widehat{\mathfrak{gl}}_N}\big(\vx,p^{{1}/{N}}|\vs',t^{-{1}/{N}}|q,t\big)
\\ \qquad
{}=\prod_{1\leq i<j\leq N}\frac{(t s_j/s_i;q)_{\infty}}{(qs_j/s_i;q)_{\infty}}\, \gamma^{-N^2}t^{-N}
\sum_{\vl_0, \ldots, \vl_{N-1} \in \parset^N}
\prod_{k=0}^{N-1} \prod_{i,j=1}^N
\frac{ N_{\lambda^{(k,i)},\lambda^{(k+1,j)}}(u_{k,i}/\gamma u_{k+1,j})}
{ N_{\lambda^{(k,i)},\lambda^{(k,j)}}(q u_{k,i}/ tu_{k,j})}
\\ \qquad \qquad
{}\times(t^3 \gamma^N)^{|\vl_0|}
\prod_{k=1}^{N}\big( p^{1/N} x_{N-k+1}/\gamma^{2N} q x_{N-k}\big)^{|\vl_k|}.
\end{gather*}
Here,
\begin{gather*}
\vs'=(s'_1,\ldots ,s'_N),\qquad
s'_k =t^{{k}/{N}}s_k.
\end{gather*}

\begin{proof}
Combine Proposition~\ref{prop: trace wcT = fellip},
Theorem~\ref{thm: fgln=fEG}
and Propositon \ref{prop: tr TN by Nek}.
\end{proof}
\end{Corollary}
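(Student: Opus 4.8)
The plan is to compare the two evaluations of the $p$-trace $\tr\big(p^d\wcT^N(\vs;\vx)\big)$ that are now available and solve for $\nonstrui$. On the one hand, Proposition~\ref{prop: trace wcT = fellip} writes the trace as $\big((pq/t;q,p)_\infty/(p;p)_\infty(pt;q,p)_\infty\big)^N\prod_{1\le i<j\le N}\Gamma(tx_j/x_i;q,p)/\Gamma(qx_j/x_i;q,p)$ times $\fellip_N(\vs;\vx|q,t,p)$, and Theorem~\ref{thm: fgln=fEG} then rewrites $\fellip_N(\vs;\vx|q,t,p)$ as $\mathfrak{C}^{-1}\nonstrui(\vx',p^{1/N}|\vs',t^{-1/N}|q,t)$; since $\mathfrak{C}$ exceeds the prefactor in Proposition~\ref{prop: trace wcT = fellip} by exactly $\prod_{1\le i<j\le N}(ts_j/s_i;q)_\infty/(qs_j/s_i;q)_\infty$, combining the two recovers equation~\eqref{eq: Tr TN in terms of fglN}, namely $\tr\big(p^d\wcT^N(\vs;\vx)\big)=\prod_{i<j}(qs_j/s_i;q)_\infty/(ts_j/s_i;q)_\infty\cdot\nonstrui(\vx',p^{1/N}|\vs',t^{-1/N}|q,t)$. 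On the other hand, Proposition~\ref{prop: tr TN by Nek} (taking $\vu=\vs$) evaluates the same trace as $\gamma^{-N^2}t^{-N}\prod_{i<j}(qs_j/s_i;q)_\infty/(ts_j/s_i;q)_\infty$ times the Nekrasov-factor sum over $\vl_0,\dots,\vl_{N-1}$, with the monomial weights $(p\gamma^Nt)^{|\vl_0|}\prod_{k=0}^{N-1}(x_{k+1}/\gamma^{2N}tx_k)^{|\vl_k|}$. Equating the two and cancelling the common $q$-shifted factorial already yields a closed formula for $\nonstrui(\vx',p^{1/N}|\vs',t^{-1/N}|q,t)$ as $\gamma^{-N^2}t^{-N}$ times that Nekrasov sum.

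It then remains to bring this intermediate identity to the shape displayed in the corollary: written in the unshifted variable $\vx$ rather than $\vx'=(p^{-k/N}x_k)_{k}$, and carrying the reciprocal factor $\prod_{i<j}(ts_j/s_i;q)_\infty/(qs_j/s_i;q)_\infty$. I would do this with the substitution $x_k\mapsto p^{k/N}x_k$ (which turns $\vx'$ into $\vx$ on the left and reshuffles the powers of $p$ among the $|\vl_k|$'s on the right) followed by the reversal $x_i\mapsto 1/x_{N+1-i}$, $s_i\mapsto 1/s_{N+1-i}$ together with the index reversal $\vl_k\mapsto\vl_{N-k}$ --- the same manoeuvre used at the ends of the proofs of Theorem~\ref{thm: Tr TH TH...} and Proposition~\ref{prop: trace wcT = fellip}, and compatible with the dualities of $\nonstrui$ listed as property~(3) in the introduction. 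Under this combined change $\prod_k(x_{k+1}/\gamma^{2N}tx_k)^{|\vl_k|}$ turns into $\prod_k(p^{1/N}x_{N-k+1}/\gamma^{2N}qx_{N-k})^{|\vl_k|}$, the $q$-shifted factorial prefactor flips to its reciprocal, and collecting the residual powers of $\gamma=(t/q)^{1/2}$, $t$, $q$ gives the stated scalars $\gamma^{-N^2}t^{-N}$ and $(t^3\gamma^N)^{|\vl_0|}$, the cyclic identifications $x_{i+N}=x_i$ and $\vl_N=\vl_0$ being used to fold the $|\vl_0|$-weight together.

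The hard part is purely the bookkeeping in this last step, which is also the only place an error could creep in: one has to check that the index conventions for the tuples $\vs^{(k)}$, $\vx^{(k)}$ --- hence for the spectral parameters $u_{k,i}$ inside the Nekrasov factors --- stay consistent under the substitutions, and track every power of $\gamma$, $t$, $q$ and $p$ produced by the shift $s_k\mapsto t^{k/N}s_k$, the substitution $x_k\mapsto p^{\pm k/N}x_k$, and the reversal. I would cross-check the result against the all-empty ($\vl_0=\dots=\vl_{N-1}=\emptyset$) term and, more finely, against the coefficients of the leading monomials $p^{1/N}x_{N-k+1}/x_{N-k}$ on both sides.
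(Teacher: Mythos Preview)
Your approach is exactly the paper's: equate the two evaluations of $\tr\big(p^d\,\wcT^N(\vs;\vx)\big)$ coming from Proposition~\ref{prop: trace wcT = fellip} together with Theorem~\ref{thm: fgln=fEG} (which in combination are equation~\eqref{eq: Tr TN in terms of fglN}) and from Proposition~\ref{prop: tr TN by Nek}, then cancel the common $q$-shifted factorial. The paper's proof is in fact even terser than yours---it is literally the one-line instruction to combine those three results, with no bookkeeping spelled out at all.

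One caveat on your second paragraph: after the cancellation in step~1 you have $\nonstrui(\vx',\ldots)=\gamma^{-N^2}t^{-N}\times(\text{Nekrasov sum})$ with \emph{no} $q$-shifted factorial prefactor remaining, so there is nothing left to ``flip to its reciprocal'' under the reversal $s_i\mapsto 1/s_{N+1-i}$ (and that product is in any case invariant under such a reversal, not inverted). The prefactor $\prod_{i<j}(ts_j/s_i;q)_\infty/(qs_j/s_i;q)_\infty$ in the displayed corollary therefore cannot arise the way you suggest; if it is not simply a typo in the corollary, it must come out of how the Nekrasov factors $N_{\lambda^{(k,i)},\lambda^{(k+1,j)}}(u_{k,i}/\gamma u_{k+1,j})$ themselves reorganise when you reindex the spectral parameters $u_{k,i}$ under the reversal. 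You are right to flag this as the step where the calculation has to be done carefully---the paper does not do it for you either.
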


\subsection[Case N=1]{Case $\boldsymbol{N=1}$}
\label{sec: N=1}

In this subsection,
we treat the case $N=1$.
This case is special in the sense that the $\kappa$ parameter is not specialized.
This is because in this case, the ratio of spectral parameters $v/u$ is the free parameter, and it becomes the $\kappa$ parameter.

\begin{Definition}
We put
\begin{gather*}
 \check{\Phi}^-(s)=\exp\bigg({-}\sum_n\frac{1-(1/\gamma \kappa)^n}{1-q^n}a_{-n} s^n \bigg) ,\qquad \check{\Phi}^+(s) = \exp\bigg(\sum_n\frac{1- (t\gamma \kappa/q)^n}{1-q^{-n}}a_n s^{-n} \bigg),
\end{gather*}
with $\kappa = qv/tu$
so that
\begin{gather*}
\Phicross
\bigg[ tu\kappa/q ;{ q s/t \kappa, \emptyset \atop s, \emptyset} ;u \bigg]
= \check{\Phi}^-(s)\check{\Phi}^+(s).
\end{gather*}
\end{Definition}

First, we take the trace in the horizontal direction. We obtain the next lemma.
\begin{Lemma}
We have
\begin{gather*}
\mathrm{tr}
\bigg(p^d
\Phicross
\bigg[ tu\kappa/q ;{ q s/t \kappa, \emptyset \atop s, \emptyset} ;u \bigg] \bigg)
= \exp\bigg(\sum \frac{1}{n}\frac{(1-q^n (\gamma \kappa)^n)(1-(\gamma \kappa)^n/t^n)(\gamma \kappa)^{-n}p^n}{(1-q^n)(1-t^{-n})(1-p^n)} \bigg) .
\end{gather*}
\begin{proof}
We note the formula for the normal ordering:
\begin{gather*}
\check{\Phi}^+\big(p^{-1} s\big)\check{\Phi}^-(s) =g(p){:}\check{\Phi}^+\big(p^{-1} s\big)\check{\Phi}^-(s){:} ,
\end{gather*}
where
\begin{gather*}
g(p) = \exp\bigg(\sum_n \frac{\big(1-(\gamma \kappa)^n\big)\big(1-q^n t^{-n}(\gamma \kappa)^{-n}\big)}{\big(1-t^n\big)\big(1-q^n\big)}p^n\bigg) .
\end{gather*}
By Lemma~\ref{lem: tr},
we can show that the given trace is
\begin{gather*}
\frac{1}{(p;p)_\infty}\prod_{k=1}^\infty g(p^{k})=
\exp\bigg(\sum \frac{1}{n}\frac{\big(1-q^n (\gamma \kappa)^n\big)\big(1-(\gamma \kappa)^n/t^n\big)(\gamma \kappa)^{-n}p^n}
{\big(1-q^n\big)\big(1-t^{-n}\big)\big(1-p^n\big)} \bigg) .\tag*{\qed}
\end{gather*}\renewcommand{\qed}{}
\end{proof}
\end{Lemma}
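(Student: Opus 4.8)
The plan is to compute the $p$-trace of the single cross operator $\Phicross\bigl[ tu\kappa/q ;{ q s/t \kappa, \emptyset \atop s, \emptyset} ;u \bigr] = \check\Phi^-(s)\check\Phi^+(s)$ using the general machinery of Lemma~\ref{lem: tr}. First I would record the vertex operators in the form $A(z)=A^-(z)A^+(z)$ with $A^-(s)=\check\Phi^-(s)$ and $A^+(s)=\check\Phi^+(s)$; since these are exponentials in the single boson $\{a_n\}$ (the $N=1$ Heisenberg algebra), they lie in $\widehat{\mathcal H}^1[s]$ and Lemma~\ref{lem: tr} applies directly with $n=1$, $N=1$. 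The only input needed is the self-contraction $\check\Phi^+(p^{-1}s)\check\Phi^-(s)$, which I would obtain by the standard Baker--Campbell--Hausdorff computation: using $[a_n,a_{-n}]=n\frac{1-q^n}{1-t^n}$ one finds $\check\Phi^+(p^{-1}s)\check\Phi^-(s)=g(p)\,{:}\check\Phi^+(p^{-1}s)\check\Phi^-(s){:}$ with
\begin{gather*}
g(p)=\exp\Bigl(\sum_{n>0}\frac{(1-(\gamma\kappa)^n)(1-q^nt^{-n}(\gamma\kappa)^{-n})}{(1-t^n)(1-q^n)}p^n\Bigr).
\end{gather*}

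Next I would feed this into the proof scheme of Lemma~\ref{lem: tr}: iterating the shift $A^+(z)\mapsto A^+(p^{-1}z)$ across the trace (which is cyclic up to the $p^d$-grading) produces, after $k$ steps, a factor $g(p^k)$, and since $\tr(p^d\cdot 1)=\frac{1}{(p;p)_\infty}$ in the $N=1$ case, the trace telescopes to $\frac{1}{(p;p)_\infty}\prod_{k\geq 1}g(p^k)$. Taking the logarithm and summing the geometric series $\sum_{k\geq 1}p^{kn}=\frac{p^n}{1-p^n}$ converts this into
\begin{gather*}
\exp\Bigl(\sum_{n>0}\frac1n\frac{(1-q^n(\gamma\kappa)^n)(1-(\gamma\kappa)^n/t^n)(\gamma\kappa)^{-n}p^n}{(1-q^n)(1-t^{-n})(1-p^n)}\Bigr),
\end{gather*}
where I would also absorb the $\frac{1}{(p;p)_\infty}=\exp\bigl(\sum_{n>0}\frac1n\frac{p^n}{1-p^n}\bigr)$ factor and check that the numerator of the exponent assembles correctly after clearing the $(1-t^n)$ versus $(1-t^{-n})$ discrepancy by pulling out a power of $t$ and $\gamma\kappa$ (this is the bookkeeping that turns $\frac{(1-(\gamma\kappa)^n)(1-q^nt^{-n}(\gamma\kappa)^{-n})}{(1-t^n)(1-q^n)}$ into $\frac{(1-q^n(\gamma\kappa)^n)(1-(\gamma\kappa)^n/t^n)(\gamma\kappa)^{-n}}{(1-q^n)(1-t^{-n})}$, which one verifies by multiplying out).

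The main obstacle, such as it is, is purely the algebraic bookkeeping in two places: (i) getting the exact form of $g(p)$ right, i.e. correctly combining the four exponential factors coming from $\check\Phi^+\check\Phi^-$-type contractions (the cross-contractions between the two exponentials in each $\check\Phi^\pm$), and (ii) verifying that after multiplying $\frac{1}{(p;p)_\infty}\prod_k g(p^k)$ and collecting the geometric sums, the numerator reorganizes into the symmetric product $(1-q^n(\gamma\kappa)^n)(1-(\gamma\kappa)^n/t^n)$ with the stated prefactor $(\gamma\kappa)^{-n}$. Both are routine manipulations of infinite products of the form $(a;p)_\infty$ and power-series exponentials; no conceptual difficulty arises because Lemma~\ref{lem: tr} has already packaged the trace computation. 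I would present the proof in the compact form already sketched in the excerpt, citing Lemma~\ref{lem: tr} for the telescoping step and leaving the normal-ordering contraction as the one displayed formula for $g(p)$.
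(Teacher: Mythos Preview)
Your proposal is correct and follows essentially the same approach as the paper: compute the normal-ordering contraction $\check\Phi^+(p^{-1}s)\check\Phi^-(s)=g(p){:}\cdots{:}$ with the same $g(p)$, then invoke Lemma~\ref{lem: tr} to obtain $\frac{1}{(p;p)_\infty}\prod_{k\geq 1}g(p^k)$ and simplify. Your discussion of the final bookkeeping (absorbing the $1/(p;p)_\infty$ factor and rewriting the numerator) is more explicit than the paper's, but the argument is the same.
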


Next, we make the loop in the vertical direction. We obtain the following lemma.
\begin{Lemma}
\begin{gather*}
\mathrm{tr}\bigg(p^d
\Phicross
\bigg[ tu\kappa/q ;{ q s/t \kappa, \emptyset \atop s, \emptyset} ;u \bigg] \bigg)
=
\sum_{\lambda} p^{|\lambda|} (\gamma\kappa)^{-|\lambda|} \frac{N_{\lambda\lambda}(\gamma \kappa)}{ N_{\lambda\lambda}(1)} .
\end{gather*}
\begin{proof}
Use the $S$-duality formula
\begin{gather*}
\mathrm{tr}\bigg(p^d \Phicross
\bigg[ tu\kappa/q ;{ q s/t \kappa, \emptyset \atop s, \emptyset} ;u \bigg] \bigg)
=\sum_{\lambda}p^{|\lambda|}\bra{P_{\lambda}}\Phicross
\bigg[ tu\kappa/q ;{ q s/t \kappa, \emptyset \atop s, \emptyset} ;u \bigg] \ket{Q_{\lambda}}
\\ \phantom{\mathrm{tr}\bigg(p^d \Phicross
\bigg[ tu\kappa/q ;{ q s/t \kappa, \emptyset \atop s, \emptyset} ;u \bigg] \bigg)}
= \sum_{\lambda}p^{|\lambda|}\bra{0}\Phicross
\bigg[s ;{ u ,\; \lambda \atop tu\kappa /q, \lambda} ;qs/t\kappa \bigg] \ket{0} ,
\end{gather*}
and take the normal ordering (Fact~\ref{fact: intertwiner OPE}).
\end{proof}
\end{Lemma}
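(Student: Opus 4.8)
The plan is to evaluate the $p$-trace in the basis of Macdonald functions, transport it through the $S$-duality of the intertwiners, and finish with a single operator product expansion.

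First I would use that $\braket{P_\lambda|Q_\mu}=\delta_{\lambda,\mu}$ together with the homogeneity of $\ket{Q_\lambda}$ in degree $|\lambda|$, so that the $p$-graded trace is computed in this biorthogonal basis as
\begin{gather*}
\mathrm{tr}\bigg(p^d\,\Phicross\bigg[ tu\kappa/q ;{ qs/t\kappa, \emptyset \atop s, \emptyset} ;u \bigg]\bigg)
=\sum_{\lambda\in\parset}p^{|\lambda|}\,\bra{P_\lambda}\Phicross\bigg[ tu\kappa/q ;{ qs/t\kappa, \emptyset \atop s, \emptyset} ;u \bigg]\ket{Q_\lambda}.
\end{gather*}
In the case $N=1$ the operator in the bracket is exactly $\cTV$ (the internal sum is empty and $\mu^{(0)}=\mu^{(1)}=\emptyset$), with $\vu=(u)$, $\vv=(tu\kappa/q)$, $x=s$, so Theorem~\ref{fact: chang pref. direc.} applies: $\bra{P_\lambda}\cdots\ket{Q_\lambda}=(-1)^{2|\lambda|}\bra{\lambda}\cTH\ket{\lambda}=\bra{\lambda}\cTH\ket{\lambda}$, and by the very definition of $\cTH$ this matrix element is the vacuum expectation value, in the horizontal Fock space, of the horizontally flipped cross operator carrying $\lambda$ on both preferred legs. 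Chasing the spectral parameters through the flip (horizontal $u,v=tu\kappa/q$ get interchanged with the vertical ones $s,\,qs/t\kappa$), the sum becomes $\sum_\lambda p^{|\lambda|}\,\brazero\Phicross[s;{u,\lambda\atop tu\kappa/q,\lambda};qs/t\kappa]\ketzero$. No extra constant intervenes, since both $\cTH$ and $\Phicross$ are already divided by their $\boldsymbol{\emptyset}$-vacuum values, so the normalizations on the two sides of the $S$-duality formula match.

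Next I would compute that vacuum expectation value by normal ordering. By definition the flipped operator is $\Phi^*_\lambda\circ\Phi_\lambda$ divided by $\brazero\Phi^*_\emptyset\Phi_\emptyset\ketzero$, and Fact~\ref{fact: intertwiner OPE} gives $\Phi^*_\lambda(u_j)\Phi_\lambda(v_i)=\mathcal{G}(v_i/\gamma u_j)^{-1}N_{\lambda\lambda}(v_i/\gamma u_j)\,{:}\Phi^*_\lambda(u_j)\Phi_\lambda(v_i){:}$. For the flipped parameters the preferred variable of $\Phi_\lambda$ is $tu\kappa/q$ and that of $\Phi^*_\lambda$ is $u$, so $v_i/\gamma u_j=(tu\kappa/q)/(\gamma u)=\gamma\kappa$ using $\gamma=(t/q)^{1/2}$; hence the Nekrasov factor is $N_{\lambda\lambda}(\gamma\kappa)$, the factor $\mathcal{G}(\gamma\kappa)^{-1}$ cancels against the $\emptyset$-normalization (where $N_{\emptyset\emptyset}=1$), and the vacuum expectation of the remaining normal-ordered operator is $1$. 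What survives are the $c$-number prefactors $\hat t(\lambda,\cdot,\cdot,0)$ of $\Phi_\lambda$ and $\hat t^*(\lambda,\cdot,\cdot,0)$ of $\Phi^*_\lambda$; collecting them, using the identity $c_\lambda c'_\lambda=(-1)^{|\lambda|}q^{n(\lambda')+|\lambda|}t^{n(\lambda)}N_{\lambda\lambda}(1)$ and the explicit form of $f_\lambda$, produces precisely $(\gamma\kappa)^{-|\lambda|}/N_{\lambda\lambda}(1)$. Each summand is therefore $p^{|\lambda|}(\gamma\kappa)^{-|\lambda|}N_{\lambda\lambda}(\gamma\kappa)/N_{\lambda\lambda}(1)$, which is the claimed identity.

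The main obstacle I expect is purely the bookkeeping in the last step: one must carry the monomial prefactors $\hat t,\hat t^*$ with their several spectral arguments through the composition inside $\Phicross$, combine them with the factorizations of $c_\lambda$, $c'_\lambda$ and $f_\lambda$, and verify that everything except $(\gamma\kappa)^{-|\lambda|}/N_{\lambda\lambda}(1)$ cancels. A subsidiary point is to confirm that the $S$-duality formula, stated for matrix elements, transfers verbatim to the $p$-trace; this it does, because $\mathrm{tr}(p^d{-})$ in the Macdonald basis is by construction $\sum_\lambda p^{|\lambda|}$ times the diagonal matrix element.
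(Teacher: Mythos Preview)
Your proof is correct and follows exactly the paper's route: expand the $p$-trace in the Macdonald basis, apply the $N=1$ case of the $S$-duality Theorem~\ref{fact: chang pref. direc.} to flip to a vacuum expectation value of $\Phicross$ with $\lambda$ on both preferred legs, and evaluate by the OPE of Fact~\ref{fact: intertwiner OPE}. The paper's own proof is simply a terse two-line version of what you wrote; your identification of $v_i/\gamma u_j=\gamma\kappa$ and the reduction of the monomial prefactors $\hat t\,\hat t^*/(c_\lambda c'_\lambda f_\lambda)$ to $(\gamma\kappa)^{-|\lambda|}/N_{\lambda\lambda}(1)$ is the bookkeeping the paper leaves implicit.
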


Combining these two lemmas results in the following summation formula.
\begin{Theorem}\label{thm: N=1}
We have
\begin{gather*}
\exp\bigg(\sum \frac{1}{n}\frac{(1-q^n \kappa^n)(1-\kappa^n/t^n)\kappa^{-n}p^n}{(1-q^n)(1-t^{-n})(1-p^n)} \bigg)
 \\
 \qquad = \sum_{\lambda \in \mathsf{P}} (p/\kappa)^{|\lambda|} \frac{\prod_{1 \leq i\leq j }
\big(\kappa q^{-\lambda_i+\lambda_{j+1}} t^{i-j};q\big)_{\lambda_j-\lambda_{j+1}}
\big(\kappa q^{\lambda_i-\lambda_j} t^{-i+j+1};q\big)_{\lambda_{j}-\lambda_{j+1}}}{\prod_{1 \leq i\leq j }
\big(q^{-\lambda_i+\lambda_{j+1}} t^{i-j};q\big)_{\lambda_j-\lambda_{j+1}}
\big(q^{\lambda_i-\lambda_j} t^{-i+j+1};q\big)_{\lambda_{j}-\lambda_{j+1}}} .
\end{gather*}
\end{Theorem}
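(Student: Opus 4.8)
The plan is to obtain Theorem~\ref{thm: N=1} by equating the two evaluations of the $p$-trace $\tr\big(p^d\,\Phicross[\cdots]\big)$ of the cross operator that both Lemmas above compute, and then by rewriting the Nekrasov weight on the right-hand side as a product of $q$-shifted factorials. Concretely, the first of the two Lemmas above evaluates the horizontal $p$-trace, through Lemma~\ref{lem: tr} applied to the normal ordering $\check{\Phi}^+\big(p^{-1}s\big)\check{\Phi}^-(s)=g(p){:}\check{\Phi}^+\big(p^{-1}s\big)\check{\Phi}^-(s){:}$, as the plethystic exponential appearing on the left-hand side of the theorem but with $\kappa$ replaced by $\gamma\kappa$; the second Lemma above computes the \emph{same} trace as a vertical loop, using the $S$-duality formula (Theorem~\ref{fact: chang pref. direc.}) together with the normal-ordering rules of Fact~\ref{fact: intertwiner OPE}, and returns $\sum_{\lambda\in\parset}p^{|\lambda|}(\gamma\kappa)^{-|\lambda|}\,N_{\lambda\lambda}(\gamma\kappa)/N_{\lambda\lambda}(1)$. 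Setting the two expressions equal yields an identity in $q$, $t$, $p$ and in the single combination $\gamma\kappa=(q/t)^{1/2}\,v/u$.

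Since in the $N=1$ situation the spectral ratio $v/u$ is a free parameter, so is the combination $\gamma\kappa$; writing $\kappa$ for it, consistently with the statement of the theorem, the previous identity becomes
\begin{gather*}
\exp\bigg(\sum\frac1n\frac{(1-q^n\kappa^n)(1-\kappa^n/t^n)\kappa^{-n}p^n}{(1-q^n)(1-t^{-n})(1-p^n)}\bigg)
=\sum_{\lambda\in\parset}(p/\kappa)^{|\lambda|}\,\frac{N_{\lambda\lambda}(\kappa)}{N_{\lambda\lambda}(1)} .
\end{gather*}
Thus the only thing left is to identify $N_{\lambda\lambda}(u)/N_{\lambda\lambda}(1)$ with the double product displayed in the theorem.

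For this last, purely combinatorial, step I would start from the first of the two product expressions in the definition of the Nekrasov factor $N_{\lambda\mu}$ with $\mu=\lambda$, insert $a_\lambda(i,j)=\lambda_i-j$ and $\ell_\lambda(i,j)=\lambda'_j-i$, and in each row $i$ collect the cells $(i,j)$ according to the common value $k=\lambda'_j$, which is constant on each block $\lambda_{k+1}<j\le\lambda_k$ and forces $k\ge i$. Over such a block both products over $j$ telescope into $q$-shifted factorials of length $\lambda_k-\lambda_{k+1}$, and relabelling the block index $k$ as $j$ gives
\begin{gather*}
N_{\lambda\lambda}(u)=\prod_{1\le i\le j}\big(u\,q^{-\lambda_i+\lambda_{j+1}}t^{i-j};q\big)_{\lambda_j-\lambda_{j+1}}\big(u\,q^{\lambda_i-\lambda_j}t^{-i+j+1};q\big)_{\lambda_j-\lambda_{j+1}} ,
\end{gather*}
that is, $N_{\lambda\lambda}(u)=\sfN^{(0|1)}_{\lambda\lambda}\big(u|q,t^{-1}\big)$ in the notation of Definition~\ref{def: non-st. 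Ruij}. Dividing the value at $u=\kappa$ by the value at $u=1$ reproduces exactly the right-hand side of Theorem~\ref{thm: N=1}; in passing this makes transparent the remark that setting $\kappa=t^{-1}$ recovers the $N=1$ case of Theorem~\ref{thm: fgln=fEG}.

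Because the two Lemmas are already established, the proof of the theorem is essentially the combination above, so the only point that genuinely needs care is the combinatorial identity of the third step: one must be attentive to the block endpoints $\lambda_{k+1},\lambda_k$ when telescoping, so that the resulting index set is exactly $1\le i\le j$ and no spurious boundary factors survive. (If one instead unfolds the proof of the second Lemma, the analogous fussy step is checking that the normalization prefactors of $\Phi_\lambda,\Phi^*_\lambda$ and of $\ket{Q_\lambda}=(c_\lambda/c'_\lambda)\ket{P_\lambda}$ collapse precisely to $p^{|\lambda|}(\gamma\kappa)^{-|\lambda|}\,N_{\lambda\lambda}(\gamma\kappa)/N_{\lambda\lambda}(1)$, where the identity $c_\lambda c'_\lambda=(-1)^{|\lambda|}q^{n(\lambda')+|\lambda|}t^{n(\lambda)}N_{\lambda\lambda}(1)$ enters; but that bookkeeping is contained in the Lemma itself.) I do not expect any of this to pose a real obstacle.
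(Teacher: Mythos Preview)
Your proposal is correct and matches the paper's own argument: the theorem is obtained exactly by equating the two lemmas (horizontal trace versus vertical loop), relabelling the free combination $\gamma\kappa\mapsto\kappa$, and recognising $N_{\lambda\lambda}(u)$ as the product $\sfN^{(0|1)}_{\lambda\lambda}(u|q,t^{-1})$ of $q$-shifted factorials. Your explicit verification of the last combinatorial rewriting is a welcome addition, since the paper leaves it implicit.
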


This gives the proof of the conjecture in \cite{HR2008mixed}, which claims the two different forms of the mixed Hodge polynomials of certain twisted $\mathrm{GL}(n,\mathbb{C})$-character varieties of Riemann surfaces with $g=1$.
The identity was proposed also in \cite{AK2009Changing} motivated by the S-duality conjecture in the string theory.
The similar proof is given in \cite{CNO2013Five, RW2018nekrasov}.
Physically, this relates the partition function of the 5d $\mathcal{N} = 1^\ast$ $U(1)$ gauge theory to that of the 6d theory with one tensor multiplet.

\section{Integral operators}\label{sec: int op.}

\subsection{Integral operator of Macdonald functions}
We return to the non-affine case with $N \geq 2$.
In Fact~\ref{fact: macdonald from mukade},
the ordinary Macdonald functions were constructed
from the screened vertex operators.
In this section,
an integral operator introduced in
\cite{shiraishi2005commutative1,shiraishi2005commutative2,Shiraishi2006family}
will be constructed from them.
We treat the spectral parameters $\vs=(s_1,\ldots,s_N)$ as generic complex variables in this section.
First, we rewrite the screened vertex operators (non-affine case)
by the contour integrals.

\begin{Definition}
For $k=0,\ldots, N-1$, define
$\phicont_{k,N}(x)=\phicont_{k,N}(\vs;x)\colon
\cF^{(N,0)}_{\gamma^{-1} t^{-\delta_{k+1}}\cdot \vs}\rightarrow \cF^{(N,0)}_{\vs}$
by
\begin{gather*}
\phicont_{k,N}(x):=K(\vs)
\oint_C \prod_{i=1}^k \frac{{\rm d}y_i}{2 \pi \sqrt{-1}y_i}
\phi_0(x) \widetilde{S}^{(1)}(y_1)\cdots \widetilde{S}^{(k)}(y_k) g(\vs;x,y_1,\ldots,y_k) ,
\\
K(\vs):=\prod_{i=1}^k \frac{(q;q)_{\infty} (q/t;q)_{\infty}}
{(\frac{q s_i}{s_{k+1}};q)_{\infty}(\frac{q s_{k+1}}{ts_{i}};q)_{\infty}} ,
\\
g(\vs;x,y_1,\ldots,y_k):=\frac{\theta_q(qs_1 y_1/s_{k+1}x)}{\theta_q(q y_1/x)}
\prod_{i=1}^{k-1}\frac{\theta_q(qs_{i+1} y_{i+1}/s_{k+1}y_i)}{\theta_q(q y_{i+1}/y_i)} .
\end{gather*}
Here, the contour of the integration $C$ is taken such that
$|q/t|<|x/y_1|<|q|$ and $|t^{-1}|<y_i/y_{i+1}|<1$ for $i=1,\ldots,k$.%
\footnote{
This screened vertex operator corresponds to $\Phi^{(k)}(x)$
in \cite{FOS2019Generalized} after
transformation $x \rightarrow t^{-1}x$ and $y_i \rightarrow (q/t)^i t^{-1} y_i$
and modification of the integration contour.
Actually, a more strict condition is imposed for inte\-gration contour in \cite{FOS2019Generalized}
in order to show that the screening currents commute with $X^{(r)}(z)$
($r=1,\ldots, N$).
However, only commutativity with $\Xo(z)$ is required
to show Fact~\ref{fact: mac from phicont}.
Hence we adopt this integration contour in this paper.
}
\end{Definition}
\begin{Remark}\label{rem. phicont argument}
The spectral parameter $\gamma^{-1} t^{-\delta_{k+1}}\cdot \vs$ in the domain of $\phicont$ is determined by the spectral parameter $\vs$ in the codomain.
Though all $\phicont_{k,N}(x)$ depend on the parameter $\vs$,
we omit it in the argument
if spectral parameters of the domain and the codomain are
automatically determined such as the composition of the operators:
\begin{gather*}
\phicont_{1,N}(\vs, x)\,\phicont_{2,N}(x)\,\phicont_{3,N}(x)\cdots .
\end{gather*}
\end{Remark}

This operator can be expanded as follows.

\begin{Proposition}\label{prop: Raman expansion}
We have
\begin{gather*}
\phicont_{k,N}(x)\!=\!\oint_C \!\prod_{i=1}^k \frac{{\rm d}y_i}{2 \pi \sqrt{-1}y_i}
\!\!\sum_{r_1,\ldots, r_k \in \mathbb{Z}} \prod_{i=1}^k
\frac{(ts_i/s_{k+1};q)_{r_i} }{(qs_i/s_{k+1};q)_{r_i}}
\left(\frac{qy_i}{ty_{i-1}}\right)^{r_i}\!\!
{:}\phi_0(x) \widetilde{S}^{(1)}(y_1)\cdots \widetilde{S}^{(k)}(y_k){:}.
\end{gather*}

\begin{proof}
It follows from the operator product formulas (Proposition~\ref{prop: phi0 S OPE})
and Ramanujan's ${}_1\psi_1$ summation formula
((5.2.1) in \cite{GR2004basic}):
\begin{gather*}
{}_1\psi_1(a;b;q;z):=
\sum_{n=-\infty}^{\infty} \frac{(a;q)_{n}}{(b;q)_{n}}z^n=
\frac{(q;q)_{\infty}(b/a;q)_{\infty}(az;q)_{\infty}(q/az;q)_{\infty}}
{(b;q)_{\infty}(q/a;q)_{\infty}(z;q)_{\infty}(b/az;q)_{\infty}}
\\ \hphantom{{}_1\psi_1(a;b;q;z):=}
(|b/a|<z<1).\tag*{\qed}
\end{gather*}
\renewcommand{\qed}{}
\end{proof}
\end{Proposition}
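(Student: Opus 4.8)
The plan is to prove the expansion of $\phicont_{k,N}(x)$ by separating the operator content from the scalar content. First I would use Proposition~\ref{prop: phi0 S OPE} to normal-order the product $\phi_0(x)\wS^{(1)}(y_1)\cdots\wS^{(k)}(y_k)$ appearing inside the integral in the definition of $\phicont_{k,N}(x)$. The only nontrivial contractions are between $\phi_0(x)$ and $\wS^{(1)}(y_1)$, and between consecutive $\wS^{(i)}(y_i)$ and $\wS^{(i+1)}(y_{i+1})$ (all the OPEs of $\phi_0$ with $\wS^{(i)}$ for $i\geq 2$, and of $\wS^{(i)}$ with $\wS^{(j)}$ for $|i-j|\geq 2$, are trivial by Proposition~\ref{prop: phi0 S OPE} and the proposition on the $\wS$-OPEs). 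So the normal ordering produces
\begin{gather*}
\phi_0(x)\wS^{(1)}(y_1)\cdots\wS^{(k)}(y_k)=\frac{(qy_1/x;q)_\infty}{(qy_1/tx;q)_\infty}\prod_{i=1}^{k-1}\frac{(qy_{i+1}/y_i;q)_\infty}{(qy_{i+1}/ty_i;q)_\infty}\,{:}\phi_0(x)\wS^{(1)}(y_1)\cdots\wS^{(k)}(y_k){:}.
\end{gather*}

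Next I would collect the scalar prefactor: the constant $K(\vs)$, the theta-quotient $g(\vs;x,y_1,\ldots,y_k)$, and the normal-ordering factor just computed. I expect this to organize into a product over $i=1,\ldots,k$ of a single ${}_1\psi_1$-type object in the variable $z_i:=y_i/(t\,y_{i-1})$ with $y_0:=x$, namely each factor being of the shape $\frac{(q;q)_\infty(q/t;q)_\infty}{(qs_i/s_{k+1};q)_\infty(qs_{k+1}/ts_i;q)_\infty}\cdot\frac{\theta_q(qs_i y_i/s_{k+1}y_{i-1})}{\theta_q(qy_i/y_{i-1})}\cdot\frac{(qy_i/ty_{i-1};q)_\infty}{(qy_i/y_{i-1};q)_\infty}$ (with the obvious reading at $i=1$ using $y_0=x$). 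Using $\theta_q(a)=(a;q)_\infty(q/a;q)_\infty$, each such factor becomes exactly the right-hand side of Ramanujan's ${}_1\psi_1$ summation with $a=ts_i/s_{k+1}$, $b=qs_i/s_{k+1}$, and $z=qz_i=qy_i/(ty_{i-1})$; hence it equals $\sum_{r_i\in\mathbb{Z}}\frac{(ts_i/s_{k+1};q)_{r_i}}{(qs_i/s_{k+1};q)_{r_i}}(qy_i/ty_{i-1})^{r_i}$. Multiplying over $i$ and interchanging the (absolutely convergent, on the prescribed contour) sum with the integral gives the claimed formula.

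The main obstacle I anticipate is purely bookkeeping: verifying that the theta-functions, the normalization $K(\vs)$, and the OPE contraction factors recombine \emph{precisely} into the Ramanujan ${}_1\psi_1$ right-hand side for each $i$, with no leftover monomial or constant. This requires being careful that the contour conditions $|q/t|<|x/y_1|<|q|$ and $|t^{-1}|<|y_i/y_{i+1}|<1$ translate into the convergence condition $|b/a|<|z|<1$, i.e. $|q/t|<|qy_i/ty_{i-1}|<1$, for the ${}_1\psi_1$ sum in each variable, which matches since $|b/a|=|q/t|$ here. One should also note that the factor $\frac{q^n}{1-q^n}$-type pieces in $\phi_0$ and the screening currents are common to both sides (they sit inside the normal-ordered operator ${:}\phi_0(x)\wS^{(1)}(y_1)\cdots\wS^{(k)}(y_k){:}$), so nothing on the operator side needs to move. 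Once the per-variable identification with ${}_1\psi_1$ is in hand, the rest is immediate, so I would present the proof essentially as the two-line argument the paper gives, with the normal-ordering step and the recombination into ${}_1\psi_1$ being the content.
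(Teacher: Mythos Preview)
Your approach is correct and is exactly the argument the paper intends: normal-order using Proposition~\ref{prop: phi0 S OPE}, then recognize that for each $i$ the product of the $i$-th factor of $K(\vs)$, the $i$-th theta-quotient in $g$, and the nearest-neighbour contraction factor is precisely the right-hand side of Ramanujan's ${}_1\psi_1$ with $a=ts_i/s_{k+1}$, $b=qs_i/s_{k+1}$, $z=qy_i/(ty_{i-1})$. One small slip: the OPE contraction gives $\dfrac{(qy_i/y_{i-1};q)_\infty}{(qy_i/ty_{i-1};q)_\infty}$, not its reciprocal as you wrote; with the correct orientation the $(qy_i/y_{i-1};q)_\infty$ cancels against the denominator of $\theta_q(qy_i/y_{i-1})$ and the match with the ${}_1\psi_1$ product is exact, so the argument goes through as stated.
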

%
\begin{Proposition}
$\phicont_k(x)$ is given by $\tilTV$ as
\begin{gather*}
\phicont_{k,N}(\vs, x)=
\prod_{i=1}^k \frac{(s_{k+1}/s_i;q)_{\infty}}{( qs_{k+1}/ts_i;q)_{\infty}} \,
\tilTV_{k+1}(\vs; x).
\end{gather*}
\begin{proof}
First,
we adjust the contour of the integration
to the condition $|q|<|y_{i-1}/y_{i}|<1$
($i=1,\ldots, k$). Here, we put $y_0=x$.
Note that no pole affects this change.
Then we have
\begin{gather*}
g(\vs;x,y_1,\ldots,y_k)
=\prod_{i=0}^{k-1}
\Bigg(
\frac{(\frac{s_{k+1}}{s_{i+1}};q)_{\infty} (\frac{qs_{i+1}}{s_{k+1}};q)_{\infty}}
{(q;q)_{\infty}(q;q)_{\infty}}
\sum_{n \in \mathbb{Z}}\frac{1}{1-q^n \frac{s_{k+1}}{s_{i+1}}}(y_i/y_{i+1})^n\Bigg).
\end{gather*}
By the deformation of the formal series
\begin{gather}
\sum_{n \in \mathbb{Z}}\frac{1}{1-q^n \frac{s_{k+1}}{s_{i+1}}}(y_i/y_{i+1})^n
=
\sum_{n \in \mathbb{Z}}\sum_{m\geq 0}\left(q^n \frac{s_{k+1}}{s_{i+1}}\right)^m
(y_i/y_{i+1})^n \nonumber
\\ \hphantom{\sum_{n \in \mathbb{Z}}\frac{1}{1-q^n \frac{s_{k+1}}{s_{i+1}}}(y_i/y_{i+1})^n}
{}=\sum_{m\geq 0}\delta(q^m y_i/y_{i+1}) \left(\frac{s_{k+1}}{s_{i+1}}\right)^m,
\label{eq: formal eq}
\end{gather}
we have
\begin{gather}
\phicont_{k,N}(x)=K(\vs)\prod_{i=0}^{k-1}
\frac{\big(\frac{s_{k+1}}{s_{i+1}};q\big)_{\infty} \big(\frac{qs_{i+1}}{s_{k+1}};q\big)_{\infty}}
{(q;q)_{\infty}(q;q)_{\infty}} \sum_{m_0, \ldots, m_{k-1} \geq 0}
\oint_C \prod_{i=1}^k \frac{{\rm d}y_i}{2 \pi \sqrt{-1}y_i}\phi_0(x) \nonumber
\\ \hphantom{\phicont_{k,N}(x)=}
{}\times\prod_{1\leq i \leq k}^{\curvearrowright}
\widetilde{S}^{(i)}(q^{m_0+\cdots +m_{i-1}}x)
 \prod_{i=0}^{k-1} \delta\left(\frac{q^{m_i} y_i}{y_{i+1}}\right)
\left(\frac{s_{k+1}}{s_{i+1}}\right)^{m_i}\nonumber
\\ \hphantom{\phicont_{k,N}(x)}
{}=K(\vs)\prod_{i=0}^{k-1}
\frac{\big(\frac{s_{k+1}}{s_{i+1}};q\big)_{\infty} \big(\frac{qs_{i+1}}{s_{k+1}};q\big)_{\infty}}
{(q;q)_{\infty}(q;q)_{\infty}} \!\!\!
\sum_{0\leq \ell_1 \leq \ell _2 \leq \cdots \leq \ell_{k}}\!\!\!
\phi_0(x)\prod_{1\leq i \leq k}^{\curvearrowright}
\widetilde{S}^{(i)}(q^{\ell_{i}}x)
 \prod_{i=1}^{k-1} \left(\frac{s_{i+1}}{s_{i}}\right)^{\ell_i} \nonumber
\\ \hphantom{\phicont_{k,N}(x)}
{}=\prod_{i=1}^k \frac{(s_{k+1}/s_i;q)_{\infty}}{( qs_{k+1}/ts_i;q)_{\infty}} \,
\tilTV_{k+1}(\vs; x).
\label{eq: phicont and TV}
\end{gather}
The deformation (\ref{eq: formal eq}) itself is not well-defined
because $\sum_{m \geq 0} (q^n \frac{s_{k+1}}{s_{i+1}})^m$
does not converge for arbitrary $n \in \mathbb{Z}$.
However,
considering the matrix elements
of the operators,
we can justify the calculation (\ref{eq: phicont and TV}).
For more detail, see Remark A.2 in \cite{FOS2019Generalized}.
\end{proof}
\end{Proposition}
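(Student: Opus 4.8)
The plan is to turn the contour integral defining $\phicont_{k,N}(x)$ into the ordered sum of screening currents that Fact~\ref{fact: til Ti = PhiSS..} identifies with $\tilTV_{k+1}$. The only nontrivial ingredient beyond bookkeeping is the ``connector'' function $g(\vs;x,y_1,\dots,y_k)$, a product of ratios $\theta_q(qs_{i+1}y_{i+1}/s_{k+1}y_i)/\theta_q(qy_{i+1}/y_i)$ (with $y_0=x$). Using $\theta_q(q/w)=\theta_q(w)$, each factor is $\theta_q(az)/\theta_q(z)$ with $z=y_i/y_{i+1}$ and $a=s_{k+1}/s_{i+1}$, and the first step is to expand it by the partial-fraction identity
\[
\frac{\theta_q(az)}{\theta_q(z)}=\frac{\theta_q(a)}{(q;q)_\infty^2}\sum_{n\in\mathbb{Z}}\frac{z^n}{1-aq^n},
\]
which is the $b=qa$ specialisation of the Ramanujan ${}_1\psi_1$ summation already recalled in Proposition~\ref{prop: Raman expansion}. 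This rewrites $g$ as $\prod_{i=0}^{k-1}\big(\theta_q(s_{k+1}/s_{i+1})/(q;q)_\infty^2\big)\sum_{n}(y_i/y_{i+1})^n/(1-q^n s_{k+1}/s_{i+1})$, the scalar $\theta_q(s_{k+1}/s_{i+1})$ being absorbed into $K(\vs)$.

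Second, I would deform the contour $C$ to the region $|q|<|y_{i-1}/y_i|<1$; no residue is crossed, because in the annuli being swept the only singularities of the full integrand are the poles of the $\cA$--$\cAs$ operator products (Proposition~\ref{prop: phi0 S OPE}) and of $g$, and these stay off the deformed contour. There one may expand each summand as $1/(1-w)=\sum_{m\ge0}w^m$ and interchange summations to obtain the formal delta function
\[
\sum_{n\in\mathbb{Z}}\frac{(y_i/y_{i+1})^n}{1-q^n s_{k+1}/s_{i+1}}=\sum_{m\ge0}\delta\big(q^m y_i/y_{i+1}\big)\,(s_{k+1}/s_{i+1})^m .
\]
The $k$ delta functions then collapse the $k$ integrals: $\delta(q^{m_0}x/y_1)$ localises $y_1$ at $q^{m_0}x$, $\delta(q^{m_1}y_1/y_2)$ localises $y_2$ at $q^{m_0+m_1}x$, and so on, so $y_i=q^{\ell_i}x$ with $\ell_i=m_0+\dots+m_{i-1}$ ranging over $0\le\ell_1\le\dots\le\ell_k$, while the monomials $\prod_i(s_{k+1}/s_{i+1})^{m_i}$ telescope to $\prod_i(s_{i+1}/s_i)^{\ell_i}$.

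Third, what remains is $\phi_0(x)\prod^{\curvearrowright}_{1\le i\le k}\widetilde{S}^{(i)}(q^{\ell_i}x)$ summed over $0\le\ell_1\le\dots\le\ell_k$ with weight $\prod_i(s_{i+1}/s_i)^{\ell_i}$, times the scalar $K(\vs)\prod_{i=1}^{k}\theta_q(s_{k+1}/s_i)/(q;q)_\infty^2$; a one-line simplification of $q$-Pochhammer symbols shows this scalar equals $\big((q/t;q)_\infty/(q;q)_\infty\big)^k\prod_{i=1}^{k}(s_{k+1}/s_i;q)_\infty/(qs_{k+1}/ts_i;q)_\infty$. Recognising the summed operator together with the factor $\big((q/t;q)_\infty/(q;q)_\infty\big)^k$ as precisely $\tilTV_{k+1}(\vs;x)$ via Fact~\ref{fact: til Ti = PhiSS..} (with the index $k+1$ and $u_j=s_j$) yields the claim.

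The main obstacle is that the delta-function step is not literally legitimate: $\sum_{m\ge0}(q^n s_{k+1}/s_{i+1})^m$ fails to converge for every $n\in\mathbb{Z}$, so $1/(1-w)=\sum_m w^m$ cannot be used uniformly in $n$. This must be justified after pairing with fixed Fock vectors, where each operator contributes only finitely many modes and the manipulation becomes a genuine equality of formal power series --- exactly the kind of argument carried out in Remark~A.2 of \cite{FOS2019Generalized}. A secondary point requiring care is the admissibility of the contour deformation, i.e.\ checking that the poles of the $\cA$--$\cAs$ operator products never lie between the original and the deformed contours.
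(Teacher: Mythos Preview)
Your proposal is correct and follows essentially the same argument as the paper: expand each theta-ratio in $g$ via the $_1\psi_1$ identity, deform the contour, turn the bilateral sums into formal delta functions that localise $y_i=q^{\ell_i}x$, and match the resulting ordered sum against Fact~\ref{fact: til Ti = PhiSS..}. Your acknowledgement that the delta-function step is only justified at the level of matrix elements, with reference to Remark~A.2 of \cite{FOS2019Generalized}, is exactly the caveat the paper makes.
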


Fact~\ref{fact: macdonald from mukade} can be rewritten as follows.

\begin{fact}[\cite{FOS2019Generalized}, Theorem 3.26]
\label{fact: mac from phicont}
It follows that
\begin{gather*}
\brazero \phicont_{0,N}(\vs;x_1)\,\phicont_{1,N}(x_2)\cdots \phicont_{N-1,N}(x_{N}) \ketzero = f^{\mathfrak{gl}_N}(\vx, \vs|q, q/t).
\end{gather*}
\end{fact}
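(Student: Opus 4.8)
The plan is to reduce the claim to Fact~\ref{fact: macdonald from mukade} by rewriting each contour-integral operator $\phicont_{k,N}$ as the corresponding screened vertex operator $\tilTV_{k+1}$, and then checking that all the $q$-shifted-factorial prefactors cancel. All identities below are read as equalities of formal power series in $x_2/x_1,\dots,x_N/x_{N-1}$; the formal-series and contour manipulations hidden inside the proposition $\phicont_{k,N}(\vs,x)=\prod_{i=1}^k\frac{(s_{k+1}/s_i;q)_\infty}{(qs_{k+1}/ts_i;q)_\infty}\tilTV_{k+1}(\vs;x)$ have already been justified there.

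\emph{Step 1: make the spectral parameters explicit.} By Remark~\ref{rem. phicont argument}, in the composition $\phicont_{0,N}(\vs;x_1)\phicont_{1,N}(x_2)\cdots\phicont_{N-1,N}(x_N)$ the $k$-th factor $\phicont_{k,N}(x_{k+1})$ is forced to carry the spectral parameter $\vs^{(k)}$ obtained from $\vs$ by applying $\gamma^{-1}t^{-\delta_1},\dots,\gamma^{-1}t^{-\delta_k}$ in succession. A short induction gives $\vs^{(k)}=(\gamma^{-k}t^{-1}s_1,\dots,\gamma^{-k}t^{-1}s_k,\gamma^{-k}s_{k+1},\dots,\gamma^{-k}s_N)$, i.e.\ exactly the vectors of (\ref{eq: notation of s^i}); in particular $s^{(k)}_{k+1}/s^{(k)}_i=ts_{k+1}/s_i$ and $qs^{(k)}_{k+1}/(ts^{(k)}_i)=qs_{k+1}/s_i$ for $1\le i\le k$. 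This is precisely the chain of shifts implicit in the composition $\tilTV_1(\vx;s_1)\tilTV_2(s_2)\cdots\tilTV_N(s_N)$ of Fact~\ref{fact: macdonald from mukade}, with the roles of $\vx$ and $\vs$ interchanged.

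\emph{Step 2: replace $\phicont$ by $\tilTV$ and cancel.} Applying the proposition relating $\phicont_{k,N}$ to $\tilTV_{k+1}$ with $\vs\mapsto\vs^{(k)}$ and using Step~1, one gets $\phicont_{k,N}(\vs^{(k)};x_{k+1})=\prod_{i=1}^k\frac{(ts_{k+1}/s_i;q)_\infty}{(qs_{k+1}/s_i;q)_\infty}\,\tilTV_{k+1}(\vs^{(k)};x_{k+1})$. Multiplying over $k=0,\dots,N-1$ and inserting vacua,
\[
\brazero\phicont_{0,N}(\vs;x_1)\cdots\phicont_{N-1,N}(x_N)\ketzero
=\prod_{1\le i<j\le N}\frac{(ts_j/s_i;q)_\infty}{(qs_j/s_i;q)_\infty}\;
\brazero\tilTV_1(\vs;x_1)\tilTV_2(x_2)\cdots\tilTV_N(x_N)\ketzero,
\]
where the omitted spectral parameters on the right are the $\vs^{(k)}$ of Step~1. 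Now specialize Fact~\ref{fact: macdonald from mukade} by $\vx\mapsto\vs$ and $s_i\mapsto x_i$: $\brazero\tilTV_1(\vs;x_1)\cdots\tilTV_N(x_N)\ketzero=\prod_{1\le i<j\le N}\frac{(qs_j/s_i;q)_\infty}{(ts_j/s_i;q)_\infty}\,f^{\mathfrak{gl}_N}(\vx;\vs|q,q/t)$. The two products over $i<j$ are reciprocal, so the product collapses and the right-hand side is $f^{\mathfrak{gl}_N}(\vx;\vs|q,q/t)$, as claimed.

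\emph{Main obstacle.} The only nontrivial bookkeeping is Step~1: verifying that composing the $\phicont_{k,N}$'s generates exactly the shift chain $\gamma^{-1}t^{-\delta_1},\gamma^{-1}t^{-\delta_2},\dots$ built into Fact~\ref{fact: macdonald from mukade}, and that under these shifts the prefactor produced by the proposition is precisely the reciprocal of the prefactor appearing there. Everything else is routine. As a consistency check one may instead invoke the bispectral-duality form of Fact~\ref{fact: macdonald from mukade} stated in the Remark following it, which routes through $f^{\mathfrak{gl}_N}(\vs;\vx|q,q/t)$ and $\prod_{1\le i<j\le N}(qx_j/x_i;q)_\infty/(tx_j/x_i;q)_\infty$ and must yield the same final expression.
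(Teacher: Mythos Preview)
Your proof is correct. The paper does not give its own proof of this statement---it is recorded as a fact from \cite{FOS2019Generalized}---but your derivation is exactly the one implicit in the surrounding material: combine the unnumbered Proposition $\phicont_{k,N}(\vs,x)=\prod_{i=1}^{k}\frac{(s_{k+1}/s_i;q)_\infty}{(qs_{k+1}/ts_i;q)_\infty}\,\tilTV_{k+1}(\vs;x)$ with Fact~\ref{fact: macdonald from mukade} (after the swap $\vx\leftrightarrow\vs$), and observe that the accumulated prefactor $\prod_{1\le i<j\le N}(ts_j/s_i;q)_\infty/(qs_j/s_i;q)_\infty$ cancels the one coming from Fact~\ref{fact: macdonald from mukade}. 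Your Step~1 bookkeeping of the spectral-parameter chain $\vs^{(k)}$ is correct and matches the notation~(\ref{eq: notation of s^i}) used elsewhere in the paper.
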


We introduce the following integral operator,
which is essentially the same as the one in \cite{Shiraishi2006family}.

\begin{Definition}\label{def: operator I}
Define the integral operator $I(s_1/s_0,\ldots,s_N/s_0)$ on
$\mathbb{C}[[x_2/x_1,\ldots, x_N/x_{N-1}]]$ by
\begin{gather*}
I(s_1/s_0,\ldots,s_N/s_0)(f(x_1,\ldots , x_N))
\\ \qquad
{}=\mathcal{K}(s)
\prod_{1\leq i < j \leq N} \frac{(qx_j/t x_i ;q)_{\infty} }{(tx_j/x_i ;q)_{\infty}}
\oint_{C'} \prod_{i=1}^N \frac{{\rm d}y_{i}}{2 \pi \sqrt{-1}y_{i}}\,
\Pi^{(q,t)}(x|y)
\prod_{i=1}^{N} \frac{\theta_q(qs_0 y_i/s_{i}x_i)}{\theta_q(q y_i/x_i)}
\\ \phantom{=} \qquad
\times\prod_{1\leq i < j \leq N} \frac{\theta_q(tx_j/y_i)}{\theta_q(x_j/y_i)}
\prod_{1\leq i < j \leq N} (1-y_j/y_i) \, f(y_1,\ldots , y_N).
\end{gather*}
Here, we put
\begin{gather*}
\mathcal{K}(s)=\mathcal{K}(s_1/s_0,\ldots,s_N/s_0)=\prod_{i=1}^k \frac{(q;q)_{\infty} (q/t;q)_{\infty}}
{(q s_0/s_{i};q)_{\infty}(q s_{i}/ts_{0};q)_{\infty}}.
\end{gather*}
$\Pi^{(q,t)}(x|y)$ is the kernel function:
\begin{gather*}
\Pi^{(q,t)}(x|y)=\prod_{i,j=1}^{N}\frac{(qy_j/x_i ;q)_{\infty} }{(qy_j/tx_i ;q)_{\infty}}.
\end{gather*}
{\sloppy
We chose the integration contour $C'$
so that $|y_{k+1}/qy_k|<|t^{-1}|$ ($k=1,\ldots, N-1$) and
$|t^{-1}|<|x_i/y_i|<|q|$ ($i=1,\ldots, N$),
regarding the variables $x_i$'s
as complex variables satisfying $|x_{k+1}/x_k|<|t^{-1}|$
($k=1,\ldots,N-1$).

}
\end{Definition}

\begin{Remark}
In what follows, we assume $|t^{-1}|<|q|$ so that
the integration contour is well-defined.
\end{Remark}

Consider the $N+1$ fold Fock tensor spaces
\begin{gather*}
\cF^{(N+1,0)}_{s_0, \ldots, s_N}=\cF^{(1,0)}_{s_0}\otimes \cF^{(N,0)}_{s_1,\ldots,s_N}
\end{gather*}
and naturally extend the screened vertex operators $\phicont_{k,N}$'s
to this space.
Then
we can construct the Macdonald functions from $\cF^{(N,0)}_{s_1,\ldots.s_N}$
and reproduce the integral operator from the additional Fock space $\cF^{(1,0)}_{s_0}$.
That is to say,
the matrix element in the following proposition
can be viewed as the action of $I$ on Macdonald functions.
See also Fig.~\ref{fig: integral_op}.

\begin{figure}[h]
\centering
 \includegraphics[scale=1.0]{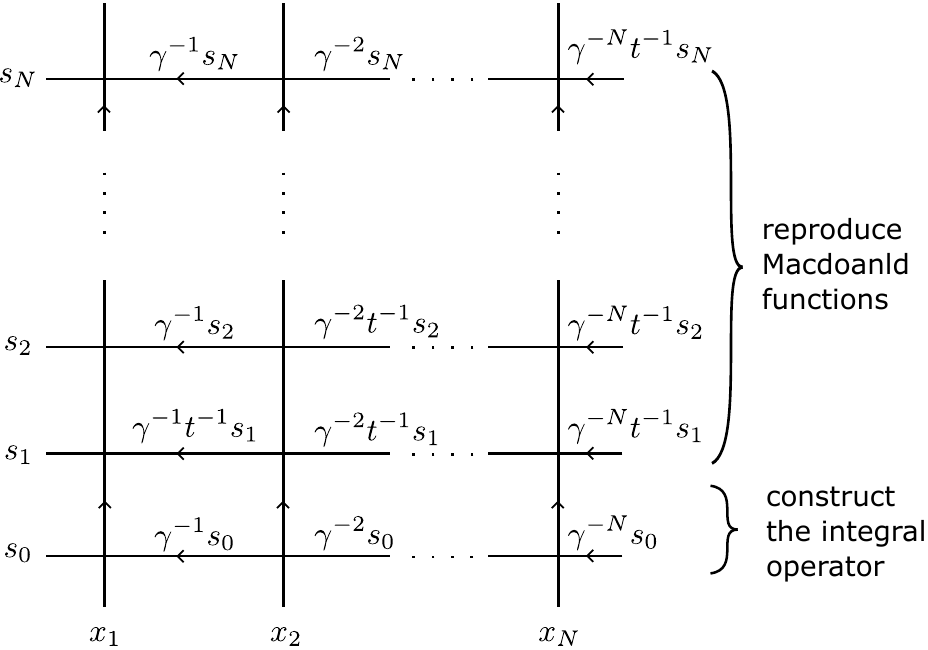}
	\caption{Operators in Proposition~\ref{prop: reproduce I}.}
	\label{fig: integral_op}
\end{figure}

\begin{Proposition}\label{prop: reproduce I}
Let $\vs^+=(s_0,s_1,\ldots, s_N)$.
Then we have
\begin{gather*}
\brazero \phicont_{1,N+1}(\vs^+;x_1) \ldots \phicont_{N,N+1}(x_N) \ketzero
 \\ \qquad
{}=I(s_1/s_0,\ldots,s_N/s_0)\big(f^{\mathfrak{gl}_N}((x_1,\ldots,x_N);(s_1,\ldots,s_N)|q,q/t )\big).
\end{gather*}
\end{Proposition}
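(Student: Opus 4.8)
The plan is to reduce the claim to Fact~\ref{fact: mac from phicont} applied to the $N$-fold Fock space sitting inside $\cF^{(N+1,0)}_{s_0,\dots,s_N}=\cF^{(1,0)}_{s_0}\otimes\cF^{(N,0)}_{s_1,\dots,s_N}$, while interpreting the extra factors coming from the $\cF^{(1,0)}_{s_0}$-slot as the integral kernel of $I(s_1/s_0,\dots,s_N/s_0)$. First I would unwind the definition of $\phicont_{k,N+1}(\vs^+;x_k)$: each $\phicont_{k,N+1}$ is, by its Definition, a contour integral in auxiliary variables $y_1,\dots,y_k$ of $\phi_0(x_k)\,\widetilde S^{(1)}(y_1)\cdots\widetilde S^{(k)}(y_k)$ against the theta-quotient $g(\vs^+;x_k,y_1,\dots,y_k)$ and the prefactor $K(\vs^+)$. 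Because we now work in $\cF^{(N+1,0)}$ rather than $\cF^{(N,0)}$, the operator $\widetilde S^{(1)}$ acts on the first two tensor legs ($\cF_{s_0}\otimes\cF_{s_1}$), so the $y_1$-variable is genuinely "attached to $s_0$''; the remaining screenings $\widetilde S^{(2)},\dots,\widetilde S^{(k)}$ act on legs $2,3,\dots$ exactly as in the $N$-variable case. Thus the composition $\phicont_{1,N+1}(x_1)\cdots\phicont_{N,N+1}(x_N)$ naturally splits, after moving all $\phi_0$'s and $\widetilde S^{(j)}$'s with $j\geq 2$ past each other, into the piece that builds $f^{\mathfrak{gl}_N}$ on $\cF^{(N,0)}_{s_1,\dots,s_N}$ and a collection of operators involving only the first leg.

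The key step is then a bookkeeping computation of the first-leg contribution. For each $k=1,\dots,N$ there is exactly one auxiliary variable, call it $y_k$ (the "$y_1$'' of $\phicont_{k,N+1}$), whose screening touches $\cF_{s_0}$; taking the vacuum expectation value $\brazero\cdots\ketzero$ on the $\cF_{s_0}$-leg produces, via the normal-ordering formulas of Proposition~\ref{prop: phi0 S OPE} and the OPE between $\cA(z)$, $\cA^*(z)$ (so effectively between the $s_0$-components of $\phi_0$ and the $\widetilde S$'s), precisely the infinite products $(qy_j/x_i;q)_\infty/(qy_j/tx_i;q)_\infty$ that assemble into the kernel $\Pi^{(q,t)}(x|y)=\prod_{i,j}(qy_j/x_i;q)_\infty/(qy_j/tx_i;q)_\infty$, together with the cross-terms $\prod_{i<j}(qx_j/tx_i;q)_\infty/(tx_j/x_i;q)_\infty$ and $\prod_{i<j}(1-y_j/y_i)$ and $\prod_{i<j}\theta_q(tx_j/y_i)/\theta_q(x_j/y_i)$. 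Simultaneously, the theta-factors $g(\vs^+;x_k,y_k,\dots)$ carry the $s_0$-dependence into $\prod_i\theta_q(qs_0y_i/s_ix_i)/\theta_q(qy_i/x_i)$, and the prefactors $K(\vs^+)$ collect to $\mathcal K(s_1/s_0,\dots,s_N/s_0)$. Matching these term by term against Definition~\ref{def: operator I} is the substance of the proof.

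The remaining step is to check that, once the $s_0$-leg is contracted, what is left acting on $\cF^{(N,0)}_{s_1,\dots,s_N}$ is exactly $\phicont_{0,N}(\vs;x_1)\cdots\phicont_{N-1,N}(x_N)$ applied to the vacuum — i.e.\ that the shifted screening structure of $\phicont_{k,N+1}$ restricted to legs $2,\dots,N+1$ reproduces $\phicont_{k-1,N}$ on legs $1,\dots,N$ with the relabelling $(s_1,\dots,s_N)\mapsto(s_1,\dots,s_N)$ — so that Fact~\ref{fact: mac from phicont} yields $f^{\mathfrak{gl}_N}((x_1,\dots,x_N);(s_1,\dots,s_N)|q,q/t)$. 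The function produced is the integrand's $f(y_1,\dots,y_N)$ in the formula of $I$, and the $y$-contour integral that remains is exactly the one defining $I$. I would close by noting that all manipulations — moving operators past one another, interchanging sums and integrals, deforming contours — are justified at the level of matrix elements as formal power series in $x_{i+1}/x_i$, exactly as in Remark~A.2 of \cite{FOS2019Generalized}; the integration contour $C'$ of Definition~\ref{def: operator I} is precisely the one into which the individual contours $C$ of the $\phicont$'s deform without crossing poles.

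The main obstacle I anticipate is the careful tracking of the auxiliary-variable contours and of the theta-function factors: each $\phicont_{k,N+1}$ comes with its own nested contour conditions on $(y_1,\dots,y_k)$, and one must verify both that these can be simultaneously deformed to the single contour $C'$ without picking up residues and that the resulting product of theta-quotients and $q$-shifted factorials reorganizes — using $\theta_q(a)=(a;q)_\infty(q/a;q)_\infty$ and the Ramanujan ${}_1\psi_1$ identity exactly as in Proposition~\ref{prop: Raman expansion} — into the symmetric kernel $\Pi^{(q,t)}(x|y)$ times the Vandermonde-type factor $\prod_{i<j}(1-y_j/y_i)$. This is elementary but bulky; the conceptual content is entirely in identifying which tensor leg each screening acts on.
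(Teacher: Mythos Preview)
Your overall strategy—unwinding the screened vertex operators, normal-ordering, and then recognizing the integral kernel of $I$ together with the Macdonald function as the integrand—is the paper's approach. But the specific mechanism you propose does not work as stated, and the defect shows up exactly where your description becomes self-contradictory.

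The problematic claim is that, after contracting the $s_0$-leg, what remains on $\cF^{(N,0)}_{s_1,\dots,s_N}$ is $\phicont_{0,N}(\vs;x_1)\cdots\phicont_{N-1,N}(x_N)$, so that Fact~\ref{fact: mac from phicont} yields $f^{\mathfrak{gl}_N}((x_1,\dots,x_N);\vs|q,q/t)$. But the Macdonald function in the integrand of $I$ must be evaluated at the \emph{integration variables} $(y_1,\dots,y_N)$, not at $(x_1,\dots,x_N)$—which is why your very next sentence silently switches to $f(y_1,\dots,y_N)$. The tensor-leg restriction does not produce this: the restriction of the $(N{+}1)$-fold $\phi_0(x_k)$ to legs $1,\dots,N$ is not the $N$-fold $\phi_0$ at any argument (no $t^{\delta_{j,1}}$ survives on the remaining legs, and there is an overall $\gamma^{-1}$ shift), and it still carries $x_k$. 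In the correct identification the $y_i$ are the \emph{first} auxiliary variables $y_{i,1}$, one from each $\phicont_{i,N+1}$, and \emph{all} of the $x$-dependence—including the OPE contributions of $\phi_0(x_j)$ on legs $1,\dots,N$—must be pushed into the kernel factors $\Pi^{(q,t)}(x|y)$, $\prod_{i<j}\theta_q(tx_j/y_i)/\theta_q(x_j/y_i)$ and the prefactor $\prod_{i<j}(qx_j/tx_i;q)_\infty/(tx_j/x_i;q)_\infty$, not into the Macdonald piece.

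The paper accordingly does not split by tensor legs. It normal-orders the entire $(N{+}1)$-fold product at once, keeping all integration variables $y_{j,i}$ (with $j$ indexing which $\phicont$ and $i$ which screening inside it), and then observes that the sub-expression built from the inner variables $y_{j,i}$ with $i\ge 2$, together with the remaining screening matrix element on legs $1,\dots,N$, reproduces $f^{\mathfrak{gl}_N}\big((y_{1,1},\dots,y_{N,1});(s_1,\dots,s_N)\,\big|\,q,q/t\big)$ by the same mechanism as Fact~\ref{fact: mac from phicont}. What is left—the outer integral over $y_{1,1},\dots,y_{N,1}$ against the factors listed above—is matched term by term with Definition~\ref{def: operator I}. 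Your plan becomes correct once the tensor-leg split is replaced by this inner/outer-variable split.
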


\begin{proof}
In this proof,
we put $s_{i,j}:=\gamma^{-i+1}t^{-\delta_{i>j}} s_j$.
Here $\delta_{a>b}$ is $1$ if $a>b$ or $0$ if $a\leq b$.
By~tak\-ing the normal ordering, we have
\begin{gather*}
\brazero \phicont_{1,N+1}(\vs^+;x_1) \cdots \phicont_{N,N+1}(x_N) \ketzero
\\ \qquad
{}=\prod_{0\leq i < j\leq N}^{N}\frac{(q;q)_{\infty} (q/t;q)_{\infty}}
{(\frac{q s_{j,i}}{s_{j,j}};q)_{\infty}(\frac{q s_{j,j}}{ts_{j,i}};q)_{\infty}}
\prod_{1\leq i <j\leq N}
\frac{(qx_{j}/t x_{i} ;q)_{\infty}}{(tx_{j}/x_{i} ;q)_{\infty}}
 \oint \prod_{i=1}^N \frac{{\rm d}y_{i,1}}{2\pi \sqrt{-1}y_{i,1}}
\\ \qquad\phantom{=}
{}\times
\prod_{1\leq i <j\leq N}
\frac{(qy_{j,1}/x_{i} ;q)_{\infty}}{(qy_{j,1}/tx_{i} ;q)_{\infty}}
\frac{(tx_{j}/y_{i,1} ;q)_{\infty}}{(x_{j}/y_{i,1} ;q)_{\infty}}\prod_{1\leq i <j\leq N}
\bigg(1-\frac{y_{j,1}}{y_{i,1}}\bigg)
\frac{(qy_{j,1}/ty_{i,1} ;q)_{\infty}}{(ty_{j,1}/y_{i,1} ;q)_{\infty}}
 \\ \qquad\phantom{=}
{}\times \oint \prod_{2\leq i<j\leq N}
\frac{{\rm d}y_{j,i}}{2\pi \sqrt{-1}y_{j,i}}\prod_{1\leq i \leq j\leq N}
\frac{(qy_{j,2}/y_{i,1} ;q)_{\infty}}{(qy_{j,2}/ty_{i,1} ;q)_{\infty}}
 \prod_{2\leq i < j\leq N}
\frac{(ty_{j,1}/y_{i,2} ;q)_{\infty}}{(y_{j,1}/y_{i,2} ;q)_{\infty}}
 \\ \qquad\phantom{=}
{}\times \brazero S_2(y_{2,2})S_3(y_{3,2})S_3(y_{3,3}) \cdots S_N(y_{N,2})\cdots
S_N(y_{N,N})\ketzero
 \\ \qquad\phantom{=}
{}\times \prod_{1\leq i < j\leq N}
\frac{\theta_q\big(q\tfrac{s_{j,i}}{s_{j,j}} \tfrac{y_{j,i+1}}{y_{j,i}}\big)}
{\theta_q\big(q \tfrac{y_{j,i+1}}{y_{j,i}}\big)} \prod_{i=1}^N
\frac{\theta_q\big(q\tfrac{s_{i,0}}{s_{i,i}} \tfrac{y_{i,1}}{x_{i}}\big)}
{\theta_q\big(q \tfrac{y_{i,1}}{x_{i}}\big)}.
\end{gather*}
As in Fact~\ref{fact: mac from phicont},
we can rewrite the inner integrals by Macdonald functions,
and we can show that
\begin{gather*}
\prod_{1\leq i < j\leq N}^{N}
\frac{(q;q)_{\infty} (q/t;q)_{\infty}}
{\big(\frac{q s_{j,i}}{s_{j,j}};q\big)_{\infty}\big(\frac{q s_{j,j}}{ts_{j,i}};q\big)_{\infty}}
\prod_{1\leq i <j\leq N}
\frac{(qy_{j,1}/ty_{i,k} ;q)_{\infty}}{(qy_{j,1}/y_{i,k} ;q)_{\infty}}\oint \prod_{2\leq i<j\leq N}
\frac{{\rm d}y_{j,i}}{2\pi \sqrt{-1}y_{j,i}}
 \\ \qquad\phantom{=}
{}\times
\prod_{1\leq i \leq j\leq N}
\frac{(qy_{j,2}/y_{i,1} ;q)_{\infty}}{(qy_{j,2}/ty_{i,1} ;q)_{\infty}}
 \prod_{2\leq i < j\leq N}
\frac{(ty_{j,1}/y_{i,2} ;q)_{\infty}}{(y_{j,1}/y_{i,2} ;q)_{\infty}}
 \\ \qquad\phantom{=}
{}\times \brazero S_2(y_{2,2})S_3(y_{3,2})S_3(y_{3,3}) \cdots S_N(y_{N,2})\cdots
S_N(y_{N,N})\ketzero
 \prod_{1\leq i < j\leq N}\frac{\theta_q\big(q\tfrac{s_{j,i}}{s_{j,j}} \tfrac{y_{j,i+1}}{y_{j,i}}\big)}
{\theta_q\big(q \tfrac{y_{j,i+1}}{y_{j,i}} \big)}
 \\ \qquad
{}=\ordmac((y_{1,1},y_{2,1},\ldots , y_{N,1});(s_1,\ldots, s_N)|q,q/t).
\end{gather*}
Therefore, Proposition~\ref{prop: reproduce I} follows.
\end{proof}

We prove the commutativity
between the integral operator $I$ and Macdonald's difference operator.
For the proof,
we need to take care of the analyticity of the domain.
Hence,
let us define the following region.

\begin{notation}\label{not: region}
Define the projection $\pi\colon (\mathbb{C}^*)^N \rightarrow \mathbb{C}^{N-1}$ by
\begin{gather*}
\pi (a_1,\ldots, a_N)=(a_2/a_1,\ldots, a_N/a_{N-1}).
\end{gather*}
Set
\begin{gather*}
U^N_r:=\big\{(z_1,\ldots, z_{N-1}) \in (\mathbb{C})^{N-1}\,|\, z_i<r,\ i=1,\ldots, N-1 \big\},
\\
B^N_r:=\big\{(z_1,\ldots, z_{N-1}) \in (\mathbb{C})^{N-1}\,|\, z_i\leq r,\ i=1,\ldots, N-1\big \}.
\end{gather*}
so that
\begin{gather*}
\pi^{-1} \big(U^N_r\big)
=\big\{(x_1,\ldots, x_N) \in (\mathbb{C}^{*})^N;\ x_{j}/x_i<r^{j-i},\ 1\leq i <j\leq N\big \}.
\end{gather*}
Define the subset $\mathcal{O}(U^N_r) \subset
\mathbb{C}[[x_2/x_1,\ldots,x_N/x_{N-1}]]$
to be the set of power series
which absolutely convergent on the $B^N_{r'}$ for any $r'<r$, i.e., which can be regarded as a holomorphic function
on $U^N_r$.
\end{notation}

\begin{Theorem}\label{thm: [D,I]=0}
On $\mathcal{O}\big(U^N_{|t^{-1}|}\big)$,
we obtain
\begin{gather*}
[ D_x(s_1,\ldots,s_N;q,q/t),\ I(s_1/s_0,\ldots,s_N/s_0)]=0.
\end{gather*}
Here, $D_x(\vs;q,t)$ is the Macdonald $q$-difference operator:
\begin{gather*}
D_x(\vs;q,t):= \sum_{k=1}^N s_k
\prod_{1\leq \ell < k} \frac{1-tx_k/x_{\ell}}{1-x_k/x_{\ell}}
\prod_{k < \ell \leq n} \frac{1-x_{\ell}/tx_k}{1-x_{\ell}/x_{k}}T_{q,x_k}.
\end{gather*}
\end{Theorem}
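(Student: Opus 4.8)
The plan is to transport the commutativity through the vertex-operator realization of $I$ from Proposition~\ref{prop: reproduce I}, reducing it to the eigenfunction property of the screened vertex operators (Fact~\ref{fact: mac from phicont}). By Fact~\ref{fact: eigen fn of D}, $\ordmac(\vx;\vs|q,q/t)$ is an eigenfunction of $D_x(\vs;q,q/t)$ with eigenvalue $\sum_{k=1}^{N}s_k$ (read off from the constant term). The contour $C'$ in Definition~\ref{def: operator I}, together with the assumption $|t^{-1}|<|q|$, is arranged so that $I(s_1/s_0,\ldots,s_N/s_0)$ preserves $\mathcal{O}\big(U^N_{|t^{-1}|}\big)$, and the family $\{\ordmac(\vx;\vs|q,q/t)\}_{\vs}$ is complete there (this is part of the analytic content of Appendix~\ref{sec: asymp mac}). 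Hence it suffices to show that $I(s_1/s_0,\ldots,s_N/s_0)$ maps $\ordmac(\,\cdot\,;\vs|q,q/t)$ to an eigenfunction of $D_x(\vs;q,q/t)$ with the same eigenvalue $\sum_{k=1}^{N}s_k$; granted this, $D_xI\ordmac=(\sum_k s_k)I\ordmac=ID_x\ordmac$ for every $\vs$, so $[D_x,I]$ vanishes on $\mathcal{O}\big(U^N_{|t^{-1}|}\big)$.

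To produce that eigenfunction I would pass to $N+1$ variables. By Proposition~\ref{prop: reproduce I}, with $\vs^+=(s_0,s_1,\ldots,s_N)$,
\begin{gather*}
I(s_1/s_0,\ldots,s_N/s_0)\,\ordmac\big((x_1,\ldots,x_N);(s_1,\ldots,s_N)|q,q/t\big)
=\brazero\,\phicont_{1,N+1}(\vs^+;x_1)\cdots\phicont_{N,N+1}(x_N)\,\ketzero ,
\end{gather*}
and prepending the unscreened operator $\phicont_{0,N+1}(\vs^+;x_0)=\phi_0(x_0)$, Fact~\ref{fact: mac from phicont} (with $N\mapsto N+1$) gives
\begin{gather*}
\brazero\,\phi_0(x_0)\,\phicont_{1,N+1}(\vs^+;x_1)\cdots\phicont_{N,N+1}(x_N)\,\ketzero
=f^{\mathfrak{gl}_{N+1}}\big((x_0,x_1,\ldots,x_N);(s_0,s_1,\ldots,s_N)|q,q/t\big),
\end{gather*}
which is an eigenfunction of $D_{(x_0,x_1,\ldots,x_N)}(s_0,\ldots,s_N;q,q/t)$ with eigenvalue $\sum_{i=0}^{N}s_i$. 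Since $\brazero a_{-m}=0$ and $a_m\ketzero=0$ for $m>0$, the operator $\phi_0(x_0)$ affects the vacuum expectation value only through the scalar obtained by pushing its annihilation part to the right past the remaining operators, and that scalar is a power series in $x_0^{-1}$ with constant term $1$. Taking the constant term in $x_1/x_0$ (i.e.\ the limit $x_0\to\infty$) therefore yields
\begin{gather*}
\mathrm{CT}_{x_1/x_0}\;f^{\mathfrak{gl}_{N+1}}\big((x_0,\ldots,x_N);(s_0,\ldots,s_N)|q,q/t\big)
=I(s_1/s_0,\ldots,s_N/s_0)\,\ordmac\big((x_1,\ldots,x_N);(s_1,\ldots,s_N)|q,q/t\big).
\end{gather*}

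Applying $\mathrm{CT}_{x_1/x_0}$ to the $(N+1)$-variable eigenvalue equation and writing $D_{(x_0,\ldots,x_N)}(s_0,\ldots,s_N;q,q/t)=\sum_{k=0}^{N}s_k A_k(x)T_{q,x_k}$, one checks that each $T_{q,x_k}$ preserves the $x_1/x_0$-grading, that the factors of $A_k$ containing $x_0$ tend to $1$ as $x_0\to\infty$, and that $\mathrm{CT}_{x_1/x_0}$ kills every monomial with a nonzero power of $x_1/x_0$; hence the $k\ge 1$ part of $D_{(x_0,\ldots,x_N)}$ degenerates to $D_x(s_1,\ldots,s_N;q,q/t)$ acting on $\mathrm{CT}_{x_1/x_0}$, while the $k=0$ term $s_0A_0(x)T_{q,x_0}$ degenerates to multiplication by $s_0$. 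The equation becomes
\begin{gather*}
D_x(s_1,\ldots,s_N;q,q/t)\big(I\,\ordmac\big)+s_0\big(I\,\ordmac\big)=\Big(\sum_{i=0}^{N}s_i\Big)\big(I\,\ordmac\big),
\end{gather*}
so $D_x(s_1,\ldots,s_N;q,q/t)\big(I\,\ordmac\big)=\big(\sum_{k=1}^{N}s_k\big)\big(I\,\ordmac\big)$, which is exactly what the reduction demands.

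I expect the main obstacle to be the analytic part of the reduction step: verifying that $I(s_1/s_0,\ldots,s_N/s_0)$ genuinely maps $\mathcal{O}\big(U^N_{|t^{-1}|}\big)$ into itself (a pole analysis of the integrand tied to the choice of $C'$), and formulating the completeness of $\{\ordmac(\vx;\vs|q,q/t)\}$ in that space precisely enough to pass from ``$[D_x,I]$ annihilates every $\ordmac(\,\cdot\,;\vs)$'' to ``$[D_x,I]=0$''. The vertex-operator input and the constant-term degeneration of Macdonald's operator are, by contrast, essentially formal once the $\mathrm{CT}_{x_1/x_0}$ operation is set up with care.
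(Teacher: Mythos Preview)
Your vertex-operator computation is essentially correct and actually reproves Corollary~4.10 of the paper directly: prepending $\phicont_{0,N+1}(x_0)=\phi_0(x_0)$, using Fact~\ref{fact: mac from phicont} in $N{+}1$ variables, and then sending $x_0\to\infty$ does show that $I(s_1/s_0,\ldots,s_N/s_0)\ordmac(\,\cdot\,;\vs|q,q/t)$ is a $D_x(\vs;q,q/t)$-eigenfunction with eigenvalue $\sum_{k=1}^N s_k$. But the reduction you place before it does not go through. The parameters $\vs=(s_1,\ldots,s_N)$ enter simultaneously in $D_x(\vs;q,q/t)$, in $I(s_1/s_0,\ldots,s_N/s_0)$, and in the particular eigenfunction $\ordmac(\,\cdot\,;\vs)$; Proposition~\ref{prop: reproduce I} only realizes $I$ acting on the \emph{single} Macdonald function whose spectral parameter matches that of $I$. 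So what you have proved is, for each $\vs$, the single relation $[D_x(\vs),I(\vs/s_0)]\,\ordmac(\,\cdot\,;\vs)=0$. As $\vs$ varies both operators change, so these relations do not assemble into the vanishing of a fixed commutator on a spanning set. The ``completeness of $\{\ordmac(\,\cdot\,;\vs)\}_\vs$'' you invoke is not stated in Appendix~\ref{sec: asymp mac}, and even if it were, you would need $[D_x(\vs),I(\vs/s_0)]\,\ordmac(\,\cdot\,;\vs')=0$ for \emph{all} $\vs'$ with $\vs$ fixed, which your constant-term argument does not supply.

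The paper avoids this issue entirely by a direct kernel-function argument valid for arbitrary $f\in\mathcal{O}\big(U^N_{|t^{-1}|}\big)$. One first passes $D_x(\vs;q,q/t)$ through the prefactor $\prod_{i<j}\frac{(qx_j/tx_i;q)_\infty}{(tx_j/x_i;q)_\infty}$ (which conjugates it to $D_x(\vs;q,t)$), then through the theta factors (which replaces the $\vs$-dependence by the principal specialization $(1,t^{-1},\ldots,t^{-N+1})$ up to the overall $s_0$), and then uses the kernel identity of Fact~\ref{fact: kernel fn rel (Mac)} to transfer the Macdonald operator from the $x$-variables to $D_{y^{-1}}$ acting on $\Pi^{(q,t)}(x|y)$. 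A change of variable $y_k\mapsto qy_k$ in each summand (checking that no poles are crossed on $C'$) converts this into $D_y(\vs;q,q/t)$ acting on $f$, so that $D_xI f=ID_xf$ for every $f$. If you want to salvage your route, you would need either a genuine density statement for the $\ordmac$'s together with a computation of $I\ordmac(\,\cdot\,;\vs')$ for $\vs'\neq\vs$, or to bypass completeness altogether as the paper does.
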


As for the relation between $D_x(\vs;q,t)$ and ordinary Macdonald's difference operator,
see Remark~\ref{rem: ord Mac and assym Mac}.
In the proof,
we use the following fact.

\begin{fact}[\cite{Macdonald2015Symmetric}]
\label{fact: kernel fn rel (Mac)}
It follows that
\begin{gather*}
D_x\big(1,t^{-1},\ldots, t^{-N+1};q,t\big) \Pi^{(q,t)}(x|y)=
D_{y^{-1}}\big(1,t^{-1},\ldots, t^{-N+1};q,t\big) \Pi^{(q,t)}(x|y).
\end{gather*}
Here, we put
\begin{gather*}
D_{y^{-1}}(\boldsymbol{s};q,t):= \sum_{k=1}^n
s_k
\prod_{1\leq \ell < k} \frac{1-ty_{\ell}/y_{k}}{1-y_{\ell}/y_{k}}
\prod_{k < \ell \leq n} \frac{1-y_{k}/ty_{\ell}}{1-y_{k}/y_{\ell}}
T_{q^{-1},y_k}.
\end{gather*}
\end{fact}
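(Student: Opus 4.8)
The plan is to reduce the operator identity to a single rational-function identity in the $2N$ variables $x_1,\dots,x_N,y_1,\dots,y_N$, and then prove that identity by a pole-and-residue analysis.

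First I would divide both sides by $\Pi^{(q,t)}(x|y)$ and compute the elementary shift ratios. Using only $(a;q)_{\infty}=(1-a)(qa;q)_{\infty}$ on $\Pi^{(q,t)}(x|y)=\prod_{i,j=1}^{N}(qy_j/x_i;q)_{\infty}/(qy_j/tx_i;q)_{\infty}$ gives
\[
\frac{T_{q,x_k}\Pi^{(q,t)}}{\Pi^{(q,t)}}=\prod_{j=1}^{N}\frac{1-y_j/x_k}{1-y_j/tx_k},\qquad
\frac{T_{q^{-1},y_k}\Pi^{(q,t)}}{\Pi^{(q,t)}}=\prod_{i=1}^{N}\frac{1-y_k/x_i}{1-y_k/tx_i}.
\]
Next I would specialize the coefficients at $\vs=(1,t^{-1},\dots,t^{-N+1})$. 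Writing each factor of the coefficient of $T_{q,x_k}$ (resp.\ $T_{q^{-1},y_k}$) over a common denominator shows that the spectral-parameter product collapses to one symmetric product, namely $t^{-N+1}\prod_{\ell\neq k}(tx_k-x_\ell)/(x_k-x_\ell)$ for $D_x$ and $t^{-N+1}\prod_{\ell\neq k}(y_k-ty_\ell)/(y_k-y_\ell)$ for $D_{y^{-1}}$ (this is exactly the mechanism by which $D_x(\vs)$ degenerates to the genuine Macdonald operator). After cancelling the common factor $t^{-N+1}\cdot t^{N}\cdot\Pi^{(q,t)}$, the Fact becomes equivalent to
\[
\sum_{k=1}^{N}\prod_{\ell\neq k}\frac{tx_k-x_\ell}{x_k-x_\ell}\prod_{j=1}^{N}\frac{x_k-y_j}{tx_k-y_j}
=\sum_{k=1}^{N}\prod_{\ell\neq k}\frac{y_k-ty_\ell}{y_k-y_\ell}\prod_{i=1}^{N}\frac{x_i-y_k}{tx_i-y_k}.
\]

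To prove this, let $\Delta$ be the difference of the two sides and show $\Delta\equiv0$ by locating all poles. As a function of $x_k$ the left side has only apparent poles at $x_k=x_\ell$; a two-term computation (the residues of the $k$-th and $\ell$-th summands are negatives of one another) shows these cancel, so the left side is regular on $\{x_k=x_\ell\}$, and symmetrically the right side is regular on $\{y_k=y_\ell\}$. Hence the only genuine poles of either side lie on the loci $\{tx_i=y_j\}$. The heart of the argument is to match the residues there: along $x_1=y_1/t$ only the $k=1$ summand of each side contributes, and after substituting $tx_1=y_1$ both residues reduce to
\[
\frac{1}{t}\left(-\frac{y_1(t-1)}{t}\right)\prod_{\ell\ge2}\frac{y_1-x_\ell}{y_1-tx_\ell}\prod_{j\ge2}\frac{y_1-ty_j}{y_1-y_j},
\]
the two expressions coinciding after reordering the products and using $\tfrac{y_1-x_\ell}{y_1-tx_\ell}=\tfrac{x_\ell-y_1}{tx_\ell-y_1}$. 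By the $S_N\times S_N$ symmetry this settles every locus $tx_i=y_j$, so $\Delta$ has no poles at all and is a Laurent polynomial in all $2N$ variables.

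Finally I would pin down $\Delta$ by boundedness. Each side tends to a finite limit as any single variable goes to $0$ or $\infty$ (the offending factors tend to $1$, $t$, or $t^{-1}$), so the pole-free $\Delta$ has finite limits in every variable and is therefore a constant. Letting all $x_i\to\infty$ reduces the two sides to $t^{-N}\sum_k\prod_{\ell\neq k}(tx_k-x_\ell)/(x_k-x_\ell)$ and $t^{-N}\sum_k\prod_{\ell\neq k}(y_k-ty_\ell)/(y_k-y_\ell)$; the substitution $y_\ell\mapsto y_\ell^{-1}$ turns the second Lagrange-type sum into the first, and both equal $t^{-N}(1+t+\cdots+t^{N-1})$ by a classical interpolation evaluation, whence $\Delta=0$. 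The main obstacle is the residue-matching step along $tx_i=y_j$: it is where the precise $t$-powers emerging from the two very differently shaped coefficient products must conspire, so careful bookkeeping of the $1/t$ residue factors and of the shift $x_1=y_1/t$ is essential; by contrast the boundedness-and-evaluation step is routine.
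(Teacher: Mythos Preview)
The paper does not give a proof of this statement; it is recorded as a ``Fact'' and attributed to Macdonald's book, so there is no in-paper argument to compare against. Your proposal is a correct self-contained proof: the shift ratios of $\Pi^{(q,t)}$ are computed correctly, the specialization $s_k=t^{1-k}$ does collapse the asymmetric coefficients to the symmetric Macdonald-operator form $t^{1-N}\prod_{\ell\neq k}(tx_k-x_\ell)/(x_k-x_\ell)$ (and the analogous $y$-expression), and the resulting rational identity is exactly the classical kernel identity underlying $D_N^1\Pi=D_N^{1,*}\Pi$. Your residue match along $tx_i=y_j$ checks out (both residues equal $-\dfrac{y_1(t-1)}{t^{2}}\prod_{\ell\ge2}\dfrac{y_1-x_\ell}{y_1-tx_\ell}\prod_{j\ge2}\dfrac{y_1-ty_j}{y_1-y_j}$ after using $\tfrac{y_1-x_\ell}{y_1-tx_\ell}=\tfrac{x_\ell-y_1}{tx_\ell-y_1}$), the cancellation of the apparent poles on the diagonals $x_k=x_\ell$ and $y_k=y_\ell$ is standard, and the Liouville-type endgame is fine once one notes that $\sum_k\prod_{\ell\neq k}\frac{tx_k-x_\ell}{x_k-x_\ell}=\sum_{j=0}^{N-1}t^{\,j}$ is a genuine constant (this is the Lagrange-interpolation evaluation you invoke). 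In short, your argument is the direct rational-function proof that one finds, in various guises, in Macdonald's Chapter~VI; the paper simply quotes the result rather than reproving it.
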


\begin{proof}[Proof of Proposition~\ref{thm: [D,I]=0}]
A direct calculation gives
\begin{gather*}
D_x(\vs;q,q/t)
\prod_{1\leq i < j \leq N} \frac{(qx_j/t x_i ;q)_{\infty} }{(tx_j/x_i ;q)_{\infty}}=
\prod_{1\leq i < j \leq N} \frac{(qx_j/t x_i ;q)_{\infty} }{(tx_j/x_i ;q)_{\infty}}
D_x(\vs;q,t),
\end{gather*}
and
\begin{gather*}
D_x(\vs;q,t)\prod_{i=1}^{N} \frac{\theta_q(qs_0 y_i/s_{i}x_i)}{\theta_q(q y_i/x_i)}
\prod_{1\leq i < j \leq N} \frac{\theta_q(tx_j/y_i)}{\theta_q(x_j/y_i)}
\\ \phantom{D_x(\vs;)}
{}=\prod_{i=1}^{N} \frac{\theta_q(qs_0 y_i/s_{i}x_i)}{\theta_q(q y_i/x_i)}
\prod_{1\leq i < j \leq N} \frac{\theta_q(tx_j/y_i)}{\theta_q(x_j/y_i)}
s_0 D_x\big(\big(1, t^{-1},\ldots, t^{-N+1}\big);q,t\big).
\end{gather*}
By these equations and
Fact~\ref{fact: kernel fn rel (Mac)},
we can show that for a function $f(x_1,\ldots ,x_N) \in \mathcal{O}\big(U^N_{|t^{-1}|}\big)$,
\begin{gather}
D_x(\vs;q,q/t) I(s_1/s_0,\ldots,s_N/s_0)f(x_1,\ldots ,x_N)\nonumber
\\ \qquad
{}= \mathcal{K}(s)
\prod_{1\leq i < j \leq N} \frac{(qx_j/t x_i ;q)_{\infty} }{(tx_j/x_i ;q)_{\infty}}\oint \prod_{i=1}^N \frac{{\rm d}y_{i}}{2 \pi \sqrt{-1}y_{i}}
\prod_{i=1}^{N} \frac{\theta_q(qs_0 y_i/s_{i}x_i)}{\theta_q(q y_i/x_i)}\nonumber
\\ \qquad \qquad\
{}\times\prod_{1\leq i < j \leq N} \frac{\theta_q(tx_j/y_i)}{\theta_q(x_j/y_i)}
\prod_{1\leq i < j \leq N} (1-y_j/y_i)\, f(y_1,\ldots, y_N)s_0 \nonumber
\\ \qquad \qquad\
{}\times D_{y^{-1}}(1,\ldots, t^{-N+1};q,t)\Pi^{(q,t)}(x|y) \nonumber
\\ \qquad
{}= \mathcal{K}(s)
\prod_{1\leq i < j \leq N} \frac{(qx_j/t x_i ;q)_{\infty} }{(tx_j/x_i ;q)_{\infty}}
\oint \prod_{i=1}^N \frac{{\rm d}y_{i}}{2 \pi \sqrt{-1}y_{i}}
\prod_{i=1}^{N} \frac{\theta_q(qs_0 y_i/s_{i}x_i)}{\theta_q(q y_i/x_i)} \nonumber
\\ \qquad \qquad\
{}\times\prod_{1\leq i < j \leq N} \frac{\theta_q(tx_j/y_i)}{\theta_q(x_j/y_i)}
\prod_{1\leq i < j \leq N} (1-y_j/y_i)\, f(y_1,\ldots, y_N)s_0 \nonumber
\\ \qquad \qquad\
{}\times\sum_{k=1}^N\prod_{ \ell \neq k} \frac{1-y_{k}/ty_{\ell}}{1-y_{k}/y_{\ell}}\,
T_{q^{-1},y_k}\Pi^{(q,t)}(x|y).
\label{eq: cal D I}
\end{gather}
We have poles in $y_k$
of the each term containing the difference operator $T_{q^{-1},y_k}$
at $y_k=q^{a}x_k$, $y_k=q^{-a}tx_k$, $y_k=q^{a+1}x_{j}$,
$y_k=q^{-a}tx_i$ ($a=0,1,2,\ldots$, $i<k<j$).
Therefore, by the change of variable $y_k \rightarrow q y_k$
in the each term
(note that there is no pole between $y_k$ and $q y_k$),
we can show that (\ref{eq: cal D I}) is equal to
\begin{gather*}
\mathcal{K}(s)\prod_{1\leq i < j \leq N}
\frac{(qx_j/t x_i ;q)_{\infty} }{(tx_j/x_i ;q)_{\infty}}
\oint \prod_{i=1}^N\frac{{\rm d}y_{i}}{2 \pi \sqrt{-1}y_{i}}\,
\Pi^{(q,t)}(x|y)
\\ \qquad
{}\times \sum_{k=1}^N t^{-k+1}T_{q,y_k}
\prod_{1\leq \ell < k} \frac{1-ty_{\ell}/y_{k}}{1-y_{\ell}/y_{k}}
\prod_{k < \ell \leq n} \frac{1-y_{k}/ty_{\ell}}{1-y_{k}/y_{\ell}}
\prod_{i=1}^{N} \frac{\theta_q(qs_0 y_i/s_{i}x_i)}{\theta_q(q y_i/x_i)}
\\ \qquad
{}\times \prod_{1\leq i < j \leq N} \frac{\theta_q(tx_j/y_i)}{\theta_q(x_j/y_i)}
 \prod_{1\leq i < j \leq N} (1-y_j/y_i) f(y_1,\ldots, y_N)s_0
\\ \qquad
{}=\mathcal{K}(s)
\prod_{1\leq i < j \leq N}
\frac{(qx_j/t x_i ;q)_{\infty} }{(tx_j/x_i ;q)_{\infty}}
\oint \prod_{i=1}^N\frac{{\rm d}y_{i}}{2 \pi \sqrt{-1}y_{i}}\,
\Pi^{(q,t)}(x|y)
\prod_{i=1}^{N} \frac{\theta_q(qs_0 y_i/s_{i}x_i)}{\theta_q(q y_i/x_i)}
 \\ \qquad \qquad\
 {}\times
\prod_{1\leq i < j \leq N} \frac{\theta_q(tx_j/y_i)}{\theta_q(x_j/y_i)}
 \!\!\prod_{1\leq i < j \leq N}\!\! (1-y_j/y_i)
 D_y(s_1,\ldots, s_N|q,q/t) f(y_1,\ldots, y_N).
\end{gather*}
This completes the proof.
\end{proof}

\begin{Corollary}
We have
\begin{gather*}
I(s_1/s_0,\ldots,s_N/s_0) \big( f^{\mathfrak{gl}_N}(x_1,\ldots,x_N;s_1,\ldots,s_N|q,q/t ) \big)
\\ \qquad
{}=f^{\mathfrak{gl}_N}(x_1,\ldots,x_N;s_1,\ldots,s_N|q,q/t ).
\end{gather*}

\begin{proof}
{\sloppy
Since by fixing $\vs$,
the Macdonald function $f^{\mathfrak{gl}_N}(\vx;\vs|q,q/t)$
is in $\mathcal{O}\big(U^N_{|t^{-1}|}\big)$
(Fact~\ref{fact: analyticity}),
we~can use Theorem~\ref{thm: [D,I]=0}.
Moreover, by the uniqueness of Fact~\ref{fact: eigen fn of D},
we can show that
$f^{\mathfrak{gl}_N}(\vx;\vs|q,q/t )$
is an eigenfunction of $I(s_1/s_0,\ldots,s_N/s_0)$:
\begin{gather*}
I(s_1/s_0,\ldots,s_N/s_0) \big( f^{\mathfrak{gl}_N}(\vx;\vs|q,q/t ) \big) \propto f^{\mathfrak{gl}_N}(\vx;\vs|q,q/t ).
\end{gather*}

}
The expansion (Proposition~\ref{prop: Raman expansion})
makes it clear that
the constant term of the LHS is $1$.
Hence, the eigenvalue is $1$.
\end{proof}
\end{Corollary}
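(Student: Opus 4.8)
The plan is to combine the commutativity statement just proved, Theorem~\ref{thm: [D,I]=0}, with the characterization of $\ordmac$ as the unique eigenfunction of the Macdonald $q$-difference operator having constant term $1$. First I would observe that, with $\vs$ fixed and generic, the series $\ordmac(\vx;\vs|q,q/t)$ lies in $\mathcal{O}\big(U^N_{|t^{-1}|}\big)$ by the analyticity statement (Fact~\ref{fact: analyticity}); this is precisely the space on which both $D_x(\vs;q,q/t)$ and $I(s_1/s_0,\ldots,s_N/s_0)$ act, so Theorem~\ref{thm: [D,I]=0} applies to it.

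Since $\ordmac(\vx;\vs|q,q/t)$ is an eigenfunction of $D_x(\vs;q,q/t)$ with some eigenvalue $\epsilon$ (Fact~\ref{fact: eigen fn of D}), commutativity gives $D_x(\vs;q,q/t)\, I\big(\ordmac\big) = I\big(D_x(\vs;q,q/t)\,\ordmac\big) = \epsilon\, I\big(\ordmac\big)$, so $I\big(\ordmac\big)$ is again an eigenfunction of $D_x(\vs;q,q/t)$ with the same eigenvalue $\epsilon$. For generic $\vs$ the only eigenfunctions of this operator in $\mathcal{O}\big(U^N_{|t^{-1}|}\big)$ with eigenvalue $\epsilon$ are the scalar multiples of $\ordmac$ (the uniqueness in Fact~\ref{fact: eigen fn of D}), so $I\big(\ordmac\big) = c\,\ordmac$ for a scalar $c$ depending only on $q$, $t$, $\vs$, $s_0$.

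Finally I would pin down $c$ by extracting the constant term (the coefficient of $\theta=0$ in the expansion in the variables $x_j/x_i$). The cleanest route is the Ramanujan ${}_1\psi_1$ rewriting of the screened vertex operators in Proposition~\ref{prop: Raman expansion} together with the realization of the left-hand side as the vacuum matrix element in Proposition~\ref{prop: reproduce I}: only the ``diagonal'' pole $y_i = x_i$ in each integration variable contributes at lowest order, and one reads off that this term equals $1$, matching the constant term $1$ of $\ordmac$; hence $c = 1$. The step that requires genuine care, rather than routine manipulation, is the invocation of the uniqueness statement for $I(\ordmac)$: one must confirm that $I$ indeed preserves holomorphy on $U^N_{|t^{-1}|}$ and that the residue and contour manipulations used in computing the constant term are legitimate for the contour $C'$ fixed in Definition~\ref{def: operator I}.
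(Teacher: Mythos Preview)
Your proposal is correct and follows essentially the same approach as the paper: invoke analyticity (Fact~\ref{fact: analyticity}) to land in the domain where Theorem~\ref{thm: [D,I]=0} applies, use commutativity together with the uniqueness in Fact~\ref{fact: eigen fn of D} to conclude $I(\ordmac)=c\,\ordmac$, and then read off $c=1$ from the constant term via the Ramanujan expansion (Proposition~\ref{prop: Raman expansion}). Your additional remark that Proposition~\ref{prop: reproduce I} is implicitly needed to connect the integral operator $I$ to the $\phicont$ expansion, and your flagging of the holomorphy-preservation and contour issues, are fair observations that the paper's terse proof leaves implicit.
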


\subsection{Integral operator in elliptic case}

In this subsection,
we give brief discussion on the elliptic lift of the integral operator.
Namely, consider the trace at the horizontal representation
of the operator in Proposition~\ref{prop: reproduce I}.
Then we can derive the following integral operator.

\begin{Definition}
Define the operator $\Iellip$
on $\mathbb{C}[[x_2/x_1,\ldots, x_N/x_{N-1}]]$ by
\begin{gather*}
\Iellip(s_1/s_0,\ldots,s_N/s_0)(f(x_1,\ldots , x_N))=
\left(\frac{(pq/t;q,p)_\infty}{(pt;q,p)_\infty} (p;p)\right)^{N}
\mathcal{K}(\vs)
\\ \qquad
{}\times\prod_{1\leq i < j \leq N}
\frac{\Gamma(tx_j/x_i ;q,p) }{\Gamma(qx_j/tx_i ;q,p)}
 \oint \prod_{i=1}^N \frac{{\rm d}y_{i}}{2 \pi \sqrt{-1}y_{i}}
\Pi^{(q,t,p)}(x|y)
\prod_{i=1}^{N} \frac{\theta_q(qs_0 y_i/s_{i}x_i)}{\theta_q(q y_i/x_i)}
\\ \qquad
{}\times\prod_{1\leq i < j \leq N} \frac{\theta_q(tx_j/y_i)}{\theta_q(x_j/y_i)}
\prod_{1\leq i < j \leq N} \theta_{p}(y_j/y_i)\, f(y_1,\ldots , y_N).
\end{gather*}
\end{Definition}
Here, $\mathcal{K}(\vs)$ is defined in Definition~\ref{def: operator I}.
Further we set
\begin{gather*}
\Pi^{(q,t,p)}(x|y)=
\prod_{i,j=1}^N \frac{\Gamma(qy_j/tx_i ;q,p) }{\Gamma(qy_j/x_i ;q,p)}.
\end{gather*}

The following trace can be viewed as
the action of $\Iellip$ on the non-stationary Ruijsenaars functions.

\begin{Proposition}
Let $\vs^+=(s_0,s_1,\ldots, s_N)$.
Then we have
\begin{gather*}
\tr \big(p^d \phicont_{1,N+1}(\vs^+;x_1) \cdots \phicont_{N,N+1}(x_N) \big)
\\ \qquad
{}=\Iellip(s_1/s_0,\ldots,s_N/s_0)
\big(\nonstrui(\vx',p^{1/N}|\vs', t^{-1/N}|q,t)\big),
\end{gather*}
where we used the same notation in Theorem~{\rm \ref{thm: Tr TH TH...}}.
As explained in Remark~{\rm \ref{rem. phicont argument}},
we omitted the spectral parameters $\vs^+$ in the argument of $\phicont$.

\begin{proof}
This can be proved similarly to the one of Proposition~\ref{prop: reproduce I}.
By Lemma~\ref{lem: tr} and Proposition~\ref{prop: trace wcT = fellip},
we can show that
\begin{gather*}
\tr \big(p^d \phicont_{1,N+1}(\vs^+;x_1) \cdots \phicont_{N,N+1}(x_N) \big)
\\ \qquad
{}=\left(\frac{(pq/t;q,p)_{\infty}}{(p;p)_{\infty}(pt;q,p)_{\infty}} \right)^{N}
\Iellip(s_1/s_0,\ldots,s_N/s_0)
\bigg(\prod_{1\leq i<j\leq N} \frac{\Gamma(tx_j/x_i;q,p)}{\Gamma(qx_j/x_i;q,p)}
\\ \qquad \qquad
{}\times\prod_{1\leq i<j\leq N} \frac{(ts_j/s_i;q)_{\infty}}{(qs_j/s_i;q)_{\infty}}\,
\fellip_N(\boldsymbol{s};\vx|q,t,p)\bigg).
\end{gather*}
Applying Theorem~\ref{thm: fgln=fEG}, we obtain the claim.
\end{proof}
\end{Proposition}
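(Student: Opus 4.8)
The plan is to follow the proof of Proposition~\ref{prop: reproduce I} almost verbatim, replacing the vacuum expectation value $\brazero \cdots \ketzero$ there by the $p$-trace $\tr(p^d \cdot)$ and invoking Lemma~\ref{lem: tr} to pass from $q$-shifted factorials to elliptic gamma functions. First I would expand each operator $\phicont_{k,N+1}(x_k)$ via its definition in terms of $\phi_0$, the screening currents $\widetilde{S}^{(i)}$, the contour integrals over the auxiliary variables $y_{j,i}$, and the $\theta_q$-weights, and take the normal ordering of the whole composition on $\cF^{(N+1,0)}_{s_0,\ldots,s_N}=\cF^{(1,0)}_{s_0}\otimes\cF^{(N,0)}_{s_1,\ldots,s_N}$. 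Exactly as in Proposition~\ref{prop: reproduce I}, the product of normal-ordering factors separates into a block supported on the last $N$ tensor factors together with the deeper integration variables $y_{j,i}$ ($i\ge 2$), which reassembles into the operator $\wcT^N(\vs;\vx)$, and a block supported on the first tensor factor $\cF^{(1,0)}_{s_0}$ together with the top-level variables $y_{i,1}$, which carries the kernel $\Pi^{(q,t)}(x|y)$, the Vandermonde-type factor $\prod_{i<j}(1-y_j/y_i)$, and the $\theta_q$-weights of the integral operator.

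Next I would apply Lemma~\ref{lem: tr} on the $(N+1)$-fold Fock space. It converts each OPE factor $\tfrac{(aw/z;q)_\infty}{(bw/z;q)_\infty}$ (for the pair with $z$ on the left) into a ratio of elliptic gamma functions, turns $\prod_{i<j}(1-y_j/y_i)$ into $\prod_{i<j}\theta_p(y_j/y_i)$, and produces an overall $(p;p)_\infty^{-(N+1)}$ together with the diagonal corrections $\tfrac{(pa;p)_\infty}{(pb;p)_\infty}$; in particular $\Pi^{(q,t)}(x|y)$ is lifted to $\Pi^{(q,t,p)}(x|y)$ and $\prod_{i<j}\tfrac{(qx_j/tx_i;q)_\infty}{(tx_j/x_i;q)_\infty}$ to $\prod_{i<j}\tfrac{\Gamma(tx_j/x_i;q,p)}{\Gamma(qx_j/tx_i;q,p)}$. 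The block attached to the last $N$ Fock spaces is precisely $\tr(p^d\,\wcT^N(\vs;\vx))$, which Proposition~\ref{prop: trace wcT = fellip} identifies with $(\tfrac{(pq/t;q,p)_\infty}{(p;p)_\infty(pt;q,p)_\infty})^{N}\prod_{i<j}\tfrac{\Gamma(tx_j/x_i;q,p)}{\Gamma(qx_j/x_i;q,p)}\,\fellip_N(\vs;\vx|q,t,p)$. Collecting all scalar prefactors — matching the power of $(p;p)_\infty$ from the $(N+1)$-fold Lemma~\ref{lem: tr} against the $N$-fold one used inside Proposition~\ref{prop: trace wcT = fellip}, and keeping track of the factor $\mathcal{K}(\vs)$ coming from $K(\vs)$ in the definition of $\phicont$ — the right-hand side becomes $(\tfrac{(pq/t;q,p)_\infty}{(p;p)_\infty(pt;q,p)_\infty})^{N}\,\Iellip(s_1/s_0,\ldots,s_N/s_0)$ applied to $\prod_{i<j}\tfrac{\Gamma(tx_j/x_i;q,p)}{\Gamma(qx_j/x_i;q,p)}\prod_{i<j}\tfrac{(ts_j/s_i;q)_\infty}{(qs_j/s_i;q)_\infty}\,\fellip_N(\vs;\vx|q,t,p)$.

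It then remains to invoke Theorem~\ref{thm: fgln=fEG}, which asserts $\mathfrak{C}\,\fellip_N(\vs;\vx|q,t,p)=\nonstrui(\vx',p^{1/N}|\vs',t^{-1/N}|q,t)$ with $\mathfrak{C}=(\tfrac{(pq/t;q,p)_\infty}{(p;p)_\infty(pt;q,p)_\infty})^{N}\prod_{i<j}\tfrac{\Gamma(tx_j/x_i;q,p)}{\Gamma(qx_j/x_i;q,p)}\prod_{i<j}\tfrac{(ts_j/s_i;q)_\infty}{(qs_j/s_i;q)_\infty}$; hence the argument of $\Iellip$ above equals $(\tfrac{(pq/t;q,p)_\infty}{(p;p)_\infty(pt;q,p)_\infty})^{-N}\,\nonstrui(\vx',p^{1/N}|\vs',t^{-1/N}|q,t)$, the leading scalar cancels by linearity of $\Iellip$ in its argument, and the claim follows. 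The step I expect to be the main obstacle is the bookkeeping underlying the first two paragraphs: cleanly disentangling which normal-ordering factors belong to the ``integral-operator block'' (first Fock space plus top auxiliary variables) and which to the ``$\wcT^N$ block'', so that Lemma~\ref{lem: tr} reproduces $\Pi^{(q,t,p)}$, the $\theta_q/\theta_p$-weights and all prefactors with the exact powers of $(p;p)_\infty$ and $\mathcal{K}(\vs)$; and, as in the proof of the identity $\phicont_{k,N}(x)=\prod(\cdots)\,\tilTV_{k+1}(\vs;x)$, justifying the intermediate geometric-series and $\delta$-function manipulations once the operators sit inside a trace rather than between vacua, for which one again descends to matrix elements (cf.\ Remark~A.2 of \cite{FOS2019Generalized}).
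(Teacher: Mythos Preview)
Your proposal is correct and follows essentially the same route as the paper: mimic the proof of Proposition~\ref{prop: reproduce I} with the vacuum expectation value replaced by the $p$-trace, use Lemma~\ref{lem: tr} together with Proposition~\ref{prop: trace wcT = fellip} to reach the intermediate expression $(\tfrac{(pq/t;q,p)_\infty}{(p;p)_\infty(pt;q,p)_\infty})^{N}\,\Iellip(\cdots)\big(\prod\tfrac{\Gamma}{\Gamma}\prod\tfrac{(\,;q)_\infty}{(\,;q)_\infty}\,\fellip_N\big)$, and then apply Theorem~\ref{thm: fgln=fEG} so that the prefactor cancels. Your added commentary on separating the two blocks and on matching the powers of $(p;p)_\infty$ and $\mathcal{K}(\vs)$ is exactly the bookkeeping the paper suppresses under ``similarly to Proposition~\ref{prop: reproduce I}''.
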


We can obtain the commutativity between
the integral operator $\Iellip$ and the Ruijsenaars operator.

{\samepage\begin{Proposition}\label{thm: [ruijD,Iellip]=0}
We obtain
\begin{gather*}
\big[ D_x(s_1,\ldots,s_N;q,q/t,p), \Iellip(s_1/s_0,\ldots,s_N/s_0)\big]=0.
\end{gather*}
Here,
\begin{gather*}
D_x(\vs;q,t,p):= \sum_{k=1}^n
s_k
\prod_{1\leq \ell < k} \frac{\theta_p(tx_k/x_{\ell})}{\theta_p(x_k/x_{\ell})}
\prod_{k < \ell \leq n} \frac{\theta_p(x_{\ell}/tx_k)}{\theta_p(x_{\ell}/x_{k})}
T_{q,x_k}.
\end{gather*}
\end{Proposition}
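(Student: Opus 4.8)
The plan is to follow the proof of Theorem~\ref{thm: [D,I]=0} step by step, replacing each trigonometric ingredient by its elliptic counterpart. Abbreviate the two prefactors of $\Iellip$ by $\Phi_\Gamma(x):=\prod_{1\leq i<j\leq N}\Gamma(tx_j/x_i;q,p)/\Gamma(qx_j/tx_i;q,p)$ and $\Phi_\theta(x,y):=\prod_{i=1}^N\theta_q(qs_0y_i/s_ix_i)/\theta_q(qy_i/x_i)\cdot\prod_{1\leq i<j\leq N}\theta_q(tx_j/y_i)/\theta_q(x_j/y_i)$. The first two steps are the elliptic conjugation formulas
\begin{gather*}
D_x(\vs;q,q/t,p)\,\Phi_\Gamma(x)=\Phi_\Gamma(x)\,D_x(\vs;q,t,p),\\
D_x(\vs;q,t,p)\,\Phi_\theta(x,y)=\Phi_\theta(x,y)\,s_0\,D_x\big((1,t^{-1},\ldots,t^{-N+1});q,t,p\big),
\end{gather*}
where $D_x(\vs;q,t,p)$ is the elliptic Ruijsenaars operator. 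Both are elementary and are proved by the same bookkeeping as in the trigonometric case: the $\Phi_\theta$ computation is literally the one in the proof of Theorem~\ref{thm: [D,I]=0} (it only involves $\theta_q$ and is insensitive to the presence of $p$), while the $\Phi_\Gamma$ conjugation uses the functional equation $\Gamma(qz;q,p)=\theta_p(z)\Gamma(z;q,p)$ to convert the $\theta_p$-coefficients of $D_x(\vs;q,q/t,p)$ into those of $D_x(\vs;q,t,p)$.

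The structural input that genuinely differs is the elliptic Ruijsenaars kernel-function identity, i.e.\ the $p$-deformation of Fact~\ref{fact: kernel fn rel (Mac)}, namely
\begin{gather*}
D_x\big((1,t^{-1},\ldots,t^{-N+1});q,t,p\big)\,\Pi^{(q,t,p)}(x|y)
=D_{y^{-1}}\big((1,t^{-1},\ldots,t^{-N+1});q,t,p\big)\,\Pi^{(q,t,p)}(x|y),
\end{gather*}
where $D_{y^{-1}}(\vs;q,t,p)$ has coefficients $\prod_{\ell<k}\theta_p(ty_\ell/y_k)/\theta_p(y_\ell/y_k)\cdot\prod_{k<\ell}\theta_p(y_k/ty_\ell)/\theta_p(y_k/y_\ell)$ and $q^{-1}$-shifts $T_{q^{-1},y_k}$; this is a known identity from the kernel-function technology for Ruijsenaars-type operators. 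Feeding the three displays into the integral defining $\Iellip$, the operator $D_x(\vs;q,q/t,p)$ applied to $\Iellip(s_1/s_0,\ldots,s_N/s_0)f$ turns into the same integral with $\Pi^{(q,t,p)}(x|y)$ replaced by $\sum_{k=1}^N\prod_{\ell\neq k}\theta_p(y_k/ty_\ell)/\theta_p(y_k/y_\ell)\,T_{q^{-1},y_k}\Pi^{(q,t,p)}(x|y)$, exactly as in the trigonometric computation (the factor $s_0$ being absorbed in the same way).

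The final step, and the main obstacle, is to move the difference operator off $\Pi^{(q,t,p)}$ and onto $f$ by substituting $y_k\mapsto qy_k$ in the $k$-th summand, which turns each $T_{q^{-1},y_k}$ into $T_{q,y_k}$ and reassembles $D_y(\vs;q,q/t,p)$ acting on $f(y_1,\ldots,y_N)$; here the $\prod_{i<j}\theta_p(y_j/y_i)$ factor of $\Iellip$ transforms precisely into the $\theta_p$-coefficients demanded by the elliptic Ruijsenaars operator, which is in fact cleaner than in the trigonometric case. What must be checked carefully is that this deformation of the contour $C'$ of Definition~\ref{def: operator I} crosses no pole of the (now $p$-enriched) integrand: besides the families $y_k=q^ax_k$, $q^{-a}tx_k$, $q^{a+1}x_j$, $q^{-a}tx_i$ ($a\geq 0$, $i<k<j$) inherited from the trigonometric case, the elliptic gamma functions in $\Pi^{(q,t,p)}$ and the $\theta_p(y_j/y_i)$ factor contribute the full lattices of $p$-translates of these points, together with extra poles at $p$-powers times such ratios. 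One therefore fixes the annular contour as in Definition~\ref{def: operator I}, works over a domain of the type $\mathcal{O}(U^N_{|t^{-1}|})$ as in Theorem~\ref{thm: [D,I]=0} with $|p|$ small, and verifies that in each summand the arc from $y_k$ to $qy_k$ stays strictly between the relevant pole families. Once this is established the computation closes exactly as in Theorem~\ref{thm: [D,I]=0}, yielding $\big[D_x(\vs;q,q/t,p),\Iellip(s_1/s_0,\ldots,s_N/s_0)\big]=0$.
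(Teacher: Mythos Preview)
Your proposal is correct and follows essentially the same route as the paper's own proof: the two conjugation identities for $\Phi_\Gamma$ and $\Phi_\theta$, the elliptic kernel-function relation (stated in the paper as Fact~\ref{fact: ellip kernel rel}), and then the variable change $y_k\mapsto qy_k$ to move the operator onto $f$. Your treatment of the pole structure in the contour-deformation step is in fact more explicit than the paper's, which simply invokes the analogy with Theorem~\ref{thm: [D,I]=0}.
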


In the proof, we use the following fact.

}


\begin{fact}[\cite{Ruijsenaars2006zero,Ruijsenaars2009hilbert}]\label{fact: ellip kernel rel}
It follows that
\begin{gather*}
D_x(1,t^{-1},\ldots, t^{-N+1};q,t,p) \Pi^{(q,t,p)}(x|y)=
D_{y^{-1}}\big(1,t^{-1},\ldots, t^{-N+1};q,t,p\big) \Pi^{(q,t,p)}(x|y).
\end{gather*}
Here, we put
\begin{gather*}
D_{y^{-1}}(\boldsymbol{s};q,t,p):= \sum_{k=1}^n
s_k
\prod_{1\leq \ell < k} \frac{\theta_p(ty_{\ell}/y_{k})}{\theta_p(y_{\ell}/y_{k})}
\prod_{k < \ell \leq n} \frac{\theta_p(y_{k}/ty_{\ell})}{\theta_p(y_{k}/y_{\ell})}
T_{q^{-1},y_k}.
\end{gather*}
\end{fact}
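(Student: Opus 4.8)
The plan is to turn the operator identity into an explicit theta-function identity and then to prove the latter by an elliptic-function (Liouville) argument. Everything is driven by the single shift equation
\begin{gather*}
\Gamma(qa;q,p)=\theta_p(a)\,\Gamma(a;q,p),\qquad \theta_p(a)=(a;p)_\infty(p/a;p)_\infty,
\end{gather*}
which follows at once from $\Gamma(a;q,p)=(qp/a;q,p)_\infty/(a;q,p)_\infty$ by telescoping the $n=0$ rows of the two double products. First I would apply this term by term to the kernel $\Pi^{(q,t,p)}(x|y)=\prod_{i,j=1}^N \Gamma(qy_j/tx_i;q,p)/\Gamma(qy_j/x_i;q,p)$, obtaining the clean quotients
\begin{gather*}
\frac{T_{q,x_k}\,\Pi^{(q,t,p)}(x|y)}{\Pi^{(q,t,p)}(x|y)}=\prod_{j=1}^N\frac{\theta_p(y_j/x_k)}{\theta_p(y_j/tx_k)},\qquad
\frac{T_{q^{-1},y_k}\,\Pi^{(q,t,p)}(x|y)}{\Pi^{(q,t,p)}(x|y)}=\prod_{i=1}^N\frac{\theta_p(y_k/x_i)}{\theta_p(y_k/tx_i)}.
\end{gather*}

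Substituting these into $D_x$ and $D_{y^{-1}}$ with the special spectral parameters $s_k=t^{-k+1}$ and cancelling the common factor $\Pi^{(q,t,p)}(x|y)$ reduces Fact~\ref{fact: ellip kernel rel} to the theta identity
\begin{gather*}
\sum_{k=1}^N t^{-k+1}
\prod_{\ell<k}\frac{\theta_p(tx_k/x_\ell)}{\theta_p(x_k/x_\ell)}
\prod_{\ell>k}\frac{\theta_p(x_\ell/tx_k)}{\theta_p(x_\ell/x_k)}
\prod_{j=1}^N\frac{\theta_p(y_j/x_k)}{\theta_p(y_j/tx_k)}
\\
=\sum_{k=1}^N t^{-k+1}
\prod_{\ell<k}\frac{\theta_p(ty_\ell/y_k)}{\theta_p(y_\ell/y_k)}
\prod_{\ell>k}\frac{\theta_p(y_k/ty_\ell)}{\theta_p(y_k/y_\ell)}
\prod_{i=1}^N\frac{\theta_p(y_k/x_i)}{\theta_p(y_k/tx_i)} ,
\end{gather*}
which is the purely elliptic content of the statement. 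A useful checkpoint here is to let $p\to0$: each $\theta_p(z)$ becomes $1-z$, the kernel degenerates to $\Pi^{(q,t)}(x|y)$, and the identity collapses to the trigonometric Macdonald kernel relation Fact~\ref{fact: kernel fn rel (Mac)}, which fixes all normalisations and structure factors.

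To prove the theta identity I would run a Liouville argument on the whole difference $F(\vx;\vy)$ of the two sides, not term by term: the coefficient products $\prod_{\ell<k}\theta_p(tx_k/x_\ell)/\theta_p(x_k/x_\ell)\cdots$ are only quasi-periodic (not elliptic) in each variable, so no single summand lives on the torus and the naive term-wise argument is unavailable. Instead I would fix $\vy$ and all $x_m$ with $m<N$, regard $F$ as a meromorphic function of $x_N$, and show (i) that the apparent simple poles at $x_N\equiv x_\ell$ and at $tx_N\equiv y_j$ have vanishing residues, and (ii) that the residue-free $F$ vanishes identically. Step~(i) is the crux: at $x_N\equiv x_\ell$ only the $k=N$ and $k=\ell$ summands of the left sum are singular and their residues cancel by the Weierstrass three-term theta relation, while at $tx_N\equiv y_j$ the singular summand on the left ($k=N$) cancels the singular summand on the right ($k=j$). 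Once $F$ is holomorphic in $x_N$, a growth/quasi-periodicity count forces $F\equiv0$ in $x_N$, and an induction on $N$ (peeling off $x_N$) closes the argument; in practice it is cleanest to package this residue calculus as Frobenius's elliptic determinant identity for $\theta_p(tx_i/y_j)/\theta_p(x_i/y_j)$, from which the theta identity follows directly.

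The main obstacle is exactly step~(i): organising the pole structure of the two sums and matching residues through the three-term theta relation is delicate, precisely because the Ruijsenaars coefficients are not elliptic and each summand carries the full product $\prod_j\theta_p(y_j/tx_k)^{-1}$ whose poles must be tracked simultaneously with those of the structure factors. Since the statement is quoted from \cite{Ruijsenaars2006zero,Ruijsenaars2009hilbert}, one may simply cite those proofs; the reduction above shows that, granted the standard elliptic three-term (equivalently, Frobenius determinant) identity, the result is self-contained.
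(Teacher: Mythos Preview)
The paper does not prove this statement at all: it is presented as a \emph{fact} imported from the cited references \cite{Ruijsenaars2006zero,Ruijsenaars2009hilbert} and used as a black box in the proof of Proposition~\ref{thm: [ruijD,Iellip]=0}. So there is no ``paper's own proof'' to compare against; you have supplied what the paper deliberately outsources.

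Your reduction is correct. The shift relation $\Gamma(qa;q,p)=\theta_p(a)\Gamma(a;q,p)$ gives exactly the quotients you wrote, and dividing out $\Pi^{(q,t,p)}$ does reduce the kernel identity to the stated theta-function identity; the $p\to0$ check against Fact~\ref{fact: kernel fn rel (Mac)} is a good sanity anchor. The Liouville/residue-matching argument you sketch is the standard route in Ruijsenaars's work and is valid in outline: holomorphy in a single variable plus quasi-periodicity forces the difference to vanish, and the residue cancellations at $x_N\equiv x_\ell$ and $tx_N\equiv y_j$ are precisely where the three-term theta relation enters. Packaging this via the Frobenius elliptic Cauchy determinant is also legitimate and is in fact how Ruijsenaars organises it. The one point where your sketch is a bit loose is step~(ii): after establishing holomorphy you appeal to a ``growth/quasi-periodicity count,'' but you should be explicit that the difference $F$, viewed as a function of $x_N$, transforms under $x_N\mapsto px_N$ with a definite multiplier (coming from the $\theta_p$ factors), and that a holomorphic function on $\mathbb{C}^\times$ with that multiplier must vanish; this is elementary but deserves one line rather than a wave of the hand. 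With that addition the argument is complete, and in any case, as you note, citing \cite{Ruijsenaars2006zero,Ruijsenaars2009hilbert} suffices for the purposes of this paper.
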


\begin{proof}[Proof of Proposition~\ref{thm: [ruijD,Iellip]=0}]
A direct calculation gives
\begin{gather*}
D_x(\vs;q,q/t,p)
\prod_{1\leq i < j \leq N} \frac{\Gamma(tx_j/x_i ;q,p)}{\Gamma(qx_j/t x_i ;q,p)}=
\prod_{1\leq i < j \leq N} \frac{\Gamma(tx_j/x_i ;q,p)}{\Gamma(qx_j/t x_i ;q,p) }
D_x(\vs;q,t,p),
\end{gather*}
and
\begin{gather*}
D_x(\vs;q,t, p)\prod_{i=1}^{N} \frac{\theta_q(qs_0 y_i/s_{i}x_i)}{\theta_q(q y_i/x_i)}
\prod_{1\leq i < j \leq N} \frac{\theta_q(tx_j/y_i)}{\theta_q(x_j/y_i)}
\\ \qquad\qquad\,
{}=\prod_{i=1}^{N} \frac{\theta_q(qs_0 y_i/s_{i}x_i)}{\theta_q(q y_i/x_i)}
\prod_{1\leq i < j \leq N} \frac{\theta_q(tx_j/y_i)}{\theta_q(x_j/y_i)}
s_0 D_x\big(\big(1, t^{-1},\ldots, t^{-N+1}\big);q,t,p\big).
\end{gather*}
By these equations and
Fact~\ref{fact: ellip kernel rel},
we can show that for a function $f(x_1,\ldots ,x_N) \in \mathcal{O}\big(U^N_{|t^{-1}|}\big)$,
\begin{gather}
D_x(\vs;q,q/t,p) \Iellip(s_1/s_0,\ldots,s_N/s_0)f(x_1,\ldots ,x_N)\nonumber
\\ \qquad
{}= \left(\frac{(pq/t;q,p)_\infty}{(pt;q,p)_\infty} (p;p)\right)^{N}\mathcal{K}(s)
\prod_{1\leq i < j \leq N} \frac{\Gamma(tx_j/x_i ;q,p)}{\Gamma(qx_j/t x_i ;q,p)}\nonumber
\\ \qquad \qquad
{}\times \oint \prod_{i=1}^N \frac{{\rm d}y_{i}}{2 \pi \sqrt{-1}y_{i}}
\prod_{i=1}^{N} \frac{\theta_q(qs_0 y_i/s_{i}x_i)}{\theta_q(q y_i/x_i)}
\prod_{1\leq i < j \leq N} \frac{\theta_q(tx_j/y_i)}{\theta_q(x_j/y_i)}\nonumber
\\ \qquad \qquad
{}\times\prod_{1\leq i < j \leq N} \theta_{p}(y_j/y_i) f(y_1,\ldots, y_N)
s_0 D_{y^{-1}}\big(1,\ldots, t^{-N+1};q,t\big)\Pi^{(q,t)}(x|y) \nonumber
\\ \qquad
{}=\left(\frac{(pq/t;q,p)_\infty}{(pt;q,p)_\infty} (p;p)\right)^{N}\mathcal{K}(s)
\prod_{1\leq i < j \leq N} \frac{\Gamma(tx_j/x_i ;q,p)}{\Gamma(qx_j/t x_i ;q,p)}\nonumber
\\ \qquad\qquad
{}\times \oint \prod_{i=1}^N \frac{{\rm d}y_{i}}{2 \pi \sqrt{-1}y_{i}}
\prod_{i=1}^{N} \frac{\theta_q(qs_0 y_i/s_{i}x_i)}{\theta_q(q y_i/x_i)}
\prod_{1\leq i < j \leq N} \frac{\theta_q(tx_j/y_i)}{\theta_q(x_j/y_i)}\nonumber
\\ \qquad\qquad
{}\times\prod_{1\leq i < j \leq N} \theta_{p}(y_j/y_i)
 f(y_1,\ldots, y_N)s_0 \sum_{k=1}^N
\prod_{ \ell \neq k} \frac{\theta_p(y_{k}/ty_{\ell}) }{\theta_p(y_{k}/y_{\ell})}\,
T_{q^{-1},y_k}\Pi^{(q,t)}(x|y).
\label{eq: cal ellip D I}
\end{gather}
Similarly to the proof of Theorem~\ref{thm: [D,I]=0},
by the change of variable $y_k \rightarrow q y_k$
in the each term,
we can show that (\ref{eq: cal ellip D I}) is equal to
\begin{gather*}
\left(\frac{(pq/t;q,p)_\infty}{(pt;q,p)_\infty} (p;p)\right)^{N}\mathcal{K}(s)
\prod_{1\leq i < j \leq N} \frac{\Gamma(tx_j/x_i ;q,p)}{\Gamma(qx_j/t x_i ;q,p)}
\oint \prod_{i=1}^N\frac{{\rm d}y_{i}}{2 \pi \sqrt{-1}y_{i}}\Pi^{(q,t,p)}(x|y)
\\ \qquad
{}\times\prod_{i=1}^{N} \frac{\theta_q(qs_0 y_i/s_{i}x_i)}{\theta_q(q y_i/x_i)}
\prod_{1\leq i < j \leq N} \frac{\theta_q(tx_j/y_i)}{\theta_q(x_j/y_i)}
\prod_{1\leq i < j \leq N} \theta_{p}(y_j/y_i)
\\ \qquad
{}\times D_y(s_1,\ldots, s_N|q,q/t,p) f(y_1,\ldots, y_N).
\end{gather*}
This completes the proof.
\end{proof}

In this subsection,
we have derived the integral operator $\Iellip$
and
given commutativity with the Ruijsenaars operator.
Unfortunately,
the non-stationary Ruijsenaars functions $\nonstrui$
are not the eigenfunctions of the Ruijsenaars operator.
So, neither for $\Iellip$.
It is left to a future study to
find an operator whose eigenfunctions are $\nonstrui$.

\section[Conformal limit q to 1]{Conformal limit $\boldsymbol{q \rightarrow 1}$}
\label{sec: q->1}

\subsection{Preparation}

In this section,
we will derive the
relation of the Virasoro algebra and the primary field
from the relation of the $q$-Virasoro algebra and the Mukad\'e operator $\cTV$.
Firstly, we define the following algebra.

\begin{Definition}
For $k = 2,\dots,N$,
define
\begin{gather*}
X^{(k)}(z) =
\sum_{n\in\mathbb{Z}} X^{(k)}_n z^{-n} = \Xo\big(\gamma^{2(1-k)} z\big)\Xo\big(\gamma^{2(2-k)} z\big)\cdots\Xo(z)
\in \mathrm{End}(\mathcal{F}_{\vu})\big[\big[z^{\pm 1}\big]\big].
\end{gather*}
\end{Definition}

\begin{fact}[\cite{AFHKSY2011notes}]
The operator $X^{(k)}(z)$ is of the form
\begin{gather*}
X^{(k)}(z)
= \sum_{1\leq j_1 <\cdots <j_k \leq N} {:} \Lambda^{(j_1)}(z) \cdots \Lambda^{(j_k)}\big((q/t)^{k-1}z\big){:} u_{j_1} \cdots u_{j_k}.
\end{gather*}
Here, $\Lambda^{(j)}(z)$ is defined in Definition~\ref{Def_X^k}.
\end{fact}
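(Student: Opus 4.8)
The plan is to expand the ordered product $X^{(k)}(z)=\Xo(\gamma^{2(1-k)}z)\cdots\Xo(z)$ by means of $\Xo(w)=\sum_{i=1}^{N}u_i\Lambda^{(i)}(w)$ and then to control the Wick contractions of the resulting products of $\Lambda$'s. Setting $z_a=\gamma^{2(a-k)}z$ (so $z_k=z$ and $z_{a+1}=\gamma^{2}z_a=(t/q)z_a$), one has
\[
X^{(k)}(z)=\sum_{i_1,\dots,i_k=1}^{N}u_{i_1}\cdots u_{i_k}\,\Lambda^{(i_1)}(z_1)\cdots\Lambda^{(i_k)}(z_k).
\]
Since each $\Lambda^{(i)}(w)$ is the exponential of a linear combination of the bosons $a_{\pm n}$, every such product equals $\big(\prod_{a<b}C_{a,b}\big)\,{:}\Lambda^{(i_1)}(z_1)\cdots\Lambda^{(i_k)}(z_k){:}$, where $C_{a,b}$ is the contraction coefficient of $\Lambda^{(i_a)}(z_a)$ against $\Lambda^{(i_b)}(z_b)$ (in that left-to-right order). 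I would show that $C_{a,b}=1$ for all $a<b$ exactly when $i_1>i_2>\cdots>i_k$, and that the product vanishes otherwise; relabelling $j_m:=i_{k+1-m}$ then turns the surviving sum into $\sum_{j_1<\cdots<j_k}u_{j_1}\cdots u_{j_k}\,{:}\Lambda^{(j_1)}(z)\Lambda^{(j_2)}((q/t)z)\cdots\Lambda^{(j_k)}((q/t)^{k-1}z){:}$, which is the claim.

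The inputs are the elementary operator products among the constituents of $\Lambda^{(i)}$. A short boson computation gives
\[
\eta(z)\eta(w)=\frac{(1-w/z)(1-qw/tz)}{(1-w/tz)(1-qw/z)}\,{:}\eta(z)\eta(w){:},\qquad
\eta(u)\varphi^{-}(v)=\frac{(1-ty)(1-y/q)(1-qy/t)}{(1-y/t)(1-qy)(1-ty/q)}\,{:}\eta(u)\varphi^{-}(v){:}
\]
with $y=(v/u)\,q^{1/4}t^{-1/4}$, while $\varphi^{-}\varphi^{-}$ and $\varphi^{-}(u)\eta(v)$ are already normal-ordered. Comparing the tensor factors in Definition~\ref{Def_X^k}, $\Lambda^{(i_a)}(z_a)$ and $\Lambda^{(i_b)}(z_b)$ have a single non-commuting overlap, sitting in the $i_a$-th tensor factor, precisely when $i_a\le i_b$ (an $\eta\eta$ overlap if $i_a=i_b$, an $\eta\varphi^{-}$ overlap if $i_a<i_b$), and no overlap at all when $i_a>i_b$. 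Hence $C_{a,b}=1$ whenever $i_a>i_b$; in particular, if $i_1>\cdots>i_k$ the whole product is the genuine (nonzero) normal-ordered product, establishing one direction.

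For the vanishing, observe that a tuple $(i_1,\dots,i_k)$ fails to be strictly decreasing iff some consecutive pair satisfies $i_a\le i_{a+1}$, and then $z_{a+1}/z_a=\gamma^{2}=t/q$. Substituting this ratio into the two displayed coefficients, the $\eta\eta$ overlap (case $i_a=i_{a+1}$) acquires the factor $1-q\gamma^{2}/t=1-1=0$, and the $\eta\varphi^{-}$ overlap (case $i_a<i_{a+1}$) has $y=\gamma^{5/2}q^{1/4}t^{-1/4}=t/q$, hence the factor $1-qy/t=1-1=0$. For generic $q,t$ none of the remaining $C_{a',b'}$ has a pole at the same (fixed) argument configuration, so $C_{a,a+1}=0$ annihilates the entire product. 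This completes the proof; alternatively one can run the same computation as an induction on $k$ via $X^{(k+1)}(z)=X^{(k)}(\gamma^{-2}z)\,\Xo(z)$, checking that left-multiplication... I mean right-multiplication by $\Xo(z)=\sum_i u_i\Lambda^{(i)}(z)$ against ${:}\Lambda^{(j_1)}(\gamma^{-2}z)\cdots{:}$ vanishes unless the new index is strictly smaller than $j_1$.

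The main obstacle is bookkeeping rather than anything conceptual: one must track, for each pair $(a,b)$ and each tensor slot $m$, which of $\eta$, $\varphi^{-}$ or $1$ occupies that slot in $\Lambda^{(i_a)}$ and in $\Lambda^{(i_b)}$, and confirm that the single vanishing factor produced by an offending consecutive pair is a genuine zero, not cancelled by a pole coming from another contraction — this is where genericity of $q,t$ enters. A minor additional point is to record that the normal-ordered products ${:}\Lambda^{(j_1)}(z)\cdots\Lambda^{(j_k)}((q/t)^{k-1}z){:}$ are well-defined elements of $\mathrm{End}(\cF_{\vu})[[z^{\pm 1}]]$, which is immediate once the strictly increasing index pattern is fixed, since every surviving contraction is then trivial.
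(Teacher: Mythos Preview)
Your argument is correct. The paper does not supply a proof of this statement at all: it is recorded as a Fact and attributed to \cite{AFHKSY2011notes}. What you have written is exactly the standard derivation one finds in that literature, namely expanding the ordered product of $\Xo$'s, observing that $\Lambda^{(i_a)}(z_a)\Lambda^{(i_b)}(z_b)$ is already normal-ordered when $i_a>i_b$ (because $\varphi^-$ carries only creation modes, so the sole possible contraction lives in tensor slot $i_a$ where the right factor is $1$), and that the single nontrivial contraction at a bad consecutive pair $i_a\le i_{a+1}$ acquires the factor $1-q\,\gamma^{2}/t=0$ in either the $\eta\eta$ or the $\eta\varphi^-$ OPE, while all other contraction coefficients stay finite for generic $q,t$. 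Your relabelling $j_m=i_{k+1-m}$ and the identification $z_{k+1-m}=(q/t)^{m-1}z$ are also correct, so the surviving terms match the stated formula. There is nothing further to compare: your proof simply fills in what the paper leaves to the reference.
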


The algebra generated by $X^{(k)}(z)$ can be regarded as
the tensor product of the $q$-deformed $W_N$ algebra
and some Heisenberg algebra \cite{FHSSY2010Kernel}
(See Proposition~\ref{prop: decomp X} in the next subsection).
Their PBW(Poincar\'{e}--Birkhoff--Witt)-type basis is well-understood.

\begin{Definition}
For an $N$-tuple of partitions
$\vl=\big(\lo, \lt, \ldots, \lambda^{(N)}\big)$,
we define 
the vectors $\Ket{X_{\vl}}=\Ket{X_{\vl}(\vu)} \in \mathcal{F}_{\vu}$ and
$\Bra{X_{\vl}}=\Bra{X_{\vl}(\vu)}\in \mathcal{F}_{\vu}^*$ by
\begin{gather*}
\Ket{X_{\vl}(\vu)} :=
 X^{(1)}_{-\lambda^{(1)}_1} X^{(1)}_{-\lambda^{(1)}_2} \cdots X^{(2)}_{-\lambda^{(2)}_1} X^{(2)}_{-\lambda^{(2)}_2}\cdots X^{(N)}_{-\lambda^{(N)}_1} X^{(N)}_{-\lambda^{(N)}_2} \cdots \ketzero, \\
 \Bra{X_{\vl}(\vu)} :=
\brazero \cdots X^{(N)}_{\lambda^{(N)}_2} X^{(N)}_{\lambda^{(N)}_1} \cdots X^{(2)}_{\lambda^{(2)}_2} X^{(2)}_{\lambda^{(2)}_1} \cdots
X^{(1)}_{\lambda^{(1)}_2} X^{(1)}_{\lambda^{(1)}_1} .
\end{gather*}
\end{Definition}

\begin{fact}[\cite{FOS2019Generalized,Ohkubo2017Kac}]
The set $(\Ket{X_{\vl}})$ (resp.~$(\Bra{X_{\vl}})$) form a PBW-type basis
of $\mathcal{F}_{\vu}$ (resp.~$\mathcal{F}^*_{\vu}$), if
$u_i\neq q^st^{-r}u_j$ and $u_i\neq 0$
for all $i, j$ and $r, s \in \mathbb{Z}$.
\end{fact}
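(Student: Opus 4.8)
The plan is as follows. The statements for $(\Ket{X_{\vl}})$ and for $(\Bra{X_{\vl}})$ will both follow at once, so I set them up together. Since $X^{(k)}_{-n}$ raises the principal grading by $n$, each $\Ket{X_{\vl}}$ lies in $\mathcal{F}_{\vu}[\,|\vl|\,]$ and each $\Bra{X_{\vl}}$ in $\mathcal{F}^{*}_{\vu}[\,|\vl|\,]$; moreover the number of $\vl=(\lo,\dots,\lN)$ with $|\vl|=d$ equals $\dim\mathcal{F}_{\vu}[d]$, both counting $N$-tuples of partitions of total size $d$ (recall $\mathcal{F}_{\vu}=\mathcal{F}^{\otimes N}$ is a free polynomial algebra on the $a^{(i)}_{-n}$, $n\geq1$). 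It therefore suffices to show, for every $d$, that the Gram matrix $G_d=\big(\braket{X_{\vl}|X_{\vm}}\big)_{|\vl|=|\vm|=d}$ is nonsingular: this makes both $(\Ket{X_{\vl}})_{|\vl|=d}$ and $(\Bra{X_{\vl}})_{|\vl|=d}$ linearly independent, and by the dimension count each is then a basis of its graded piece.

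The second step is to identify $\det G_d$ with a Kac determinant. By Proposition~\ref{prop: decomp X}, the subalgebra of $\mathrm{End}(\mathcal{F}_{\vu})$ generated by $X^{(1)}(z),\dots,X^{(N)}(z)$ is the tensor product of the deformed $W_N$ algebra with a rank-one Heisenberg algebra, and $\mathcal{F}_{\vu}$ factors correspondingly as $\mathcal{F}^{W}_{\vu}\otimes\mathcal{F}^{\mathrm{Heis}}$. Under this factorization I expect the PBW monomials $\Ket{X_{\vl}}$ to split into a product of a $W_N$-type vector and a Heisenberg-type vector, so that $\det G_d$ is the Kac determinant of the deformed $W_N$ Fock module $\mathcal{F}^{W}_{\vu}$ at degree $d$ times a nonzero scalar from the manifestly nondegenerate Heisenberg factor. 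The explicit form of that Kac determinant is available in \cite{Ohkubo2017Kac}: up to a nonzero constant it is a product of linear forms in the spectral parameters, whose zero locus is exactly $\{u_i=q^{s}t^{-r}u_j\}\cup\{u_i=0\}$. Hence, under the stated non-resonance hypothesis, $\det G_d\neq0$ for every $d$, which completes the argument.

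The hard part will be the Kac-determinant input to the second step: setting up the screening operators of the deformed $W_N$ algebra in this free field realization, locating the singular vectors of $\mathcal{F}^{W}_{\vu}$, and verifying that the resonance conditions governing their appearance are precisely $u_i=q^{s}t^{-r}u_j$ with the correct multiplicities. A more hands-on alternative, avoiding the $W_N$ machinery, would be a triangularity argument: fix a term order on the monomials $\prod_i a^{(i)}_{-\mu^{(i)}}\ketzero$, normal-order a general product $X^{(k_1)}_{-n_1}X^{(k_2)}_{-n_2}\cdots\ketzero$ (using the structure of the $\Lambda^{(i)}(z)$ recorded above together with the normal-ordering rules for $\eta$ and $\varphi^{-}$) to read off the leading monomial of $\Ket{X_{\vl}}$ and its coefficient — a Laurent polynomial in the $u_i$ that vanishes exactly on the resonance locus — and check that $\vl\mapsto(\text{leading monomial})$ is a bijection onto the monomial basis of $\mathcal{F}_{\vu}[\,|\vl|\,]$. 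Either way, the only genuinely delicate bookkeeping is which factors $u_i-q^{s}t^{-r}u_j$ appear and with what multiplicity; the dimension count and the normal-ordering are routine.
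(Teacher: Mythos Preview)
The paper does not prove this statement at all: it is recorded as a \emph{fact} with citation to \cite{FOS2019Generalized,Ohkubo2017Kac}, so there is no ``paper's own proof'' to compare against. Your outline is a faithful summary of how the result is actually established in those references. In particular, \cite{Ohkubo2017Kac} computes the Kac determinant of the level-$N$ Fock representation and identifies its zero locus with the resonance conditions $u_i=q^{s}t^{-r}u_j$ (and $u_i=0$), which is exactly your ``second step''; the dimension count is the trivial input you describe.

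Two small cautions on your write-up. First, Proposition~\ref{prop: decomp X} in this paper is stated only for $N=2$; for general $N$ the analogous decomposition (the $X^{(k)}$ generate $q$-$W_N\otimes(\text{Heisenberg})$, with $X^{(N)}$ being pure Heisenberg) is cited to \cite{FHSSY2010Kernel}, so you should invoke that rather than the $N=2$ proposition. Second, the factorization of the PBW monomials $\Ket{X_{\vl}}$ into a $W_N$-part times a Heisenberg-part is not literally a tensor product of monomials for $1\leq k\leq N-1$, since each $X^{(k)}(z)$ carries a Heisenberg dressing; one has to either change basis to separated generators first or, as in \cite{Ohkubo2017Kac}, work directly with the $X^{(k)}$ and compute the determinant of the full Gram matrix. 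Your alternative triangularity argument would also work and is closer in spirit to how \cite{FOS2019Generalized} handles related transition matrices.
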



\begin{Definition}
Define the linear operator $\cV(x)=\cVweight{\vv \\ \vu}{x}\colon \cF_{\boldsymbol{u}} \to \cF_{\boldsymbol{v}}$
by
\begin{gather*}
\left(1-\frac{x}{z}\right)X^{(i)}(z) \cV(x)
=\left(1- (t/q)^i\frac{x}{z}\right)\cV(x) X^{(i)}(z)
\qquad (i \in \{1,2,\dots,N\})
\end{gather*}
and the normalization condition $\bra{\boldsymbol{0}}\cV(x) \ket{\boldsymbol{0}}=1$.
\end{Definition}

It is known that the operator $\cV(x)$ exists uniquely \cite{FOS2019Generalized}.
Moreover, their matrix element formula is proved.

\begin{fact}[\cite{FOS2019Generalized}]
\label{fact: mat. el. formula}
It follows that
\begin{gather*}
\bra{K_{\boldsymbol{\lambda}}(\vv)}\cV (x)\ket{K_{\boldsymbol{\mu}}(\vu)}
=\frac{\big((-\gamma^2)^N e_{N}(\vu)x\big)^{|\vl|}}{( \gamma^2 x )^{|\vm|}}
\prod_{i=1}^N\frac{u_i^{|\mu^{(i)}|}g_{\mu^{(i)}}}
{\big(v_i^{|\lambda^{(i)}|}g_{\lambda^{(i)}}\big)^{N-1}}
\prod_{i,j=1}^N N_{\lambda^{(i)},\mu^{(j)}}(qv_i/tu_j).
\end{gather*}
\end{fact}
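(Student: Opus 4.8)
The plan is to recognize $\cV(x)$ as a spectrally shifted and rescaled version of the Mukad\'e operator $\cTV$, and then to read off the matrix element formula from Fact~\ref{fact: mat el. of TV}. First I would check that the normalized operator $\cTV(\vu,\vv,x)$, viewed as an operator $\cF^{(N,0)}_{\vu}\to\cF^{(N,0)}_{\vv}$, satisfies the defining intertwining relation of $\cV$, namely $(1-x/z)X^{(i)}(z)\cTV=(1-(t/q)^ix/z)\cTV X^{(i)}(z)$ for $i=1,\ldots,N$, after an appropriate identification of spectral parameters. Since $\cV(x)$ is characterized uniquely by this relation together with $\brazero\cV(x)\ketzero=1$, and since $\cTV$ is normalized precisely so that $\brazero\cTV\ketzero=1$, the two operators must then agree up to the identification.

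The main technical step is the intertwining relation. For $i=1$ it is almost immediate: $X^{(1)}(z)=\Xo(z)$ is the image of $x^+(z)$ under $(\rho_{u_1}\otimes\cdots\otimes\rho_{u_N})\circ\Delta^{(N)}$, while $T^V$ is built from the trivalent intertwiners $\Phi,\Phi^*$ which satisfy $a\Phi=\Phi\Delta(a)$ and $\Delta(a)\Phi^*=\Phi^*a$ (Fact~\ref{fact: intertwiner}). Pushing $x^+(z)$ through each $\Phicross$ in the composition defining $T^V$ produces its horizontal action on one side together with a delta-function action along the preferred (vertical) legs coming from the level $(0,1)$ representation (Fact~\ref{fact: lv 0,1 mod}); summing over the internal partitions collapses these delta functions and leaves a single scalar rational prefactor, which a direct computation identifies with $(1-(t/q)x/z)/(1-x/z)$. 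For $i\ge2$ one iterates this using $X^{(i)}(z)=\Xo(\gamma^{2(1-i)}z)\cdots\Xo(z)$, and checks that the various linear factors telescope into exactly $(1-(t/q)^ix/z)/(1-x/z)$.

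Granting the identification $\cV(x)=\cTV$ with $\vv$ replaced by $\gamma^{-1}\vv$ (and $x$ absorbing the accompanying power of $\gamma$, so that the Nekrasov argument $v_i/\gamma u_j$ in Fact~\ref{fact: mat el. of TV} becomes $qv_i/tu_j$), the matrix element formula follows by substituting into Fact~\ref{fact: mat el. of TV} and simplifying the prefactor $\mathcal{M}(\vu,\vv;\vl,\vm;x)$: using $g_\lambda=q^{n(\lambda')}t^{-n(\lambda)}$ and $e_N(\vu)=u_1\cdots u_N$, it turns into $\big((-\gamma^2)^Ne_N(\vu)x\big)^{|\vl|}/(\gamma^2x)^{|\vm|}\cdot\prod_{i=1}^N u_i^{|\mu^{(i)}|}g_{\mu^{(i)}}/\big(v_i^{|\lambda^{(i)}|}g_{\lambda^{(i)}}\big)^{N-1}$, which is the claimed coefficient.

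I expect the principal obstacle to be the intertwining check for the composite currents $X^{(i)}(z)$ with $i\ge2$: one must verify that the rational factors picked up when commuting each $\Xo$ through $\cTV$ recombine correctly after the telescoping, and that no spurious contributions come from the normal-ordering zeros of $\Xo(\gamma^{2(1-i)}z)\cdots\Xo(z)$. Should this identification prove awkward, an alternative is to compute $\bra{X_{\vl}(\vv)}\cV(x)\ket{X_{\vm}(\vu)}$ directly: read as an identity of formal series expanded suitably, the defining relation lets one peel off the modes $X^{(i)}_n$ one at a time, so that all matrix elements are determined recursively from $\brazero\cV(x)\ketzero=1$; one would then convert from the PBW-type basis $(\ket{X_{\vl}})$ to the generalized Macdonald basis $(\ket{K_{\vl}})$, but controlling that triangular transition matrix is itself a nontrivial calculation.
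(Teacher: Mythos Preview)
Your approach is exactly the one the paper itself indicates. The paper does not give a self-contained proof of this Fact (it is cited from \cite{FOS2019Generalized}), but the Remark immediately following it states precisely your key observation: $\cTV(\vu,\vv;w)$ satisfies
\[
\Big(1-\tfrac{w}{z}\Big)X^{(i)}(z)\,\cTV(\vu,\vv;w)=\gamma^{-i}\Big(1-\gamma^{2i}\tfrac{w}{z}\Big)\cTV(\vu,\vv;w)\,X^{(i)}(z),
\]
so that $\cV(x)$ is realized by $\cTV$ after a spectral-parameter modification, and the matrix-element formula then follows from Fact~\ref{fact: mat el. of TV}.

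One point you should tighten: the intertwining relation for $\cTV$ carries the extra prefactor $\gamma^{-i}$, not just the $(t/q)^i=\gamma^{2i}$ inside the bracket. This factor is homogeneous of degree $i$ in the spectral parameters (recall $X^{(i)}(z)=\sum u_{j_1}\cdots u_{j_i}{:}\Lambda^{(j_1)}\cdots\Lambda^{(j_i)}{:}$), so it is absorbed by the rescaling $\vu\mapsto\gamma\vu$ (equivalently $\vv\mapsto\gamma^{-1}\vv$ on the other side), exactly as you anticipate. You can then check directly that under $\vu\mapsto\gamma\vu$ in Fact~\ref{fact: mat el. of TV} one has $e_N(\gamma\vu)=\gamma^N e_N(\vu)$ turning $(-\gamma)^N$ into $(-\gamma^2)^N$, and $v_i/\gamma u_j\mapsto v_i/\gamma^2 u_j=qv_i/tu_j$ in the Nekrasov factors; the residual powers of $\gamma$ coming from $u_i^{|\mu^{(i)}|}$ and from the renormalization $\ket{K_{\vm}(\gamma\vu)}$ versus $\ket{K_{\vm}(\vu)}$ (through $\mathcal{C}^{(+)}_{\vm}$) must be tracked and shown to cancel. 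This bookkeeping is the only real content beyond Fact~\ref{fact: mat el. of TV}, and your outline is correct once the $\gamma^{-i}$ is accounted for.
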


\begin{Remark}
The operator $ \cTV(\boldsymbol{u})$ satisfies \cite{FOS2019Generalized}
\begin{gather*}
 \left(1- \frac{w}{z}\right)X^{(i)}(z) \cTV(\boldsymbol{u}, \boldsymbol{v};w) = \gamma^{-i}\left(1- \gamma^{2 i} \frac{w}{z}\right)\cTV(\boldsymbol{u}, \boldsymbol{v};w) X^{(i)}(z) .
\end{gather*}
Thus, the $\cV (x)$ can be realized by $\cTV$
with some modifications to spectral parameters.
\end{Remark}

\subsection[Conformal limit q to 1]{Conformal limit $\boldsymbol{q \rightarrow 1}$}

Consider the limit $q \rightarrow 1$ of $\cV(x)=\cVweight{\vv \\ \vu}{x}$
in the case $N=2$. Put
\begin{gather}\label{eq: q t parametrization}
\begin{array}{l}
q={\rm e}^{b^{-1} \hbar}, \qquad t={\rm e}^{-b \hbar},
\\
u_i={\rm e}^{u'_i \hbar},\qquad v_i={\rm e}^{v'_i \hbar}\qquad (i=1,2),
\end{array}
\end{gather}
and we parametrize $v'_i$ and $u'_i$ by the parameters $\parap'$, $\parap$ and $\alpha$
such that
\begin{gather}\label{eq: parametrization}
v'_i-u'_j=\parap'_i-\parap_j-\alpha, \qquad i,j=1,2.
\end{gather}
Here, we set $\parap_1=-\parap_2=\parap$, $\parap'_1=-\parap'_2=\parap'$.
Write
\begin{gather*}
Q:=b+b^{-1}.
\end{gather*}

The parameters $b$, $\parap$, $\parap'$, and $\alpha$
directly correspond to $b$, $P$, $P'$, and $\alpha$
in \cite{AFLT2011combinatorial},
which come from the central charges of the Virasoro algebra,
highest weights of its representation, and so on.

\begin{Remark}
There is an ambiguity of the choice of parametrization satisfying
(\ref{eq: parametrization}).
No matter how we choose it,
final results (Theorem~\ref{thm: the Vir rel}) are not affected if
(\ref{eq: parametrization}) is satisfied.
For example, the simplest parametrization is
\begin{gather}
u'_1=\parap+\alpha/2, \qquad u'_2=-\parap+\alpha/2, \label{eq: simple para 1}
\\
v'_1=\parap'-\alpha/2, \qquad v'_2=-\parap'-\alpha/2. \label{eq: simple para 2}
\end{gather}
Even if we add an arbitrary parameter to
(\ref{eq: simple para 1}) and (\ref{eq: simple para 2})
to keep the degree of freedom of $u'_i$ and $v'_i$,
the results stay the same.
\end{Remark}

It is easy to show the following lemma.

\begin{Lemma}
If \eqref{eq: parametrization} is satisfied,
it follows that
\begin{gather*}
u'_1-u'_2=2\parap, \qquad v'_1-v'_2=2\parap',
\\
u'_1+u'_2-v'_1-v'_2=2\alpha.
\end{gather*}
\end{Lemma}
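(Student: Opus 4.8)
The statement to prove is the elementary lemma: under the parametrization \eqref{eq: parametrization}, i.e.\ $v'_i-u'_j=\parap'_i-\parap_j-\alpha$ for $i,j=1,2$ with $\parap_1=-\parap_2=\parap$ and $\parap'_1=-\parap'_2=\parap'$, one has $u'_1-u'_2=2\parap$, $v'_1-v'_2=2\parap'$, and $u'_1+u'_2-v'_1-v'_2=2\alpha$.

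\textbf{Plan of proof.} The plan is to simply extract these three identities by taking suitable linear combinations of the four defining relations $v'_i-u'_j=\parap'_i-\parap_j-\alpha$. First I would subtract the relation for $(i,j)=(1,1)$ from that for $(i,j)=(2,1)$: the left side gives $v'_2-v'_1$, the right side gives $\parap'_2-\parap'_1=-2\parap'$ (the $\parap_j$ and $\alpha$ terms cancel since $j$ is held fixed), hence $v'_1-v'_2=2\parap'$. Symmetrically, subtracting the relation for $(i,j)=(1,2)$ from that for $(i,j)=(1,1)$ holds $i$ fixed and varies $j$: the left side gives $u'_2-u'_1$ wait — more precisely, $(v'_1-u'_1)-(v'_1-u'_2)=u'_2-u'_1$ on the left and $(\parap'_1-\parap_1-\alpha)-(\parap'_1-\parap_2-\alpha)=\parap_2-\parap_1=-2\parap$ on the right, giving $u'_1-u'_2=2\parap$.

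For the third identity I would add the relations for $(i,j)=(1,1)$ and $(i,j)=(2,2)$, obtaining $v'_1+v'_2-u'_1-u'_2=(\parap'_1+\parap'_2)-(\parap_1+\parap_2)-2\alpha$. Since $\parap'_1+\parap'_2=\parap'-\parap'=0$ and $\parap_1+\parap_2=\parap-\parap=0$, this reduces to $v'_1+v'_2-u'_1-u'_2=-2\alpha$, i.e.\ $u'_1+u'_2-v'_1-v'_2=2\alpha$. (Adding the $(1,2)$ and $(2,1)$ relations instead would give the same thing, confirming the consistency of the overparametrized system; one could remark on this but it is not needed.)

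\textbf{Main obstacle.} Honestly there is no real obstacle: this is a two-line linear-algebra computation and the only thing to be careful about is keeping track of signs and of which index is being varied versus held fixed, since the system \eqref{eq: parametrization} is overdetermined (four equations for the quantities $u'_1,u'_2,v'_1,v'_2$ in terms of $\parap,\parap',\alpha$) and one must make sure the chosen combinations are consistent. The ``content'' of the lemma is really just that it repackages the parametrization into the variables $\parap,\parap',\alpha$ that match \cite{AFLT2011combinatorial}, so the proof is a direct verification and nothing more.
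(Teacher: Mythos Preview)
Your proof is correct and matches the paper's approach: the paper offers no argument beyond the remark that the lemma is easy, and your direct linear-algebra verification is exactly what is intended.
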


The parametrization above 
is designed
so that the following factor appears in the limit of
the Nekrasov factor.

\begin{Definition}
Set
\begin{gather*}
Z_{bif}(\alpha|\parap',\vl|\parap,\vm)
=\prod_{i,j=1}^2 \prod_{(k,l) \in \lambda^{(i)}}
\big( \parap'_i-\parap_j-\alpha +b^{-1} (a_{\lambda^{(i)}}(k,l)+1) -b \ell_{\mu^{(j)}}(k,l) \big)
\\ \hphantom{Z_{bif}(\alpha|\parap',\vl|\parap,\vm)=}
{}\times \prod_{(k,l) \in \mu^{(j)}}
\big( \parap'_i-\parap_j-\alpha -b^{-1} a_{\mu^{(j)}}(k,l)
+b(\ell_{\lambda^{(i)}}(k,l)+1) \big).
\end{gather*}
\end{Definition}

{\samepage\begin{Proposition}\label{prop: limit of Nek}
We have
\begin{gather*}
\prod_{i,j=1}^2 N_{\lambda^{(i)}, \mu^{(i)}}(qv_i/tu_j)=
Z_{bif}(\alpha|\parap',\vl|\parap,\vm) \cdot \hbar^{2(|\vl|+|\vm|)} + \mathcal{O}\big(\hbar^{2(|\vl|+|\vm|)+1} \big).
\end{gather*}

\begin{proof}
Follows by direct calculation.
\end{proof}
\end{Proposition}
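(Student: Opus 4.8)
The plan is to substitute the parametrization (\ref{eq: q t parametrization}) into the definition of the Nekrasov factor and to expand each elementary factor to first order in $\hbar$. First I would record that, under (\ref{eq: q t parametrization}), one has $q^{m}t^{n}={\rm e}^{(b^{-1}m-bn)\hbar}$ for $m,n\in\mathbb{Z}$, and, using (\ref{eq: parametrization}),
\begin{gather*}
\frac{qv_i}{tu_j}={\rm e}^{(b^{-1}+b+v'_i-u'_j)\hbar}={\rm e}^{(Q+\parap'_i-\parap_j-\alpha)\hbar}.
\end{gather*}
Consequently every elementary factor occurring in $N_{\lambda^{(i)},\mu^{(j)}}(qv_i/tu_j)$ has the shape $1-{\rm e}^{c\hbar}$ with $c$ an affine-linear function of $b,b^{-1},\parap,\parap',\alpha$ and of the arm/leg lengths of the relevant box, and $1-{\rm e}^{c\hbar}=-c\hbar+\mathcal{O}(\hbar^{2})$.

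Next I would count factors. By the definition of $N_{\lambda\mu}(u)$, the factor $N_{\lambda^{(i)},\mu^{(j)}}(u)$ is a product of $|\lambda^{(i)}|+|\mu^{(j)}|$ elementary factors, so the product over $i,j\in\{1,2\}$ consists of $\sum_{i,j=1}^{2}(|\lambda^{(i)}|+|\mu^{(j)}|)=2(|\vl|+|\vm|)$ of them. Multiplying the first-order expansions term by term gives $\hbar^{2(|\vl|+|\vm|)}$ times the product of all the constants $(-c)$, with an error of order $\hbar^{2(|\vl|+|\vm|)+1}$; since the number of factors is even, the overall sign is $+1$.

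It remains to identify the product of the constants $c$ with $Z_{bif}(\alpha|\parap',\vl|\parap,\vm)$. To make the arm- and leg-lengths appear exactly as in its definition I would use the form
\begin{gather*}
N_{\lambda\mu}(u)=\prod_{(k,l)\in\lambda}\big(1-uq^{a_{\lambda}(k,l)}t^{\ell_{\mu}(k,l)+1}\big)\prod_{(k,l)\in\mu}\big(1-uq^{-a_{\mu}(k,l)-1}t^{-\ell_{\lambda}(k,l)}\big)
\end{gather*}
with $u={\rm e}^{(Q+\parap'_i-\parap_j-\alpha)\hbar}$. For a box $(k,l)\in\lambda^{(i)}$ one then gets $c=Q+\parap'_i-\parap_j-\alpha+b^{-1}a_{\lambda^{(i)}}(k,l)-b(\ell_{\mu^{(j)}}(k,l)+1)$, which by $Q=b+b^{-1}$ equals $\parap'_i-\parap_j-\alpha+b^{-1}(a_{\lambda^{(i)}}(k,l)+1)-b\,\ell_{\mu^{(j)}}(k,l)$; for $(k,l)\in\mu^{(j)}$ one similarly gets $\parap'_i-\parap_j-\alpha-b^{-1}a_{\mu^{(j)}}(k,l)+b(\ell_{\lambda^{(i)}}(k,l)+1)$. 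These are precisely the factors of $Z_{bif}$, so the leading coefficient is $Z_{bif}(\alpha|\parap',\vl|\parap,\vm)$, as claimed. The only point that needs attention — and where a careless computation goes astray — is the choice between the two equivalent forms of $N_{\lambda\mu}(u)$: the other form produces $a_{\mu^{(j)}},\ell_{\lambda^{(i)}}$ on the $\lambda^{(i)}$-boxes, which agrees with $Z_{bif}$ only through the polynomial identity between the two forms, not term by term. Everything else is the routine first-order expansion of exponentials together with the bookkeeping of $b$ and $b^{-1}$.
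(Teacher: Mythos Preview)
Your argument is correct and is exactly the direct calculation the paper alludes to: substitute the parametrization, expand each of the $2(|\vl|+|\vm|)$ elementary factors $1-{\rm e}^{c\hbar}=-c\hbar+\mathcal{O}(\hbar^{2})$, and match the resulting constants with the factors of $Z_{bif}$ using the second form of $N_{\lambda\mu}(u)$. Your remark about the even number of factors absorbing the signs, and about choosing the form of $N_{\lambda\mu}$ that matches $Z_{bif}$ box by box, are the only points worth noting; the paper simply omits these details.
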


}

Let us define bosons independent of $\hbar$
which is naturally obtained from the parameterization~(\ref{eq: q t parametrization})
and the limit $\hbar \rightarrow 0$.

\begin{Definition}
Define the Heisenberg algebra $\mathsf{a}_{n}$ ($n \in \mathbb{Z}_{\neq 0}$)
by the commutation relation
\begin{gather*}
[\mathsf{a}_{n},\mathsf{a}_{m}]=-b^{-2}n\delta_{n+m,0}.
\end{gather*}
Set
\begin{gather*}
\mathsf{a}^{(1)}_n=\mathsf{a}_n \otimes 1,\qquad
\mathsf{a}^{(2)}_n= 1\otimes \mathsf{a}_n,
\end{gather*}
and we assume that
\begin{gather*}
a^{(i)}_{-n}=\mathsf{a}^{(i)}_{-n},\qquad
a^{(i)}_{n}=\big({-}b^{2}\big)\frac{1-q^n}{1-t^n}\mathsf{a}^{(i)}_{n} \qquad (n>0),
\\
\lim_{\hbar \rightarrow 0} a^{(i)}_{-n}=\mathsf{a}^{(i)}_{-n}, \qquad
\lim_{\hbar \rightarrow 0} a^{(i)}_{n}=\mathsf{a}^{(i)}_{n}.
\end{gather*}
\end{Definition}

It is known that
the generalized Macdonald functions are reduced to
the generalized Jack functions \cite{MS2014Towards}
in the limit $\hbar \rightarrow 0$.
They are defined as eigenfunctions of the following Hamiltonian.
The existence theorem also follows similarly to the generalized Macdonald functions.

\begin{Definition}
Define the operator $H_b$ by
\begin{gather*}
H_b=H_b^{(1)}+H_b^{(2)}
-\big(1+b^{-2}\big)\sum_{n=1}^{\infty} n \mathsf{a}^{(1)}_{-n}
\mathsf{a}^{(2)}_{n},
\end{gather*}
where $H_b^{(i)}$ ($i=1,2$) is a modified Hamiltonian of the Calogero--Sutherland model:
\begin{gather*}
H_b^{(i)}
\mathbin{:=} \frac{1}{2}\sum_{n,m}
\left( \mathsf{a}^{(i)}_{-n} \mathsf{a}^{(i)}_{-m}
\mathsf{a}^{(i)}_{n+m}
+ \mathsf{a}^{(i)}_{-n-m}
 \mathsf{a}^{(i)}_n \mathsf{a}^{(i)}_m \right)
-\sum_{n=1}^{\infty} \left( b^{-1} u_i' + \frac{1+b^{-2}}{2} n \right) \mathsf{a}^{(i)}_{-n} \mathsf{a}^{(i)}_n.
\end{gather*}
\end{Definition}

\begin{fact}[\cite{Ohkubo2014Generalized}]
There exists a unique function $\ket{J_{\vl}}=\ket{J_{\vl}(u'_1,u'_2)}$
satisfying the following two conditions:
\begin{gather*}
 \ket{J_{\vl}}=\prod_{i=1}^2 m_{\lambda^{(i)}}\big(\mathsf{a}^{(i)}_{-n}\big) \ketzero
+\sum_{\vm <^{\mathrm{L}} \vl} d_{\vl \vm}
\prod_{i=1}^2 m_{\mu^{(i)}}\big(\mathsf{a}^{(i)}_{-n}\big) \ketzero , \qquad
d_{\vl \vm} \in \mathbb{C};
\\
 H_{b} \ket{J_{\vl}}= e'_{\vl} \ket{J_{\vl}} ,\qquad e'_{ \vl} \in \mathbb{C}.
\end{gather*}
Similarly,
there exists a unique function $\bra{J_{\vl}}=\bra{J_{\vl}(u'_1,u'_2)}$
such that
\begin{gather*}
\bra{J_{\vl}}=\brazero \prod_{i=1}^2 m_{\lambda^{(i)}}\big(\mathsf{a}^{(i)}_{n}\big)
+\sum_{\vm <^{\mathrm{R}} \vl} d'_{\vl \vm}
\brazero \prod_{i=1}^2 m_{\mu^{(i)}}\big(\mathsf{a}^{(i)}_{n}\big) , \qquad
d'_{\vl \vm} \in \mathbb{C};
\\
\bra{J_{\vl}} H_{b}= e'_{\vl} \bra{J_{\vl}} ,\qquad e'_{ \vl} \in \mathbb{C}.
\end{gather*}
\end{fact}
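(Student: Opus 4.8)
The plan is to deduce the statement by degeneration from the existence theorem for the generalized Macdonald functions (Fact~\ref{fact:existence thm of Gn Mac}), following the analogy already invoked in the text, with a parallel direct route noted at the end. Throughout, $b$, $\parap$, $\alpha$ (hence the $u'_i$) are fixed generic complex numbers. First I would reduce to finite dimensions: $H_b$ commutes with the grading operator $\sum_{n>0}n\big(\mathsf{a}^{(1)}_{-n}\mathsf{a}^{(1)}_n+\mathsf{a}^{(2)}_{-n}\mathsf{a}^{(2)}_n\big)$, since each cubic term of $H_b^{(i)}$ preserves degree, the quadratic terms act diagonally, and the cross term $\mathsf{a}^{(1)}_{-n}\mathsf{a}^{(2)}_n$ raises the degree of the first Fock space by $n$ while lowering that of the second by $n$. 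So it suffices to work inside each homogeneous component of $\cF^{\otimes 2}$ of fixed total degree $|\vl|$, where the vectors $\prod_{i=1}^2 m_{\lambda^{(i)}}(\mathsf{a}^{(i)}_{-n})\ketzero$ form a basis that is independent of $\hbar$ (recall $a^{(i)}_{-n}=\mathsf{a}^{(i)}_{-n}$ exactly).

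Second, I would identify $H_b$ with a rescaled limit of $\Xo_0$. Under the parametrization (\ref{eq: q t parametrization}) and the identification of bosons, $\Xo_0=\sum_{i=1}^{2}u_i(\Lambda^{(i)})_0$ can be expanded in powers of $\hbar$ using the mode expansions of $\eta$ and $\varphi^-$: since $1-t^{-n}$, $1-t^n$ and $1-t^nq^{-n}$ are each $O(\hbar)$, a contribution containing $k$ boson modes carries a factor $\hbar^{k}$, with an extra $\hbar$ for every $\varphi^-$-to-$\eta$ contraction. A computation then shows that $\Xo_0$ equals a scalar series, plus $\hbar^2$ times the total-degree operator, plus $\hbar^3$ times an operator whose restriction to each homogeneous component is a nonzero constant multiple of $H_b$, modulo $O(\hbar^4)$: the cubic $\eta$-terms give the cubic part of $H_b^{(i)}$, the $\varphi^-\otimes\eta$ contraction gives the cross term $\sum_n n\,\mathsf{a}^{(1)}_{-n}\mathsf{a}^{(2)}_n$, and the $u'_i$-linear part of $u_i$ against the $\eta$ Euler term gives the $u'_i$-dependent quadratic term. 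Correspondingly $\epsilon_{\vl}(\vu)=\sum_k u_k e_{\lambda^{(k)}}$ expands as $2+\hbar(u'_1+u'_2)+\hbar^2(\text{scalar}-|\vl|)+\hbar^3(\text{const}\cdot e'_{\vl})+O(\hbar^4)$, which both fixes $e'_{\vl}$ and shows $\epsilon_{\vl}-\epsilon_{\vm}$ is, at leading order in $\hbar$, a nonzero multiple of $e'_{\vl}-e'_{\vm}$.

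Third, I would transfer the triangular structure across the limit. For $\vm<^{\mathrm{L}}\vl$ with $|\vm|=|\vl|$ the gap $e'_{\vl}-e'_{\vm}$ is nonzero for generic $b,\parap$: its bidegree-sensitive part is linear in $u'_1-u'_2=2\parap$, and within a fixed bidegree it reduces to the classical separation of Calogero--Sutherland (Jack) eigenvalues. Hence, for $\hbar$ in a punctured neighbourhood of $0$, the Gram--Schmidt coefficients $v_{\vl,\vm}(\vu)$ defining $\ket{P_{\vl}}$ are rational functions of the (analytic in $\hbar$) matrix entries of $\Xo_0$ in the monomial basis and of $(\epsilon_{\vl}-\epsilon_{\vm})^{-1}$, so they extend analytically to $\hbar=0$; their limits define $\ket{J_{\vl}}:=\lim_{\hbar\to0}\ket{P_{\vl}}$, which is triangular with respect to $<^{\mathrm{L}}$ in the $\hbar$-independent monomial basis and satisfies $H_b\ket{J_{\vl}}=e'_{\vl}\ket{J_{\vl}}$ by continuity. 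Uniqueness then follows because any other solution differs from $\ket{J_{\vl}}$ by an $H_b$-eigenvector of eigenvalue $e'_{\vl}$ lying in the span of strictly $<^{\mathrm{L}}$-lower monomials, which must vanish since $e'_{\vl}\neq e'_{\vm}$ there. The bra statement is the same argument with $\mathsf{a}^{(i)}_n$, $\brazero$ and the adjoint Hamiltonian, whose cross term $\sum_n n\,\mathsf{a}^{(2)}_{-n}\mathsf{a}^{(1)}_n$ moves boxes in the opposite direction and is therefore triangular with respect to $<^{\mathrm{R}}$. Alternatively, one can avoid the limit and check directly that $H_b$ is $<^{\mathrm{L}}$-triangular in the monomial basis, using the classical triangularity of the $H_b^{(i)}$ on each tensor factor together with a short calculation of the cross term.

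The hardest point is controlling the cross term with respect to the generalized dominance order $<^{\mathrm{L}}$ — equivalently, pinning down the $\varphi^-\otimes\eta$ contraction in the $\hbar$-expansion of $\Xo_0$ and verifying it lowers $<^{\mathrm{L}}$ — together with the non-vanishing of the gaps $e'_{\vl}-e'_{\vm}$, without which the limit of the triangular system could be singular and uniqueness could fail. Both are inherited here from the corresponding facts for the generalized Macdonald functions, but they are precisely where genericity of $\parap$ (and $b$) is used.
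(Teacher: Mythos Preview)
Your argument is essentially correct, but it takes a different route from what the paper itself indicates. The paper does not prove this Fact; it is cited from \cite{Ohkubo2014Generalized}, and the text remarks only that ``the existence theorem also follows similarly to the generalized Macdonald functions'', i.e.\ by the direct triangularity argument you mention at the end: one checks that $H_b$ is upper triangular in the monomial basis with respect to $<^{\mathrm{L}}$ (the Calogero--Sutherland pieces $H_b^{(i)}$ are triangular in ordinary dominance on each tensor factor, and the cross term $\sum_n n\,\mathsf{a}^{(1)}_{-n}\mathsf{a}^{(2)}_n$ strictly raises the bidegree $(|\lambda^{(1)}|,|\lambda^{(2)}|)$ in the $<^{\mathrm{L}}$-direction), with diagonal eigenvalues $e'_{\vl}$ that are pairwise distinct for generic $b,\parap$.

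Your main route instead goes through the $\hbar\to 0$ degeneration of $X^{(1)}_0$. This is legitimate and has the pleasant side effect of simultaneously proving Fact~\ref{fact: limit of Gen. Mac}; but note that the crux is the same in both approaches, namely the nonvanishing of the gaps $e'_{\vl}-e'_{\vm}$ for $\vm<^{\mathrm{L}}\vl$ (which you correctly isolate as the hard point). Once that is known, the limit machinery is not strictly needed: the direct Gram--Schmidt on $H_b$ already gives existence and uniqueness. One small caution on your $\hbar$-expansion: the precise statement is that, on each homogeneous component, $X^{(1)}_0$ agrees with a scalar plus a multiple of $H_b$ at the first order in $\hbar$ at which the eigenvalues separate within that component (order $\hbar^3$, as your eigenvalue expansion confirms); you should be a bit careful that the operator appearing at that order is exactly $H_b$ up to an overall constant and a scalar shift, not merely an operator with the same eigenvalues. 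This is a computation, but it is where the identification $a^{(i)}_n=(-b^2)\tfrac{1-q^n}{1-t^n}\mathsf{a}^{(i)}_n$ for $n>0$ enters nontrivially.
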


We call the eigenfunctions $\ket{J_{\vl}}$ and
$\bra{J_{\vl}}$ the generalized Jack functions.

\begin{fact}[\cite{Ohkubo2014Generalized}]
\label{fact: limit of Gen. Mac}
We have
\begin{gather*}
\ket{P_{\vl}(\vu)} \underset{\substack{\hbar \rightarrow 0,
\\
u_i={\rm e}^{u_i'\hbar },\ q={\rm e}^{b^{-1} \hbar},\ t={\rm e}^{-b \hbar} }}{\longrightarrow} \ket{J_{\vl}(u'_1,u'_2)},
\\
\bra{P_{\vl}(\vu)} \underset{\substack{\hbar \rightarrow 0,
\\
_i={\rm e}^{u_i'\hbar },\ q={\rm e}^{b^{-1} \hbar},\ t={\rm e}^{-b \hbar} }}{\longrightarrow} \bra{J_{\vl}(u'_1,u'_2)}.
\end{gather*}
\end{fact}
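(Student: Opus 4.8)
The plan is to deduce this from the characterizations of $\ket{P_{\vl}(\vu)}$ and $\ket{J_{\vl}(u_1',u_2')}$ as the \emph{unique} vectors that are of triangular form with respect to the monomial-type basis and the partial order $<^{\mathrm{L}}$ and that are eigenvectors of $X^{(1)}_0$, respectively of $H_b$. First I would check that the identification of bosons $a^{(i)}_{-n}=\mathsf{a}^{(i)}_{-n}$, $a^{(i)}_{n}=(-b^{2})\frac{1-q^n}{1-t^n}\mathsf{a}^{(i)}_{n}$ is compatible with the algebra relations in the limit: under $q={\rm e}^{b^{-1}\hbar}$, $t={\rm e}^{-b\hbar}$ one has $\frac{1-q^n}{1-t^n}=-b^{-2}+O(\hbar)$, so $[a_n,a_m]=n\frac{1-q^{|n|}}{1-t^{|n|}}\delta_{n+m,0}\to -b^{-2}n\,\delta_{n+m,0}=[\mathsf{a}_n,\mathsf{a}_m]$, and hence the monomial basis $\prod_i m_{\lambda^{(i)}}(a^{(i)}_{-n})\ketzero$, its leading terms, and the order $<^{\mathrm{L}}$ all survive the limit verbatim. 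It then remains to show that on each finite-dimensional graded subspace $X^{(1)}_0$ degenerates, after subtracting an ($\hbar$-dependent) scalar and rescaling by a power of $\hbar$, to a nonzero scalar multiple of $H_b$ (up to an additive constant), and that the eigenvalue $\epsilon_{\vl}(\vu)=u_1 e_{\lambda^{(1)}}+u_2 e_{\lambda^{(2)}}$ degenerates in the same way to the corresponding multiple of $e'_{\vl}$; uniqueness of the generalized Jack functions then forces $\lim\ket{P_{\vl}}=\ket{J_{\vl}}$, and the statement for $\bra{P_{\vl}}$ follows from the symmetric argument using $\Bra{P_{\vl}}X^{(1)}_0=\epsilon^*_{\vl}\Bra{P_{\vl}}$.

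For the degeneration I would use $X^{(1)}(z)=u_1\,\eta(z)\otimes 1+u_2\,\varphi^-(\gamma^{1/2}z)\otimes\eta(\gamma z)$ (the case $N=2$ of $X^{(1)}(z)=\sum_i u_i\Lambda^{(i)}(z)$) and expand each vertex operator in $\hbar$, noting that $1-t^{\pm n}$, $1-t^nq^{-n}$ and $\log\gamma=-\tfrac12(b+b^{-1})\hbar$ are all $O(\hbar)$, and then take the $z^{0}$-coefficient. Rewriting the result in terms of the fixed bosons $\mathsf{a}^{(i)}_n$, one checks that the parts of orders $\hbar^{0}$, $\hbar^{1}$, $\hbar^{2}$ act as scalars on each fixed graded subspace; on the eigenvalue side this reflects the shape of $e_\lambda=1+(t-1)\sum_{i\geq 1}(q^{\lambda_i}-1)t^{-i}$, in which the factor $(t-1)=O(\hbar)$ multiplies an $O(\hbar)$ quantity whose leading coefficient is proportional to $|\lambda|$, so that any dependence of $e_\lambda$ on the \emph{shape} of $\lambda$ (rather than merely on $|\lambda|$) is pushed to order $\hbar^{3}$. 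Subtracting the scalar part and dividing by $\hbar^{3}$, $X^{(1)}_0$ converges on each graded subspace to a nonzero scalar multiple of $H_b$ up to an additive constant: the cubic Calogero--Sutherland terms of $H_b^{(i)}$ arise from the third-order part of $\eta$, the quadratic terms with coefficients $b^{-1}u_i'$ and $\tfrac{1+b^{-2}}{2}n$ from the prefactors $u_i={\rm e}^{u_i'\hbar}$ and the $\gamma$-shifts, and the inter-Fock coupling $-(1+b^{-2})\sum_n n\,\mathsf{a}^{(1)}_{-n}\mathsf{a}^{(2)}_n$ precisely from the $\varphi^-(\gamma^{1/2}z)\otimes\eta(\gamma z)$ factor of $\Lambda^{(2)}$; the same computation gives $\hbar^{-3}\big(\epsilon_{\vl}(\vu)-(\mathrm{scalar})\big)\to(\text{the corresponding multiple of }e'_{\vl})+(\mathrm{const})$.

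It remains to conclude by degenerate perturbation theory. On a fixed finite-dimensional graded subspace of $\cF_{\vu}$, $X^{(1)}_0$ is an analytic family of operators of the form $(\mathrm{scalar})\,\mathrm{id}+\hbar^{3}\mathcal{A}(\hbar)$, where $\mathcal{A}(0)$ is a nonzero scalar multiple of $H_b$ plus a multiple of the identity; for generic $b,\parap,\parap',\alpha$ this operator has simple spectrum (the $e'_{\vl}$ with $|\vl|$ fixed being pairwise distinct), so the eigenvectors $\ket{P_{\vl}(\vu)}$, normalized so that the coefficient of $\prod_i m_{\lambda^{(i)}}(a^{(i)}_{-n})\ketzero$ equals $1$, depend analytically on $\hbar$ near $0$ and their limits are the correspondingly normalized eigenvectors of $\mathcal{A}(0)$, which by the existence and uniqueness theorem for the generalized Jack functions are $\ket{J_{\vl}(u_1',u_2')}$; the argument for $\bra{P_{\vl}}$ is identical. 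I expect the main obstacle to be the order-$\hbar^{3}$ bookkeeping in the middle step --- verifying that the orders $\hbar^{0},\hbar^{1},\hbar^{2}$ collapse to scalars on each graded subspace and that the assembled third-order operator is a scalar multiple of $H_b$ on the nose, in particular reproducing the coupling term --- with a secondary point being to make the ``generic parameters'' hypothesis precise (mirroring the condition $u_i\neq q^s t^{-r}u_j$ already needed for the PBW-type bases), so that $\ket{P_{\vl}(\vu)}$ is genuinely an eigenvector for small $\hbar\neq 0$ and no eigenvalue collision occurs in the limit.
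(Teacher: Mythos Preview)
The paper does not supply its own proof of this statement: it is recorded as a \emph{Fact} with a citation to \cite{Ohkubo2014Generalized}, so there is no argument in the paper to compare against. Your proposal is therefore being measured against the cited literature rather than against anything in the present text.

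That said, your strategy is the standard and correct one, and matches what is done in the reference: expand $X^{(1)}_0$ in $\hbar$, check that the first nontrivial (non-scalar on graded pieces) contribution appears at order $\hbar^3$ and equals a nonzero multiple of $H_b$ up to an additive constant, and then invoke the uniqueness characterizations of $\ket{P_{\vl}}$ and $\ket{J_{\vl}}$ together with simplicity of the spectrum of $H_b$ for generic parameters. Your identification of where the three pieces of $H_b$ come from is accurate; in particular the key observation that $\varphi^-(z)=1+O(\hbar^2)$ (because its exponent carries the factor $(1-t^{-n})(1-t^nq^{-n})=O(\hbar^2)$) is what kills the would-be order-$\hbar^2$ cross term $a^{(1)}_{-n}a^{(2)}_n$ and pushes the inter-Fock coupling to order $\hbar^3$, exactly matching $-(1+b^{-2})\sum_n n\,\mathsf a^{(1)}_{-n}\mathsf a^{(2)}_n$. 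The only point I would sharpen is the genericity hypothesis: rather than phrasing it via $\parap,\parap',\alpha$ (which pertain to the vertex operator $\cV$, not to the spectrum of $H_b$), you want $u_1',u_2',b$ generic so that the limiting eigenvalues $e'_{\vl}$ are pairwise distinct on each graded piece; this is the direct analogue of the condition $u_i\neq q^s t^{-r} u_j$ you mention.
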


In fact, the generalized Jack functions are defined
for general $N$.
Also for general $N$,
they correspond to the limit of the generalized Macdonald function.

Next, we take the limit of the generator $X^{(i)}(z)$.
In advance, we decompose the generator $X^{(i)}(z)$ into
the $q$-Virasoro algebra and some Heisenberg algebra
in order to obtain the relation of the Virasoro Primary fields.
This decomposition can be obtained by the following linear transformation
of the bosons.

\begin{Definition}
For $n>0$, define
\begin{gather*}
b'_{-n}:=\frac{(1-t^{-n})}{(1+(q/t)^n)}\big(a^{(1)}_{-n}-\gamma^{-n}a^{(2)}_{-n}\big), \qquad
b'_{n}:=-\frac{(1-t^{n})}{(1+(q/t)^n)}\big(a^{(1)}_{n}-\gamma^{-n}a^{(2)}_{n}\big),
\\
b''_{-n}:=(1-t^{-n})\big(\gamma^{-2n} a^{(1)}_{-n}+\gamma^{-n}a^{(2)}_{-n}\big), \qquad
b''_{n}:=-(1-t^{n})\big(a^{(1)}_{n}+\gamma^{n}a^{(2)}_{n}\big).
\end{gather*}
Furthermore,
for $n \in \mathbb{Z}_{\neq 0}$,
define
\begin{gather*}
\beta^{(1)}_{n}:=-\frac{\sqrt{-1}}{2}b\big(\mathsf{a}^{(1)}_{n}-\mathsf{a}^{(2)}_{n}\big), \qquad
\beta^{(2)}_{n}:=-\frac{\sqrt{-1}}{2}b\big(\mathsf{a}^{(1)}_{n}+\mathsf{a}^{(2)}_{n}\big).
\end{gather*}
\end{Definition}

\begin{Definition}
Define the zero mode by
\begin{gather*}
\beta^{(1)}_0 :=\sqrt{-1}\cdot \frac{u'_1-u'_2}{2}=\sqrt{-1}\parap, \qquad
\beta^{(2)}_0 :=\sqrt{-1}\cdot \frac{u'_1+u'_2}{2} \qquad
\mbox{on $\cF_{\vu}$},
\\
\beta^{(1)}_0 :=\sqrt{-1}\cdot \frac{v'_1-v'_2}{2}=\sqrt{-1}\parap', \qquad
\beta^{(2)}_0 :=\sqrt{-1}\cdot \frac{v'_1+v'_2}{2} \qquad
\mbox{on $\cF_{\vv}$}.
\end{gather*}
\end{Definition}

\begin{Proposition}
For $n,m\in \mathbb{Z}$, it follows that
\begin{gather*}
[b'_n,b'_{m}]=-n\frac{(1-q^n)(1-t^{-n})}{(1+(q/t)^n)}\delta_{n+m,0},
\\
[b'_n,b''_m]=\big[\beta^{(1)}_n, \beta^{(2)}_n\big]=0,
\\
[\beta^{(2)}_{n},\beta^{(2)}_{m}]=\frac{n}{2}\delta_{n+m,0}.
\end{gather*}

\begin{proof}
It follows from direct computation.
\end{proof}
\end{Proposition}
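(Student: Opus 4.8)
The plan is a routine but careful computation resting only on bilinearity of the bracket together with the two elementary Heisenberg relations in play: on $\mathcal{F}_{\vu}=\mathcal{F}_{u_1}\otimes\mathcal{F}_{u_2}$ the two tensor factors carry commuting copies of $\mathcal{H}$, so $[a^{(i)}_n,a^{(j)}_m]=\delta_{i,j}\,n\frac{1-q^{|n|}}{1-t^{|n|}}\delta_{n+m,0}$, while by construction $[\mathsf{a}^{(i)}_n,\mathsf{a}^{(j)}_m]=-\delta_{i,j}\,b^{-2}n\,\delta_{n+m,0}$. Each of these vanishes unless $n+m=0$, hence so does every bracket in the Proposition, which already produces the Kronecker deltas; it therefore suffices to evaluate the case $m=-n$ with $n>0$, the case $n<0$ following by antisymmetry of the bracket, and the central zero modes $\beta^{(i)}_0$ (which act as scalars) playing no role in the relations among the $n\neq 0$ modes.

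First I would record the consequence $\gamma^{-2n}=(q/t)^n$ of $\gamma=(t/q)^{1/2}$, which is exactly what makes the factor $1+(q/t)^n$ occurring in the definitions of $b'_{\mp n}$ and $b''_{\mp n}$ collapse. For $[b'_n,b'_{-n}]$ one substitutes the two defining linear combinations, drops the mixed $a^{(1)}$--$a^{(2)}$ terms, and is left with $[a^{(1)}_n,a^{(1)}_{-n}]+\gamma^{-2n}[a^{(2)}_n,a^{(2)}_{-n}]=n\frac{1-q^n}{1-t^n}\bigl(1+(q/t)^n\bigr)$; multiplying by the prefactor $-\frac{(1-t^n)(1-t^{-n})}{(1+(q/t)^n)^2}$ cancels one factor of the denominator and yields precisely $-n\frac{(1-q^n)(1-t^{-n})}{1+(q/t)^n}$. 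For $[b'_n,b''_{-n}]$ the same expansion produces $\gamma^{-2n}[a^{(1)}_n,a^{(1)}_{-n}]$ from the $a^{(1)}$--$a^{(1)}$ term and $-\gamma^{-n}\cdot\gamma^{-n}[a^{(2)}_n,a^{(2)}_{-n}]=-\gamma^{-2n}[a^{(2)}_n,a^{(2)}_{-n}]$ from the $a^{(2)}$--$a^{(2)}$ term; since $[a^{(1)}_n,a^{(1)}_{-n}]=[a^{(2)}_n,a^{(2)}_{-n}]$ these cancel identically, which is the precise reason the $b'$-sector and the $b''$-sector decouple. The $\beta$-relations are shorter still: using $\bigl(-\tfrac{\sqrt{-1}}{2}b\bigr)^2=-\tfrac{b^2}{4}$ one finds $[\beta^{(1)}_n,\beta^{(2)}_m]=-\tfrac{b^2}{4}\bigl([\mathsf{a}^{(1)}_n,\mathsf{a}^{(1)}_m]-[\mathsf{a}^{(2)}_n,\mathsf{a}^{(2)}_m]\bigr)=0$ for all $n,m$, and $[\beta^{(2)}_n,\beta^{(2)}_m]=-\tfrac{b^2}{4}\bigl([\mathsf{a}^{(1)}_n,\mathsf{a}^{(1)}_m]+[\mathsf{a}^{(2)}_n,\mathsf{a}^{(2)}_m]\bigr)=-\tfrac{b^2}{4}\cdot(-2b^{-2}n)\,\delta_{n+m,0}=\tfrac{n}{2}\delta_{n+m,0}$.

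Since every identity collapses to a two-line manipulation of rational functions in $q$ and $t$, there is no genuine obstacle; the only point requiring real attention is the index bookkeeping, because the paper's definitions of $b'_n,b''_n$ for $n>0$ and of $b'_{-n},b''_{-n}$ carry different signs and different powers of $\gamma$, so one must not simply substitute $n\mapsto -n$ in a formula derived for positive index. Instead I would establish the positive-index case directly and then invoke antisymmetry, checking compatibility via $1-q^{-k}=-q^{-k}(1-q^k)$ and $1-t^{k}=-t^{k}(1-t^{-k})$, which identifies the antisymmetrized expression with the $n<0$ instance of the stated formula. In the writeup I would give $[b'_n,b'_{-n}]$ in full and note that $[b'_n,b''_{-n}]=0$ and both $\beta$-brackets follow in exactly the same manner.
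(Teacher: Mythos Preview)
Your proposal is correct and is exactly the ``direct computation'' the paper alludes to, only written out in full; there is nothing to add beyond noting that your careful remark about the asymmetric definitions of $b'_{\pm n}$, $b''_{\pm n}$ (and hence the need to invoke antisymmetry rather than blindly substituting $n\mapsto -n$) is precisely the one subtlety worth flagging.
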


\begin{Definition}
Set
\begin{gather*}
\Lambda'_1(z)={:}\exp\bigg(\sum_{n\neq 0} \frac{b'_n}{|n|}z^{-n} \bigg){:}, \qquad
\Lambda'_2(z)={:}\exp\bigg({-}\sum_{n\neq 0} \frac{b'_n}{|n|}(t/q)^nz^{-n} \bigg){:},
\\
\Lambda''(z)=
\exp\bigg(\sum_{n>0} \frac{b''_{-n}}{n(1+(q/t)^{n})}z^{n} \bigg)
\exp\bigg(\sum_{n>0} \frac{(q/t)^{n}b''_{n}}{n(1+(q/t)^{n})}z^{-n} \bigg).
\end{gather*}
Moreover, define
\begin{gather*}
T(z)=\Lambda'_1(z){\rm e}^{-\sqrt{-1}\hbar \beta^{(1)}_0 }+\Lambda'_2(z){\rm e}^{\sqrt{-1}\hbar \beta^{(1)}_0 },
\qquad
Y(z)=\Lambda''(z){\rm e}^{-\sqrt{-1}\hbar \beta^{(2)}_0}.
\end{gather*}
\end{Definition}

\begin{Proposition}\label{prop: decomp X}
$\Xo(z)$ can be decomposed as
\begin{gather*}
\Xo(z)=T(z)Y(z).
\end{gather*}
Moreover we have
\begin{gather*}
X^{(2)}(z)={:}\exp\bigg(\sum_{n\neq 0} \frac{b''_n}{|n|}z^{-n}\bigg){:}{\rm e}^{-2 \sqrt{-1}
\hbar \beta^{(2)}_0}.
\end{gather*}
\end{Proposition}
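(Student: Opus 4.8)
The plan is to prove Proposition~\ref{prop: decomp X} by a direct computation at the level of the explicit free-field formulas, working entirely in the $N=2$ horizontal representation. Recall from the earlier fact that $\Xo(z)=u_1\Lambda^{(1)}(z)+u_2\Lambda^{(2)}(z)$ with $\Lambda^{(1)}(z)=\eta(z)\otimes 1$ and $\Lambda^{(2)}(z)=\varphi^-(\gamma^{1/2}z)\otimes\eta(\gamma z)$, and the mode expansions of $\eta$ and $\varphi^-$ are given in the Definition just after the Heisenberg algebra. First I would write out $\Xo(z)$ in terms of the two bosons $a^{(1)}_n$, $a^{(2)}_n$: this gives $\Xo(z)$ as a sum of two vertex operators whose exponents are explicit linear combinations of $a^{(1)}_{\pm n}$ and $a^{(2)}_{\pm n}$ with the $u_i$'s absorbed into the zero modes via the parametrization $u_i=e^{u_i'\hbar}$.

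The key step is then the change of basis: the point of introducing $b'_n$, $b''_n$ in the preceding Definition is precisely to diagonalize the two summands so that the $b'$-part reproduces the $q$-Virasoro fundamental current $T(z)=\Lambda'_1(z)e^{-\sqrt{-1}\hbar\beta^{(1)}_0}+\Lambda'_2(z)e^{\sqrt{-1}\hbar\beta^{(1)}_0}$ and the $b''$-part (which commutes with $b'$, as stated in the Proposition on commutation relations) factors out as the Heisenberg current $Y(z)=\Lambda''(z)e^{-\sqrt{-1}\hbar\beta^{(2)}_0}$. Concretely I would:
(i) verify that the combination of $a^{(1)}_{-n},a^{(2)}_{-n}$ appearing in the creation part of $u_1\Lambda^{(1)}(z)$, once split along $b'_{-n}$ and $b''_{-n}$, matches the creation parts of $\Lambda'_1(z)\Lambda''(z)$, and similarly the annihilation parts;
(ii) do the same for $u_2\Lambda^{(2)}(z)$ against $\Lambda'_2(z)\Lambda''(z)$, here using that the $(q/t)^n=\gamma^{2n}$ twist in $\Lambda'_2$ and the $\gamma$-shifts in $\Lambda^{(2)}(z)=\varphi^-(\gamma^{1/2}z)\otimes\eta(\gamma z)$ conspire correctly;
(iii) track the zero modes: the $u_i$-dependence collapses to $e^{\mp\sqrt{-1}\hbar\beta^{(1)}_0}$ on the $T$-part (since $\beta^{(1)}_0=\sqrt{-1}(u_1'-u_2')/2$) times a common $e^{-\sqrt{-1}\hbar\beta^{(2)}_0}$ on the $Y$-part (since $\beta^{(2)}_0=\sqrt{-1}(u_1'+u_2')/2$), giving $u_1=e^{u_1'\hbar}$ and $u_2=e^{u_2'\hbar}$ respectively.
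Because $b'$ and $b''$ commute, $T(z)$ and $Y(z)$ commute as operators, so $\Xo(z)=T(z)Y(z)$ holds without normal-ordering subtleties between the two factors — this is where the vanishing bracket $[b'_n,b''_m]=0$ from the Proposition on commutation relations is used.

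For the second assertion, $X^{(2)}(z)=\Xo(\gamma^{-2}z)\Xo(z)$ by definition, and from the earlier fact $X^{(2)}(z)={:}\Lambda^{(1)}(\gamma^{-2}z)\Lambda^{(2)}(z){:}\,u_1u_2$ (the only surviving term for $N=2$). I would compute the normal-ordered product $\Lambda^{(1)}(\gamma^{-2}z)\Lambda^{(2)}(z)$ directly in the $a^{(i)}$ bosons and check that the resulting exponent is exactly $\sum_{n\neq0}\frac{b''_n}{|n|}z^{-n}$, with the zero mode $u_1u_2=e^{(u_1'+u_2')\hbar}=e^{-2\sqrt{-1}\hbar\beta^{(2)}_0}$; equivalently one observes that the $b'$-contributions cancel between the two factors, leaving only the $b''$-piece, which is consistent with the first part since $T(\gamma^{-2}z)T(z)$ times the $b''$-cocycle must reduce to the pure $b''$ current. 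The main obstacle is purely bookkeeping: getting every power of $q$, $t$, $\gamma=(t/q)^{1/2}$ right in the argument shifts and in the coefficients $\frac{1-t^{\pm n}}{n}$ versus $\frac{1}{n(1+(q/t)^n)}$, so I would set up the substitution $b'_{\pm n},b''_{\pm n}\leftrightarrow a^{(i)}_{\pm n}$ once (inverting the linear change of variables in the defining Definition) and then just match coefficients term by term; no conceptual difficulty is expected beyond this.
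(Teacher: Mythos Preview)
Your proposal is correct and matches the paper's approach: the paper states simply that this proposition ``can be shown by direct calculation'' (with a reference to \cite{FHSSY2010Kernel}), and your plan is precisely that calculation carried out in the $N=2$ free-field realization via the $(a^{(i)}_n)\leftrightarrow(b'_n,b''_n)$ change of basis. One small bookkeeping slip to watch: the Fact following Definition~\ref{Def_X^k} gives $X^{(2)}(z)={:}\Lambda^{(1)}(z)\Lambda^{(2)}((q/t)z){:}\,u_1u_2$, i.e.\ the $\gamma^{-2}$ shift sits on $\Lambda^{(2)}$ rather than $\Lambda^{(1)}$, but as you note this is exactly the kind of index tracking you already flagged.
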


\begin{Proposition}
The operator $T(z)$ satisfies the defining relation of the $q$-Virasoro algebra:
\begin{gather*}
f(w/z)T(z)T(w)-f(z/w)T(w)T(z)=-\frac{(1-q)\big(1-t^{-1}\big)}{1-(q/t)}
(\delta(qw/tz)-\delta(tw/qz)),
\end{gather*}
where
\begin{gather*}
f(z)= \exp \bigg(\sum_{n>0}\frac{1}{n} \frac{(1-q^n)(1-t^{-n})}{1+(q/t)^n} z^n\bigg).
\end{gather*}
\end{Proposition}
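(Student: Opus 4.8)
The plan is to prove the relation by a direct normal-ordering computation, just as the defining relation of the $q$-Virasoro algebra is originally derived. The inputs are the commutator $[b'_n,b'_m]=-n\frac{(1-q^n)(1-t^{-n})}{1+(q/t)^n}\delta_{n+m,0}$ established above, together with the fact that the zero-mode factors $e^{\mp\sqrt{-1}\hbar\beta^{(1)}_0}$ are scalars on each Fock space, so that they contribute only constants; in particular the two mixed channels of $T(z)T(w)$ carry coefficient $1$, since $e^{-\sqrt{-1}\hbar\beta^{(1)}_0}e^{\sqrt{-1}\hbar\beta^{(1)}_0}=1$. First I would record the operator products of $\Lambda'_1$ and $\Lambda'_2$: a short computation with the above commutator gives $\Lambda'_a(z)\Lambda'_a(w)=f(w/z)^{-1}{:}\Lambda'_a(z)\Lambda'_a(w){:}$ for $a=1,2$, while for the mixed products, after multiplying through by the structure function, $f(w/z)\Lambda'_1(z)\Lambda'_2(w)=R(w/z){:}\Lambda'_1(z)\Lambda'_2(w){:}$ and $f(w/z)\Lambda'_2(z)\Lambda'_1(w)=R(z/w){:}\Lambda'_2(z)\Lambda'_1(w){:}$, where $R(x)=\frac{(1-qx)(1-x/t)}{(1-x)(1-qx/t)}$. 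The point is that this particular $f$ is exactly the one that makes these right-hand sides rational.

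Next I would assemble $f(w/z)T(z)T(w)-f(z/w)T(w)T(z)$ channel by channel, remembering that in the first term all rational coefficients are expanded in $w/z$ and in the second in $z/w$. The diagonal contributions $\Lambda'_1\Lambda'_1$ and $\Lambda'_2\Lambda'_2$ cancel at once, since after multiplying by $f$ each equals the normal-ordered product, which is symmetric in $z$ and $w$. The mixed channels produce $[\iota_{w/z}R(w/z)-\iota_{z/w}R(w/z)]\,{:}\Lambda'_1(z)\Lambda'_2(w){:}$ together with a companion term, multiplying ${:}\Lambda'_1(w)\Lambda'_2(z){:}$ and obtained by the substitution $z\leftrightarrow w$, where $\iota$ denotes expansion in the indicated variable. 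Applying the standard distributional identity --- the difference of the two opposite expansions of a rational function with simple poles is the corresponding combination of delta functions, with coefficients read off from the residues --- and using that $R$ has simple poles only at $x=1$ and $x=t/q$, one reduces the whole expression to a combination of $\delta(w/z)$, $\delta(qw/tz)$ and $\delta(tw/qz)$.

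Two checks then finish the proof. First, the $\delta(w/z)$-terms coming from the pole at $x=1$ must cancel between the two mixed channels; this is the decisive point, and it is exactly why the right-hand side of the relation is a multiple of the identity rather than an operator, because at $w/z=1$ the operator ${:}\Lambda'_1(z)\Lambda'_2(w){:}$ reduces to the nontrivial ${:}\Lambda'_1(z)\Lambda'_2(z){:}$, whose two contributions must therefore cancel. Second, at the surviving supports $w/z=t/q$ and $w/z=q/t$ one has ${:}\Lambda'_1(z)\Lambda'_2(w){:}=1$, since $\Lambda'_2((t/q)z)$ and $\Lambda'_1(z)$ (respectively $\Lambda'_1((q/t)z)$ and $\Lambda'_2(z)$) are normal-ordered inverses of each other, as one reads off directly from the definitions of $\Lambda'_1$, $\Lambda'_2$. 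Substituting then leaves precisely $-c\,(\delta(qw/tz)-\delta(tw/qz))$ with $c$ a simple multiple of the residue of $R$ at $x=t/q$, and a one-line simplification identifies $c=\frac{(1-q)(1-t^{-1})}{1-q/t}$.

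The main obstacle is the bookkeeping in the last two steps: keeping track of which expansion region each rational factor belongs to, verifying that the spurious $w/z=1$ poles drop out after combining the two mixed channels, and matching the residue constant; there is no conceptual difficulty. An alternative route would be to invoke Proposition~\ref{prop: decomp X}: since $\Xo(z)=T(z)Y(z)$ with the $b'$- and $b''$-oscillators, and with $\beta^{(1)}_0$ and $\beta^{(2)}_0$, mutually commuting, the known quadratic relation for the current $\Xo(z)$ should factorize into the $q$-Virasoro relation for $T(z)$ and a decoupled relation for $Y(z)$; but making that factorization rigorous requires essentially the same operator-product input, so I would favor the direct computation.
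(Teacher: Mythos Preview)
Your proposal is correct and is exactly the ``direct calculation'' that the paper invokes: the paper gives no detailed argument for this proposition, merely stating that it ``can be shown by direct calculation'' with a reference to \cite{FHSSY2010Kernel}, and your normal-ordering computation of the $\Lambda'_a(z)\Lambda'_b(w)$ OPEs followed by the delta-function identity is the standard way to carry it out.
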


These proposition can be shown by direct calculation.
(See also \cite{FHSSY2010Kernel}.)
For representation theory of the $q$-Virasoro algebra,
we refer the reader to \cite{SKAO1995quantum}.
Now, we consider the limit of these generators.

\begin{fact}[\cite{SKAO1995quantum}]\label{fact: qVir expansion}
The $\hbar$-expansion of $T(z)$
is of the form
\begin{gather*}
T(z)=2-\left(L(z)-\frac{Q^2}{4}\right)\hbar^2+\mathcal{O}\big(\hbar^4\big).
\end{gather*}
Here $L(z)=\sum_{n\in \mathbb{Z}} L_nz^{-n}$ is
some operator written by the boson $\beta^{(1)}_n$.
Moreover, $L_n$'s generate the Virasoro algebra
with the central element $c=1+6Q^2$:
\begin{gather*}
[L_n,L_m]=(n-m)L_{n+m}+
\frac{n(n-1)(n+1)c}{12}\delta_{n+m,0}.
\end{gather*}
\end{fact}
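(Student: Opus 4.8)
The plan is to substitute the parametrization \eqref{eq: q t parametrization} directly into the free-field expression $T(z)=\Lambda'_1(z)\,{\rm e}^{-\sqrt{-1}\hbar\beta^{(1)}_0}+\Lambda'_2(z)\,{\rm e}^{\sqrt{-1}\hbar\beta^{(1)}_0}$ and to expand everything as a formal power series in $\hbar$ whose coefficients are formal Laurent series in $z$ with operator coefficients. The two summands turn out to be ``mirror images'' of each other: the antisymmetric combination between them annihilates the $\hbar^1$ term, while the $\hbar^2$ term reassembles into a Coulomb-gas (Feigin--Fuks) stress tensor built from the modes $\beta^{(1)}_n$ alone. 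The vanishing of the $\hbar^3$ term will be a consequence of the $q\leftrightarrow q^{-1}$ duality of the $q$-Virasoro algebra.

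\textbf{Step 1: expansion of the bosons.} From $q={\rm e}^{b^{-1}\hbar}$, $t={\rm e}^{-b\hbar}$ one gets $q/t={\rm e}^{Q\hbar}$, $\gamma^{-1}={\rm e}^{Q\hbar/2}$ and the elementary estimates $1-q^{\pm n}=\mp nb^{-1}\hbar+\mathcal{O}(\hbar^2)$, $1-t^{\pm n}=\pm nb\hbar+\mathcal{O}(\hbar^2)$, $1+(q/t)^n=2+\mathcal{O}(\hbar)$. Feeding these, together with the identifications $a^{(i)}_{-n}=\mathsf{a}^{(i)}_{-n}$ and $a^{(i)}_{n}=(-b^{2})\tfrac{1-q^n}{1-t^n}\mathsf{a}^{(i)}_{n}$, into the definitions of $b'_{\pm n}$ yields $b'_{\pm n}=-\sqrt{-1}\,n\hbar\,\beta^{(1)}_{\pm n}+\hbar^{2}r_{\pm n}+\mathcal{O}(\hbar^{3})$ for $n>0$, where the leading term is a pure $\beta^{(1)}$-mode and the correction $r_{\pm n}$, read off from the subleading terms above, is a linear combination of $\beta^{(1)}_{\pm n}$ and $\beta^{(2)}_{\pm n}$ (the $\beta^{(2)}$-part arising from $\gamma^{-n}=1+\mathcal{O}(\hbar)$). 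Consequently, as formal series in $z$, $\Lambda'_1(z)=1-\sqrt{-1}\,\hbar\,\widehat\beta(z)+\mathcal{O}(\hbar^{2})$ and $\Lambda'_2(z)=1+\sqrt{-1}\,\hbar\,\widehat\beta(z)+\mathcal{O}(\hbar^{2})$, where $\widehat\beta(z):=\sum_{n\ne0}\beta^{(1)}_n z^{-n}$ and the twisting factors $(t/q)^{\pm n}=1+\mathcal{O}(\hbar)$ inside $\Lambda'_2(z)$ affect only the higher orders.

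\textbf{Step 2: the first three orders of $T(z)$.} With ${\rm e}^{\mp\sqrt{-1}\hbar\beta^{(1)}_0}=1\mp\sqrt{-1}\hbar\beta^{(1)}_0-\tfrac{\hbar^2}{2}(\beta^{(1)}_0)^2+\mathcal{O}(\hbar^3)$ and adding the two summands: the $\hbar^0$ coefficient of $T(z)$ is $1+1=2$; the $\hbar^1$ coefficient is $\bigl(-\sqrt{-1}\widehat\beta(z)-\sqrt{-1}\beta^{(1)}_0\bigr)+\bigl(\sqrt{-1}\widehat\beta(z)+\sqrt{-1}\beta^{(1)}_0\bigr)=0$; and the $\hbar^2$ coefficient is the sum of the two second-order coefficients. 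Because $\Lambda'_1$ and $\Lambda'_2$ carry the $\hbar^2$-linear terms $\pm\hbar^2 r_{\pm n}$ with opposite signs, the $\beta^{(2)}$-contributions cancel (the same mechanism that killed $\hbar^1$), so the answer depends only on $\beta^{(1)}$; collecting the normal-ordered squares produced by the exponentials, the cross terms with $\beta^{(1)}_0$, the terms $(\beta^{(1)}_0)^2$ and the $(t/q)^{\pm n}$-corrections, one finds that the $\hbar^2$ coefficient equals
\[
-{:}\bigl(\widehat\beta(z)+\beta^{(1)}_0\bigr)^{2}{:}\;+\;\sqrt{-1}\,Q\,z\,\partial_z\widehat\beta(z)\;=\;-\Bigl(L(z)-\tfrac{Q^{2}}{4}\Bigr),
\]
with $L(z)=\sum_n L_n z^{-n}$ the standard Coulomb-gas stress tensor of the free field $\beta^{(1)}_n$ equipped with the (imaginary) background charge $Q$; in particular $L_0$ acts on $\cF_{\vu}$ by the conformal weight $\tfrac{Q^2}{4}-\parap^2$. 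Since $[\beta^{(1)}_n,\beta^{(1)}_m]=\tfrac{n}{2}\delta_{n+m,0}$, the usual Feigin--Fuks mode computation shows that the $L_n$ satisfy the Virasoro relation with central charge $c=1+6Q^2$ --- the $+$ sign (rather than $-6Q^2$) being a consequence of the ``$\sqrt{-1}$'' in the background-charge term, equivalently of the indefinite normalization of the $\mathsf{a}^{(i)}_n$. Alternatively, this commutation relation can be cross-checked by inserting $T(z)=2-\hbar^2\bigl(L(z)-\tfrac{Q^2}{4}\bigr)+\cdots$ into the $q$-Virasoro quadratic relation of the preceding Proposition and matching the $\hbar^4$ coefficient.

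\textbf{Step 3 and the main obstacle.} It remains to see that the $\hbar^3$ term vanishes, so that $T(z)=2-\bigl(L(z)-\tfrac{Q^2}{4}\bigr)\hbar^2+\mathcal{O}(\hbar^4)$; this is the conformal-limit shadow of the duality $q\leftrightarrow q^{-1}$, $t\leftrightarrow t^{-1}$ (i.e.\ $\hbar\mapsto-\hbar$) of the $q$-Virasoro algebra, under which $T(z)$ is invariant once the inversion is accompanied by the appropriate rescaling of the bosons and the sign change $\beta^{(1)}_0\mapsto-\beta^{(1)}_0$ that exchanges $\Lambda'_1(z)$ and $\Lambda'_2(z)$; at the orders needed here it can also be checked by direct inspection of the expansion. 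The substantive part of the argument is Step 2: one must keep track of every source of $\hbar^2$ --- the normal-ordered quadratic terms from the two exponentials, the genuinely subleading corrections $r_{\pm n}$ to $b'_{\pm n}$, the twisting factors inside $\Lambda'_2(z)$, and the zero-mode pieces --- and verify that they recombine, with exactly the right numerical coefficients, into the Coulomb-gas stress tensor with background charge $Q$, constant $Q^2/4$ and central charge $1+6Q^2$. Getting these coefficients right (and not off by a factor of $2$, a $\sqrt{-1}$, or a sign) is the delicate point; everything else is formal bookkeeping.
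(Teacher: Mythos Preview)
The paper does not prove this statement at all: it is recorded as a \emph{Fact} with a bare citation to \cite{SKAO1995quantum}, so there is nothing in the paper's argument to compare your proposal against. What you have written is an outline of the standard derivation from that reference, and the overall strategy --- expand $b'_{\pm n}$ to leading order in $\hbar$, observe the sign-reversal between $\Lambda'_1$ and $\Lambda'_2$, and recognize the surviving $\hbar^2$ coefficient as a Feigin--Fuks stress tensor in $\beta^{(1)}_n$ --- is correct.

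A few remarks on your sketch. Your verification that the $\beta^{(2)}$-contamination cancels at order $\hbar^2$ is right, but the phrase ``the same mechanism that killed $\hbar^1$'' is slightly misleading: the $\hbar^1$ cancellation is between the \emph{leading} linear terms of the two exponentials with opposite signs, whereas at $\hbar^2$ the quadratic pieces $\tfrac12{:}A^2{:}$ \emph{add} and it is only the subleading \emph{linear} corrections $r_{\pm n}$ (carrying the $\beta^{(2)}$-piece) that cancel between $\Lambda'_1$ and $\Lambda'_2$. You have the right conclusion, but the bookkeeping is a shade different from what your wording suggests. For Step~3, the parity argument via $\hbar\mapsto -\hbar$ is morally the right reason; to make it honest one should check on the level of the free-field realization that this substitution (together with $q\leftrightarrow q^{-1}$, $t\leftrightarrow t^{-1}$) exchanges $\Lambda'_1(z)\,{\rm e}^{-\sqrt{-1}\hbar\beta^{(1)}_0}\leftrightarrow\Lambda'_2(z)\,{\rm e}^{+\sqrt{-1}\hbar\beta^{(1)}_0}$, which requires tracking how $b'_{\pm n}$ transforms. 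Finally, your alternative route --- reading off the Virasoro relations and the central charge $c=1+6Q^2$ from the $\hbar^4$ coefficient of the $q$-Virasoro quadratic relation --- is in fact closer to what \cite{SKAO1995quantum} does and is a clean way to avoid the coefficient-chasing you flag as the ``delicate point''.
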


\begin{Remark}\label{rem: regard F as vir mod}
On $\cF_{\vu}^{(2,0)}$, the Virasoro algebra acts as
\begin{gather*}
L_0\ketzero =\left( \frac{Q^2}{4}-\parap^2\right) \ketzero, \qquad
L_n\ketzero=0 \qquad (n>0).
\end{gather*}
Similarly,
on $\cF_{\vv}^{(2,0)*}$
\begin{gather*}
\brazero L_0 =\left( \frac{Q^2}{4}-\parap'^2\right) \brazero, \qquad
\brazero L_{-n}=0 \qquad (n>0).
\end{gather*}
If $\parap$, $\parap'$ and $b$ are generic,
the following vectors form a basis on $\cF^{(2,0)}_{\vu}$ and $\cF^{(2,0)*}_{\vv}$, respectively:
\begin{gather*}
\ket{L_{\lambda, \mu}}:=L_{-\lambda_1}L_{-\lambda_2}\cdots
\beta^{(2)}_{-\mu_1}\beta^{(2)}_{-\mu_2}\cdots\ketzero,
\qquad \lambda, \mu \in \parset,
\\
\bra{L_{\lambda, \mu}}:=\brazero \cdots\beta^{(2)}_{\mu_2}\beta^{(2)}_{\mu_1}
\cdots L_{\lambda_2}L_{\lambda_1},\qquad \lambda, \mu \in \parset.
\end{gather*}
Hence, we can identify $\cF_{\vu}^{(2,0)}$ with
the tensor product of the Verma module of
the Virasoro algebra $\langle L_n \rangle $
and the Fock space of the Heisenberg algebra $ \langle \beta^{(2)}_n \rangle$.
Further,
$\ketzero$ can be regarded as
the tensor product of
the highest wight vector of the highest weight $\frac{Q^2}{4}-\parap^2$
and the vacuum state of the Fock space.
For simplicity,
we hereafter assume that $\parap$, $\parap'$ and $b$ are generic
so that the modules are irreducible.
\end{Remark}

By Fact~\ref{fact: mat. el. formula}, Proposition~\ref{prop: limit of Nek}
and Fact~\ref{fact: limit of Gen. Mac},
we can assume the following expansion.
Namely, there is no pole at $\hbar=0$.

\begin{Definition}
Define $\cV_i(z)$ ($i=0, 1, \ldots$) to be each coefficient in the $\hbar$-expansion of
$\cV(z)=\cVweight{\vv \\ \vu}{z}$, i.e.,
\begin{gather*}
\cV(z)=\cV_0(z)+\cV_1(z)\hbar + \cV_2(z)\hbar^2 +\cdots.
\end{gather*}
Furthermore, we introduce
\begin{gather*}
\Phi^{\mathrm{H.V}}(z)=z^{-\parap'{}^2+\parap^2-\alpha(Q-\alpha)}\cV_0(z).
\end{gather*}
\end{Definition}

We obtain the result that the operator $\Phi^{\mathrm{H.V}}(z)$ corresponds to the Virasoro primary fields.

\begin{Theorem}\label{thm: the Vir rel}
$\Phi^{\mathrm{H.V}}(z)$ satisfies
the relation
\begin{gather*}
[L_n,\Phi^{\mathrm{H.V}}(z)]=z^n\left(z \frac{\partial}{\partial z}
+\alpha(Q-\alpha)(n+1)\right)\Phi^{\mathrm{H.V}}(z).
\end{gather*}
Moreover, we obtain
\begin{gather*}
\big[\beta^{(2)}_n, \Phi^{\mathrm{H.V}}(w)\big]=\sqrt{-1} w^n (Q-\alpha) \Phi^{\mathrm{H.V}}(w), \qquad n>0,
\\
\big[\beta^{(2)}_{-n}, \Phi^{\mathrm{H.V}}(w)\big]=-\sqrt{-1} w^{-n} \alpha \Phi^{\mathrm{H.V}}(w), \qquad n\geq 0.
\end{gather*}
\end{Theorem}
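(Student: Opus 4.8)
The plan is to obtain both groups of identities from the $\hbar$-expansion of the two defining relations $(1-x/z)X^{(i)}(z)\,\cV(x)=\bigl(1-(t/q)^{i}x/z\bigr)\cV(x)\,X^{(i)}(z)$ ($i=1,2$) of $\cV(x)=\cVweight{\vv \\ \vu}{x}$ in the parametrization~(\ref{eq: q t parametrization}), for which $t/q=\mathrm e^{-Q\hbar}$. I shall use $\Xo(z)=T(z)Y(z)$ (Proposition~\ref{prop: decomp X}), the expansion $T(z)=2-\bigl(L(z)-Q^{2}/4\bigr)\hbar^{2}+\mathcal O(\hbar^{4})$ (Fact~\ref{fact: qVir expansion}), and the expansion $\cV(x)=\cV_{0}(x)+\cV_{1}(x)\hbar+\cdots$, which has no negative powers of $\hbar$ by Fact~\ref{fact: mat. el. formula}, Proposition~\ref{prop: limit of Nek} and Fact~\ref{fact: limit of Gen. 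Mac}. A short preliminary computation --- using $1-t^{\pm n}\to\mp nb\hbar$, $\gamma\to1$, $a^{(i)}_{\pm n}\to\mathsf a^{(i)}_{\pm n}$ and $\mathsf a^{(1)}_{n}+\mathsf a^{(2)}_{n}=2\sqrt{-1}\,\beta^{(2)}_{n}/b$ --- gives $b''_{m}\to-2\sqrt{-1}\,|m|\,\hbar\,\beta^{(2)}_{m}$ ($m\neq0$), hence $Y(z)=1-\sqrt{-1}\hbar\,\beta(z)+\mathcal O(\hbar^{2})$ and $X^{(2)}(z)=1-2\sqrt{-1}\hbar\,\beta(z)+\mathcal O(\hbar^{2})$, where $\beta(z):=\sum_{n\in\mathbb Z}\beta^{(2)}_{n}z^{-n}$ (the zero mode being supplied by the factors $\mathrm e^{\mp\sqrt{-1}\hbar\beta^{(2)}_{0}}$).

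\emph{The Heisenberg relations.} Insert these expansions into the $i=2$ relation. The order $\hbar^{0}$ is an identity and the order $\hbar^{1}$ reads $(1-x/z)[\beta(z),\cV_{0}(x)]=\sqrt{-1}Q\,(x/z)\,\cV_{0}(x)$. Comparing the coefficient of $z^{-m}$ yields the two-sided recursion $[\beta^{(2)}_{m},\cV_{0}(x)]=x\,[\beta^{(2)}_{m-1},\cV_{0}(x)]+\delta_{m,1}\sqrt{-1}Qx\,\cV_{0}(x)$; solving it with the anchor $[\beta^{(2)}_{0},\cV_{0}(x)]=-\sqrt{-1}\alpha\,\cV_{0}(x)$ --- forced by the zero-mode assignments on $\cF_{\vu}$ and $\cF_{\vv}$ together with $u'_{1}+u'_{2}-v'_{1}-v'_{2}=2\alpha$ --- gives $[\beta^{(2)}_{n},\cV_{0}(x)]=\sqrt{-1}x^{n}(Q-\alpha)\cV_{0}(x)$ for $n>0$ and $[\beta^{(2)}_{-n},\cV_{0}(x)]=-\sqrt{-1}x^{-n}\alpha\,\cV_{0}(x)$ for $n\ge0$. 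Since $\Phi^{\mathrm{H.V}}(x)$ differs from $\cV_{0}(x)$ by a scalar power of $x$, which commutes with every $\beta^{(2)}_{n}$, the same relations hold for $\Phi^{\mathrm{H.V}}$; this is the second group of identities.

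\emph{The Virasoro relation.} Now expand the $i=1$ relation, using $\Xo(z)=T(z)Y(z)$. Orders $\hbar^{0},\hbar^{1}$ hold automatically (the latter being again the Heisenberg relation), and the assertion lives at order $\hbar^{2}$. Two points need care. First, the $\hbar^{2}$-coefficient of $Y(z)$ is not purely a $\beta^{(2)}$-quadratic: the subleading corrections to $1-t^{\pm n}$, $\gamma^{\pm n}$ and $1+(q/t)^{n}$ reintroduce $\beta^{(1)}$-modes, so this coefficient must be combined with the $\hbar^{2}$-coefficient $-(L(z)-Q^{2}/4)$ of $T(z)$; it is these $\beta^{(1)}$-pieces that produce the Virasoro current $L(z)$. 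Second, the unknown $\cV_{1}(x)$ enters at this order, but a short check --- the factor $2$ relating $\Xo(z)$ and $X^{(2)}(z)$ at leading order being compensated by the $(t/q)$ versus $(t/q)^{2}$ in the two prefactors --- shows that its contribution to the $i=1$ relation coincides with its contribution to the $i=2$ relation, so subtracting the order-$\hbar^{2}$ part of the $i=2$ relation eliminates $\cV_{1}$ entirely. Rewriting the remaining $\beta(z)$-dependent terms by means of the $\beta^{(2)}$-relations just obtained, one is left with a generating-function identity which, compared coefficient by coefficient in $z$, reads $[L_{n},\cV_{0}(x)]=x^{n}\bigl(x\,\partial_{x}+\parap^{2}-\parap'{}^{2}+n\,\alpha(Q-\alpha)\bigr)\cV_{0}(x)$, the constants $\parap^{2},\parap'{}^{2}$ entering through the $L_{0}$-eigenvalues on the two vacua (Remark~\ref{rem: regard F as vir mod}). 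Multiplying by $x^{-\parap'{}^{2}+\parap^{2}-\alpha(Q-\alpha)}$ --- exactly the factor that collapses the right-hand side to the primary form of weight $\alpha(Q-\alpha)$ --- yields $[L_{n},\Phi^{\mathrm{H.V}}(x)]=x^{n}\bigl(x\,\partial_{x}+\alpha(Q-\alpha)(n+1)\bigr)\Phi^{\mathrm{H.V}}(x)$, which is the first assertion (with $x$ in place of the dummy $z$ of the statement).

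\emph{Main obstacle.} Conceptually this is the routine ``extract the classical limit of a $q$-deformed relation'' argument; the real work is the second-order bookkeeping: (i) computing the $\mathcal O(\hbar^{2})$ term of $Y(z)$, in particular the $\beta^{(1)}$-pieces hidden in the expansions of $1-t^{\pm n}$, $\gamma^{\pm n}$ and $1+(q/t)^{n}$, and combining them correctly with $-(L(z)-Q^{2}/4)$ from Fact~\ref{fact: qVir expansion}; (ii) verifying that the $\cV_{1}$-dependence of the order-$\hbar^{2}$ parts of the $i=1$ and $i=2$ relations really agrees, so that $\cV_{1}$ drops out; and (iii) the final mode-by-mode comparison, where one must keep apart the two half-series making up $[\beta(z),\cV_{0}(x)]$ and $[L(z),\cV_{0}(x)]$ rather than naively inverting $1-x/z$, and check that the $z$-power in $\Phi^{\mathrm{H.V}}$ produces precisely the weight $\alpha(Q-\alpha)$ (the central charge $c=1+6Q^{2}$ being already built into $L(z)$ by Fact~\ref{fact: qVir expansion}). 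Everything else is a direct calculation once these expansions are in hand.
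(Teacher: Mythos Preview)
Your strategy matches the paper's up to and including the elimination of $\cV_1$: Heisenberg relations from the $\hbar^1$ term of the $i=2$ relation (the paper's Proposition~\ref{prop: beta2 cV_0 rel.s}), then subtracting the $\hbar^2$ terms of the $i=1$ and $i=2$ relations to cancel $\cV_1$ (the paper's Lemma~\ref{lem: L cL and cV rel.}). Your identification of the $\cV_1$-cancellation mechanism is exactly right.

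The gap is in the last step. After the subtraction and after pushing the $\beta^{(2)}$-terms through $\cV_0$ via the Heisenberg relations (the paper's Lemma~\ref{lem: cL cV rel}), what actually remains is
\[
\Bigl(1-\tfrac{x}{z}\Bigr)[L(z),\cV_0(x)]=\alpha(Q-\alpha)\,\cV_0(x)\sum_{n\in\mathbb Z}(x/z)^n,
\]
whose $z^{-n}$ coefficient is only the recursion $[L_n,\cV_0(x)]=x[L_{n-1},\cV_0(x)]+\alpha(Q-\alpha)x^n\cV_0(x)$ (Proposition~\ref{eq: Ln cV rel}). Iterating gives $[L_n,\cV_0(x)]=x^n[L_0,\cV_0(x)]+n\alpha(Q-\alpha)x^n\cV_0(x)$, but no $x\partial_x$ appears anywhere in the $z$-expansion; the generating identity simply does not see the $x$-dependence of $\cV_0$. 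Your sentence ``compared coefficient by coefficient in $z$, reads $[L_n,\cV_0(x)]=x^n(x\partial_x+\cdots)$'' is therefore not correct as stated: you need a separate input for $[L_0,\cV_0(x)]$.

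The paper closes this gap differently from what you sketch: it proves a uniqueness lemma (Lemma~\ref{lem: cV uniqueness}) showing that the recursion together with the Heisenberg relations determines the operator up to its vacuum expectation value, then exhibits an explicit primary field $V^{\mathrm{Vir}}(x)\otimes V^H(x)$ satisfying the same relations, and concludes $\Phi^{\mathrm{H.V}}(x)$ equals it up to scalar. Your route can be salvaged more directly, but you must say so: by Fact~\ref{fact: mat. el. formula} the matrix element $\bra{K_{\vl}}\cV(x)\ket{K_{\vm}}$ is proportional to $x^{|\vl|-|\vm|}$, so the $x^k$-component of $\cV_0(x)$ raises the Fock degree by exactly $k$; since $L_0$ acts on $\cF^{(2,0)}_{\vu}$ and $\cF^{(2,0)}_{\vv}$ as $(\text{degree})+h_1$ and $(\text{degree})+h_2$ respectively (Remark~\ref{rem: regard F as vir mod}), one gets $[L_0,\cV_0(x)]=(x\partial_x+h_2-h_1)\cV_0(x)$ with $h_2-h_1=\parap^2-\parap'^2$. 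Only after inserting this does the recursion yield your closed formula, and then the prefactor in $\Phi^{\mathrm{H.V}}$ does the rest. The $L_0$-eigenvalues on the vacua alone are not enough; the homogeneity of $\cV_0(x)$ in $x$ is the missing ingredient.
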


The proof is given in Section~\ref{sec: proof of Vir rel}.
Let us also state the following proposition.

\begin{Proposition}
We have
\begin{gather*}
\frac{\braket{\widetilde{J}_{\vl}(v'_1,v'_2)|\Phi^{\mathrm{H.V}}(z)|\widetilde{J}_{\vm}(u'_1,u'_2)}}
{\brazero \Phi^{\mathrm{H.V}}(z)\ketzero}
=Z_{bif}(\alpha|p',\vl|p,\vm),
\end{gather*}
where we put
\begin{gather*}
\ket{\widetilde{J}_{\vl}(u'_1,u'_2)}= \ket{J_{\vl}(u'_1,u'_2)} \lim_{\hbar \rightarrow 0} \hbar^{-N|\vl|} \mathcal{C}_{\vl}^{+}(\vu),
\\
\bra{\widetilde{J}_{\vl}(v'_1,v'_2)}= \bra{J_{\vl}(v'_1,v'_2)} \lim_{\hbar \rightarrow 0} \hbar^{-N|\vl|} \mathcal{C}_{\vl}^{-}(\vv).
\end{gather*}

\begin{proof}
This is clear by Fact~\ref{fact: mat. el. formula},
Fact~\ref{fact: limit of Gen. Mac} and Proposition~\ref{prop: limit of Nek}.
\end{proof}
\end{Proposition}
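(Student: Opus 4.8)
The plan is to unwind each ingredient of the statement back to the closed matrix-element formula of Fact~\ref{fact: mat. el. formula}, and then let $\hbar\to 0$ factor by factor, using Proposition~\ref{prop: limit of Nek} to keep track of the powers of $\hbar$; recall that $N=2$ throughout this subsection.

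First I would rewrite the renormalized Jack vectors as rescaled limits of the renormalized generalized Macdonald vectors. Since $\ket{K_{\vm}(\vu)}=\mathcal{C}_{\vm}^{(+)}(\vu)\ket{P_{\vm}(\vu)}$ and $\bra{K_{\vl}(\vv)}=\mathcal{C}_{\vl}^{(-)}(\vv)\bra{P_{\vl}(\vv)}$, Fact~\ref{fact: limit of Gen. Mac} together with the very definition of $\ket{\widetilde{J}_{\vm}}$, $\bra{\widetilde{J}_{\vl}}$ gives
\begin{gather*}
\ket{\widetilde{J}_{\vm}(u'_1,u'_2)}=\lim_{\hbar\to 0}\hbar^{-N|\vm|}\ket{K_{\vm}(\vu)},\qquad
\bra{\widetilde{J}_{\vl}(v'_1,v'_2)}=\lim_{\hbar\to 0}\hbar^{-N|\vl|}\bra{K_{\vl}(\vv)},
\end{gather*}
under the parametrization (\ref{eq: q t parametrization})--(\ref{eq: parametrization}).

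Next I would observe that $\cV_0(z)=\lim_{\hbar\to 0}\cV(z)$ by the definition of $\cV_0$, that $\Phi^{\mathrm{H.V}}(z)$ differs from $\cV_0(z)$ only by the scalar $z^{-\parap'{}^2+\parap^2-\alpha(Q-\alpha)}$, and that $\brazero\cV(z)\ketzero=1$ is the normalization built into $\cV$; hence $\brazero\Phi^{\mathrm{H.V}}(z)\ketzero=z^{-\parap'{}^2+\parap^2-\alpha(Q-\alpha)}$, and the ratio in the statement equals $\lim_{\hbar\to 0}\hbar^{-N(|\vl|+|\vm|)}\bra{K_{\vl}(\vv)}\cV(z)\ket{K_{\vm}(\vu)}$. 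Into this I would substitute Fact~\ref{fact: mat. el. formula} with $N=2$ and $x=z$, splitting it into the scalar prefactor in $z,\gamma,e_2(\vu),u_i,v_i,g_{\lambda^{(i)}},g_{\mu^{(i)}}$ and the Nekrasov product $\prod_{i,j=1}^2 N_{\lambda^{(i)},\mu^{(j)}}(qv_i/tu_j)$. As $\hbar\to 0$ one has $q,t,\gamma,u_i,v_i\to 1$ and $g_{\lambda}=q^{n(\lambda')}t^{-n(\lambda)}\to 1$, so the prefactor has a finite nonzero limit (the usual degree-shift power $z^{|\vl|-|\vm|}$ of a vertex operator, which in the conformal normalization is precisely the prefactor stripped off when quoting primary-field matrix elements); and Proposition~\ref{prop: limit of Nek} gives $\prod_{i,j} N_{\lambda^{(i)},\mu^{(j)}}(qv_i/tu_j)=Z_{bif}(\alpha|\parap',\vl|\parap,\vm)\,\hbar^{2(|\vl|+|\vm|)}+\mathcal{O}(\hbar^{2(|\vl|+|\vm|)+1})$, and since $N(|\vl|+|\vm|)=2(|\vl|+|\vm|)$ this exactly cancels the overall $\hbar^{-N(|\vl|+|\vm|)}$. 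Collecting the surviving pieces leaves $Z_{bif}(\alpha|\parap',\vl|\parap,\vm)$.

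The only step that is not pure bookkeeping is the legitimacy of exchanging $\lim_{\hbar\to 0}$ with the bra-ket pairing, i.e.\ that $\hbar^{-N(|\vl|+|\vm|)}\bra{K_{\vl}(\vv)}\cV(z)\ket{K_{\vm}(\vu)}$ is regular at $\hbar=0$ once $u_i,v_i,q,t$ are expanded as in (\ref{eq: q t parametrization})--(\ref{eq: parametrization}). This is precisely the ``no pole at $\hbar=0$'' fact recorded just before the definition of the $\cV_i(z)$, which itself follows from Facts~\ref{fact: mat. el. formula} and~\ref{fact: limit of Gen. Mac} and Proposition~\ref{prop: limit of Nek}. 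I expect this regularity to be the only delicate point; everything else is an elementary computation of limits of scalar factors.
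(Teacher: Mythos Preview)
Your proposal is correct and follows exactly the same three-ingredient route the paper indicates (Fact~\ref{fact: mat. el. formula}, Fact~\ref{fact: limit of Gen. Mac}, Proposition~\ref{prop: limit of Nek}); you have simply spelled out the limit computation that the paper leaves implicit. The one loose thread you flag---the surviving scalar $z^{|\vl|-|\vm|}$ from the prefactor in Fact~\ref{fact: mat. el. formula}---is present in the paper's own terse formulation as well and does not affect the method.
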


The relations in Theorem~\ref{thm: the Vir rel}
are the exactly same as the ones in \cite{AFLT2011combinatorial} up to notation.
We~can also check that $\ket{\widetilde{J}_{\vl}}$ corresponds to the Alba, Fateev, Litvinov and Tarnopolski's (AFLT) basis~\cite{AFLT2011combinatorial}
under the identification between the Fock space
$\cF_{\vu}$ and
the Verma module of the algebra $(\mathrm{Virasoro}) \otimes (\mathrm{Heisemberg})$.
However,
$\ket{\widetilde{J}_{\vl}}$ and the AFLT basis are defined
in the different ways.
While $\ket{J_{\vl}}$ is defined as eigenfunctions of $H_b$,
the AFLT basis is defined by the condition that
the matrix elements of the primary fields reproduce the $Z_{bif}$.
Our matrix elements formula (Fact~\ref{fact: mat. el. formula})
and Theorem~\ref{thm: the Vir rel}
prove that $\ket{\widetilde{J}_{\vec{\lambda}}}$ and
the AFLT basis actually coincide.
Furthermore,
these results also prove
the 4D AGT correspondence \cite{AGT2010liouville}
which states the duality between the (non-deformed) Virasoro algebra
and the 4D $\mathcal{N}=2$ gauge theory.
We can also expect the similar results for general $N$.

\subsection{Proof of Theorem~\ref{thm: the Vir rel}}
\label{sec: proof of Vir rel}

\begin{Definition}
Define the currents
\begin{gather*}
J^{(1)}(z)=-2 \sqrt{-1}\sum_{n \in \mathbb{Z}} \beta^{(2)}_n z^{-n},
\\
J^{(2)}(z)=\frac{1}{8}{ :}J^{(1)}(z)^2{:}-\frac{1}{4}
\sum_{n>0} n\big(\big(1+2b^2\big)\mathsf{a}^{(1)}_{-n} + b^{2}\mathsf{a}^{(2)}_{-n}\big)z^n
\\ \hphantom{J^{(2)}(z)=}
{}-\frac{1}{4} \sum_{n>0}n \big(\big(2+b^{2}\big) \mathsf{a}^{(1)}_n+ \mathsf{a}^{(2)}_n \big) z^{-n},
\\
\widetilde{J}^{(2)}(z)= \frac{1}{2} {:}J^{(1)}(z)^2{:}
-\frac{1}{2}\sum_{n>0} n\big(\big(2+3b^2\big)\mathsf{a}^{(1)}_{-n} + \big(1+2b^{2}\big)\mathsf{a}^{(2)}_{-n}\big)z^n
\\ \hphantom{\widetilde{J}^{(2)}(z)= }
{} +\frac{1}{2}\sum_{n>0} n\big({-}\,\mathsf{a}^{(1)}_n +b^2 \mathsf{a}^{(2)}_n\big) z^{-n}.
\end{gather*}
\end{Definition}

These currents
appear in the following expansion.

\begin{Lemma}\label{lem: Y and Xt expansion}
We have
\begin{gather*}
Y(z)=1+ \frac{1}{2} J^{(1)}(z)\hbar + J^{(2)}(z) \hbar^2+ \mathcal{O}\big(\hbar^3\big),
\\
(t/q)Y(z)=1+ \left({-}Q+ \frac{1}{2}J^{(1)}(z) \right) \hbar
+ \left( J^{(2)}(z)-\frac{1}{2}Q J^{(1)}(z) +\frac{Q^2}{2} \right)\hbar^2+ \mathcal{O}\big(\hbar^3\big),
\\
\Xt(z)=1+ J^{(1)}(z) \hbar + \widetilde{J}^{(2)}(z) \hbar^2+ \mathcal{O}\big(\hbar^3\big),
\\
(t/q)^2 \Xt(z)=1+ \left({-}2Q+J^{(1)}(z)\right) \hbar
+ \left( \widetilde{J}^{(2)}(z)-2Q J^{(1)}(z) +2Q^2 \right) \hbar^2+ \mathcal{O}\big(\hbar^3\big).
\end{gather*}

\begin{proof}
By direct calculation.
\end{proof}

\end{Lemma}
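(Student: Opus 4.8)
The plan is to prove Lemma~\ref{lem: Y and Xt expansion} by a direct $\hbar$-expansion of the vertex operators $Y(z)$ and $X^{(2)}(z)$, using the explicit bosonic formulas for these operators obtained from Proposition~\ref{prop: decomp X} together with the parametrization~(\ref{eq: q t parametrization}) of $q,t,u_i,v_i$ in terms of $\hbar$. First I would recall from Proposition~\ref{prop: decomp X} that $Y(z)=\Lambda''(z)\,{\rm e}^{-\sqrt{-1}\hbar\beta^{(2)}_0}$ and $X^{(2)}(z)={:}\exp\bigl(\sum_{n\neq 0} b''_n z^{-n}/|n|\bigr){:}\,{\rm e}^{-2\sqrt{-1}\hbar\beta^{(2)}_0}$. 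The key preliminary step is to express the bosons $b''_{\pm n}$ in terms of the $\hbar$-independent bosons $\mathsf{a}^{(i)}_n$; from the Definition of $b''_{\pm n}$ and the assumed relations $a^{(i)}_{-n}=\mathsf{a}^{(i)}_{-n}$, $a^{(i)}_n=(-b^2)\frac{1-q^n}{1-t^n}\mathsf{a}^{(i)}_n$ for $n>0$, I would compute the leading and subleading terms of $1-t^{\mp n}=\pm b n\hbar + O(\hbar^2)$, $\gamma^{\pm n}=(t/q)^{\pm n/2}=1 \mp \tfrac12(b+b^{-1})n\hbar+O(\hbar^2)$, and $q^n/t^n=(t/q)^{-n}=1+O(\hbar)$, so that $1+(q/t)^n = 2+O(\hbar)$. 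Collecting these gives $b''_{-n}=(1-t^{-n})(\gamma^{-2n}\mathsf{a}^{(1)}_{-n}+\gamma^{-n}\mathsf{a}^{(2)}_{-n})$ and $b''_n=-(1-t^n)(a^{(1)}_n+\gamma^n a^{(2)}_n)$ expanded to the needed order in $\hbar$.

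Next I would substitute these expansions into the exponentials defining $Y(z)$ and $X^{(2)}(z)$ and expand ${\rm e}^{X}=1+X+\tfrac12 X^2+\cdots$, being careful that the normal-ordered square ${:}J^{(1)}(z)^2{:}$ appears from the $\tfrac12(\sum_{n\neq 0}\beta^{(2)}_n z^{-n})^2$ term together with the zero-mode contribution. The zero-mode factor ${\rm e}^{-\sqrt{-1}\hbar\beta^{(2)}_0}$ contributes $1-\sqrt{-1}\hbar\beta^{(2)}_0+O(\hbar^2)$, and on $\cF_{\vu}$ one has $\beta^{(2)}_0=\sqrt{-1}(u'_1+u'_2)/2$, which with the parametrization feeds into the linear-in-$\hbar$ coefficient of $Y(z)$; matching this against the stated $\tfrac12 J^{(1)}(z)$ and against $J^{(2)}(z)$ at order $\hbar^2$ pins down exactly the coefficients in the Definition of $J^{(1)},J^{(2)},\widetilde{J}^{(2)}$. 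The formulas for $(t/q)Y(z)$ and $(t/q)^2 X^{(2)}(z)$ follow by multiplying the already-obtained expansions by $(t/q)^k={\rm e}^{-kQ\hbar}=1-kQ\hbar+\tfrac12 k^2 Q^2\hbar^2+O(\hbar^3)$ with $k=1,2$ respectively, and re-collecting terms; this accounts for the extra $-Q$, $-2Q$, $Q^2/2$, $2Q^2$ shifts in the statement.

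I expect the main obstacle to be purely bookkeeping: keeping track of all the $O(\hbar)$ corrections from $1-t^{\pm n}$, from $\gamma^{\pm n}$, and from the ratio $\frac{1-q^n}{1-t^n}$ simultaneously, and then correctly identifying which pieces recombine into the normal-ordered quadratic ${:}J^{(1)}(z)^2{:}$ versus the linear modes $n\,\mathsf{a}^{(i)}_{\mp n}z^{\pm n}$ in $J^{(2)}$ and $\widetilde{J}^{(2)}$. The coefficients $(1+2b^2)$, $b^2$, $(2+b^2)$, $1$ in $J^{(2)}(z)$ and $(2+3b^2)$, $(1+2b^2)$, $-1$, $b^2$ in $\widetilde{J}^{(2)}(z)$ are exactly what the expansion of $(1-t^{-n})\gamma^{-2n}$, $(1-t^{-n})\gamma^{-n}$, $(1-t^n)$, $(1-t^n)\gamma^n$ produces to the relevant order, so the proof amounts to verifying these matchings term by term. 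Since the lemma statement only asserts the expansion up to $O(\hbar^3)$ and all ingredients are elementary power series in $\hbar$, no conceptual difficulty arises, and the proof is correctly summarized as ``by direct calculation.''
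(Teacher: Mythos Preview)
Your proposal is correct and follows exactly the approach the paper intends: the paper's proof is literally ``By direct calculation,'' and what you have written is a faithful outline of that calculation, expanding the bosonic exponentials to second order in $\hbar$ using the parametrization~(\ref{eq: q t parametrization}) and then multiplying by $(t/q)^k={\rm e}^{-kQ\hbar}$ for the shifted versions. The only minor inaccuracy is a sign in your expansion of $1-t^{\mp n}$ (since $t={\rm e}^{-b\hbar}$ one has $1-t^{-n}=-bn\hbar+O(\hbar^2)$), but this is pure bookkeeping and does not affect the strategy.
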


At first,
we prove the relation with the Heisenberg algebra $\beta^{(2)}_n$.

\begin{Proposition}\label{prop: beta2 cV_0 rel.s}
We have
\begin{gather}
\big[\beta^{(2)}_n, \cV_0(w)\big]=\sqrt{-1} w^n (Q-\alpha) \cV_0(w), \qquad n>0,
\label{eq: beta V_0 rel +}
\\
\big[\beta^{(2)}_{-n}, \cV_0(w)\big]=-\sqrt{-1} w^{-n} \alpha \cV_0(w), \qquad n\geq 0.
\label{eq: beta V_0 rel -}
\end{gather}
\end{Proposition}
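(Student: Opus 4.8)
The statement is the order-$\hbar$ shadow of the $i=2$ defining relation of $\cV$, so I would start from
\[
(1-w/z)\,X^{(2)}(z)\,\cV(w)=\bigl(1-(t/q)^2 w/z\bigr)\,\cV(w)\,X^{(2)}(z),
\]
and substitute the expansions $X^{(2)}(z)=1+J^{(1)}(z)\hbar+O(\hbar^2)$ from Lemma~\ref{lem: Y and Xt expansion}, $(t/q)^2=e^{-2Q\hbar}=1-2Q\hbar+O(\hbar^2)$ from the parametrization \eqref{eq: q t parametrization}, and $\cV(w)=\cV_0(w)+\cV_1(w)\hbar+\cdots$. The $\hbar^0$ term is an identity; the $\hbar^1$ term, after the $(1-w/z)\cV_1(w)$ contributions cancel on the two sides, collapses to
\[
(1-w/z)\,[\,J^{(1)}(z),\cV_0(w)\,]=2Q\,(w/z)\,\cV_0(w).
\]

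Next I would plug in $J^{(1)}(z)=-2\sqrt{-1}\sum_{n\in\mathbb Z}\beta^{(2)}_n z^{-n}$ and equate coefficients of $z^{-n}$. Since the right-hand side only feeds the coefficient of $z^{-1}$, this produces the two-sided recursion
\[
[\beta^{(2)}_n,\cV_0(w)]=w\,[\beta^{(2)}_{n-1},\cV_0(w)]\qquad(n\ne 1),
\]
together with the single "defect" equation at $n=1$,
\[
[\beta^{(2)}_1,\cV_0(w)]-w\,[\beta^{(2)}_0,\cV_0(w)]=\sqrt{-1}\,Q\,w\,\cV_0(w).
\]
Consequently $[\beta^{(2)}_n,\cV_0(w)]=w^{\,n-1}[\beta^{(2)}_1,\cV_0(w)]$ for $n\ge 1$ and $[\beta^{(2)}_{-n},\cV_0(w)]=w^{-n}[\beta^{(2)}_0,\cV_0(w)]$ for $n\ge 0$, so the whole family is pinned down once the two anchor commutators are known, and the $n=1$ equation links them.

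The last ingredient is the zero mode, which is the only place the spectral parameters enter. Because $\beta^{(2)}_0$ is the scalar $\sqrt{-1}(v'_1+v'_2)/2$ on the codomain $\cF_{\vv}$ and $\sqrt{-1}(u'_1+u'_2)/2$ on the domain $\cF_{\vu}$, and $\cV_0(w)\colon\cF_{\vu}\to\cF_{\vv}$, one gets $[\beta^{(2)}_0,\cV_0(w)]=\sqrt{-1}\,\tfrac{(v'_1+v'_2)-(u'_1+u'_2)}{2}\,\cV_0(w)=-\sqrt{-1}\,\alpha\,\cV_0(w)$, using $u'_1+u'_2-v'_1-v'_2=2\alpha$ from \eqref{eq: parametrization}; this is already \eqref{eq: beta V_0 rel -} for $n=0$. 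Substituting into the $n=1$ equation gives $[\beta^{(2)}_1,\cV_0(w)]=\sqrt{-1}\,w(Q-\alpha)\,\cV_0(w)$, and feeding both anchors through the recursion yields exactly \eqref{eq: beta V_0 rel +} and \eqref{eq: beta V_0 rel -}.

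None of this is deep; the points requiring care are (i) that the order-$\hbar$ extraction is legitimate, i.e.\ that the relation is an equality of Laurent series in $z$ with $\hbar$-regular operator coefficients and that the $\cV_1$ terms genuinely cancel, and (ii) bookkeeping the $\sqrt{-1}$'s and the sign of $\alpha$ through the (spectral-parameter-dependent) zero mode. The harder companion statements in Theorem~\ref{thm: the Vir rel}, namely the $L_n$-primary-field relation, would be obtained the same way but from the $i=1$ relation together with the decomposition $\Xo(z)=T(z)Y(z)$ and the $\hbar^2$-order data of Lemma~\ref{lem: Y and Xt expansion} and Fact~\ref{fact: qVir expansion}; they are not needed here.
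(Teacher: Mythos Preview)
Your proof is correct and follows essentially the same route as the paper: expand the $i=2$ defining relation of $\cV$ to first order in $\hbar$ via Lemma~\ref{lem: Y and Xt expansion}, obtain $(1-w/z)[J^{(1)}(z),\cV_0(w)]=2Q(w/z)\cV_0(w)$, and extract Laurent coefficients in $z$ to get a recursion together with anchor relations. The only presentational difference is that the paper records the $z^0$ and $z^{-1}$ coefficients directly as the anchor identities $[\beta^{(2)}_{-1},\cV_0(w)]=-\sqrt{-1}\,\alpha\,w^{-1}\cV_0(w)$ and $[\beta^{(2)}_{1},\cV_0(w)]=\sqrt{-1}\,w(Q-\alpha)\cV_0(w)$, whereas you make explicit that these follow from the single defect equation at $n=1$ combined with the scalar action of the zero mode $\beta^{(2)}_0$ on the two Fock spaces; both are the same computation.
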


\begin{proof}
We calculate the $\hbar$-expansion of
the defining relation
\begin{gather}
\left( \Xt(z)-\frac{w}{z} \Xt(z)\right) \cV(w)
=\cV(w)\left( \Xt(z)-(t/q)^2 \frac{w}{z} \Xt(z) \right).
\label{eq: Xt cV rel}
\end{gather}
Lemma~\ref{lem: Y and Xt expansion}
shows that
the coefficient in front of $\hbar^1$
in the expansion of (\ref{eq: Xt cV rel}) is
\begin{gather}
\left( J^{(1)}(z)-\frac{w}{z}J^{(1)}(z) \right) \cV_0(w)
= \cV_0(w) \left( J^{(1)}(z)-\frac{w}{z} \left({-}2Q+ J^{(1)}(z)\right) \right).
\label{eq: J1 and cV}
\end{gather}
The coefficients of (\ref{eq: J1 and cV}) in front of $z^n$
gives the following relations.
\begin{gather*}
\shortintertext{$\bullet$ Coefficient of $z^n$ ($n>0$): }
\big[\beta^{(2)}_{-n},\cV_0(w)\big]=w \big[\beta^{(2)}_{-n-1},\cV_0(w)\big],
\\
\shortintertext{$\bullet$ Coefficient of $z^0$: }
\big[\beta^{(2)}_{-1},\cV_0(w)\big]=-\sqrt{-1} \frac{\alpha}{w} \cV_0(w),
\shortintertext{$\bullet$ Coefficient of $z^{-1}$:}
\big[\beta^{(2)}_1,\cV_0(w)\big]=\sqrt{-1}w(Q-\alpha)\cV_0(w),
\shortintertext{$\bullet$ Coefficient of $z^{-n}$ ($n>1$):}
\big[\beta^{(2)}_n,\cV_0(w)\big]=w \big[\beta^{(2)}_{n-1},\cV_0(w)\big].
\end{gather*}
By solving inductively these relations,
we can get Proposition~\ref{prop: beta2 cV_0 rel.s}.
\end{proof}

In the proof of Proposition~\ref{prop: beta2 cV_0 rel.s},
we computed the coefficient in front of $\hbar^1$
with respect to the relation between $\cV(w)$
and $\Xt(z)$.
Actually,
the same relation can be obtained from the relation between
$\cV(w)$ and $\Xo(z)$.
(The coefficients of $\hbar^0$ give trivial relations.)
Next, we calculate the coefficient of $\hbar^2$,
from which
the relation with the Virasoro algebra arises.
Let us introduce the following operator.

\begin{Definition}
Define $\mathcal{L}(z)=\sum_{n\in \mathbb{Z}} \mathcal{L}_n z^{-n}$ by
\begin{gather*}
\mathcal{L}(z):=2J^{(2)}-\widetilde{J}^{(2)}(z)
= \sum_{n,m \in \mathbb{Z}}{:}\beta^{(2)}_n\beta^{(2)}_m{:}z^{-n-m}
-\sqrt{-1} Q \sum_{n\in \mathbb{Z}} n \beta^{(2)}_{n}z^{-n}.
\end{gather*}
\end{Definition}

\begin{Lemma}\label{lem: L cL and cV rel.}
We have
\begin{gather}
\left({-}L(z) + \mathcal{L}(z) -\frac{w}{z}\big({-}L(z) + \mathcal{L}(z) \big) \right)\cV_0(w)\nonumber
\\ \qquad
{}=\cV_0(w)\left({-}L(z) + \mathcal{L}(z) -\frac{w}{z}
\big({-}L(z) + \mathcal{L}(z) +Q J^{(1)}(z)-Q^2\big) \right).
\label{eq: L cL and cV rel.}
\end{gather}

\begin{proof}
We calculate the $\hbar$-expansion of the defining relation
\begin{gather}\label{eq: def rel Xo cV}
\left(1-\frac{w}{z}\right)\Xo(z)\cV(w)
=\left(1-(t/q)\frac{w}{z}\right)\cV(w)\Xo(z).
\end{gather}
By Fact~\ref{fact: qVir expansion} and Lemma~\ref{lem: Y and Xt expansion},
the coefficient of $\hbar^2$ in the expansion of (\ref{eq: def rel Xo cV})
gives the relation
\begin{gather}
\left(1-\frac{w}{z} \right)
\left({-}L(z)+\frac{Q^2}{4}+2 J^{(2)}(z) \right)\cV_0(w)
+\left(1-\frac{w}{z} \right)J^{(1)}(z) \cV_1(w) \nonumber
\\ \qquad
{}=\cV_0(w) \left({-}L(z)+\frac{Q^2}{4}+2 J^{(2)}(z)
-\frac{w}{z}\left({-}L(z)+\frac{Q^2}{4}+2 J^{(2)}(z)
-Q J^{(1)}(z) +Q^2 \right) \right)\nonumber
\\ \qquad\phantom{=}
{}+\cV_1(w)\left(J^{(1)}(z) -\frac{w}{z}\left(J^{(1)}(z)-2Q\right) \right).\label{eq: h2 coeff of X1}
\end{gather}
The coefficient of $\hbar^2$ in the $\hbar$-expansion of
the defining relation (\ref{eq: Xt cV rel}) is
\begin{gather}
\left(1-\frac{w}{z} \right)\widetilde{J}^{(2)}(z) \cV_0(z)
+\left(1-\frac{w}{z} \right)J^{(1)}(z) \cV_1(w)\nonumber
\\ \qquad
{}=\cV_0(w)
\left( \widetilde{J}^{(2)}(z)-\frac{w}{z}\big(\widetilde{J}^{(2)}(z)-2QJ^{(1)}(z)+2Q^2 \big) \right)\nonumber
\\ \qquad \phantom{=}
{}+\cV_1(w) \left( J^{(1)}(z) -\frac{w}{z}\big( J^{(1)}(z)-2Q \big) \right).\label{eq: h2 coeff of X2}
\end{gather}
By subtracting (\ref{eq: h2 coeff of X2}) from (\ref{eq: h2 coeff of X1}),
we can cancel the terms of $\cV_1(w)$ and obtain (\ref{eq: L cL and cV rel.}).
\end{proof}
\end{Lemma}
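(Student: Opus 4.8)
The plan is to extract \eqref{eq: L cL and cV rel.} from the two defining relations of $\cV(w)$ — the one with $X^{(1)}(z)=\Xo(z)$, namely $(1-w/z)\Xo(z)\cV(w)=(1-(t/q)\tfrac{w}{z})\cV(w)\Xo(z)$, and the one with $X^{(2)}(z)=\Xt(z)$, namely \eqref{eq: Xt cV rel} — by expanding both in powers of $\hbar$ and comparing coefficients of $\hbar^2$. The essential idea is that, although the $\hbar^2$-coefficient of each relation still involves the unknown subleading vertex coefficients $\cV_1(w)$ and $\cV_2(w)$, these two relations carry identical $\cV_1$- and $\cV_2$-contributions, so taking their difference eliminates the unknowns and leaves a closed identity for $\cV_0(w)$.

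First I would assemble the required $\hbar$-expansions. By Proposition~\ref{prop: decomp X} we have $\Xo(z)=T(z)Y(z)$; combining the $q$-Virasoro expansion $T(z)=2-(L(z)-Q^2/4)\hbar^2+\mathcal{O}(\hbar^4)$ of Fact~\ref{fact: qVir expansion} with the expansion of $Y(z)$ from Lemma~\ref{lem: Y and Xt expansion} gives $\Xo(z)=2+J^{(1)}(z)\hbar+(-L(z)+Q^2/4+2J^{(2)}(z))\hbar^2+\mathcal{O}(\hbar^3)$, while Lemma~\ref{lem: Y and Xt expansion} also supplies $\Xt(z)=1+J^{(1)}(z)\hbar+\widetilde{J}^{(2)}(z)\hbar^2+\mathcal{O}(\hbar^3)$. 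The spectral prefactors expand as $t/q=\mathrm{e}^{-Q\hbar}$ and $(t/q)^2=\mathrm{e}^{-2Q\hbar}$, and I substitute $\cV(w)=\cV_0(w)+\cV_1(w)\hbar+\cV_2(w)\hbar^2+\cdots$. Reading off the $\hbar^2$-coefficient of the $\Xo$-relation produces \eqref{eq: h2 coeff of X1}, and that of the $\Xt$-relation produces \eqref{eq: h2 coeff of X2}.

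The decisive step is the subtraction. The $\cV_2(w)$-contributions cancel between the two sides of each individual $\hbar^2$-relation already, their leading factors agreeing, so neither \eqref{eq: h2 coeff of X1} nor \eqref{eq: h2 coeff of X2} contains $\cV_2$. Moreover, since $\Xo$ and $\Xt$ share the same linear coefficient $J^{(1)}(z)$, the $\cV_1(w)$-terms in the two relations are literally equal — on the left $(1-w/z)J^{(1)}(z)\cV_1(w)$ and on the right $\cV_1(w)(J^{(1)}(z)-\tfrac{w}{z}(J^{(1)}(z)-2Q))$. Subtracting \eqref{eq: h2 coeff of X2} from \eqref{eq: h2 coeff of X1} therefore removes every appearance of $\cV_1$, and the residual scalar $Q^2/4$ multiplying $(1-w/z)\cV_0(w)$ drops out because it enters symmetrically on both sides. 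Using the definition $\mathcal{L}(z)=2J^{(2)}(z)-\widetilde{J}^{(2)}(z)$ then collapses the surviving $\cV_0$-terms to exactly \eqref{eq: L cL and cV rel.}.

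The main obstacle is organizational rather than conceptual: one must keep the $\hbar$-expansions of the noncommuting factors in the correct left/right order and verify that the $Q$-dependent pieces coming from $t/q$ versus $(t/q)^2$ conspire so that the $\cV_1$-coefficients genuinely coincide and the $Q^2/4$ scalar cancels. Granting the expansions of Fact~\ref{fact: qVir expansion} and Lemma~\ref{lem: Y and Xt expansion}, this amounts to a direct if somewhat lengthy bookkeeping of coefficients, the only real content being the observation that $\cV_1$ and $\cV_2$ never have to be determined explicitly.
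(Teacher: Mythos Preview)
Your proposal is correct and follows essentially the same route as the paper's proof: expand the defining relations for $\Xo(z)$ and $\Xt(z)$ in $\hbar$, read off the $\hbar^2$-coefficients to obtain \eqref{eq: h2 coeff of X1} and \eqref{eq: h2 coeff of X2}, and subtract to cancel the $\cV_1(w)$-terms, using $\mathcal{L}(z)=2J^{(2)}(z)-\widetilde{J}^{(2)}(z)$ to reach \eqref{eq: L cL and cV rel.}. Your additional remarks about why $\cV_2(w)$ drops out of each relation individually and why the $\cV_1(w)$-contributions coincide are accurate and merely make explicit what the paper leaves implicit.
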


To obtain the relation of the Virasoro primary,
we have to remove the contribution of $\mathcal{L}(z)$
from (\ref{eq: L cL and cV rel.}).
For this purpose,
we give the following lemma.

\begin{Lemma}\label{lem: cL cV rel}
It follows that
\begin{gather}
\left(\mathcal{L}(z)-\frac{w}{z} \mathcal{L}(z)\right)\cV_0(w)
- \cV_0(w)\left(\mathcal{L}(z)-\frac{w}{z}
\big(\mathcal{L}(z) +Q J^{(1)}(z)-Q^2 \big)\right)\nonumber
\\ \qquad
{}=\alpha(Q-\alpha) \cV_0(w) \sum_{n \in \mathbb{Z}} (w/z)^n.
\label{eq: cL cV rel}
\end{gather}

\begin{proof}
By Proposition~\ref{prop: beta2 cV_0 rel.s},
we can show that for $k>0$,
\begin{gather*}
\bigg[\sum_{m\in \mathbb{Z}} {:}\beta^{(2)}_{k-m}\beta^{(2)}_{m}{:},\cV_0(w)\bigg]
\\ \qquad
{}=\cV_0(w)\bigg({-}2\sqrt{-1} \alpha \sum_{m\geq k} \beta^{(2)}_{m} w^{k-m}
+2\sqrt{-1}(Q-\alpha)\sum_{0<m<k} \beta^{(2)}_mw^{k-m} \bigg)
\\ \qquad \phantom{=}
{}-(Q-\alpha)^2 (k-1)w^k \cV_0(w)
+2\sqrt{-1}(Q-\alpha)\sum_{m\leq 0}\beta^{(2)}_mw^{k-m} \cV_0(w).
\end{gather*}
For $k=0$,
\begin{gather*}
\bigg[\sum_{m\in \mathbb{Z}} {:}\beta^{(2)}_{-m}\beta^{(2)}_{m}{:},
\cV_0(w)\bigg]= -2\sqrt{-1} \alpha \cV_0(w)\sum_{m> 0} \beta^{(2)}_{m} w^{-m}
+\alpha \left(\frac{u'_1+u'_2+v'_1+v'_2}{2} \right)\cV_0(w)
 \\ \qquad
{}+2\sqrt{-1} (Q-\alpha) \sum_{m> 0} \beta^{(2)}_{-m} w^{m}\cV_0(w).
\end{gather*}
For $k<0$,
\begin{gather*}
\bigg[\sum_{m\in \mathbb{Z}} {:}\beta^{(2)}_{k-m}\beta^{(2)}_{m}{:},\cV_0(w)\bigg]
=-2\sqrt{-1} \alpha \cV_0(w) \sum_{0\leq m} w^{k-m} \beta^{(2)}_m-(k+1)\alpha^2 w^k \cV_0(w)
\\ \qquad
{}+\bigg({-}2\sqrt{-1} \alpha \sum_{k\leq m<0} w^{k-m}\beta^{(2)}_m
+2\sqrt{-1} (Q-\alpha)\sum_{m<k} w^{k-m}\beta^{(2)}_m \bigg)\cV_0(w).
\end{gather*}
By using these commutation relations,
we can prove (\ref{eq: cL cV rel})
at each coefficient of $z^{-k}$.
\end{proof}
\end{Lemma}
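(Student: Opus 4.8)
The plan is to read (\ref{eq: cL cV rel}) as a statement about the ``commutator'' $\mathcal{L}(z)\cV_0(w)-\cV_0(w)\mathcal{L}(z)$, keeping in mind that $\mathcal{L}(z)$ contains the zero mode $\beta^{(2)}_0$, which acts by $\sqrt{-1}(v'_1+v'_2)/2$ on $\cF_{\vv}$ and by $\sqrt{-1}(u'_1+u'_2)/2$ on $\cF_{\vu}$, so that moving $\cV_0(w)$ through $\beta^{(2)}_0$ already produces the scalar $-\sqrt{-1}\alpha$ by the $n=0$ case of Proposition~\ref{prop: beta2 cV_0 rel.s} together with $u'_1+u'_2-v'_1-v'_2=2\alpha$. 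After rearranging, (\ref{eq: cL cV rel}) becomes $(1-w/z)\big(\mathcal{L}(z)\cV_0(w)-\cV_0(w)\mathcal{L}(z)\big)=\alpha(Q-\alpha)\cV_0(w)\sum_{n\in\mathbb{Z}}(w/z)^n-\frac{w}{z}\cV_0(w)\big(QJ^{(1)}(z)-Q^2\big)$, so the task is to compute the left-hand side explicitly.

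First I would split $\mathcal{L}(z)=\sum_{n,m}{:}\beta^{(2)}_n\beta^{(2)}_m{:}z^{-n-m}-\sqrt{-1}Q\sum_n n\beta^{(2)}_n z^{-n}$ into its quadratic part and its linear anomaly part. The linear part is immediate: since each $[\beta^{(2)}_n,\cV_0(w)]$ is a scalar multiple of $\cV_0(w)$, Proposition~\ref{prop: beta2 cV_0 rel.s} gives its contribution to $\mathcal{L}(z)\cV_0(w)-\cV_0(w)\mathcal{L}(z)$ as an explicit numerical series times $\cV_0(w)$, the $n>0$ and $n<0$ parts carrying the factors $Q-\alpha$ and $\alpha$ respectively. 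The quadratic part is the crux: for each Fourier mode $k$ one must move $\cV_0(w)$ past $\sum_m{:}\beta^{(2)}_{k-m}\beta^{(2)}_m{:}$, where commuting either factor through $\cV_0(w)$ produces $\sqrt{-1}w^{j}(Q-\alpha)$ when the moved index $j$ is positive and $-\sqrt{-1}w^{j}\alpha$ when $j\le 0$; since which of $k-m$ and $m$ sits on the creation side of the normal-ordered product depends on their signs, the sum over $m$ must be broken into the ranges on which the ordering is constant. For each of $k>0$, $k=0$, $k<0$ this yields an expression of the form ``$\cV_0(w)\times(\text{linear-in-}\beta^{(2)}\text{ tail})+(\text{scalar}\propto\alpha(Q-\alpha))\,\cV_0(w)$'', the scalar coming from the double-commutation term in which both factors are pushed through $\cV_0(w)$.

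Finally I would assemble: multiplying the identity by $1-w/z$ (equivalently, comparing the coefficient of $z^{-k}$ with $w$ times that of $z^{-k+1}$), the infinite linear-in-$\beta^{(2)}$ tails produced at consecutive modes telescope against one another, leaving at each mode only finitely many surviving linear terms plus the $k$-independent scalar $\alpha(Q-\alpha)w^{k}$. One then checks that the surviving linear terms, together with the contribution of the linear anomaly part of $\mathcal{L}(z)$, reassemble into $-\frac{w}{z}\cV_0(w)QJ^{(1)}(z)$; that the surviving constants give $+\frac{w}{z}Q^{2}\cV_0(w)$; and that the non-telescoping scalars $\alpha(Q-\alpha)w^{k}$, summed over all $k\in\mathbb{Z}$, produce the formal series $\alpha(Q-\alpha)\cV_0(w)\sum_{n\in\mathbb{Z}}(w/z)^{n}$ on the right of (\ref{eq: cL cV rel}).

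The main obstacle is the bookkeeping in the quadratic computation: correctly tracking the summation ranges when $\cV_0(w)$ is commuted through the normal-ordered bilinear, and especially the $k=0$ mode, where $\beta^{(2)}_0$ enters, the normal-ordering convention must be reconciled with the mismatched zero-mode eigenvalues on $\cF_{\vu}$ and $\cF_{\vv}$, and a residual zero-mode scalar involving $u'_i+v'_i$ appears and must be absorbed using $u'_1+u'_2-v'_1-v'_2=2\alpha$; once that is handled, checking that everything except the genuine delta-like term telescopes down to the finite right-hand side is routine but lengthy.
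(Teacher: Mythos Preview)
Your proposal is correct and follows essentially the same route as the paper: split $\mathcal{L}(z)$ into its quadratic normal-ordered bilinear and its linear anomaly, compute $[\sum_m{:}\beta^{(2)}_{k-m}\beta^{(2)}_m{:},\cV_0(w)]$ separately for $k>0$, $k=0$, $k<0$ using Proposition~\ref{prop: beta2 cV_0 rel.s} (with the zero-mode subtlety at $k=0$ handled via $u'_1+u'_2-v'_1-v'_2=2\alpha$), and then assemble coefficient by coefficient in $z^{-k}$. The paper's proof records exactly these three commutator formulas and then says ``by using these commutation relations we can prove (\ref{eq: cL cV rel}) at each coefficient of $z^{-k}$'', which is precisely your telescoping/assembly step.
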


By the above lemmas,
we can obtain the following relation between
the Virasoro algebra $L_n$ and $\cV_0(w)$.

\begin{Proposition}
For $n \in \mathbb{Z}$, we have
\begin{gather}\label{eq: Ln cV rel}
[L_n,\cV_0(w)]=w[L_{n-1},\cV_0(w)]+\alpha(Q-\alpha)w^n\cV_0(w).
\end{gather}
\begin{proof}
By Lemmas~\ref{lem: L cL and cV rel.} and~\ref{lem: cL cV rel},
we have
\begin{gather*}
\left(1-\frac{w}{z} \right)L(z)\cV_0(w)
- \left(1-\frac{w}{z} \right)\cV_0(w)L(z)
=\alpha(Q-\alpha) \cV_0(w) \sum_{n \in \mathbb{Z}} (w/z)^n.
\end{gather*}
By taking the coefficient of $z^{-n}$,
we get (\ref{eq: Ln cV rel}).
\end{proof}
\end{Proposition}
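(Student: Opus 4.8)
The plan is to establish the recursion by working entirely at the level of the generating currents, descending to the individual modes $L_n$ only in the very last step. The two available inputs are Lemma~\ref{lem: L cL and cV rel.} and Lemma~\ref{lem: cL cV rel}; both are identities in the formal variables $z$ and $w$ relating $\cV_0(w)$ to the currents $L(z)$, $\mathcal{L}(z)$ and $J^{(1)}(z)$. The observation driving the whole argument is that $\mathcal{L}(z)$ and the asymmetric correction $QJ^{(1)}(z)-Q^2$ occur in both lemmas in matching shapes, so a single subtraction will eliminate them and isolate the commutator of $L(z)$ with $\cV_0(w)$.

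Concretely, I would first rewrite each lemma in commutator form. Gathering the terms of Lemma~\ref{lem: L cL and cV rel.} acting on $\cV_0(w)$ from the left and from the right turns it into
\begin{gather*}
\left(1-\frac{w}{z}\right)\big[{-}L(z)+\mathcal{L}(z),\,\cV_0(w)\big]
=-\frac{w}{z}\,\cV_0(w)\big(QJ^{(1)}(z)-Q^2\big),
\end{gather*}
while the same manipulation on Lemma~\ref{lem: cL cV rel} gives
\begin{gather*}
\left(1-\frac{w}{z}\right)\big[\mathcal{L}(z),\,\cV_0(w)\big]
=\alpha(Q-\alpha)\,\cV_0(w)\sum_{n\in\mathbb{Z}}(w/z)^n
-\frac{w}{z}\,\cV_0(w)\big(QJ^{(1)}(z)-Q^2\big).
\end{gather*}
Subtracting the first identity from the second, the common factor $\left(1-\frac{w}{z}\right)$ multiplies $[\mathcal{L}(z),\cV_0(w)]-[{-}L(z)+\mathcal{L}(z),\cV_0(w)]=[L(z),\cV_0(w)]$, and the two correction terms cancel identically, leaving the clean current identity
\begin{gather*}
\left(1-\frac{w}{z}\right)\big[L(z),\,\cV_0(w)\big]
=\alpha(Q-\alpha)\,\cV_0(w)\sum_{n\in\mathbb{Z}}(w/z)^n.
\end{gather*}

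Finally I would expand $L(z)=\sum_m L_m z^{-m}$ and read off the coefficient of $z^{-n}$ on both sides, using $\sum_{n\in\mathbb{Z}}(w/z)^n=\sum_{n\in\mathbb{Z}}w^n z^{-n}$. On the left the two terms of $1-w/z$ produce $[L_n,\cV_0(w)]-w[L_{n-1},\cV_0(w)]$, and on the right one obtains $\alpha(Q-\alpha)w^n\cV_0(w)$, which is exactly~(\ref{eq: Ln cV rel}). I do not anticipate a substantive obstacle at this stage: all of the representation-theoretic content is already carried by the two lemmas, whose proofs in turn rest on Proposition~\ref{prop: beta2 cV_0 rel.s} and on the $\hbar^2$-coefficient of the defining relations for $\cV(x)$. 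The only genuine point of care is the bookkeeping of the correction term $QJ^{(1)}(z)-Q^2$, which appears only on the right-hand factor because of the $(t/q)$-shift in the defining relation $(1-\frac{w}{z})X^{(i)}(z)\cV(x)=(1-(t/q)^i\frac{w}{z})\cV(x)X^{(i)}(z)$; one must check that it enters the two commutator-form identities with identical coefficients so that the cancellation is exact rather than merely approximate to leading order.
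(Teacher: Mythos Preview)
Your proof is correct and follows essentially the same approach as the paper: both subtract the content of Lemma~\ref{lem: cL cV rel} from that of Lemma~\ref{lem: L cL and cV rel.} to eliminate $\mathcal{L}(z)$ and the $QJ^{(1)}(z)-Q^2$ correction, arriving at the current identity $\left(1-\frac{w}{z}\right)[L(z),\cV_0(w)]=\alpha(Q-\alpha)\cV_0(w)\sum_{n}(w/z)^n$, and then extract the coefficient of $z^{-n}$. Your version simply spells out the intermediate commutator manipulations that the paper compresses into a single line.
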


We have proved the relations among $\cV_0(w)$, the Heisenberg algebra
$\beta^{(2)}_{n}$ and the Virasoro algebra $L_n$.
Conversely, we can show that an operator satisfying
these relations is unique up to the vacuum expectation value.

\begin{Lemma}\label{lem: cV uniqueness}
Let $\widetilde{\cV}_{0}(w)\colon\cF^{(2,0)}_{\vu} \rightarrow \cF^{(2,0)}_{\vv}$
be an operator satisfying the relations obtained by replacing
$\cV_{0}(w)$ with $\widetilde{\cV}_{0}(w)$
in \eqref{eq: beta V_0 rel +}, \eqref{eq: beta V_0 rel -} and \eqref{eq: Ln cV rel}.
Then, we have
\begin{gather*}
\widetilde{\cV}_{0}(w)=\cV_{0}(w) \times \brazero \widetilde{\cV}_{0}(w) \ketzero.
\end{gather*}
\end{Lemma}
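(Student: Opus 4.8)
\textbf{Proof proposal for Lemma~\ref{lem: cV uniqueness}.}
The plan is to show that the three families of relations \eqref{eq: beta V_0 rel +}, \eqref{eq: beta V_0 rel -} and \eqref{eq: Ln cV rel} determine $\widetilde{\cV}_0(w)$ completely once the single scalar $\brazero \widetilde{\cV}_0(w)\ketzero$ is fixed. By Remark~\ref{rem: regard F as vir mod}, under the assumption that $\parap$, $\parap'$ and $b$ are generic, the modules $\cF^{(2,0)}_{\vu}$ and $\cF^{(2,0)}_{\vv}$ are irreducible $(\mathrm{Virasoro})\otimes(\mathrm{Heisenberg}\ \langle\beta^{(2)}_n\rangle)$-modules with PBW-type bases $\ket{L_{\lambda,\mu}}$ and $\bra{L_{\lambda,\mu}}$. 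So it suffices to show that every matrix element $\bra{L_{\lambda,\mu}}\widetilde{\cV}_0(w)\ket{L_{\nu,\rho}}$ is determined by $\brazero\widetilde{\cV}_0(w)\ketzero$. Since $D(w):=\widetilde{\cV}_0(w)-\cV_0(w)\cdot\brazero\widetilde{\cV}_0(w)\ketzero$ satisfies the same relations but has $\brazero D(w)\ketzero = 0$, the statement reduces to: any $D(w)$ obeying \eqref{eq: beta V_0 rel +}, \eqref{eq: beta V_0 rel -}, \eqref{eq: Ln cV rel} (with $\cV_0$ replaced by $D$) and $\brazero D(w)\ketzero=0$ must be identically zero.

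The key steps, in order: (1) Use \eqref{eq: beta V_0 rel -} with $n=0$, i.e. $[\beta^{(2)}_0, D(w)] = -\sqrt{-1}\,\alpha\, D(w)$, together with the eigenvalues of $\beta^{(2)}_0$ on $\ketzero\in\cF_{\vu}$ and $\brazero\in\cF_{\vv}$ (these are $\sqrt{-1}(u'_1+u'_2)/2$ and $\sqrt{-1}(v'_1+v'_2)/2$), to check the zero-mode consistency; this is essentially the bookkeeping that makes $\alpha$ the right shift. (2) Argue by induction on the level (the total degree $|\lambda|+|\mu|+|\nu|+|\rho|$, or more precisely a bidegree in the bra- and ket-levels). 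For the base case, $\bra{\boldsymbol 0}D(w)\ket{\boldsymbol 0}=0$ by hypothesis. (3) For the inductive step, move a lowering generator past $D(w)$: to compute $\bra{L_{\lambda,\mu}}D(w)\ket{\nu,\rho}$ when, say, $\ket{\nu,\rho}$ is a descendant, write it as $A\ket{\nu',\rho'}$ with $A$ one of $L_{-k}$ or $\beta^{(2)}_{-k}$ ($k\ge 1$); then $\bra{L_{\lambda,\mu}}D(w)A\ket{\nu',\rho'} = \bra{L_{\lambda,\mu}}\big(AD(w) - [A,D(w)]\big)\ket{\nu',\rho'}$. The commutator $[A,D(w)]$ is, by \eqref{eq: beta V_0 rel -}/\eqref{eq: Ln cV rel}, a (finite, differential-operator-in-$w$) expression in $D(w)$ of strictly lower ket-level (for $\beta^{(2)}_{-k}$ this is immediate; for $L_{-k}$ one iterates \eqref{eq: Ln cV rel} downward from $L_n$ to express $[L_{-k},D(w)]$ in terms of $[L_{-1},D(w)]$ and lower data, landing on the anchor relation $[L_{-1},D(w)] = z\cdot(\text{shift})$-type identity that follows from repeatedly applying \eqref{eq: Ln cV rel}). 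Meanwhile $A D(w)$ pairs with $\bra{L_{\lambda,\mu}}$ to give a matrix element at lower bra-level since $\bra{L_{\lambda,\mu}}A = \bra{L_{\lambda,\mu}}L_{-k}$ can be rewritten via the Virasoro/Heisenberg commutators as a combination of bras of lower level (using $\brazero L_{-n}=0$, $n>0$, from Remark~\ref{rem: regard F as vir mod}, plus $\brazero\beta^{(2)}_{-n}$ handled by \eqref{eq: beta V_0 rel -} transposed). In both summands the inductive hypothesis applies, so the matrix element is forced. By symmetry one may equally strip generators off the bra side; a clean way is to induct first on the bra-level, reducing to $\brazero D(w)\ket{\nu,\rho}$, and then on the ket-level, reducing to $\brazero D(w)\ketzero = 0$.

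I expect the main obstacle to be the careful handling of the $L_n$ relations for $n\neq -1$: relation \eqref{eq: Ln cV rel} is a recursion $[L_n,\cV_0(w)] = w[L_{n-1},\cV_0(w)] + \alpha(Q-\alpha)w^n\cV_0(w)$ rather than a closed-form intertwining identity, so one must first solve it (as was done to get Theorem~\ref{thm: the Vir rel}) to obtain the honest commutator $[L_n, D(w)] = w^n(w\partial_w + \alpha(Q-\alpha)(n+1))D(w)$, and then verify that this genuinely lowers level when $D(w)$ acts between PBW basis vectors — i.e. that no ``runaway'' to higher level occurs. The zero-mode/degree bookkeeping (matching powers of $w$ against the $\beta^{(2)}_0$-eigenvalue difference $\alpha$) is where sign and normalization errors are most likely, but it is routine once set up. Everything else is a standard ``intertwiner is determined by its vacuum value'' argument in an irreducible highest-weight module, so no genuinely new idea is needed beyond organizing the double induction.
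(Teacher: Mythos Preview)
Your proposal is correct and follows essentially the same strategy as the paper: use the PBW-type bases $\ket{L_{\lambda,\mu}}$, $\bra{L_{\lambda,\mu}}$ of Remark~\ref{rem: regard F as vir mod} and observe that the relations \eqref{eq: beta V_0 rel +}, \eqref{eq: beta V_0 rel -}, \eqref{eq: Ln cV rel} let one reduce any matrix element to ones of strictly smaller total level, ending at the vacuum value. The paper's proof is slightly leaner in that it works directly with the recursive form \eqref{eq: Ln cV rel} (shifting $L_{\lambda_1}\to L_{\lambda_1-1}$ on the bra side) rather than first solving it to the closed primary-field commutator, but this is only a difference in bookkeeping.
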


\begin{proof}
As we explained in Remark~\ref{rem: regard F as vir mod},
we can regard $\cF^{(2,0)}_{\vu}$ and $\cF^{(2,0)*}_{\vv}$
as the modules of
the Virasoro algebra $\langle L_n \rangle$ and the Heisenberg algebra $\langle \beta^{(2)}_n \rangle$.
Since $\ket{L_{\lambda, \mu}}$ and $\bra{L_{\lambda, \mu}}$ form
bases on~$\cF^{(2,0)}_{\vu}$ and $\cF^{(2,0)*}_{\vv}$,
respectively,
the linear operators from $\cF^{(2,0)}_{\vu}$
to $\cF^{(2,0)}_{\vv}$ can be characterized by the matrix elements with respect to them.
By (\ref{eq: beta V_0 rel +}), (\ref{eq: beta V_0 rel -}) and (\ref{eq: Ln cV rel}),
the matrix elements $\braket{L_{\vl}| \cV_0(w) |L_{\vm}}$
can be attributed to the matrix elements
with respect to partitions of smaller size.
Indeed,
if $\lambda^{(1)} \neq 0$, then (\ref{eq: Ln cV rel}) gives
\begin{gather*}
\braket{L_{\lo,\lt }| \cV_0(w) |L_{\mo, \mt}}=
\sum_{\nu} (\mbox{const})
\braket{L_{(\lo-1, \lo_2, \lo_3,\ldots),\lt }| \cV_0(w) | L_{\nu, \mt}}.
\end{gather*}
Here, $\nu$ runs partitions of size $|\nu|=\big|\mt\big|-\big|\lo\big|$ or $\big|\mt\big|-\big|\lo\big|+1$.
Eventually,
there remains only the vacuum expectation value $\brazero \cV_0(w) \ketzero$ in the calculation of the
matrix elements.
Therefore,
the operator is unique up to the vacuum expectation value.
\end{proof}

By the uniqueness of Lemma~\ref{lem: cV uniqueness},
we can prove Theorem~\ref{thm: the Vir rel}.

\begin{proof}[Proof of Theorem~\ref{thm: the Vir rel}]
Firstly, we prepare notations of the Verma modules and the Fock space
explained in Remark~\ref{rem: regard F as vir mod}.
Let $M_{h_1}$ (resp.~$M_{h_2}$) be the Verma module of the Virasoro algebra
with the highest weight vector $\ket{h_1}$ (resp.~$\ket{h_2}$) of highest weight $h_1=\frac{Q^2}{4}-\parap^2$ $\big($resp.~$h_2=\frac{Q^2}{4}-\parap'^2\big)$.
Let $F_u$ be the Fock space of the Heisenberg algebra $\langle \beta^{(2)}_n \rangle$
in which the zero mode acts as $\beta^{(2)}_0=u$.
Then we can show that
\begin{gather*}
\cF_{\vu}\cong M_{h_1} \otimes F_{u'}, \qquad u'=\sqrt{-1} (u'_1+u'_2)/2,
\\
\cF_{\vv}\cong M_{h_2} \otimes F_{v'}, \qquad v'=\sqrt{-1} (v'_1+v'_2)/2
\end{gather*}
as representation spaces of the algebra $\langle L_n \rangle \otimes \langle\beta^{(2)}_n\rangle$.
Further,
let $\bra{h_2}$ be the dual vector
such that $\bra{h_2}L_0=h_2 \bra{h_2}$, $\bra{h_2} L_{-n}=0$ ($n>0$),
and $\braket{h_2|h_2}=1$.

Define $V^{H}(w)\colon F_{u'}\rightarrow F_{v'}$ by
\begin{gather*}
V^{H}(w)=
\exp \bigg( 2\sqrt{-1}(\alpha-Q) \sum_{n>0}\frac{\beta^{(2)}_n}{n}w^{-n}\bigg)
\exp \bigg({-}2\sqrt{-1}\alpha \sum_{n>0}\frac{\beta^{(2)}_{-n} }{n}w^{n}\bigg).
\end{gather*}
Then we have
\begin{gather*}
\big[\beta^{(2)}_n, V^{H}(w)\big]=\sqrt{-1} w^n (Q-\alpha) V^{H}(w), \qquad n>0,
\\
\big[\beta^{(2)}_{-n}, V^{H}(w)\big]=-\sqrt{-1} w^{-n} \alpha V^{H}(w), \qquad n\geq 0.
\end{gather*}
Define $V^{\rm Vir}(w)\colon M_{h_1} \rightarrow M_{h_2}$ to be the Virasoro primary field
of conformal dimension $\alpha(Q-\alpha)$,
i.e.,
it satisfies
\begin{gather*}
\big[L_n,V^{\rm Vir}(w)\big]
=w^n\left(w \frac{\partial}{\partial w}+\alpha(Q-\alpha)(n+1)\right)V^{\rm Vir}(w)
\end{gather*}
and
\begin{gather*}
\bra{h_2} V^{\rm Vir}(w) \ket{h_1}= w^{h_2-h_1-\alpha(Q-\alpha)} \times (\mbox{scalar}).
\end{gather*}
Then it follows that
\begin{gather*}
\big[L_n,V^{\rm Vir}(w)\big]=w\big[L_{n-1},V^{\rm Vir}(w)\big]+\alpha(Q-\alpha)w^n\cV_0(w).
\end{gather*}
Therefore,
Lemma~\ref{lem: cV uniqueness} shows that
\begin{gather*}
\Phi^{\mathrm{H.V}}(z)=V^{\rm Vir}(w) \otimes V^{H}(w) \times (\mbox{scalar})^{-1}.
\end{gather*}
This completes the proof.
\end{proof}

\appendix

\section[Asymptotically free eigenfunction of Macdonald's difference operator]{Asymptotically free eigenfunction\\ of Macdonald's difference operator}\label{sec: asymp mac}

In this appendix, we briefly review basic facts of
the asymptotically free eigenfunction of Macdonald's difference operator.
Consider the following modification of Macdonald's difference operator.

\begin{Definition}
Define the operator $D_N(\boldsymbol{s};q,t)$
on $\mathbb{Q}(q,t,\vs)[[x_2/x_1,\ldots, x_N/x_{N-1}]]$ by
\begin{gather*}
D_N(\boldsymbol{s};q,t):=\sum_{k=1}^N s_k
\prod_{1\leq \ell < k} \frac{1-tx_k/x_{\ell}}{1-x_k/x_{\ell}}
\prod_{k < \ell \leq N} \frac{1-x_{\ell}/tx_k}{1-x_{\ell}/x_{k}}T_{q,x_k},
\end{gather*}
where $T_{q,x_k}$ is the difference operator defined by
\begin{gather*}
T_{q,x_k}F(x_1,\ldots,x_N)=F(x_1,\ldots,qx_k,\ldots,x_N).
\end{gather*}
\end{Definition}

An combinatorial formula for the eigenfunction of
$D_N(\boldsymbol{s};q, t)$ was given in \cite{BFS2014Macdonald,NS2012direct,Shiraishi2005conjecture}.
That is
the function $\ordmac$ given in Section~\ref{sec: nonst.R and intertwiner}.

\begin{fact}[\cite{BFS2014Macdonald,NS2012direct,Shiraishi2005conjecture}]
\label{fact: eigen fn of D}
The function $\ordmac(\boldsymbol{x};\boldsymbol{s}|q,t)$ is an unique formal solution to the eigenfunction equation
\begin{gather*}
D_N(\vs;q,t)\ordmac(\vx;\vs|q,t) =(s_1+\cdots +s_N)\ordmac(\vx;\vs|q,t)
\end{gather*}
up to scalar multiples.
\end{fact}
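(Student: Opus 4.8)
The statement is purely about formal power series, so the plan is to work inside the ring $R=\mathbb{Q}(q,t,\vs)[[x_2/x_1,\ldots,x_N/x_{N-1}]]$, graded by the total degree $|\theta|=\sum_{1\le i<j\le N}\theta_{ij}$ of the monomial $m_\theta:=\prod_{1\le i<j\le N}(x_j/x_i)^{\theta_{ij}}$, $\theta\in\mathsf{M}_N$, which form a topological basis of $R$. Each factor $\frac{1-tx_k/x_\ell}{1-x_k/x_\ell}$ $(\ell<k)$ and $\frac{1-x_\ell/tx_k}{1-x_\ell/x_k}$ $(\ell>k)$ lies in $\mathbb{Q}(q,t)[[x_2/x_1,\ldots,x_N/x_{N-1}]]$ with constant term $1$, and $T_{q,x_k}$ preserves the grading, acting on $m_\theta=\prod_j x_j^{e_j(\theta)}$ by the scalar $q^{e_k(\theta)}$, where $e_k(\theta)=\sum_{i<k}\theta_{ik}-\sum_{j>k}\theta_{kj}$ is the exponent of $x_k$. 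Hence $D_N(\vs;q,t)$ acts on $R$ and decomposes as $D_N=D^{(0)}+D^{(\ge1)}$, where $D^{(0)}m_\theta=\bigl(\sum_{k}s_kq^{e_k(\theta)}\bigr)m_\theta$ acts diagonally in the basis $(m_\theta)$ and $D^{(\ge1)}$ strictly raises the degree.

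\textbf{Uniqueness and the ``up to scalars'' clause.} Let $F=\sum_{\theta\in\mathsf{M}_N}a_\theta m_\theta\in R$ be any solution of $D_N(\vs;q,t)F=(s_1+\cdots+s_N)F$. Extracting the coefficient of $m_\theta$ and using that $D^{(\ge1)}$ only raises degree, one gets
\begin{gather*}
\Bigl(\,\sum_{k=1}^{N}s_k\bigl(1-q^{e_k(\theta)}\bigr)\Bigr)a_\theta=\sum_{|\theta'|<|\theta|}c_{\theta,\theta'}\,a_{\theta'},\qquad c_{\theta,\theta'}\in\mathbb{Q}(q,t,\vs).
\end{gather*}
For $\theta\neq\emptyset$ the cascading argument $e_1(\theta)=0\Rightarrow\theta_{1j}=0\ \forall j\Rightarrow e_2(\theta)=0\Rightarrow\cdots$ shows that some $e_k(\theta)\neq0$; since $q$ is transcendental and $s_1,\ldots,s_N$ are algebraically independent over $\mathbb{Q}(q,t)$, the prefactor $\sum_k s_k(1-q^{e_k(\theta)})$ is a nonzero element of $\mathbb{Q}(q,t,\vs)$. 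Thus at degree $0$ the equation imposes nothing on $a_\emptyset$, while for $|\theta|\ge1$ it determines $a_\theta$ from lower-degree data; by induction on $|\theta|$ the solution space is one-dimensional, spanned by the normalized solution with $a_\emptyset=1$. This is exactly the uniqueness-up-to-scalars assertion.

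\textbf{Existence.} It remains to verify that the explicit series $\ordmac(\vx;\vs|q,t)$, which has $c_N(\emptyset;\vs|q,t)=1$, really is a solution, i.e.\ that the coefficients $c_N(\theta;\vs|q,t)$ satisfy the triangular recursion above. I would carry this out along the direct line of \cite{NS2012direct}: substitute the product formula for $c_N(\theta;\vs|q,t)$ into $\bigl(\sum_k s_k(q^{e_k(\theta)}-1)\bigr)c_N(\theta;\vs|q,t)+\sum_{|\theta'|<|\theta|}c_{\theta,\theta'}\,c_N(\theta';\vs|q,t)=0$, clear the $q$-shifted-factorial denominators, and reduce the resulting identity to a finite telescoping/Pieri-type relation among $(a;q)_n$'s, proved by induction on $N$ (peeling off the variables $x_N,x_{N-1},\dots$ one at a time, matching the inductive shape of the formula for $c_N$). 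An alternative would be to specialize $\vs$ so that $D_N$ becomes the classical Macdonald operator and $\ordmac$ collapses to a Macdonald polynomial, but controlling the rational $\vs$-dependence from such specializations is awkward, so the direct verification is the cleaner route.

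\textbf{Main obstacle.} The real work is this existence half: the recursion for $c_N(\theta;\vs|q,t)$ is an intricate rational-function identity, and its proof is essentially the content of \cite{BFS2014Macdonald,NS2012direct,Shiraishi2005conjecture}; the rest is the formal triangularity bookkeeping described above.
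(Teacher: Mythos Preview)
The paper does not prove this statement at all---it is presented as a \textbf{fact} and cited to \cite{BFS2014Macdonald,NS2012direct,Shiraishi2005conjecture}, with no accompanying argument. So there is no ``paper's own proof'' to compare your proposal against; the statement is imported wholesale from the literature.

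That said, your proposal follows the standard and correct route. The uniqueness half is sound in spirit, but contains a small technical slip: the family $(m_\theta)_{\theta\in\mathsf{M}_N}$ is \emph{not} a topological basis of $R$ for $N\ge 3$, since e.g.\ $(x_3/x_1)=(x_2/x_1)(x_3/x_2)$, so different $\theta$'s collapse to the same monomial. The fix is immediate: work instead with the genuine basis $\prod_{i=1}^{N-1}(x_{i+1}/x_i)^{n_i}$, $n_i\ge 0$; the operator $D^{(0)}$ is still diagonal there with eigenvalue $\sum_k s_k q^{e_k}$, and your cascading argument ($e_1=-n_1=0\Rightarrow n_1=0\Rightarrow e_2=-n_2$, etc.) goes through verbatim.

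For the existence half, you correctly identify that verifying the eigenfunction equation for the explicit $c_N(\theta;\vs|q,t)$ is the substantive computation and is the content of the cited references; your sketch of the direct approach matches the one in \cite{NS2012direct}. Since the paper itself simply cites this, your treatment is already at least as detailed as what the paper provides.
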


\begin{Remark}\label{rem: ord Mac and assym Mac}
The ordinary Macdonald polynomials \cite{Macdonald2015Symmetric} are defined as the eigenfunctions of the operator
\begin{gather*}
\mathcal{D}_x=\sum_{i=1}^{N}\prod_{\substack{1\leq j \leq N \\ j\neq i}} \frac{x_j-tx_i}{x_j-x_i}\,T_{q,x_i},
\end{gather*}
which acts on the ring of symmetric polynomials
$\mathbb{Q}(q,t)[x_1,\ldots, x_N]^{\mathfrak{S}_N}$.
The eigenfunctions of this operator, that are Macdonald polynomials,
are parametrized by the partitions $\lambda=(\lambda_1,\lambda_2,\ldots )$.
By specializing $s_i$'s as $s_i=q^{\lambda_i}t^{N-i}$,
the asymptotically free eigenfunctions $\ordmac
$
give Macdonald polynomials with partitions $\lambda$.
That is to say, if $\ell(\lambda)\leq N$ and $s_i=q^{\lambda_i}t^{N-i}$,
the infinite series $x^{\lambda}\ordmac(\vx;\vs|q,t)$ becomes
a polynomial $\big(x^{\lambda}:=\prod_{i\geq 1}x_i ^{\lambda_i}\big)$, and we have
\begin{gather*}
\mathcal{D}_x x^{\lambda} \ordmac(\vx;\vs|q,t)
=\sum_{i=1}^N q^{\lambda_i}t^{N-i} x^{\lambda} \ordmac(\vx;\vs|q,t).
\end{gather*}
\end{Remark}

The parameters $\vs=(s_1, \ldots, s_n)$ and $\vx=(x_1,\ldots, x_N)$
are symmetric to each other in the meaning of the following
bispectral duality.

\begin{fact}[\cite{NS2012direct}]\label{fact: bispec dual}
It follows that
\begin{gather*}
\prod_{1\leq i< j\leq N} \frac{(qs_j/s_i;q)_{\infty}}{(qs_j/ts_i;q)_{\infty}}
\,\ordmac(\vx;\vs|q,t)=
\prod_{1\leq i< j\leq N} \frac{(qx_j/x_i;q)_{\infty}}{(qx_j/tx_i;q)_{\infty}}
\, \ordmac(\vs;\vx|q,t).
\end{gather*}
\end{fact}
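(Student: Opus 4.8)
The plan is to deduce the bispectral duality from the vertex-operator realization of $\ordmac$, following the route sketched in the Remark after Fact~\ref{fact: macdonald from mukade} and using the $S$-duality of the Mukadé operators in the manner of the proof of Theorem~\ref{thm: fgln=fEG}. Since $t\mapsto q/t$ is an involution of $\mathbb{Q}(q,t)$ fixing $q$, and both sides of the asserted identity are well-defined formal power series with coefficients in $\mathbb{Q}(q,t,\vs)$, it suffices to prove the $t\mapsto q/t$ version,
\begin{gather*}
\prod_{1\le i<j\le N}\frac{(qx_j/x_i;q)_\infty}{(tx_j/x_i;q)_\infty}\,\ordmac(\vs;\vx|q,q/t)
=\prod_{1\le i<j\le N}\frac{(qs_j/s_i;q)_\infty}{(ts_j/s_i;q)_\infty}\,\ordmac(\vx;\vs|q,q/t),
\end{gather*}
which is exactly the equality recorded in that Remark.

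First I would invoke Fact~\ref{fact: macdonald from mukade} to rewrite the left-hand side as the vacuum matrix element $\brazero\tilTV_1(\vx;s_1)\tilTV_2(s_2)\cdots\tilTV_N(s_N)\ketzero$ of a composition of specialized vertical Mukadé operators. Next, inserting resolutions of the identity $\sum_{\vm}\ket{Q_{\vm}}\bra{P_{\vm}}$ between consecutive factors (recall $\brazero=\bra{P_{\boldsymbol{\emptyset}}}$ and $\ketzero=\ket{Q_{\boldsymbol{\emptyset}}}$) and applying the $S$-duality formula of Theorem~\ref{fact: chang pref. direc.} to each factor turns this into a matrix element of the corresponding composition of horizontal Mukadé operators: as in the proof of Theorem~\ref{thm: fgln=fEG}, the accompanying sign factors $(-1)^{|\vm_i|+|\vm_{i-1}|}$ telescope to $1$, while the normalizing ratios $\brazero\tilTV_i\ketzero/\bra{\boldsymbol{\emptyset}}\tilTH_i\ket{\boldsymbol{\emptyset}}$ are each equal to $1$ by Lemma~\ref{lem: Mukade VEV}. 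Finally, this horizontal matrix element is computed directly from the operator product expansions of Fact~\ref{fact: intertwiner OPE} read on the level-$(1,0)$ representation — the left–right flip of the reticulate web that computes $\ordmac$ — and one checks that it equals the right-hand side above, completing the proof.

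The main obstacle is the last step: carrying out the flipped-web operator-product computation so that it reproduces $\ordmac(\vx;\vs|q,q/t)$ with $\vx$ and $\vs$ in precisely the roles of expansion variable and parameter, together with the exact prefactor $\prod_{i<j}(qs_j/s_i;q)_\infty/(ts_j/s_i;q)_\infty$; matching the spectral-parameter shifts and the normalizations here is the delicate bookkeeping. A self-contained alternative avoiding the vertex operators is to appeal to the uniqueness in Fact~\ref{fact: eigen fn of D}: one would verify directly that the right-hand side, as a formal series in $x_{i+1}/x_i$, is annihilated by $D_N(\vs;q,q/t)-(s_1+\cdots+s_N)$ and has constant term $1$, hence coincides with the left-hand side — but establishing that the dual function satisfies this $q$-difference equation in $\vx$ is itself the nontrivial bispectral property of the Macdonald operator, which is essentially the content of \cite{NS2012direct}.
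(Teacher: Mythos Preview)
The paper does not prove this statement: it is quoted as a Fact with a reference to \cite{NS2012direct}. What the paper does contain is the Remark immediately after Fact~\ref{fact: macdonald from mukade}, which sketches precisely the vertex-operator argument you outline --- rewrite the left-hand side as $\brazero\tilTV_1\cdots\tilTV_N\ketzero$, apply the $S$-duality formula (Theorem~\ref{fact: chang pref. direc.}), and read off the right-hand side. So your approach matches the paper's own suggestion.

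Your identification of the ``main obstacle'' is accurate but can be resolved more cleanly than by a direct OPE computation. The paper's Remark says to use ``Fact~\ref{fact: macdonald from mukade} again'': the point is that after $S$-duality the resulting horizontal-side matrix element $\bra{\boldsymbol\emptyset}\tilTH_N(\vs^{(N-1)};x_N)\cdots\tilTH_1(\vs^{(0)};x_1)\ket{\boldsymbol\emptyset}$ corresponds to the \emph{same} reticulate web with the roles of the horizontal and vertical (i.e., $\vx$ and $\vs$) legs exchanged, so the screening-operator derivation that yields Fact~\ref{fact: macdonald from mukade} applies verbatim with $\vx\leftrightarrow\vs$ and produces $\prod_{i<j}\frac{(qs_j/s_i;q)_\infty}{(ts_j/s_i;q)_\infty}\,\ordmac(\vx;\vs|q,q/t)$. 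This sidesteps the delicate bookkeeping you anticipate; the normalizing ratios and sign telescoping work exactly as in the proof of Theorem~\ref{thm: fgln=fEG} (the $p\to0$ degeneration of that argument). Your alternative route via uniqueness in Fact~\ref{fact: eigen fn of D} is indeed the content of \cite{NS2012direct} and is not self-contained here.
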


The following Poincar\'{e} duality is also important.

\begin{fact}[\cite{NS2012direct}]\label{fact: Poincare dual}
It follows that
\begin{gather*}
\ordmac(\vx;\vs|q,t) = \prod_{1\leq i < j \leq N} \frac{(t x_j/x_i ; q)_\infty}{(qx_j/tx_i;q)_\infty} \,\ordmac(\vx;\vs|q,q/t) .
\end{gather*}
\end{fact}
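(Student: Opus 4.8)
The plan is to deduce the identity from the uniqueness statement in Fact~\ref{fact: eigen fn of D}: I will show that the right-hand side is an eigenfunction of $D_N(\vs;q,t)$ with eigenvalue $s_1+\cdots+s_N$ and with constant term $1$, hence must coincide with $\ordmac(\vx;\vs|q,t)$. Set
\[
g_N(\vx)=\prod_{1\le i<j\le N}\frac{(tx_j/x_i;q)_\infty}{(qx_j/tx_i;q)_\infty}.
\]
Since each $x_j/x_i$ with $i<j$ is a monomial in $x_2/x_1,\ldots,x_N/x_{N-1}$, this is a well-defined element of $\mathbb{Q}(q,t)[[x_2/x_1,\ldots,x_N/x_{N-1}]]$ with constant term $1$; likewise $D_N(\vs;q,t)$ and $D_N(\vs;q,q/t)$ both preserve $\mathbb{Q}(q,t,\vs)[[x_2/x_1,\ldots,x_N/x_{N-1}]]$, since $T_{q,x_k}$ merely rescales the variables by powers of $q$.

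The crux is the gauge-equivalence of the two Macdonald operators under $t\mapsto q/t$:
\[
g_N(\vx)^{-1}\circ D_N(\vs;q,t)\circ g_N(\vx)=D_N(\vs;q,q/t).
\]
Writing $D_N(\vs;q,t)=\sum_{k=1}^N s_k\,A_k(\vx;t)\,T_{q,x_k}$ with $A_k(\vx;t)=\prod_{\ell<k}\frac{1-tx_k/x_\ell}{1-x_k/x_\ell}\prod_{\ell>k}\frac{1-x_\ell/tx_k}{1-x_\ell/x_k}$, this reduces, for each $k$, to the scalar identity
\[
\frac{T_{q,x_k}\,g_N(\vx)}{g_N(\vx)}=\frac{A_k(\vx;q/t)}{A_k(\vx;t)}.
\]
To check it I would isolate the factors of $g_N$ containing $x_k$ and use the telescoping $(a;q)_\infty/(qa;q)_\infty=1-a$, which collapses the infinite products to $\prod_{i<k}\frac{1-qx_k/tx_i}{1-tx_k/x_i}\prod_{j>k}\frac{1-tx_j/qx_k}{1-x_j/tx_k}$; substituting $t\mapsto q/t$ directly into $A_k$ gives precisely the same product. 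This elementary but somewhat fiddly bookkeeping — keeping straight the $i<k$ numerators versus the $j>k$ denominators together with the shift $x_k\mapsto qx_k$ — is the only place where real care is needed, and I expect it to be the main (and essentially only) obstacle.

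Given the operator identity, the conclusion is immediate. By Fact~\ref{fact: eigen fn of D}, $F(\vx):=\ordmac(\vx;\vs|q,q/t)$ satisfies $D_N(\vs;q,q/t)F=(s_1+\cdots+s_N)F$ and has constant term $1$, so
\[
D_N(\vs;q,t)\bigl(g_N(\vx)F(\vx)\bigr)=g_N(\vx)\,D_N(\vs;q,q/t)F(\vx)=(s_1+\cdots+s_N)\,g_N(\vx)F(\vx),
\]
while $g_N F$ lies in $\mathbb{Q}(q,t,\vs)[[x_2/x_1,\ldots,x_N/x_{N-1}]]$ with constant term $1$. By the uniqueness in Fact~\ref{fact: eigen fn of D}, $g_N(\vx)F(\vx)=\ordmac(\vx;\vs|q,t)$, which is exactly the asserted identity. (One could instead try to iterate Fact~\ref{fact: bispec dual} at parameters $t$ and $q/t$, but that reintroduces the same Poincaré duality in the $\vs$-variables and does not obviously close, so the gauge-transformation route seems cleanest.)
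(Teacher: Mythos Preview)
Your argument is correct. The gauge identity
\[
g_N(\vx)^{-1}\circ D_N(\vs;q,t)\circ g_N(\vx)=D_N(\vs;q,q/t)
\]
does hold, and the telescoping computation you sketch gives exactly
\[
\frac{T_{q,x_k}\,g_N(\vx)}{g_N(\vx)}
=\prod_{i<k}\frac{1-qx_k/tx_i}{1-tx_k/x_i}\prod_{j>k}\frac{1-tx_j/qx_k}{1-x_j/tx_k}
=\frac{A_k(\vx;q/t)}{A_k(\vx;t)}.
\]
One small verbal slip: ``substituting $t\mapsto q/t$ directly into $A_k$'' produces $A_k(\vx;q/t)$, not the displayed product; it is the \emph{ratio} $A_k(\vx;q/t)/A_k(\vx;t)$ that matches, as you wrote a line earlier. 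With that corrected, the uniqueness clause of Fact~\ref{fact: eigen fn of D} finishes the proof as you indicate.

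As for comparison: the paper does not prove this statement at all. It is recorded as a background fact (Fact~\ref{fact: Poincare dual}) with a citation to Noumi--Shiraishi, so there is no ``paper's own proof'' to contrast with. Your gauge-transformation-plus-uniqueness argument is the standard self-contained route and is exactly what one would expect a direct proof to look like.
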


We defined $\ordmac(\boldsymbol{x};\boldsymbol{s}|q,t)$
as a formal power series.
However, its analyticity is well-understood,
and we can treat $\ordmac(\boldsymbol{x};\boldsymbol{s}|q,t)$
as a function of several complex variables.

\begin{notation}
Define the subset $D^N \subset \mathbb{C}^{N-1}$ to be
\begin{gather*}
D^N=\big\{ (w_1,\ldots , w_{N-1}) \in \mathbb{C}^{N-1}\,|\,
w_i\cdots w_{j-1} \notin q^{-\mathbb{Z}}\cup \{ 0\}\ (1\leq i<j\leq N) \big\}
\end{gather*}
so that
\begin{gather*}
\pi^{-1}\big(D^N\big)=\big\{ (s_1,\ldots , s_{N}) \in (\mathbb{C}^{*})^{N}\,|\,
s_j/s_i \notin q^{-\mathbb{Z}} \ (1\leq i<j\leq N) \big\}.
\end{gather*}
\end{notation}

\begin{fact}[\cite{NS2012direct}]
\label{fact: analyticity}
Let $\tau$ be a generic complex parameter.
We regard $\ordmac(\boldsymbol{x};\boldsymbol{s}|q,\tau)$
as formal power series in
\begin{gather*}
(z_1,\ldots, z_N)=(x_2/x_1,\ldots, x_N/x_{N-1}).
\end{gather*}
Set $r_0 = |q/\tau|^{\frac{n-2}{n-1}}$
if $|q/\tau|\leq 1 $, and $r_0 = |\tau/q|$ if $|q/\tau|\geq 1$.
Then for any $r< r_0$ and any compact subset $K \subset D^N$,
the series $\ordmac(\boldsymbol{x};\boldsymbol{s}|q,\tau)$
is absolutely convergent,
uniformly on $B^N_r \times K$.
Hence $\ordmac(\vx;\vs|q,\tau)$ defines a holomorphic function
on $U^N_{r_0}\times D^N$.
(For the notation $B^N_r$ and $U^N_r$,
see Notation~\ref{not: region}.)
\end{fact}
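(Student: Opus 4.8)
The plan is to prove convergence by the Weierstrass $M$-test, after reducing everything to a growth estimate on the explicit coefficients $c_N(\theta;\boldsymbol{s}|q,\tau)$ of Definition~\ref{def: ordinary Mac}. First I would pass to the variables $z_k=x_{k+1}/x_k$, in which $x_j/x_i=z_iz_{i+1}\cdots z_{j-1}$, so that the monomial indexed by $\theta\in\mathsf{M}_N$ becomes $\prod_{k=1}^{N-1}z_k^{\,n_k(\theta)}$ with $n_k(\theta)=\sum_{i\le k<j}\theta_{ij}$. On $B^N_r$ one has $\bigl|\prod_k z_k^{n_k(\theta)}\bigr|\le r^{\|\theta\|}$, where $\|\theta\|:=\sum_{1\le i<j\le N}(j-i)\,\theta_{ij}=\sum_k n_k(\theta)$ is the total weighted degree. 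Since for fixed $N$ the number of $\theta\in\mathsf{M}_N$ with $\|\theta\|=d$ grows only polynomially in $d$ (it is a coefficient of $\prod_{d=1}^{N-1}(1-x^d)^{-(N-d)}$), absolute and uniform convergence on $B^N_r$ reduces, by the root test, to showing
\[
\limsup_{\|\theta\|\to\infty}\,\bigl|c_N(\theta;\boldsymbol{s}|q,\tau)\bigr|^{1/\|\theta\|}\le r_0^{-1},
\]
locally uniformly in $\boldsymbol{s}$ on $D^N$; identifying this limsup with $r_0^{-1}$ is exactly what pins the radius.

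The next step is to control $c_N(\theta;\boldsymbol{s}|q,\tau)$, which is a finite product of ratios of $q$-shifted factorials of the shape $(q^{e}\,\tau s_j/s_i;q)_{\theta_{ik}}/(q^{e}\,q s_j/s_i;q)_{\theta_{ik}}$ and $(q^{e'}\,q s_j/\tau s_i;q)_{\theta_{ik}}/(q^{e'}\,s_j/s_i;q)_{\theta_{ik}}$, with integer shifts $e,e'$ built from the differences $\theta_{ia}-\theta_{ja}$. Each symbol $(a;q)_n$ is a truncation of the convergent product $(a;q)_\infty=\prod_{m\ge0}(1-aq^m)$, so whenever its argument stays in a fixed compact set it is bounded above and below uniformly in $n$. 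The denominators vanish precisely when some $s_j/s_i\in q^{-\mathbb{Z}}$, and by the definition of $D^N$ a compact $K\subset D^N$ keeps $\boldsymbol{s}$ a uniform distance from this locus, which yields locally uniform two-sided bounds on all factors whose arguments remain bounded. The only mechanism producing growth with $\|\theta\|$ is therefore the appearance of large negative shifts $e,e'$, which push an argument $q^{e}c$ out to large modulus; there I would use the elementary asymptotics $|(q^{-m}c;q)_n|\asymp |c|^{\min(m,n)}\,|q|^{-m\min(m,n)+\binom{\min(m,n)}{2}}$ to extract the dominant factor from each such symbol.

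With these factorwise estimates in hand, the crux is to assemble them over all $\binom{N}{2}$ entries of $\theta$ and to maximize the resulting exponential rate over the rational cone $\mathsf{M}_N$. One extracts from the product a dominant factor of the form $|q/\tau|^{\,\varphi(\theta)}$, times a quantity of subexponential growth in $\|\theta\|$, where $\varphi$ is piecewise linear in $\theta$, and then optimizes $\varphi(\theta)/\|\theta\|$ over the cone. In the regime $|q/\tau|\le 1$ the optimum is attained along a ray that spreads the weight across the superdiagonal entries, and the balance between the $(j-i)$-weights appearing in $\|\theta\|$ and the shift exponents $e,e'$ produces the exponent $\tfrac{N-2}{N-1}$, giving $r_0=|q/\tau|^{(N-2)/(N-1)}$; in the regime $|q/\tau|\ge1$ a competing extremal direction dominates and one obtains $r_0=|\tau/q|$. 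I expect this optimization, namely the sharp directional asymptotics of $|c_N(\theta;\boldsymbol{s}|q,\tau)|^{1/\|\theta\|}$ over $\mathsf{M}_N$ reproducing exactly the stated exponent, to be the main obstacle; everything else is bookkeeping with convergent $q$-products. As an alternative route to the radius, the characterization of $\ordmac$ as the unique formal eigenfunction in Fact~\ref{fact: eigen fn of D} furnishes a triangular recursion for the $c_N(\theta)$, from which a geometric majorant can be built inductively.

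Finally, combining the uniform coefficient bound with the polynomial count of $\theta$ at fixed weighted degree, the Weierstrass $M$-test gives absolute convergence uniformly on $B^N_r\times K$ for every $r<r_0$ and every compact $K\subset D^N$. The partial sums are polynomials in the $z_k$ and rational, hence holomorphic, in $\boldsymbol{s}$ on $\pi^{-1}(D^N)$, and a locally uniform limit of holomorphic functions is holomorphic; therefore $\ordmac(\vx;\vs|q,\tau)$ defines a holomorphic function on $U^N_{r_0}\times D^N$, as claimed.
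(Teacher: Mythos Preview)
The paper does not contain a proof of this statement: it is recorded as a \emph{Fact} and attributed to \cite{NS2012direct}, with no argument given in the text. So there is no ``paper's own proof'' to compare against; the result is imported wholesale from Noumi--Shiraishi.

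As for your proposal itself, it is a reasonable outline of how a direct estimate on the explicit coefficients $c_N(\theta;\boldsymbol{s}|q,\tau)$ would proceed, and the reduction to a root-test bound via the weighted degree $\|\theta\|$ is the natural setup. However, you yourself identify the crux and then defer it: the ``optimization over the cone $\mathsf{M}_N$'' that is supposed to produce the precise exponent $(N-2)/(N-1)$ in one regime and $1$ in the other is asserted but not carried out. That step is not bookkeeping; it is the entire content of the sharp radius statement, and without it you have at best shown convergence on \emph{some} polydisc, not the stated $U^N_{r_0}$. Your heuristic that ``spreading the weight across the superdiagonal entries'' yields $(N-2)/(N-1)$ is plausible but unsubstantiated, and the interaction between the integer shifts $e,e'$ (which depend on \emph{differences} $\theta_{ia}-\theta_{ja}$, not on the $\theta_{ij}$ individually) and the weighting $(j-i)$ in $\|\theta\|$ is more delicate than a single sentence suggests. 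The alternative route you mention at the end, via the triangular recursion coming from the eigenvalue equation of Fact~\ref{fact: eigen fn of D}, is in fact closer to how such radii are typically pinned down and is likely what \cite{NS2012direct} does; if you want an actual proof rather than a sketch, that is the path to pursue.
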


We have the correspondence between the non-stationary Ruijsenaars functions
and the ordinary Macdonald functions.

\begin{fact}[\cite{Shiraishi2019affine}]
\label{fact: p->0 lim}
Let $p^{\delta}\vx=(p^{N/N}x_1, \ldots ,p^{1/N}x_N)$
and $\kappa^{\delta}\vs=(\kappa^{N/N}x_1, \ldots ,\kappa^{1/N}x_N)$.
Then, it follows that
\begin{gather*}
\lim_{p\rightarrow 0}
\nonstrui(p^{\delta}\vx,p|\kappa^{\delta}\vs,\kappa|q,t)
=\ordmac(\vx;\vs|q,q/t).
\end{gather*}
\end{fact}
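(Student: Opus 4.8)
The plan is to prove the limit by specializing the defining series of $\nonstrui$ (Definition~\ref{def: non-st. Ruij}). The rescaling $\vx\mapsto p^{\delta}\vx$, $\vs\mapsto\kappa^{\delta}\vs$ --- the same one used in the $p\to 0$ limit that underlies Theorem~\ref{thm: fgln=fEG} --- retunes the $p$-grading so that, among the generators $px_2/x_1,\dots,px_N/x_{N-1},px_1/x_N$ of the completed polynomial ring, the $N-1$ ``non-affine'' ones become $p$-independent while the ``affine'' generator $px_1/x_N$ acquires a strictly positive power of $p$. First I would substitute and compute the $p$-valuation of the summand indexed by $\vl=(\lambda^{(1)},\dots,\lambda^{(N)})$: the monomial $\prod_{\beta=1}^{N}\prod_{\alpha\ge 1}(px_{\alpha+\beta}/tx_{\alpha+\beta-1})^{\lambda^{(\beta)}_\alpha}$ carries $p^{|\vl|}$ explicitly, and each $x_k$ contributes a further fractional power of $p$ from the shift; collecting exponents, the valuation vanishes exactly for the $\vl$ with $\ell(\lambda^{(\beta)})\le N-\beta$ for every $\beta$ --- precisely those for which no factor $px_1/tx_N$, nor any of its ``wrapped'' iterates, occurs.

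These surviving multi-partitions are in bijection with $\mathsf{M}_N$ via $\theta_{ij}:=\lambda^{(i)}_{j-i}-\lambda^{(i)}_{j-i+1}$ ($1\le i<j\le N$), with inverse $\lambda^{(i)}_m=\sum_{j\ge i+m}\theta_{ij}$. Under this bijection, the $x$-part of the monomial telescopes in the limit to $\prod_{1\le i<j\le N}(x_j/x_i)^{\theta_{ij}}$, leaving a residual monomial in $q,t$ which I would carry into the coefficient. The heart of the matter is then the degeneration of that coefficient, $\prod_{i,j=1}^{N}\sfN^{(j-i|N)}_{\lambda^{(i)},\lambda^{(j)}}(ts_j/s_i\,|\,q,\kappa)/\sfN^{(j-i|N)}_{\lambda^{(i)},\lambda^{(j)}}(s_j/s_i\,|\,q,\kappa)$: for the ``short'' multi-partitions above the infinite products defining each $\sfN$ truncate to finite $q$-shifted factorials, and one must show that all of the $\kappa$-dependence cancels between numerator and denominator once the $\kappa^{\delta}$-shift of $\vs$ is in force, the surviving factors reorganizing --- after the substitution $\lambda^{(i)}\leftrightarrow(\theta_{ij})_{j>i}$ --- into exactly $c_N(\theta;\vs|q,q/t)$ (equivalently, $\cellip_N(\theta;\vs|q,q/t,p)$ with $p$ set to $0$), the shift $t\mapsto q/t$ being the one already present in Definition~\ref{def: ellip Mac} and Fact~\ref{fact: macdonald from mukade}. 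Summing over $\theta\in\mathsf{M}_N$ gives $\ordmac(\vx;\vs|q,q/t)$ by Definition~\ref{def: ordinary Mac}.

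I expect this last step to be the real obstacle: it requires following how the cyclic index conditions $j-i\equiv k\ (\mathrm{mod}\ N)$ in the definition of $\sfN$ interact with the condition $\ell(\lambda^{(\beta)})\le N-\beta$, and then proving, index by index, the complete disappearance of $\kappa$ --- which is exactly what distinguishes the affine function from the Macdonald one. As a consistency check in the special case $\kappa=t^{-1/N}$, $N\ge 2$, one can instead pass to the limit directly in Theorem~\ref{thm: fgln=fEG}: as $p\to 0$ the prefactor $\mathfrak{C}$ tends to $\prod_{1\le i<j\le N}\frac{(qx_j/x_i;q)_\infty}{(tx_j/x_i;q)_\infty}\prod_{1\le i<j\le N}\frac{(ts_j/s_i;q)_\infty}{(qs_j/s_i;q)_\infty}$ (using $\Gamma(a;q,p)\to(a;q)_\infty^{-1}$ and $(p;p)_\infty,(pt;q,p)_\infty,(pq/t;q,p)_\infty\to 1$), while $\fellip_N(\vs;\vx|q,t,p)\to\ordmac(\vs;\vx|q,q/t)$ by the remark following Definition~\ref{def: ellip Mac}; the bispectral symmetry (Fact~\ref{fact: bispec dual}) with $t\mapsto q/t$ then cancels that prefactor exactly and returns $\ordmac(\vx;\vs|q,q/t)$. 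Equivalently, setting $\vm_i=\boldsymbol{\emptyset}$ in the $S$-duality identity of Theorem~\ref{fact: chang pref. direc.} and invoking Fact~\ref{fact: macdonald from mukade} gives the same conclusion.
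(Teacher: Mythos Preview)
The paper does not prove this statement: it is stated as a \texttt{fact} with a citation to \cite{Shiraishi2019affine} and no argument is given in the present paper. There is therefore nothing to compare your proposal against here.

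That said, your outline is the natural direct approach and matches the structure one would expect from the cited source: the $p^{\delta}$-shift forces the survival condition $\ell(\lambda^{(\beta)})\le N-\beta$, the surviving tuples are parametrized by $\mathsf{M}_N$, and the coefficient must collapse to $c_N(\theta;\vs|q,q/t)$. You are right that the delicate point is the index-by-index verification that the $\kappa^{\delta}$-shift of $\vs$ exactly absorbs all residual $\kappa$-dependence in the truncated $\sfN^{(k|N)}$ ratios; this is a finite but nontrivial bookkeeping exercise with the cyclic congruence conditions. Your consistency check via Theorem~\ref{thm: fgln=fEG} is valid only at $\kappa=t^{-1/N}$ and is logically circular as a \emph{proof} (the paper invokes Fact~\ref{fact: p->0 lim} as motivation prior to establishing Theorem~\ref{thm: fgln=fEG}), but it is a sound sanity check once both results are in hand.
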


By this limit, we can check that Theorem~\ref{thm: Tr TH TH...} is certainly
consistent with Fact~\ref{fact: macdonald from mukade}.

\section[Non-stationary Ruijsenaars functions and affine screening current]{Non-stationary Ruijsenaars functions\\ and affine screening current} \label{sec: nonst. Ruijsenaars and affine sc.}

We recall the screening currents
and vertex operators depending on the parameter $\kappa$.
The~ope\-rators in the main text correspond to
the specialized case $\kappa \rightarrow t^{-1}$.

\begin{Definition}
Let $S_i(z), \phi_i(z) \in \widehat{\mathcal{H}}^N[z]$ be
operators satisfying
\begin{gather*}
S_i(z)S_i(w)=
{( w/z;q)_\infty \over (t w/z;q)_\infty }
{( qw/tz;q)_\infty \over (q w/z;q)_\infty } {:}S_i(z)S_i(w){:},
\\
S_i(z)S_{i+1}(w)={\big(\kappa^{1/N} q w/z;q\big)_\infty \over \big(\kappa^{1/N} qw/tz;q\big)_\infty }
 {:}S_i(z)S_{i+1}(w){:},
 \\
S_{i+1}(z)S_i(w)={\big(\kappa^{-1/N} t w/z;q\big)_\infty \over \big(\kappa^{-1/N} w/z;q\big)_\infty }
 {:}S_{i+1}(z)S_i(w){:},
\\
\phi_i(z)\phi_i(w)={(w/z;q,\kappa)_\infty \over (tw/z;q,\kappa)_\infty }
{(\kappa q w/z;q,\kappa)_\infty \over (\kappa qw/tz;q,\kappa)_\infty }{:}\phi_i(z)\phi_i(w){:}\qquad
(0\leq i\leq N-1),
\\
\phi_i(z)\phi_j(w)=
{\big(\kappa^{(-i+j)/N} w/z;q,\kappa\big)_\infty \over \big(\kappa^{(-i+j)/N} t w/z;q,\kappa\big)_\infty }
{\big(\kappa^{(-i+j)/N} q w/z;q,\kappa\big)_\infty \over \big(\kappa^{(-i+j)/N} qw/tz;q,\kappa\big)_\infty }{:}\phi_i(z)\phi_j(w){:}
\\ \hphantom{\phi_i(z)\phi_j(w)=}
(0\leq i<j\leq N-1),
\\
\phi_i(z)\phi_j(w)=
{\big(\kappa^{(-i+j+N)/N} w/z;q,\kappa\big)_\infty \over \big(\kappa^{(-i+j+N)/N} t w/z;q,\kappa\big)_\infty }
{\big(\kappa^{(-i+j+N)/N} q w/z;q,\kappa\big)_\infty \over \big(\kappa^{(-i+j+N)/N} qw/tz;q,\kappa\big)_\infty }{:}\phi_i(z)\phi_j(w){:}
\\ \hphantom{\phi_i(z)\phi_j(w)=}
(0\leq j<i\leq N-1),
\end{gather*}
and
\begin{gather*}
\phi_i(z) S_{i+1}(w)={\big(\kappa^{1/N}qw/z;q\big)_\infty \over \big(\kappa^{1/N}qw/tz;q\big)_\infty} {:} \phi_i(z) S_{i+1}(w){:},
\\
S_{i+1}(w)\phi_i(z)={\big(\kappa^{-1/N}tz/w;q\big)_\infty \over \big(\kappa^{-1/N}z/w;q\big)_\infty} {:} \phi_i(z) S_{i+1}(w){:},
\\
\phi_i(z) S_i(w)={(w/z;q)_\infty \over ( tw/z;q)_\infty} {:} \phi_i(z) S_i(w){:},\qquad
 S_i(w)\phi_i(z)={(qz/tw;q)_\infty \over (qz/w;q)_\infty} {:} \phi_i(z) S_i(w){:},
 \\
\phi_i(z) S_j(w)= {:} \phi_i(z) S_j(w){:}, \qquad S_j(w)\phi_i(z)= {:} \phi_i(z) S_j(w){:}\qquad (j\neq i,i+1).
 \end{gather*}
\end{Definition}

The screened vertex operator
having the parameter $\kappa$
is defined as follows.

\begin{Definition}
For $1\leq i\leq N-1$ and $\lambda\in \parset$, define
\begin{gather*}
\phi^{i}_{\lambda}(z)=\phi_{i-\ell}(\kappa^{(\ell+1)/N} z)
\prod^{\curvearrowleft}_{1\leq j\leq \ell}S_{i-j+1}\big(\kappa^{j/ N} q^{\lambda_{j}}z\big),
\\
\Phi^i_{\lambda}(z)=
\left( {(q/t;q)_\infty \over (q;q)_\infty}\right)^{\ell(\lambda)}
\phi^i_{\lambda}(z),
\\
\Phi^i(z|x,p)=\sum_{\lambda\in \parset}\Phi^i_{\lambda}(z)
\prod_{k\geq 1} \big(p^{1/N} x_{N-i+k}/x_{N-i+k-1}\big)^{\lambda_k}.
\end{gather*}
\end{Definition}

\begin{notation}
For $1\leq i\leq N$, set
\begin{gather*}
\omega^i=(\omega^i_1,\ldots,\omega^i_N)=(\overbrace{0,\ldots,0}^{i \ {\rm times}},1,\ldots,1)\in \mathbb{Z}^N.
\end{gather*}
We write
\begin{gather*}
t^{\omega^i}  x=\big(t^{\omega^i_1} x_1,\ldots,t^{\omega^i_N} x_N \big).
\end{gather*}
\end{notation}

The non-stationary Ruijsenaars functions
can be constructed by the screened vertex operators.

\begin{fact}[\cite{Shiraishi2019affine}]\label{fact: <Phi...>=f}
Let $N\in \mathbb{Z}_{\geq 2}$.
Then, it follows that
\begin{gather*}
\langle 0| \Phi^0\big(1/s_N|t^{\omega^N} x,p\big) \Phi^1\big(1/s_{N-1}|t^{\omega^{N-1}} x,p\big) \cdots \Phi^{N-1}\big(1/s_1|t^{\omega^1} x,p\big) |0\rangle
 \\ \qquad
{}=\sum_{\lambda^{(1)},\ldots,\lambda^{(N)}\in \parset}
\langle 0| \Phi^0_{\lambda^{(N)}}(1/s_N) \Phi^1_{\lambda^{(N-1)}}(1/s_{N-1}) \cdots
 \Phi^{N-1}_{\lambda^{(1)}}(1/s_1) |0\rangle
 \\ \qquad \qquad
 {}\times \mathfrak{t}(\vl)
\prod_{j=1}^N \prod_{i\geq 1} (p^{1/N} x_{j+i}/x_{j+i-1})^{\lambda^{(j)}_i}
\\ \qquad
{} =\prod_{1\leq i< j\leq N}
 {(\kappa^{(j-i)/N} s_j/s_i;q,\kappa)_\infty \over (\kappa^{(j-i)/N} t s_j/s_i;q,\kappa)_\infty }
{(\kappa^{(j-i)/N} q s_j/s_i;q,\kappa)_\infty \over (\kappa^{(j-i)/N} qs_j/ts_i;q,\kappa)_\infty }
f^{\widehat{\mathfrak gl}_N} \big(x,p^{1/N}|s,\kappa^{1/N}|q,t\big).
\end{gather*}
Here, we put
\begin{gather*}
\mathfrak{t}(\vl):=
\prod_{i=1}^N t^{-|\lambda^{(i)}|+|\lambda^{(i)}|^{(0)}}
 \prod_{0\leq i< j \leq N-1} t^{|\lambda^{(N-i)}|^{(N+i-j)}+|\lambda^{(N-j)}|^{(-i+j-1)}}
 \\ \hphantom{\mathfrak{t}(\vl):}
=\prod_{i=1}^N t^{-|\lambda^{(i)}|^{(i-1)}+|\lambda^{(i)}|^{(0)}}.
\end{gather*}
Note that there is a typo in \cite{Shiraishi2019affine}, and the corresponding formula Theorem 2.13 in \cite{Shiraishi2019affine} should be read as in Fact~\ref{fact: <Phi...>=f}.
\end{fact}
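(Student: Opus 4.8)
The statement comprises two equalities: the first, passing from the product of generating operators $\Phi^i(z|x,p)$ to the sum over $N$-tuples of partitions, is essentially a matter of definition, while the second, identifying that sum with the non-stationary Ruijsenaars function up to an explicit prefactor, carries the genuine content. The plan is to treat them in turn, reducing the whole identity to a coefficient-wise comparison in the formal variables $x_{j+1}/x_j$ and $p$, which is legitimate since both sides are well-defined elements of the relevant ring of formal power series.

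For the first equality I would simply substitute the defining series $\Phi^i(z|x,p)=\sum_\lambda \Phi^i_\lambda(z)\prod_{k\geq 1}(p^{1/N}x_{N-i+k}/x_{N-i+k-1})^{\lambda_k}$, with the argument specialised to $t^{\omega^{N-i}}x$, into the left-hand vacuum expectation value and expand the product over $i=0,\dots,N-1$. This turns the single generating series into a sum over $\vl=(\lambda^{(1)},\dots,\lambda^{(N)})$, where $\lambda^{(N-i)}$ is the partition carried by the $i$-th factor. The only nontrivial point is to track the powers of $t$ coming from the shift $x\mapsto t^{\omega^{N-i}}x$: since $\omega^{N-i}_j=0$ for $j\le N-i$ and $1$ for $j>N-i$, each monomial $x_{N-i+k}/x_{N-i+k-1}$ acquires a definite power of $t$ according to whether its indices straddle the threshold $N-i$. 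Collecting these over all factors and all cells produces exactly $\mathfrak t(\vl)=\prod_i t^{-|\lambda^{(i)}|^{(i-1)}+|\lambda^{(i)}|^{(0)}}$, while the residual $x$-monomials reorganise into $\prod_j\prod_i(p^{1/N}x_{j+i}/x_{j+i-1})^{\lambda^{(j)}_i}$ after the relabelling $N-i\leftrightarrow j$. This is a direct bookkeeping step.

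For the second equality I would work coefficient-wise. Since $f^{\widehat{\mathfrak{gl}}_N}$ is by Definition~\ref{def: non-st. Ruij} itself a sum over $\vl$ of Nekrasov ratios against the monomials $\prod(p^{1/N}x_{\alpha+\beta}/tx_{\alpha+\beta-1})^{\lambda^{(\beta)}_\alpha}$, and the first equality has already exposed the monomials on the middle line, matching monomials (which differ only by a uniform factor $t^{|\vl|}$ from the $t^{-1}$ inside the arguments of $f$) reduces the claim to the single identity, for each fixed $\vl$,
\begin{gather*}
\langle 0|\Phi^0_{\lambda^{(N)}}(1/s_N)\cdots\Phi^{N-1}_{\lambda^{(1)}}(1/s_1)|0\rangle\,\mathfrak t(\vl)\,t^{|\vl|}
\\
=\prod_{1\le i<j\le N}\frac{(\kappa^{(j-i)/N}s_j/s_i;q,\kappa)_\infty(\kappa^{(j-i)/N}qs_j/s_i;q,\kappa)_\infty}{(\kappa^{(j-i)/N}ts_j/s_i;q,\kappa)_\infty(\kappa^{(j-i)/N}qs_j/ts_i;q,\kappa)_\infty}\prod_{i,j=1}^N\frac{\sfN^{(j-i|N)}_{\lambda^{(i)},\lambda^{(j)}}(ts_j/s_i)}{\sfN^{(j-i|N)}_{\lambda^{(i)},\lambda^{(j)}}(s_j/s_i)}.
\end{gather*}
The left side is then computed by free-field normal ordering: each $\Phi^i_\lambda(z)$ is one operator $\phi$ together with $\ell(\lambda)$ screening currents $S$, so the vacuum expectation value equals the product over all pairs of the scalar operator-product factors recorded in the Definitions at the head of Appendix~\ref{sec: nonst. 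Ruijsenaars and affine sc.}, the normal-ordered remainder having vacuum expectation value $1$.

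The heart of the argument, and the main obstacle, is to reorganise this product of $q$- and $(q,\kappa)$-shifted factorials into the right-hand side. The decisive structural observation is that the screening currents of a fixed residue class $k \bmod N$, carrying arguments $\kappa^{j/N}q^{\lambda_j}$ as $j$ runs over the relevant rows, assemble across all factors into a single diagonal partition $\mu^{(k)}$, exactly as in the interlacing decomposition employed in the proof of Proposition~\ref{prop: aff. scr. vertex}. Grouping the $S$--$S$ and $\phi$--$S$ operator products by residue class then collapses them into the Pochhammer products defining $\sfN^{(j-i|N)}_{\lambda^{(i)},\lambda^{(j)}}$, while the $\phi$--$\phi$ products supply the $s$-dependent prefactor. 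Carrying this out demands careful control of the cyclic indexing mod $N$, of the fractional powers $\kappa^{j/N}$, and of the $(q;q)_\infty/(q/t;q)_\infty$ normalisations built into $\Phi^i_\lambda$; it is precisely here that the $t$-powers $\mathfrak t(\vl)\,t^{|\vl|}$ must be reconciled with the $t$-shifts in the arguments of $\sfN^{(j-i|N)}$. As this is the computation performed in \cite{Shiraishi2019affine}, I would follow that reference, using the corrected reading of its Theorem~2.13 noted in the statement.
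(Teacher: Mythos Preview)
The paper does not prove this statement: it is stated as a Fact cited from \cite{Shiraishi2019affine} (Theorem~2.13 there, with a typo corrected), so there is no in-paper proof to compare against. Your outline is consistent with how the argument goes in that reference: the first equality is definitional bookkeeping of the $t^{\omega^{N-i}}$ shifts, and the second is the free-field normal-ordering computation using the operator-product formulas for $S_i$ and $\phi_i$ listed in Appendix~\ref{sec: nonst. Ruijsenaars and affine sc.}, reorganised into the factors $\sfN^{(k|N)}_{\lambda,\mu}$.

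One point to flag: your reduction of the second equality to a \emph{coefficient-wise} identity in $\vl$ tacitly assumes that the monomials $\prod_j\prod_i(p^{1/N}x_{j+i}/x_{j+i-1})^{\lambda^{(j)}_i}$ are linearly independent as $\vl$ ranges over $\parset^N$, so that matching monomials determines each coefficient. This is not literally true, since distinct $\vl$ can give the same $x$-monomial once the cyclic identifications $x_{k+N}=x_k$ are made (only the column sums $\sum_\beta\lambda^{(\beta)}_\alpha$ along diagonals matter for the $x$-part, and the overall $p$-power only records $|\vl|$). The correct phrasing is that both the middle expression and $f^{\widehat{\mathfrak{gl}}_N}$ are \emph{defined} as sums over $\vl$, and one checks that the summands agree term by term in $\vl$; the equality of the full series then follows trivially. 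This is what you end up doing anyway, but the ``matching monomials'' language is misleading. Apart from this, your strategy matches the standard computation, and your closing deferral to \cite{Shiraishi2019affine} for the combinatorial reorganisation is exactly what the present paper does as well.
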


\section{Some combinatorial formulas}\label{sec: proofs}

\subsection{Proof of Lemma~\ref{lem: rational num eq}}\label{sec: pf of Nek fac lemm}

By using factorials, we can rewrite the Nekrasov factors as follows. If $k\neq i$,
\begin{gather*}
N_{\mu^{(k-1)},\mu^{(k)}}(Q)
\\ \qquad
{}=\prod_{\substack{h\equiv i-k\\ j-h\equiv 0 \\ j\geq h\geq1}}\!\!\big(Q q^{-\lambda_{h} +\lambda_{j+N+1}}t^{\frac{h-j}{N}};q\big)_{\lambda_{j+1}-\lambda_{j+N+1}}
\prod_{\substack{h\equiv i-k+1\\ j-h\equiv 0 \\ j\geq h\geq1}}\!\!
\big(Q q^{\lambda_{h} -\lambda_{j-1}}t^{\frac{j-h}{N}+1} ;q\big)_{\lambda_{j-1}-\lambda_{j+N-1}}.
\end{gather*}
Decomposing factors which will be canceled afterward,
we have
\begin{gather*}
N_{\mu^{(k-1)},\mu^{(k)}}(Q)
=
A^{(k-1,k)}(Q) \cdot \widetilde{A}^{(k-1,k)}(Q)
\qquad (k\neq i),
\end{gather*}
where we put
\begin{gather*}
A^{(k-1,k)}(Q):=
\prod_{\substack{h\equiv i-k\\ j-h\equiv 0 \\ j\geq h\geq1}}
\big(Q q^{-\lambda_{h} +\lambda_{j+N}}t^{\frac{h-j}{N}} ;q\big)_{\lambda_{j+1}-\lambda_{j+N}} \prod_{\substack{h\equiv i-k+1\\ j-h\equiv 0 \\ j\geq h\geq1}}
\big(Q q^{\lambda_{h} -\lambda_{j}}t^{\frac{j-h}{N}+1} ;q\big)_{\lambda_{j}-\lambda_{j+N-1}},
\\
\widetilde{A}^{(k-1,k)}(Q):=\!
\prod_{\substack{h\equiv i-k\\ j-h\equiv 0 \\ j\geq h\geq1}}\!\!
\big(Q q^{-\lambda_{h} +\lambda_{j+N+1}}t^{\frac{h-j}{N}} ;q\big)_{\lambda_{j+N}-\lambda_{j+N+1}} \!\!\! \prod_{\substack{h\equiv i-k+1\\ j-h\equiv 0 \\ j\geq h\geq1}}\!\!\!\!
\big(Q q^{\lambda_{h} -\lambda_{j-1}}t^{\frac{j-h}{N}+1} ;q\big)_{\lambda_{j-1}-\lambda_{j}}.
\end{gather*}
Similarly, we have
\begin{gather*}
N_{\mu^{(i-1)},\mu^{(i)}}(Q)
=\prod_{\substack{h\equiv N\\ j-h\equiv 0 \\ j\geq h\geq1}}
\big(Q q^{-\lambda_{h} +\lambda_{j+1}}t^{\frac{h-j}{N}} ;q\big)_{\lambda_{j-N+1}-\lambda_{j+1}}
\!\!\prod_{\substack{\alpha \equiv 1\\ \beta-\alpha \equiv -1 \\ \beta \geq \alpha \geq 1}}\!\!
\big(Q q^{\lambda_{\alpha} -\lambda_{\beta}}t^{\frac{\beta-\alpha+1}{N}} ;q\big)_{\lambda_{\beta}-\lambda_{\beta+N}}\
\\ \hphantom{N_{\mu^{(i-1)},\mu^{(i)}}(Q)}
{}=A^{(i-1,i)}(Q) \cdot \widetilde{A}^{(i-1,i)}(Q),
\end{gather*}
where we put
\begin{gather*}
A^{(i-1,i)}(Q):=
\prod_{\substack{h\equiv N\\ j-h\equiv 0 \\ j\geq h\geq1}}
\big(Q q^{-\lambda_{h} +\lambda_{j}}t^{\frac{h-j}{N}} ;q\big)_{\lambda_{j-N+1}-\lambda_{j}}
\prod_{\substack{\alpha \equiv 1\\ \beta-\alpha \equiv -1 \\ \beta \geq \alpha \geq 1}}
\big(Q q^{\lambda_{\alpha} -\lambda_{\beta+1}}t^{\frac{\beta-\alpha+1}{N}} ;q\big)_{\lambda_{\beta+1}-\lambda_{\beta+N}}, \\
\widetilde{A}^{(i-1,i)}(Q):=
\prod_{\substack{h\equiv N\\ j-h\equiv 0 \\ j\geq h\geq1}}
\big(Q q^{-\lambda_{h} +\lambda_{j+1}}t^{\frac{h-j}{N}} ;q\big)_{\lambda_{j}-\lambda_{j+1}}
\prod_{\substack{\alpha \equiv 1\\ \beta-\alpha \equiv -1 \\ \beta \geq \alpha \geq 1}}
\big(Q q^{\lambda_{\alpha} -\lambda_{\beta}}t^{\frac{\beta-\alpha+1}{N}} ;q\big)_{\lambda_{\beta}-\lambda_{\beta+1}}.
\end{gather*}
For $k=1,\ldots, N$,
\begin{gather*}
N_{\mu^{(k)},\mu^{(k)}}(Q)
=\prod_{\substack{h\equiv i-k\\ j-h\equiv 0 \\ j\geq h\geq1}}
\big(Q q^{-\lambda_{h} +\lambda_{j+N}}t^{\frac{h-j}{N}} ;q\big)_{\lambda_{j}-\lambda_{j+N}}
\prod_{\substack{h\equiv i-k\\ j-h\equiv 0 \\ j\geq h\geq1}}
\big(Q q^{\lambda_{h} -\lambda_{j}}t^{\frac{j-h}{N}+1} ;q\big)_{\lambda_{j}-\lambda_{j+N}}
\\ \hphantom{N_{\mu^{(k)},\mu^{(k)}}(Q)}
{}=B^{(k)}(Q)\cdot \widetilde{B}^{(k)}(Q),
\end{gather*}
where we put
\begin{gather*}
B^{(k)}(Q)
:=\prod_{\substack{h\equiv i-k\\ j-h\equiv 0 \\ j\geq h\geq1}}
\big(Q q^{-\lambda_{h} +\lambda_{j+N}}t^{\frac{h-j}{N}} ;q\big)_{\lambda_{j+1}-\lambda_{j+N}}
\prod_{\substack{h\equiv i-k\\ j-h\equiv 0 \\ j\geq h\geq1}}
\big(Q q^{\lambda_{h} -\lambda_{j}}t^{\frac{j-h}{N}+1} ;q\big)_{\lambda_{j}-\lambda_{j+N-1}}, \\
\widetilde{B}^{(k)}(Q)
:=
\prod_{\substack{h\equiv i-k\\ j-h\equiv 0 \\ j\geq h\geq1}}
\big(Q q^{-\lambda_{h} +\lambda_{j+1}}t^{\frac{h-j}{N}} ;q\big)_{\lambda_{j}-\lambda_{j+1}}
\prod_{\substack{h\equiv i-k\\ j-h\equiv 0 \\ j\geq h\geq1}}
\big(Q q^{\lambda_{h} -\lambda_{j+N-1}}t^{\frac{j-h}{N}+1} ;q\big)_{\lambda_{j+N-1}-\lambda_{j+N}}.
\end{gather*}
Then, it is clear that
\begin{gather*}
\prod_{1\leq k \leq N}\widetilde{B}^{(k)}(1)=\sfN^{(0)}_{\lambda \lambda}\big(1|q,t^{-1/N}\big).
\end{gather*}
Since it follows that
\begin{gather*}
A^{(i-1,i)}(Q)=
\prod_{\substack{h\equiv N\\ j-h\equiv 0 \\ j\geq h\geq1}}
\big(Q q^{-\lambda_{h} +\lambda_{j+N}}t^{\frac{h-j}{N}-1} ;q\big)_{\lambda_{j+1}-\lambda_{j+N}}
\prod_{\substack{h\equiv N\\ h\geq 1}}
(Q;q)_{\lambda_{h-N+1}-\lambda_{h}}
\\ \hphantom{A^{(i-1,i)}(Q)=}
{}\times
\prod_{\substack{\alpha \equiv 1\\ \beta-\alpha \equiv -1 \\ \beta \geq \alpha \geq 1}}
\big(Q q^{\lambda_{\alpha} -\lambda_{\beta+1}}t^{\frac{\beta-\alpha+1}{N}} ;q\big)_{\lambda_{\beta+1}-\lambda_{\beta+N}}
\\ \hphantom{A^{(i-1,i)}(Q)}
{}=\prod_{\substack{h\equiv N\\ j-h\equiv 0 \\ j\geq h\geq1}}
\big(Q q^{-\lambda_{h} +\lambda_{j+N}}t^{\frac{h-j}{N}-1} ;q\big)_{\lambda_{j+1}-\lambda_{j+N}}
\prod_{\substack{\alpha \equiv 1\\ \beta-\alpha \equiv 0 \\ \beta \geq \alpha \geq 1}}
\big(Q q^{\lambda_{\alpha} -\lambda_{\beta}}t^{\frac{\beta-\alpha}{N}} ;q\big)_{\lambda_{\beta}-\lambda_{\beta+N-1}},
\end{gather*}
we have
\begin{gather*}
\prod_{1\leq k \leq N}
\frac{A^{(k-1,k)}(t^{\delta_{k,i}})}{B^{(k)}(1)}
=1.
\end{gather*}
By the above computation,
it follows that
\begin{gather*}
\prod_{1\leq k \leq N}
\frac{ N_{\mu^{(k-1)}, \mu^{(k)}}(t^{\delta_{k,i}}) }
{N_{\mu^{(k)}, \mu^{(k)}}(1)}=
\frac{1}{\sfN^{(0)}_{\lambda \lambda}(1|q,t^{-1/N}) }
\, \widetilde{A}^{(i-1,i)}(t)\prod_{k\neq i} \widetilde{A}^{(k-1,k)}(1).
\end{gather*}
Furthermore, it can be shown that for $k \neq i$,
\begin{gather*}
\widetilde{A}^{(k-1,k)}(1)=
\prod_{\substack{h\equiv i-k\\ j-h\equiv 0 \\ j\geq h\geq1}}
\big(q^{-\lambda_{h} +\lambda_{j+N+1}}t^{\frac{h-j}{N}} ;q\big)_{\lambda_{j+N}-\lambda_{j+N+1}}
\\ \phantom{\widetilde{A}^{(k-1,k)}(1)=}
{}\times
\prod_{\substack{h\equiv i-k+1\\ j-h\equiv 0 \\ j\geq h\geq1}}
\big(q^{\lambda_{h} -\lambda_{j+N-1}}t^{\frac{j-h}{N}+2} ;q\big)_{\lambda_{j+N-1}-\lambda_{j+N}}
\prod_{\substack{h\equiv i-k+1\\ h\geq1}}
\big(q^{\lambda_{h} -\lambda_{h-1}}t ;q\big)_{\lambda_{h-1}-\lambda_{h}}
\\ \phantom{\widetilde{A}^{(k-1,k)}(1)}
{}=\prod_{\substack{h\equiv i-k\\ j-h\equiv 0 \\ j\geq h\geq1}}
\big(q^{-\lambda_{h} +\lambda_{j+1}}t^{\frac{h-j}{N}+1} ;q\big)_{\lambda_{j}-\lambda_{j+1}}
\prod_{\substack{\alpha \equiv i-k+1\\ \beta-\alpha\equiv -1 \\ \beta \geq \alpha \geq1}}
\big(q^{\lambda_{\alpha} -\lambda_{\beta}}t^{\frac{\beta-\alpha+1}{N}+1} ;q\big)_{\lambda_{\beta}-\lambda_{\beta+1}}.
\end{gather*}
Therefore,
we obtain
\begin{gather*}
\prod_{1\leq k \leq N}
\frac{ N_{\mu^{(k-1)}, \mu^{(k)}}(t^{\delta_{k,i}}) }
{N_{\mu^{(k)}, \mu^{(k)}}(1)}
=
\frac{\sfN^{(0)}_{\lambda \lambda}(t|q,t^{-1/N})}
{\sfN^{(0)}_{\lambda \lambda}(1|q,t^{-1/N}) }.
\end{gather*}

\subsection{Proof of Theorem~\ref{fact: chang pref. direc.}}
\label{sec: pf of chang pref. direc.}

By Fact~\ref{fact: mat el. of TV},
we have
\begin{gather*}
\bra{P_{\vl}(\vv)} \mathcal{T}^V(\vu, \vv, w) \ket{Q_{\vm}(\vu)}
\\ \qquad
{}= \zeta^{\sharp} \times
\frac{\prod_{ i, j=1 }^N N_{\lambda^{(i)}, \mu^{(j)}}(v_i/\gamma u_j)}
{\prod_{k=1}^N c_{\lambda^{(k)}}c'_{\mu^{(k)}}
\prod_{1\leq i<j \leq N} N_{\mu^{(i)}, \mu^{(j)}}(qu_i/tu_j)
\prod_{1\leq i<j \leq N} N_{\lambda^{(j)}, \lambda^{(i)}}(qv_j/tv_i)},
\end{gather*}
where
\begin{gather*}
\zeta^{\sharp}:=\mathcal{M}(\vu,\vv; \vl, \vm;w)
\cdot
\xi^{(+)}_{\vm}(\vu)^{-1}\cdot
\xi^{(-)}_{\vl}(\vv)^{-1}.
\end{gather*}
On the other hand,
by using the relation
\begin{gather*}
N_{\lambda,\mu}(\gamma^{-1}x)=N_{\mu, \lambda}(\gamma^{-1}x^{-1})x^{|\lambda|+|\mu|}\frac{f_{\lambda}}{f_{\mu}},
\end{gather*}
we have
\begin{gather*}
\bra{\vm}\cTH(\vu, \vv;w)\ket{\vl}
\\ \qquad
{}=\zeta^{\flat} \times
\frac{\prod_{ i, j=1 }^N N_{\lambda^{(i)}, \mu^{(j)}}(v_i/\gamma u_j)}
{\prod_{k=1}^N c_{\lambda^{(k)}}c'_{\mu^{(k)}}
\prod_{1\leq i<j \leq N} N_{\mu^{(i)}, \mu^{(j)}}(qu_i/tu_j)
\prod_{1\leq i<j \leq N} N_{\lambda^{(j)}, \lambda^{(i)}}(qv_j/tv_i)},
\end{gather*}
where
\begin{gather*}
\zeta^{\flat}:=\prod_{i=1}^N
\hat{t}\big(\lambda^{(i)},\tfrac{u_1\cdots u_i}{v_1\cdots v_i}w ,v_i,0\big)c_{\lambda^{(i)}}
\hat{t}^*\big(\mu^{(i)},u_i,\tfrac{u_1\cdots u_{i-1}}{v_1\cdots v_{i-1}}w ,0\big)c'_{\mu^{(i)}}
\\ \hphantom{\zeta^{\flat}:=}
{}\times \prod_{1\leq i <j\leq N}
(\gamma u_j/u_i)^{-|\mu^{(i)}|-|\mu^{(j)}|}\frac{f_{\mu^{(i)}}}{f_{\mu^{(j)}}}
\prod_{1\leq i <j\leq N}
(u_j/u_i)^{|\lambda^{(i)}|+|\mu^{(j)}|}\frac{f_{\mu^{(j)}}}{f_{\lambda^{(i)}}}.
\end{gather*}
A direct calculation gives
\begin{gather*}
\zeta^{\sharp}=w^{|\vl|-|\vm|} (-1)^{N|\vl|+(N+1)|\vm|}
\gamma^{(N+1)|\vl|+(-2N+1)|\vm|}e_N(\vu)^{|\vl|}
\\ \phantom{\zeta^{\sharp}=}
{}\times \prod_{i=1}^N
q^{n(\mu^{(i)'})+n(\lambda^{(i)'})+|\mu^{(i)}|}
g_{\lambda^{(i)}}^{-N+i-1} g_{\mu^{(i)}}^{N-i}
(-1)^{i|\lambda^{(i)}|+i|\mu^{(i)}|}
\gamma^{-i|\lambda^{(i)}|+i|\mu^{(i)}|}
\\ \phantom{\zeta^{\sharp}=}
{}\times \prod_{i=1}^Nu_i^{(N-i+1)|\mu^{(i)}|-\sum_{k=1}^i |\mu^{(k)}|}
\times \prod_{i=1}^N
v_i^{(i-N)|\lambda^{(i)}|-\sum_{k=i}^N|\lambda^{(k)}|}
=(-1)^{|\vl|+|\vm|} \zeta^{\flat}.
\end{gather*}
This completes the proof of Theorem~\ref{fact: chang pref. direc.}.

\subsection*{Acknowledgments}
The authors would like to thank H.~Awata, B.~Feigin, A.~Hoshino,
H.~Kanno, Y.~Matsuo, M.~Noumi and S.~Yanagida for valuable comments.
The authors are also grateful to the referees for helpful feedback.
The research of J.S.~is supported by JSPS KAKENHI (Grant Numbers 19K03512).
Y.O.\ and M.F.\ are partially supported by Grant-in-Aid for JSPS Research Fellow
(Y.O.:~18J00754, M.F.:~17J02745).

\pdfbookmark[1]{References}{ref}
\LastPageEnding

\end{document}